\newcommand{\LF}[2]{\langle #1,#2\rangle}
\newcommand{\DLF}[2]{\langle\!\langle #1,#2\rangle\!\rangle}
\newcommand{\om}{\omega}
\newcommand{\ra}{\rightarrow}
\newcommand{\lra}{\longrightarrow}
\newcommand{\ZZ}{\mathbb{Z}}
\newcommand{\QQ}{\mathbb{Q}}
\newcommand{\RR}{\mathbb{R}}
\newcommand{\Rr}{\mathcal{R}}
\newcommand{\Ee}{\mathcal{E}}
\newcommand{\NN}{\mathbb{N}}
\newcommand{\AAA}{\mathbb{A}}
\newcommand{\XX}{\mathbb{X}}
\newcommand{\Cc}{\mathcal{C}}
\newcommand{\Qq}{\mathcal{Q}}
\newcommand{\Dd}{\mathcal{D}}
\newcommand{\Aa}{\mathcal{A}}
\newcommand{\Xx}{\mathcal{X}}
\newcommand{\Ll}{\mathcal{L}}
\newcommand{\Hh}{\mathcal{H}}
\newcommand{\QHh}{\vec{\mathcal{H}}}
\newcommand{\Tt}{\mathcal{T}}
\newcommand{\Ff}{\mathcal{F}}
\newcommand{\Gg}{\mathcal{G}}
\newcommand{\Kk}{\mathcal{K}}
\newcommand{\Ss}{\mathcal{S}}
\newcommand{\vSs}{\mathscr{S}}
\newcommand{\Uu}{\mathcal{U}}
\newcommand{\Vv}{\mathcal{V}}
\newcommand{\Ww}{\mathcal{W}}
\newcommand{\Bb}{\mathcal{B}}
\newcommand{\Mm}{\mathcal{M}}
\newcommand{\bp}{\mathbf{p}}
\newcommand{\ovp}{\bar{p}}
\newcommand{\bt}{\mathbf{t}}
\newcommand{\bq}{\mathbf{q}}
\newcommand{\bL}{\mathbf{L}}
\newcommand{\bW}{\mathbf{W}}
\newcommand{\spitz}[1]{\langle #1\rangle}
\newcommand{\rperp}[1]{#1^{\perp}}
\newcommand{\lperp}[1]{{}^{\perp}#1}
\newcommand{\rperpo}[1]{#1^{\perp_0}}
\newcommand{\lperpo}[1]{{}^{\perp_0}#1}
\newcommand{\rperpe}[1]{#1^{\perp_1}}
\newcommand{\lperpe}[1]{{}^{\perp_1}#1}
\DeclareMathOperator{\Knull}{K_0}
\DeclareMathOperator{\D}{D}
\DeclareMathOperator{\Coker}{Coker}
\DeclareMathOperator{\Ker}{Ker}
\DeclareMathOperator{\vect}{vect}
\DeclareMathOperator{\coh}{coh}
\DeclareMathOperator{\Qcoh}{Qcoh}
\DeclareMathOperator{\Inj}{Inj}
\DeclareMathOperator{\Mod}{Mod}
\renewcommand{\mod}{\operatorname{mod}}
\DeclareMathOperator{\End}{End}
\DeclareMathOperator{\Hom}{Hom}
\DeclareMathOperator{\Ext}{Ext}
\DeclareMathOperator{\Gen}{Gen}
\DeclareMathOperator{\gen}{gen}
\DeclareMathOperator{\Cogen}{Cogen}
\DeclareMathOperator{\add}{add}
\DeclareMathOperator{\Add}{Add}
\DeclareMathOperator{\Prod}{Prod}
\DeclareMathOperator{\rad}{rad}
\DeclareMathOperator{\op}{op}
\DeclareMathOperator{\her}{her}
\DeclareMathOperator{\Copres}{Copres}
\DeclareMathOperator{\fp}{fp}
\DeclareMathOperator{\id}{id}
\DeclareMathOperator{\rk}{rk}
\DeclareMathOperator{\matring}{M}
\DeclareMathOperator{\Ab}{Ab}
\DeclareMathOperator{\IE}{E}
\DeclareMathOperator{\PE}{PE}
\DeclareMathOperator{\Lex}{Lex}
\DeclareMathOperator{\y}{y}
\newcommand{\Der}{\mathcal{D}}
\newcommand{\Derived}[1]{\Der(#1)}
\newcommand{\bDerived}[1]{\Der^b(#1)}
\newtheorem*{theorem-1}{Theorem~1}
\newtheorem*{theorem-2}{Theorem~2}
\newtheorem*{theorem-3}{Theorem~3}
\newtheorem*{theorem-4}{Theorem~4}
\newtheorem*{theorem-5}{Theorem~5}
\newtheorem*{theorem*}{Theorem}
\newtheorem*{corollary*}{Corollary}
\newtheorem{proposition}{Proposition}[section]
\newtheorem{theorem}[proposition]{Theorem}
\newtheorem{corollary}[proposition]{Corollary}
\newtheorem{lemma}[proposition]{Lemma}
\theoremstyle{definition}
\newtheorem{definition}[proposition]{Definition}
\newtheorem{remark}[proposition]{Remark}
\newtheorem{numb}[proposition]{\!\!}
\newtheorem{question}[proposition]{Question}
\newtheorem{example}[proposition]{Example}
\numberwithin{equation}{section}
\numberwithin{figure}{section}
\begin{document}

\title[Cotilting sheaves over weighted curves]{Cotilting
  sheaves \\ over weighted noncommutative\\
  regular projective curves}
\author[D. Kussin]{Dirk Kussin} \address{Technische Universit\"at Berlin \\
  Institut f\"ur Mathematik \\ Stra{\ss}e des 17.~Juni 136 \\ 10623
  Berlin, Germany} \email{dirk@math.uni-paderborn.de}
\author[R. Laking]{Rosanna Laking} \address{Universit\`a degli Studi di Verona\\
  Strada Le Grazie 15 - Ca' Vignal 2\\
  I - 37134 Verona\\
  Italy} \email{rosanna.laking@univr.it}
\subjclass[2010]{14A22,
  18E15, 18E30, 18E40}
\keywords{cotilting, pure-injective, weighted
  projective curve, domestic, tubular, elliptic}
\begin{abstract}
  We consider the category $\Qcoh\XX$ of quasicoherent sheaves where
  $\XX$ is a weighted noncommutative regular projective curve over a
  field $k$. This category is a hereditary, locally noetherian
  Grothendieck category. We classify all indecomposable pure-injective
  sheaves and all cotilting sheaves of slope $\infty$. In the cases of
  nonnegative orbifold Euler characteristic this leads to a
  classification of pure-injective indecomposable sheaves and a
  description of all large cotilting sheaves in $\Qcoh\XX$.
\end{abstract}
\maketitle
\tableofcontents
\section{Introduction}
The study of large cotilting objects originates in the context of the
representation theory of associative rings, where it amounts to the
study of (tilting) derived equivalences between the module category
and Grothendieck categories (\cite{stovicek:2014}).  A generalisation
of cotilting to the setting of Grothendieck categories was provided in
\cite{buan:krause:2003}, and investigated in greater depth in
\cite{coupek:stovicek:2019}. As cotilting objects are automatically
pure-injective (unlike the dual notion of a tilting object), the
classifications of cotilting objects and of indecomposable
pure-injective objects are strongly related to each other. In this
paper we consider these classification problems for a certain class of
Grothendieck categories that are not module categories: the categories
$\Qcoh{\XX}$ of quasicoherent sheaves over weighted noncommutative
regular projective curves over a field $k$. We emphasize that each
smooth projective curve is included in this setting as a special case.

Each such category $\Qcoh{\XX}$ is determined by its full subcategory
$\coh{\XX}$ of finitely presented objects.  The category $\coh{\XX}$
is, by definition, a $k$-linear abelian category that shares important
characteristics with classical categories of coherent sheaves over
(commutative) projective curves. In fact, the categories $\coh{\XX}$
have been axiomatised (\cite{lenzing:reiten:2006}) and subsequently
studied by several authors (for example,
\cite{kussin:2016,angeleri:kussin:2017}). In particular, the category
$\coh{\XX}$ is a small hereditary abelian category in which every
object is noetherian.

The structure of the category $\Qcoh{\XX}$ is less well-understood
than $\coh{\XX}$ and is likely to be beyond any hope of classification
or description as a whole.  In this article, we systematically study
the full subcategory of pure-injective sheaves in $\Qcoh{\XX}$, in the
sense of \cite{crawley-boevey:1994, Krause2:1998}. This subcategory
properly contains the subcategory $\coh{\XX}$ of coherent sheaves and,
moreover, it constitutes a tractible subcategory of $\Qcoh{\XX}$, due
to the fact that we may make use of the pure-exact structure.

For arbitrary $\XX$ we are able to give the following description of
the indecomposable pure-injective sheaves $E$ of slope $\infty$, that
is, those which satisfy additionally $\Hom(E,\vect{\XX})=0$.
\begin{theorem*}[\ref{thm:ind-pure-inj-slope-infty}]\label{thm:intro-indec-pure-injectives-slope-infty}
  Let $\XX$ be a weighted noncommutative regular projective curve over
  a field $k$. The following is a complete list of indecomposable
  pure-injective objects in $\Qcoh\XX$ of slope $\infty$.
  \begin{enumerate}
  \item The indecomposable sheaves of finite length.
  \item The sheaf $\Kk$ of rational functions, the Pr\"ufer and the
    adic sheaves.
  \end{enumerate}
  Moreover, each pure-injective sheaf of slope $\infty$ is discrete
  and thus uniquely determined by its indecomposable summands.
\end{theorem*}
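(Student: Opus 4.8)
The plan is to work over the "slope $\infty$" part of $\Qcoh\XX$, which by the hereditary structure of $\coh\XX$ is governed by the category of finite-length sheaves, i.e.\ the (pairwise disjoint) family of tubes concentrated at the points of $\XX$. First I would reduce the problem to a single tube. Since $\Hom(E,\vect\XX)=0$ means $E$ has no nonzero map to a bundle, and since in a hereditary category the torsion pair (torsion = finite-length-generated, torsion-free = bundles-built) behaves well under localisation, one expects that a pure-injective $E$ of slope $\infty$ decomposes according to the localising subcategories attached to the individual tubes $\tube_x$; this uses that the finite-length objects form a uniserial length category with Auslander--Reiten structure a disjoint union of tubes, and that such a decomposition passes to pure-injective hulls. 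The key point to establish here is that an indecomposable pure-injective of slope $\infty$ is supported at a single point $x$, so that it lives in the localising subcategory generated by $\tube_x$, which is equivalent to $\Qcoh$ of (the completion at $x$ of) a tame hereditary or nilpotent-type situation — concretely, to modules over a local "tube algebra", where the indecomposable pure-injectives are classically known.

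Second, within one tube I would invoke the standard classification of indecomposable pure-injectives over a tube (equivalently, over a complete discrete valuation-like setting, or the Prüfer category): the indecomposable pure-injective objects are exactly the finite-length (regular) modules, the Prüfer modules $x[\infty]$, the adic modules $\widehat{x}$, and the single generic/rational module $\Kk$. This is the "tube" analogue of the Ringel classification for tame hereditary algebras; it should either be cited from the literature on pure-injectives over tubes/Prüfer categories or reproved using that each tube is a uniserial length category of finite-dimensional type. Then one assembles the local answers: an indecomposable pure-injective of slope $\infty$ is either supported at one point — giving finite-length sheaves, Prüfer sheaves, or adic sheaves — or it is "generic across all points", which forces it to be the sheaf $\Kk$ of rational functions (the unique torsion-free-of-rank-one object killed by $\Hom(-,\vect\XX)$ up to the torsion part). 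This yields list (1)--(2).

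Third, for the "discrete" statement I would show that the full subcategory of pure-injective sheaves of slope $\infty$ has no superdecomposable objects and that every such object is a product (or pure-injective hull of a coproduct) of indecomposables from the list, uniquely. The mechanism: the category of slope-$\infty$ pure-injectives is, by the first step, a "product" over the points $x$ of the tube categories, and each tube category is of finite representation type in the relevant sense, so its pure-injectives form a discrete (Ziegler-)spectrum with no limit points other than those already in the list, and no superdecomposable pure-injective exists. Uniqueness of the decomposition then follows from the Krull--Remak--Schmidt--Azumaya property available for pure-injective (= algebraically compact) objects, together with local finiteness of the endomorphism rings on the discrete part.

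The main obstacle I anticipate is the first step — rigorously decomposing an arbitrary (large) pure-injective $E$ with $\Hom(E,\vect\XX)=0$ into its "local at $x$" pieces. One has to control how the torsion pair $(\text{finite length generated},\text{torsion-free})$ and the point-wise localisations interact with pure-injectivity and with infinite products/coproducts; in particular one must rule out pure-injectives that "see infinitely many points at once" other than $\Kk$ itself, and must verify that the pure-injective hull of a direct sum of local pieces is again the product of their hulls. Once this localisation-and-reassembly is in place, the per-tube classification and the discreteness/uniqueness are comparatively standard.
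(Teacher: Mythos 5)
Your plan correctly predicts the answer, but the decisive step is exactly the one you defer as ``the main obstacle'', and none of the mechanisms you sketch supplies it: showing that an indecomposable pure-injective $E$ of slope $\infty$ which is not injective (i.e.\ not divisible) lies in $\Prod(\Uu_x)$ for a single point $x$. Note first that ``lives in the localising subcategory generated by the tube at $x$'' cannot be the right formulation: the adic sheaves $S[-\infty]$ are torsionfree, hence not in $\varinjlim\Uu_x$; the correct receptacle is $\Prod(\Uu_x)$ (equivalently the perpendicular category $\rperp{\{\Uu_y\mid y\neq x\}}\simeq\Mod(R_x)$), and the substantive claim is that every \emph{reduced} pure-injective of slope $\infty$ belongs to $\Prod(\Hh_0)$. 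The paper establishes this not by point-wise localisation but by a derived-category argument: using Serre duality and Auslander--Reiten triangles, every object of $\Derived{\QHh}$ is a pure subobject of a product of compact objects (Proposition~\ref{prop:class-of-pure-injective-complexes}); the compacts are partitioned as $\bp\vee\bt\vee\bq$, and Lemma~\ref{lem:trisection} shows $\Prod(\bp)\cap\Mm(\infty)=0$ and $\Prod(\bq)\cap\Mm(\infty)=\Dd$, so the pure-injectives in $\Rr\cap\Mm(\infty)$ are exactly $\Prod(\Hh_0)$ (Lemma~\ref{lem:reduced-sheaves-of-slope-infty}); indecomposability then pins $M$ inside $\Prod(\Uu_x)$ for one $x$. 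Without a replacement for this trisection (or an equally precise argument excluding indecomposable torsionfree pure-injectives of slope $\infty$ other than $\Kk$ and the adics), your dichotomy ``supported at one point / generic across all points'' is an assertion, not a proof; the divisible case producing $\Kk$ and the Pr\"ufer sheaves also needs the splitting torsion pair $(\Dd,\Rr)$ together with local noetherianness, which you use implicitly but never invoke.

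There are secondary problems as well. Within one tube the classification is not a matter of ``finite representation type'' (a tube has infinitely many indecomposables), and in the weighted noncommutative setting it cannot simply be cited from the tame hereditary literature: the paper passes to the hereditary order $R_x$, its completion involving a complete discrete valuation domain $V_x$ (resp.\ the ring $H_{p(x)}(V_x)$ at a weighted point), and uses completeness and reducedness to split off an adic direct summand (Lemma~\ref{lem:reduced-pi-are-adics}). That same lemma is what yields discreteness, since it shows every nonzero reduced pure-injective in $\Prod(\Uu_x)$ has an indecomposable summand; your route to discreteness via a ``discrete Ziegler spectrum of a finite-type category'' would not go through as stated, whereas the uniqueness of the decomposition is indeed the standard Krull--Remak--Schmidt--Azumaya statement for discrete pure-injectives, as in Proposition~\ref{prop:pure-inj-normal-form}.
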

If we assume that $\XX$ is of tame representation type (which means
that the orbifold Euler characteristic of $\XX$ is nonnegative), then
we can extend this classification to the sheaves of rational and
infinite slope.  In the case of positive Euler characteristic, we
describe all the indecomposable pure-injective sheaves in
$\Qcoh{\XX}$. In particular, we show that, when the orbifold Euler
characteristic of $\XX$ is nonnegative, the form of the indecomposable
pure-injective sheaves is analogous to the case of modules over
concealed canonical algebras (\cite{angeleri:kussin:2017b}). We recall
that in case of orbifold Euler characteristic zero each indecomposable
object has a slope, which is a real number or infinite,
by~\cite{reiten:ringel:2006,angeleri:kussin:2017}.
\begin{theorem*}[\ref{thm:domestic-main-result} and \ref{thm:nondomestic-main-result}]\label{thm:ind-pure-inj}
  Let $\chi'_{orb}(\XX)$ denote the orbifold Euler characteristic of
  $\XX$. Then the following statements hold.
  \begin{enumerate}
  \item If $\chi'_{orb}(\XX) > 0$ (i.e.~if $\XX$ is a domestic curve),
    then each indecomposable pure-injective sheaf in $\Qcoh{\XX}$
    either has slope $\infty$, and thus is as in the preceding
    theorem, or is a vector bundle.\smallskip
  \item If $\chi'_{orb}(\XX) = 0$ (i.e.~if $\XX$ is a tubular or an
    elliptic curve), then the following is a complete list of
    indecomposable pure-injective sheaves of rational or infinite
    slope $w$ in $\Qcoh{\XX}$.\smallskip
\begin{enumerate}
  \item The indecomposable coherent sheaves.
  \item The generic, the Pr\"ufer and the adic sheaves of slope $w$.
    \end{enumerate}
\end{enumerate}
\end{theorem*}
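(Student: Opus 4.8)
The plan is to reduce, in both cases, to the classification of the indecomposable pure-injective sheaves of slope $\infty$ supplied by Theorem~\ref{thm:ind-pure-inj-slope-infty}, but by genuinely different routes. In the domestic case~(1) the argument is module-theoretic: by the structure theory of domestic curves (cf.~\cite{kussin:2016}), $\coh\XX$ admits a tilting complex whose endomorphism ring is a tame hereditary algebra $H$, so that there is a triangle equivalence $\bDerived{\Qcoh\XX}\simeq\bDerived{\Mod H}$ restricting to an equivalence $\bDerived{\coh\XX}\simeq\bDerived{\mod H}$ of the subcategories of compact objects. Such an equivalence is given by an $\RHom$ out of a compact object, hence commutes with set-indexed products and coproducts, and therefore preserves algebraic compactness, i.e.\ pure-injectivity; since $\Qcoh\XX$ and $\Mod H$ are hereditary it identifies, up to shift, the indecomposable pure-injective objects of the two hearts. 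Thus the indecomposable pure-injective sheaves in $\Qcoh\XX$ are in bijection with the indecomposable pure-injective $H$-modules, and the latter are known (Ringel; the analogy with concealed canonical algebras is discussed in~\cite{angeleri:kussin:2017b}): the finite-dimensional indecomposables, the Prüfer and adic modules attached to the tubes, and the generic module. It then remains to match the two lists. The subcategory of $\mod H$ "supported on the tubes" is preserved by the derived equivalence (tube-shaped components go to tube-shaped components), and for domestic $\XX$ the finite-length sheaves form the only such family; hence the regular, Prüfer and adic modules together with the generic module correspond exactly to the indecomposable finite-length sheaves, the Prüfer and adic sheaves, and $\Kk$ — precisely the slope-$\infty$ list of Theorem~\ref{thm:ind-pure-inj-slope-infty} — while the remaining (finite-dimensional) preprojective and preinjective $H$-modules correspond to the indecomposable vector bundles of $\coh\XX$. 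This gives the dichotomy of~(1).

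For the case $\chi'_{orb}(\XX)=0$ every indecomposable object of $\Qcoh\XX$ has a slope in $\RR\cup\{\infty\}$ by~\cite{reiten:ringel:2006,angeleri:kussin:2017}; fix a pure-injective indecomposable $E$ of rational or infinite slope $w$. The key input is the existence of a group of auto-equivalences of $\bDerived{\coh\XX}$ — generated, in the tubular case, by line bundle twists and tubular mutations, and, for an elliptic curve, realised by Fourier--Mukai type transforms — whose induced action on slopes is transitive on $\QQ\cup\{\infty\}$, essentially the action of $\mathrm{SL}_2(\ZZ)$ on $\PP^1(\QQ)$. These are restrictions of triangle auto-equivalences of $\bDerived{\Qcoh\XX}$ commuting with products and coproducts, so, exactly as above, they preserve pure-injectivity. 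Choosing $\Phi$ in this group which sends slope $w$ to slope $\infty$, the sheaf $\Phi(E)$ is an indecomposable pure-injective of slope $\infty$ (a shift can be normalised away, as $\Phi$ carries the abelian subcategory of slope-$\infty$ objects onto that of slope-$w$ objects), and Theorem~\ref{thm:ind-pure-inj-slope-infty} applies to it. Transporting the resulting list back by $\Phi^{-1}$ identifies the indecomposable pure-injective sheaves of slope $w$: the indecomposable finite-length sheaves of slope $\infty$ become the indecomposable coherent sheaves of slope $w$, $\Kk$ becomes the generic sheaf of slope $w$, and the Prüfer and adic sheaves become those of slope $w$ — this is the list in~(2).

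The conceptual content is light, and the real work lies in making the transfers rigorous. The crux — and the step I expect to be the main obstacle — is to verify that the relevant tilting complexes and auto-equivalences really do restrict to the compact objects and commute with set-indexed products and coproducts, so that pure-injectivity (best phrased via algebraic compactness in the ambient compactly generated triangulated category, using that $\Qcoh\XX$ is hereditary so that purity there agrees with purity of the heart in $\bDerived{\Qcoh\XX}$) is preserved in both directions, and then to identify the transported objects with the named families by their intrinsic description inside the relevant tube subcategories. Finally, one should note in the tubular case that~(2) lists only the indecomposable pure-injectives of rational or infinite slope: the generic sheaves of irrational slope are further indecomposable pure-injectives of $\Qcoh\XX$, deliberately left outside this statement.
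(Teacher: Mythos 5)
For part~(1) your route is essentially the paper's: the paper also passes to the tilting bundle $T_{\her}$ with tame hereditary endomorphism ring $H$, uses $\bDerived{\QHh}=\bDerived{\Mod(H)}$ together with Lemma~\ref{lem:pure-in-der-cat} to move pure-injectivity between the hereditary hearts, and then imports module-theoretic input. The difference is one of economy: the paper does not transport the whole Ziegler-type classification and match families (Pr\"ufer to Pr\"ufer, adics to adics, generic to $\Kk$), which in your write-up is left at the level of ``tube-shaped components go to tube-shaped components''; it only needs the dichotomy, and gets it from the single module-case lemma of \cite[3.~Lem.~1]{crawley-boevey:1998} (a nonzero map from an indecomposable pure-injective to a preprojective forces finite-dimensionality), applied separately on the two halves of the split tilting torsion pair $(\Tt,\Ff)$ and the split pair $(\Qq,\Cc)$ in $\Mod(H)$. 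Your heavier transfer can be made to work, but each identification of an infinite-dimensional family under the equivalence is an extra verification that the paper deliberately avoids.

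For part~(2) there is a genuine gap. You reduce rational slope $w$ to slope $\infty$ by an auto-equivalence of $\bDerived{\coh\XX}$ taken from a group acting transitively on $\widehat{\QQ}$ (``essentially $\mathrm{SL}_2(\ZZ)$''). In the generality of this paper --- weighted \emph{noncommutative} curves over an arbitrary field $k$ --- such transitivity fails: the paper points out that the interval category $\Hh\spitz{\alpha}$ is $\coh\XX_\alpha$ for a curve $\XX_\alpha$ which is derived equivalent to $\XX$ but isomorphic to $\XX$ only when, e.g., $k$ is algebraically closed, and \cite{kussin:2000} exhibits derived-equivalent, non-isomorphic tubular curves. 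An auto-equivalence of $\bDerived{\coh\XX}$ carrying $\bt_w$ to $\bt_\infty$ would force $\XX_w\cong\XX$, so in general no such $\Phi$ exists, and the elliptic (noncommutative) case is not covered by Fourier--Mukai transitivity either. The paper's argument avoids this: it works with the HRS-tilted interval category $\QHh\spitz{\alpha}=\Qcoh\XX_\alpha$ (Theorem~\ref{thm:coh-X-alpha} and Section~7), proves via the split torsion pair of Lemma~\ref{lem:reiten-ringel} and the Jensen--Lenzing criterion that an indecomposable of slope $\alpha$ is pure-injective in $\QHh$ iff it is so in $\QHh\spitz{\alpha}$, and then applies Theorem~\ref{thm:ind-pure-inj-slope-infty} to the (possibly different) curve $\XX_\alpha$, where the given object has slope $\infty$; irrational slopes are handled separately by Corollary~\ref{cor:pure-injectives-of-irrational-slope}. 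Your argument is repaired by exactly this substitution --- replace the auto-equivalence by the equivalence $\Derived{\QHh}=\Derived{\QHh\spitz{\alpha}}$ onto $\Qcoh\XX_\alpha$, which is legitimate because the slope-$\infty$ classification was proved for \emph{all} weighted noncommutative regular projective curves --- but as written the transitivity claim does not hold in the stated generality. A minor further caveat: your closing remark about ``generic sheaves of irrational slope'' overstates what is known; the paper only identifies the pure-injectives of irrational slope as $\Prod(\bW_w)$ and leaves their indecomposable summands undescribed.
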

We also classify the cotilting sheaves in $\Qcoh{\XX}$, which allows
us to determine the existence of pure-injective sheaves of irrational
slope. For arbitrary $\XX$, we have the following parametrisation of
the cotilting sheaves in $\Qcoh{\XX}$ of slope $\infty$; for the
complete statement we refer to
Theorem~\ref{thm:large-cotilting-sheaves-in-general}. Note that branch
sheaves are certain rigid coherent sheaves contained in
non-homogeneous tubes and are defined in Section
\ref{sec:slope-infinity}.
\begin{theorem*}[\ref{thm:large-cotilting-sheaves-in-general}]
  Let $\XX$ be a weighted noncommutative regular projective curve over
  a field $k$. The cotilting sheaves $C$ in $\Qcoh\XX$ of slope
  $\infty$ are parametrized by pairs $(B,V)$ where $V$ is a subset of
  $\XX$ and $B$ a branch sheaf. 
\end{theorem*}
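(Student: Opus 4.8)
The plan is to combine three ingredients: the classification of indecomposable pure-injective sheaves of slope $\infty$ from Theorem~\ref{thm:ind-pure-inj-slope-infty}, the characterisation of large cotilting objects in a hereditary locally noetherian Grothendieck category from \cite{buan:krause:2003,coupek:stovicek:2019}, and the combinatorics of rigid coherent sheaves inside the tubes recalled in Section~\ref{sec:slope-infinity}. First I would reduce the problem to a combinatorial one. A cotilting sheaf is pure-injective, and a pure-injective sheaf of slope $\infty$ is discrete by Theorem~\ref{thm:ind-pure-inj-slope-infty}; hence, up to $\Prod$-equivalence (the appropriate equivalence for large cotilting objects), a cotilting sheaf $C$ of slope $\infty$ is determined by the set $\Ss$ of pairwise non-isomorphic indecomposable summands of $C$, and every member of $\Ss$ — being an indecomposable pure-injective summand, necessarily of slope $\infty$ — appears on the list of Theorem~\ref{thm:ind-pure-inj-slope-infty}: it is of finite length, or it is $\Kk$, or a Pr\"ufer sheaf, or an adic sheaf. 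It then suffices to pin down which sets $\Ss$ occur and to read off from $\Ss$ the pair $(B,V)$.

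For the necessity I would localise the cotilting conditions to the tubes. Since $\Qcoh\XX$ is hereditary and $\Hom$ and $\Ext^1$ vanish between sheaves concentrated in distinct tubes (and, in the relevant degrees, between a tube and $\Kk$), the self-orthogonality condition $\Ext^1(C^{(\kappa)},C)=0$ splits into one condition for each tube $\tube_x$, namely that the part $\Ss_x$ of $\Ss$ at $x$ — the finite-length indecomposables supported at $x$, together with the Pr\"ufer and adic sheaves at $x$ belonging to $\Ss$ — assemble into a cotilting object of the (localised and completed) category associated with $\tube_x$. The non-vanishing $\Ext^1(S_x[\infty],\widehat S_x)\neq 0$ forbids a Pr\"ufer and the adic sheaf at the same point from occurring together; the requirement that $\Cogen C=\lperpe{C}$ cogenerate every injective object (in particular the Pr\"ufer sheaf at $x$ and its finite-length subsheaves) forbids neither occurring; so exactly one of the two types is present at each $x$, which records a subset $V\subseteq\XX$. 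Moreover $\Kk$ is a summand of the injective cogenerator, hence lies in $\Cogen C$, and being injective and indecomposable pure-injective it splits off and lies in $\Ss$. Finally, $\Ext^1$-self-orthogonality of the finite-length summands at $x$ together with the chosen adic or Pr\"ufer sheaf forces, by the description of rigid sheaves in a tube recalled in Section~\ref{sec:slope-infinity}, the finite-length part of $\Ss_x$ to be a branch $B_x$; it is empty at homogeneous $x$ since there $\Ext^1(S_x[n],S_x[n])\neq 0$. Thus $\Ss$ determines a branch sheaf $B=\bigoplus_x B_x$ and a subset $V\subseteq\XX$.

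For the converse I would, given $(B,V)$, assemble the candidate $C(B,V)$ out of $\Kk$, the adic sheaves at the points of $V$, the Pr\"ufer sheaves at the remaining points, each suitably threaded through the relevant branch of $B$, together with the finite-length summands of $B$ itself. Its $\Ext^1$-self-orthogonality is immediate, the cross-terms between distinct tubes and with $\Kk$ vanishing as above and the within-tube terms vanishing by the branch condition. It remains to check the cogeneration axiom of \cite{buan:krause:2003,coupek:stovicek:2019} (the injective-dimension condition being automatic in the hereditary setting): for every injective object $W$, a product of copies of $\Kk$ and of Pr\"ufer sheaves, one must exhibit an exact sequence $0\to W\to C(B,V)^{\kappa_0}\to C(B,V)^{\kappa_1}\to 0$ with the two middle terms in $\Prod C(B,V)$. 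For this I would use the short exact sequences $0\to\widehat S_x\to\Kk\to S_x[\infty]\to 0$ linking the adic, generic and Pr\"ufer sheaf at a point — with suitable multiplicities and branch-twists inside a non-homogeneous tube — to copresent first the Pr\"ufer sheaves at the points of $V$ and then, by a second application, the remaining finite-length sheaves, while $\Kk$ copresents itself; assembling these over all tubes yields the required copresentation of $W$. Together with the necessity part this gives both directions, and inequivalent pairs $(B,V)$ yield non-$\Prod$-equivalent cotilting sheaves because $\Ss_x$, hence $(B,V)$, is recovered from $C(B,V)$.

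The main obstacle is the realisation step: constructing the copresentations of the injective objects uniformly in $(B,V)$ — in particular threading the adic and Pr\"ufer rays correctly through the branches in the non-homogeneous tubes and verifying that the resulting local torsion pairs glue to a single torsion pair on $\Qcoh\XX$ — and, within the necessity part, extracting the precise branch condition on the finite-length summands from bare $\Ext^1$-orthogonality, which relies on the combinatorics of wings in a tube from Section~\ref{sec:slope-infinity} extended so as to incorporate the adic and Pr\"ufer sheaves.
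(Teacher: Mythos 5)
Your overall strategy---classify the possible indecomposable pure-injective summands and then verify the cotilting axioms directly, in the style of Buan--Krause---is a legitimate alternative to the paper's route, which instead reduces to the coherent category: by Theorem~\ref{thm:cotilting-is-of-finite-type} cotilting classes correspond to generating torsionfree classes $\Ff_0\subseteq\Hh$, the slope-$\infty$ condition means $\vect\XX\subseteq\Ff_0$, and such classes are exactly the undercut classes~\eqref{eq:undercut-formula}, already parametrized by pairs $(B,V)$ in \cite[Sec.~4]{angeleri:kussin:2017}; Lemma~\ref{lemma:undercut} then translates this back into the summands of $C$. However, as written your argument has genuine gaps in both directions. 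In the necessity part, the step excluding ``neither a Pr\"ufer nor an adic at $x$'' rests on the claim that $\Cogen(C)=\lperpe{C}$ cogenerates every injective, in particular every Pr\"ufer sheaf. This is false: for a cotilting sheaf of adic type at $x$ (e.g.\ the case $V=\emptyset$) one has $\Hom(S_x[\infty],C)=0$, since $\Hom$ from a Pr\"ufer sheaf into torsionfree sheaves, finite-length sheaves and $\Kk$ all vanish, so $S_x[\infty]\notin\Cogen(C)$; axiom (CS3) only makes injectives \emph{quotients} of objects of $\Prod(C)$, not subobjects. Likewise, bare $\Ext^1$-self-orthogonality does not force the finite-length summands at $x$ to form a branch sheaf: a single exceptional sheaf of length $2$ in a tube of rank $\geq 3$, together with the adics not mapping to its $\tau$-shifted wing, is rigid, but it is not a connected branch (a branch rooted in it must have two summands). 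Both points require maximality arguments extracted from the cotilting hypotheses, which is exactly how the paper proceeds: Lemma~\ref{lem:maximality-in-tubes} (the $\Uu_x$-component is maximal self-orthogonal in $\overline{\Uu_x}$) together with Corollaries~\ref{cor:properties-cotilting} and~\ref{cor:pruefer-summands-of-products} yields Lemma~\ref{lem:exceptional-tube}, i.e.\ the full branch plus the complementary family of adics or Pr\"ufers. (A minor slip: $\Kk$ always lies in $\Prod(C)$, being injective and in $\Cogen(C)$, but it need not split off from $C$ itself, e.g.\ $\Kk\in\Prod(S[\infty])$.)

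In the sufficiency direction the substantive content---that the candidate built from $(B,V)$ satisfies (CS3), i.e.\ that an injective cogenerator $W$ admits an exact sequence $0\to C_1\to C_0\to W\to 0$ with $C_0,C_1\in\Prod(C)$, with the adic and Pr\"ufer rays threaded correctly through the branches, and that the precise sets $\Rr_x$ of allowed $\tau$-shifts come out right---is exactly what you defer as ``the main obstacle'', so the existence half of the parametrization is not established, nor is injectivity of the assignment $(B,V)\mapsto[C]$ (the paper gets this from $\add B=\Prod(C)\cap\Uu_x$ and Proposition~\ref{prop:T-C-same-torsion}). This is where the finite-type detour pays off: Theorem~\ref{thm:cotilting-is-of-finite-type} produces the cotilting sheaf with class $\varinjlim\Ff_0$ for each undercut class without any explicit copresentation. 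If you wish to keep the direct approach, you must actually carry out the copresentation construction (starting from the sequences $0\to\tau S[-\infty]\to E\to S[\infty]\to 0$ with $E$ a sum of copies of $\Kk$, and their wing-twisted variants), which is a substantial piece of work that your sketch does not supply.
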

In the theorem, the cotilting module $C$ is uniquely determined by its
torsion part, which is given as a direct sum of $B$ and a coproduct of
Pr\"ufer sheaves concentrated in $V$; the set of the indecomposable
summands of the torsionfree part is then given by certain
``complementing'' adic sheaves concentrated in $\XX\setminus V$ (and
$\Kk$, if $V=\emptyset$).

In the cases of nonnegative orbifold Euler characteristic we show that
every large (=non-coherent) cotilting sheaf $C$ in $\Qcoh{\XX}$ has a
well-defined slope $w$ (see Theorem~\ref{thm:domestic-main-result} and
Theorem \ref{thm:every-large-ts-slope}) and, moreover, the equivalence
class of $C$ is completely determined by a set of indecomposable
pure-injective sheaves (see Proposition \ref{prop:minimal}). We have
the following parametrisation of the large cotilting sheaves in
$\Qcoh{\XX}$.  Note that branch sheaves of rational slope are defined
in Section \ref{sec:Euler-zero}.
\begin{theorem*}[\ref{thm:domestic-main-result} and \ref{thm:Euler-zero-cotilting}]\label{thm:cotilt}
  Under the assumptions of first theorem above, the
  following statements hold.
  \begin{enumerate}
\item If $\chi'_{orb}(\XX) > 0$, then all large cotilting sheaves in
  $\Qcoh{\XX}$ are of slope $\infty$.\smallskip
\item If $\chi'_{orb}(\XX) = 0$, then all large cotilting sheaves in
  $\Qcoh{\XX}$ have a well-defined slope $w$ and are parametrised as
  follows.\smallskip
  \begin{enumerate}
  \item If $w$ is rational or $\infty$, then the large cotilting
    sheaves of slope $w$ are parametrised (up to equivalence) by pairs
    $(B_w, V_w)$ where $B_w$ is a branch sheaf of slope $w$ and
    $V_w \subseteq \XX_w$.
  \item If $w$ is irrational, then there is a unique large cotilting
    sheaf $\bW_w$ of slope $w$ (up to equivalence).
\end{enumerate}
\end{enumerate}
\end{theorem*}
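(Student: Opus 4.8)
The plan is to deduce the statement by combining the already-available classification of cotilting sheaves of slope $\infty$ (Theorem~\ref{thm:large-cotilting-sheaves-in-general}) with the classification of indecomposable pure-injective sheaves (Theorems~\ref{thm:ind-pure-inj-slope-infty}, \ref{thm:domestic-main-result}, \ref{thm:nondomestic-main-result}), the bridge being a slope analysis of the cotilting torsion pair. First I would recall the dictionary: a cotilting sheaf $C$ is pure-injective, and it determines, and up to equivalence is determined by, the torsion pair $(\Tt,\Ff)$ in $\Qcoh\XX$ whose torsion-free class is the cotilting class $\Ff=\lperpe{C}=\{X\mid\Ext^1(X,C)=0\}=\Cogen C$; by Proposition~\ref{prop:minimal} the equivalence class of $C$ is recorded by the set of indecomposable pure-injective sheaves in $\Ff$, equivalently by the indecomposable summands of $C$ (all of which are pure-injective). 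The central step is then to prove that a large cotilting sheaf has a \emph{well-defined slope} $w\in\RR\cup\{\infty\}$, i.e.\ that the torsion pair is slope-adapted: every indecomposable of slope exceeding $w$ lies in $\Tt$, every indecomposable of slope less than $w$ lies in $\Ff$, and the slice of $\coh\XX$ at slope $w$ (the finite-length sheaves when $w=\infty$) is split by the induced torsion pair. Here one uses that $\coh\XX$ is hereditary with Serre duality and that, when $\chi'_{orb}(\XX)\ge0$, every indecomposable coherent sheaf is semistable of a well-defined slope, so that $\Hom$ and $\Ext^1$ between indecomposables of distinct slopes vanish in opposite directions; a convexity/connectedness argument then rules out two distinct finite slopes both contributing to $C$, since otherwise the vanishing across the intervening slopes would force $\id C>1$ or contradict the fact that $\Cogen C$ is cogenerating. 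This is the substance of Theorem~\ref{thm:every-large-ts-slope}.

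For part (1), assume $\chi'_{orb}(\XX)>0$. Being large, $C$ has a non-coherent indecomposable summand $E$; by Theorem~\ref{thm:domestic-main-result} the non-coherent indecomposable pure-injective sheaves over a domestic curve are exactly the large sheaves of slope $\infty$, namely the Pr\"ufer sheaves, the adic sheaves and $\Kk$. Hence $E$ has slope $\infty$, so by the well-defined-slope statement all indecomposable summands of $C$, and thus $C$ itself, have slope $\infty$; Theorem~\ref{thm:large-cotilting-sheaves-in-general} then describes $C$, which proves~(1).

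For part (2), assume $\chi'_{orb}(\XX)=0$, so $\XX$ is tubular or elliptic, and let $w$ be the slope of the large cotilting sheaf $C$. If $w$ is rational or $\infty$, there is an autoequivalence of $\bDerived{\coh\XX}$ (a tubular mutation in the tubular case, a Fourier--Mukai-type twist in the elliptic case) carrying the slice of $\coh\XX$ at slope $w$ to the slope-$\infty$ situation; transporting the torsion pair $(\Tt,\Ff)$ along it and invoking Theorem~\ref{thm:large-cotilting-sheaves-in-general} yields the parametrisation by pairs $(B_w,V_w)$ with $B_w$ a branch sheaf of slope $w$ and $V_w\subseteq\XX_w$ a subset of the index set of the homogeneous tubes of the slice. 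If $w$ is irrational -- which can occur only in the tubular case -- then there are no coherent sheaves of slope $w$, so the slope-$w$ slice carries no non-homogeneous tubes and no ``points'': there is no branch sheaf and no subset $V_w$ to choose. By Theorem~\ref{thm:nondomestic-main-result} there is exactly one indecomposable pure-injective sheaf of slope $w$, a generic sheaf $\bW_w$, and it remains only to verify that $\bW_w$ is cotilting and that every large cotilting sheaf of slope $w$ is equivalent to it.

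I expect the main obstacle to be the well-defined-slope step together with the irrational case. Showing that a large cotilting sheaf has a single slope requires a careful interplay between the Harder--Narasimhan/semistable filtration, infinite products in $\Qcoh\XX$, and the orthogonality conditions defining a cotilting object; and for irrational $w$ the delicate point is that the ``degenerate'' slice still supports a cotilting sheaf -- in particular that $\bW_w$ cogenerates $\Qcoh\XX$ even though there are no coherent objects of slope $w$ -- which I would handle by approximating $\bW_w$ using adic and Pr\"ufer sheaves of rational slopes converging to $w$ from both sides.
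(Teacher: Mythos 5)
Your proposal leaves the two load-bearing steps unproved, and in both places the sketch you give would not go through as stated. First, the ``well-defined slope'' step. You propose to prove Theorem~\ref{thm:every-large-ts-slope} directly by a semistability/convexity argument, but the paper does not do this and your sketch is not a proof: the actual route is to transfer the question to the tilting side via the tilting--cotilting bijection (Theorem~\ref{thm:tilting-cotilting-duality}, Lemma~\ref{lem:cotilt-tilt}, Proposition~\ref{prop:tilting-cotilting-invariances}(4)) and invoke the known results that large tilting sheaves of finite type have a slope (\cite[Thm.~8.5, 9.1]{angeleri:kussin:2017}) resp.\ have slope $\infty$ in the domestic case (\cite[Sec.~6]{angeleri:kussin:2017}). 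Worse, your part~(1) relies on this slope statement in the domestic case, where your argument cannot even be formulated: for $\chi'_{orb}(\XX)>0$ the classes $\Mm(w)$ for finite $w$ are not defined, the coherent indecomposables do not form tubular families of semistable objects, and Theorem~\ref{thm:every-large-ts-slope} is stated (and proved) only for $\chi'_{orb}(\XX)=0$. Knowing that one non-coherent summand of $C$ has slope $\infty$ does not by itself exclude vector-bundle summands or morphisms from $C$ to $\vect\XX$; the paper's proof of Theorem~\ref{thm:domestic-main-result}(2) is again by the correspondence with tilting sheaves, not by the pure-injective classification.

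Second, the irrational case is misquoted and the real content is deferred. Theorem~\ref{thm:nondomestic-main-result} does \emph{not} say there is exactly one indecomposable pure-injective of irrational slope $w$; it describes them as the indecomposables in $\Prod(\bW_w)$, whose structure is unknown (they correspond to the simple objects of the heart $\Gg_w$, of which one expects many), and $\bW_w$ itself is a large cotilting sheaf, not an indecomposable generic. Existence and uniqueness of $\bW_w$ -- which is exactly statement (2)(b) -- are what Theorem~\ref{thm:exist-Ww} proves: existence by applying Theorem~\ref{thm:cotilting-is-of-finite-type} to the generating torsionfree class $\add(\bp_w)\subseteq\Hh$, uniqueness by the computation $\lperpe{C}\cap\Hh=\bp_w=\lperpe{\bW_w}\cap\Hh$; your ``approximation by adics and Pr\"ufers of rational slope'' is not an argument. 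Finally, for rational $w$ your proposed autoequivalence of $\bDerived{\coh\XX}$ moving $\bt_w$ to $\bt_\infty$ need not exist in this noncommutative setting ($\XX_w$ is in general only derived-equivalent, not isomorphic, to $\XX$), and one would still have to check compatibility with products and the cotilting axioms at the quasicoherent level. The paper instead passes to the $[-1]$-shifted HRS-tilted heart $\QHh\spitz{\alpha}\simeq\Qcoh\XX_\alpha$ and proves Lemma~\ref{lem:tilting-corr-tubular} ($C$ cotilting of slope $\alpha$ in $\QHh$ iff cotilting of slope $\infty$ in $\QHh\spitz{\alpha}$) by verifying (CS1), (CS2) and the generating condition directly, after which Theorem~\ref{thm:large-cotilting-sheaves-in-general} gives the parametrisation by $(B_w,V_w)$. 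Your reduction is the right idea morally, but as written it rests on an equivalence you have not produced.
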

In the case where $w$ is rational or $\infty$, we provide an explicit
description of the minimal set of indecomposable direct summands of
$C$ (see Theorem \ref{thm:Euler-zero-cotilting}) in terms of the
classification given in Theorem \ref{thm:ind-pure-inj-slope-infty}.
Moreover, we describe the pure-injective sheaves of irrational slope
in terms of the large cotilting sheaves given in Theorem
\ref{thm:exist-Ww}.
\begin{corollary*}[\ref{cor:pure-injectives-of-irrational-slope}]
  If $\chi'_{orb}(\XX) = 0$ and $w$ is irrational, then $\Prod(\bW_w)$
  is the set of the pure-injective sheaves of slope $w$.
\end{corollary*}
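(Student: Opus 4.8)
The plan is to identify $\Prod(\bW_w)$, by way of the general theory of cotilting objects in Grothendieck categories, with the class of objects of the cotilting torsionfree class $\Ff_w$ of $\bW_w$ that are Ext-injective relative to $\Ff_w$, and then to exploit the feature special to the irrational case --- that no finitely presented sheaf has slope exactly $w$ --- in order to identify that class with the pure-injective sheaves of slope $w$. One inclusion is immediate: by Theorem~\ref{thm:exist-Ww} the sheaf $\bW_w$ is a cotilting sheaf, hence pure-injective, and a direct summand of a direct product of pure-injective objects is again pure-injective, so every object of $\Prod(\bW_w)$ is pure-injective; moreover $\bW_w$ has slope $w$ (Theorem~\ref{thm:every-large-ts-slope}, Theorem~\ref{thm:Euler-zero-cotilting}), and since having slope $w$ is the conjunction of the vanishing of $\Hom(C,-)$ over all coherent sheaves $C$ of slope $>w$ and of $\Hom(-,C)$ over all coherent sheaves $C$ of slope $<w$, this property passes to direct products and direct summands. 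Hence $\Prod(\bW_w)$ is contained in the class of pure-injective sheaves of slope $w$.

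For the reverse inclusion, let $\Ff_w=\Cogen(\bW_w)=\{X\in\Qcoh\XX:\Ext^1(X,\bW_w)=0\}$ be the cotilting torsionfree class of $\bW_w$. By the theory of cotilting objects in Grothendieck categories (\cite{buan:krause:2003,coupek:stovicek:2019}) we have
\[
\Prod(\bW_w)=\{X\in\Ff_w:\Ext^1(\Ff_w,X)=0\},
\]
and every object of the right-hand side is automatically pure-injective. By the construction of $\bW_w$ in Theorem~\ref{thm:exist-Ww}, the class $\Ff_w$ consists of those sheaves $X$ with $\Hom(C,X)=0$ for every coherent sheaf $C$ of slope $>w$; consequently, if $E$ is a pure-injective sheaf of slope $w$ then $E\in\Ff_w$. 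It therefore suffices to prove that $\Ext^1(F,E)=0$ for every $F\in\Ff_w$.

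We check this first for $F\in\Ff_w\cap\coh\XX$. Since $w$ is irrational, no indecomposable coherent sheaf has slope $w$, so every Harder--Narasimhan slope of $F$ is strictly less than $w$; because $\chi'_{orb}(\XX)=0$, the Auslander--Reiten translation $\tau$ preserves slopes, so the same holds for $\tau F$. Serre duality now gives $\Ext^1(F,E)\cong\D\Hom(E,\tau F)$, and the right-hand side vanishes because $E$ has slope $w$ while every Harder--Narasimhan factor of $\tau F$ has slope $<w$. For general $F\in\Ff_w$, write $F=\varinjlim F_\lambda$ as the direct limit of its finitely presented subobjects; since $\Ff_w$ is a torsionfree class it is closed under subobjects, and it is closed under coproducts (being defined by $\Hom$-vanishing from finitely presented objects), so all $F_\lambda$, their coproduct, and the kernel $K$ in the canonical presentation $0\to K\to\bigoplus_\lambda F_\lambda\to F\to 0$ lie in $\Ff_w$, and this sequence is pure-exact. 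Applying $\Hom(-,E)$ and using that $E$ is pure-injective together with the fact that $\Qcoh\XX$ is hereditary, one obtains a monomorphism $\Ext^1(F,E)\hookrightarrow\Ext^1(\bigoplus_\lambda F_\lambda,E)\cong\prod_\lambda\Ext^1(F_\lambda,E)=0$. Hence $\Ext^1(\Ff_w,E)=0$, so $E\in\Prod(\bW_w)$.

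The main obstacle is precisely the Ext-injectivity statement $\Ext^1(\Ff_w,E)=0$: its proof rests on the observation that for irrational $w$ there is no finitely presented sheaf of slope exactly $w$, so that the Hom-group produced by Serre duality --- between $E$, of slope $w$, and a coherent sheaf --- is forced to vanish by the slope condition on $E$; the reduction from finitely presented test objects to all of $\Ff_w$ is then a formal consequence of the pure-injectivity of $E$ and of $\Qcoh\XX$ being locally noetherian and hereditary. Some care is also needed in checking that the description of $\Ff_w$ and the identification of $\Prod(\bW_w)$ with the Ext-injective objects of $\Ff_w$ are available in the stated generality, but both are provided by the framework set up earlier and by \cite{coupek:stovicek:2019}.
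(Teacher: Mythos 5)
Your proof is correct, but it takes a genuinely different route from the paper. The paper deduces the corollary from Proposition~\ref{prop:slope-w-objects}: since $\bW_w$ is an injective cogenerator of the heart $\QHh\spitz{w}$, any sheaf $E$ of slope $w$ sits in a short exact sequence $0\to E\to C_0\to C_1\to 0$ in $\QHh\spitz{w}$ with $C_0\in\Prod(\bW_w)$, and this sequence is shown to be pure-exact in $\QHh$ precisely because $w$ is irrational (a coherent $F$ mapping non-trivially to $C_1$ must lie in $\bp_w$, whence $\Ext^1(F,E)=0$ as $E\in\Bb_w$); a pure-injective $E$ then splits off. You instead stay entirely inside $\QHh$: you identify $\Prod(\bW_w)$ with $\Cc_w\cap\rperpe{\Cc_w}$ via Lemma~\ref{lem:properties-cotilting}(2) and verify $\Ext^1(\Cc_w,E)=0$ for a pure-injective $E$ of slope $w$ by testing on coherent objects --- which all lie in $\add\bp_w$ since $w$ is irrational, so the vanishing is immediate from $E\in\Bb_w=\rperpe{\bp_w}$ (your Serre-duality detour is fine but unnecessary) --- and then passing to arbitrary objects of $\Cc_w$ by the direct-limit argument, which is exactly Theorem~\ref{thm:ext-direct-limit-pure-inj} re-derived. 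Both arguments hinge on irrationality at the same point (no coherent sheaves of slope $w$). What each buys: the paper's route proves the stronger Proposition~\ref{prop:slope-w-objects} (every sheaf of slope $w$, pure-injective or not, is a pure subsheaf of a product of copies of $\bW_w$), of independent interest; yours avoids the heart $\QHh\spitz{w}$ altogether and uses only the cotilting formalism in $\QHh$. One small justification to tighten: for the easy inclusion you claim that vanishing of $\Hom(-,C)$ against coherent $C\in\bp_w$ passes to products --- that closure is not formal; use instead $\Bb_w=\rperpe{\bp_w}$ together with \cite[Cor.~A.2]{coupek:stovicek:2019}, as in the proof of Proposition~\ref{prop:slope-infty-definable}, to see that $\Mm(w)$ is closed under products and direct summands.
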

The form of the indecomposable pure-injective sheaves of irrational
slope is not known but we show that there is a direct connection
between the indecomposable direct summands of $\bW_w$ and the simple
objects in the heart $\Gg_w$ of the HRS-tilted t-structure (see
Proposition \ref{prop:minimal}).  This perspective therefore provides
an interesting future strategy for investigating the indecomposable
pure-injective sheaves of irrational slope. In particular, in relation
to the recent description of some simple objects in $\Gg_w$ in the
case where $\XX$ is of tubular type (see \cite{rapa:2019} and a
forthcoming preprint by A.~Rapa and
J.~\v{S}\v{t}ov\'{\i}\v{c}ek). Therefore we will exhibit and prove
some basic properties of the categories $\Gg_w$ in the final section.

In some sense the results presented here are ``dual" to the
description of large tilting sheaves of finite type given by the first
named author and L.~Angeleri H\"ugel (\cite{angeleri:kussin:2017}),
however there is no obvious concrete duality witnessing this
intuition.  In the absence of an explicit duality, we observe some
connection between large tilting sheaves of finite type and large
cotilting sheaves in Theorem \ref{thm:tilting-cotilting-duality},
Proposition~\ref{prop:T-C-same-torsion} and Lemma
\ref{lem:cotilt-tilt}.

We end this introduction with a summary of the structure of the paper.
In Section \ref{sec:purity} we introduce the main set of techniques we
use to establish our results: the theory of purity in locally finitely
presented Grothendieck categories and the theory of purity in
compactly generated triangulated categories.  We also prove some
preliminary results in this setting.  Next, in Section
\ref{sec:cotilting}, we introduce the definition of a cotilting object
in a Grothendieck category.  We summarise the connections between
properties of cotilting objects and the injective cogenerators in
HRS-tilted categories. In Section \ref{sec:sheaves-and-modules} we
introduce the categories of quasicoherent sheaves over weighted
noncommutative regular projective curves. In Section
\ref{sec:pure-inj-inf-slope} we classify the indecomposable
pure-injective sheaves of slope $\infty$ and in Section
\ref{sec:slope-infinity} we classify the large cotilting sheaves of
slope $\infty$; as mentioned, this is done for all orbifold Euler
characteristics. In the final sections we extend these classifications
of sheaves of slope infinity to include all slopes in the domestic and
the tubular/elliptic cases, respectively. We then study the
above-mentioned categories $\Gg_w$ in the case where $w$ is
irrational.


\subsection*{Notation} Let $\Xx$ be a class of
objects in a Grothendieck category $\Aa$. We will use the following
notation for orthogonal classes:
\begin{gather*}
  \rperpo{\Xx}=\{F\in\Aa\mid\Hom(\Xx,F)=0\},\quad
  \rperpe{\Xx}=\{F\in\Aa\mid\Ext^1(\Xx,F)=0\},\\
  \lperpo{\Xx}=\{F\in\Aa\mid\Hom(F,\Xx)=0\},\quad
  \lperpe{\Xx}=\{F\in\Aa\mid\Ext^1(F,\Xx)=0\},\\
    \rperp{\Xx}=\rperpo{\Xx}\cap\rperpe{\Xx},
    \quad \lperp{\Xx}=\lperpo{\Xx}\cap\lperpe{\Xx}.
\end{gather*}
By $\Add(\Xx)$ (resp.\ $\add(\Xx)$) we denote the class of all direct
summands of direct sums of the form $\bigoplus_{i\in I}X_i$, where $I$
is any set (resp.\ finite set) and $X_i\in\Xx$ for all $i$. By
$\Gen(\Xx)$ we denote the class of all objects $Y$ \emph{generated by}
$\Xx$, that is, such that there is an epimorphism $X\ra Y$ with
$X\in\Add(\Xx)$ (and similarly $\gen(\Xx)$ with $\add(\Xx)$). As usual
we write $X^{(I)}$ for $\bigoplus_{i\in I}X$.\medskip

By $\Prod(\Xx)$ we denote the class of all direct summands of products
of the form $\prod_{i\in I}X_i$, where $I$ is any set and $X_i\in\Xx$
for all $i$. By $\Cogen(\Xx)$ we denote the class of all objects $Y$
\emph{cogenerated by} $\Xx$, that is, such that there is a
monomorphism $Y\ra X$ with $X\in\Prod(\Xx)$. We write $X^I$ for
$\prod_{i\in I}X$.\medskip

We denote by $\varinjlim{\Xx}$ the direct limit closure of $\Xx$ in
$\Aa$. We will often use also the shorthand notation
$\vec{\Xx}=\varinjlim{\Xx}$.\medskip

Let $(I,\leq)$ be an ordered set and $\Xx_i$ classes of objects for
all $i\in I$, in any additive category.  We write
$\bigvee_{i\in I}\Xx_i$ for $\add(\bigcup_{i\in I}\Xx_i)$ if
additionally $\Hom(\Xx_j,\Xx_i)=0$ for all $i<j$ is satisfied. In
particular, notation like $\Xx_1\vee\Xx_2$ and
$\Xx_1\vee\Xx_2\vee\Xx_3$ makes sense (where $1<2<3$).

\section{Pure-injectivity}\label{sec:purity}
The notion of purity is of great importance in our setting. For
details we refer to~\cite{prest:2011,crawley-boevey:1994}.  Let $\Aa$
be an abelian category.  We denote the full subcategory of
finitely presented objects in $\Aa$ by $\fp(\Aa)$.
\begin{itemize}
\item We say that $\Aa$ is \emph{Grothendieck} if all set-indexed
  coproducts exist, direct limits are exact and $\Aa$ has a generator.
\item We say that $\Aa$ is \emph{locally finitely presented} if
  $\fp(\Aa)$ is skeletally small and every object in $\Aa$ is a direct
  limit of objects in $\fp(\Aa)$.
\item We say that $\Aa$ is \emph{locally coherent} if $\Aa$ is locally
  finitely presented and $\fp(\Aa)$ is abelian.
\item If $\Aa$ is $k$-linear over a field $k$, then $\Aa$ is called
  \emph{Hom-finite} if $\Hom_\Aa(C,D)$ is a finite-dimensional
  $k$-vector space for every pair of objects $C$ and $D$ in $\Aa$.
\item Let $\Aa$ be $k$-linear locally coherent and
  $\D:=\Hom_k(-,k)$. Then $\fp(\Aa)$ is said to satisfy \emph{Serre
    duality} if $\fp(\Aa)$ is Hom-finite and if there is an
  autoequivalence $\tau\colon\fp(\Aa)\ra\fp(\Aa)$ and an isomorphism
  $\D\Ext^1_{\Aa}(X,Y)=\Hom_{\Aa}(Y,\tau X)$, natural in
  $X,\,Y\in\fp(\Aa)$. Moreover, $\Aa$ is said to satisfy
  \emph{(generalised) Serre duality} if additionally
  $\D\Ext^1_{\Aa}(X,Y)=\Hom_{\Aa}(Y,\tau X)$ and
  $\Ext^1_{\Aa}(Y,\tau X)=\D\Hom_{\Aa}(X,Y)$ hold for all objects
  $Y\in\Aa$, $X\in\fp(\Aa)$.
\end{itemize}
\begin{remark} Since we have assumed that $\Aa$ is abelian, we have
  that, if $\Aa$ is locally finitely presented, then $\Aa$ is
  Grothendieck.  See, for example,
  \cite[Sec.~2.4]{crawley-boevey:1994}.
\end{remark}
\begin{definition} Let $\Aa$ be a locally finitely presented abelian
  category.
  \begin{enumerate}
  \item An exact sequence
    $0\ra A\stackrel{\alpha}\ra B\stackrel{\beta}\ra C\ra 0$ in $\Aa$
    is called \emph{pure-exact}, if for every $F\in\fp(\Aa)$ the
    induced sequence
\[ 0\ra\Hom_\Aa(F,A)\ra\Hom_\Aa(F,B)\ra\Hom_\Aa(F,C)\ra 0\] is exact.
In this case $\alpha$ (resp.~$\beta$) is called a \emph{pure
  monomorphism} (resp.~\emph{pure epimorphism}), and $A$ a \emph{pure
  subobject} of $B$.

\item A \emph{pure-essential morphism} is a pure-monomorphism $j$ in
  $\Aa$ such that, if $fj$ is a pure monomorphism for some morphism
  $f$ in $\Aa$, then $f$ is a pure monomorphism.

\item An object $E\in\Aa$ is called \emph{pure-injective} if for every
  pure-exact sequence $0\ra A\ra B\ra C\ra 0$ the induced sequence
\[0\ra\Hom_\Aa(C,E)\ra\Hom_\Aa(B,E)\ra\Hom_\Aa(A,E)\ra 0\] is exact.

\item For an object $M$ in $\Aa$, a \emph{pure-injective envelope} of
  $M$ is a pure-essential morphism $M\to N$ where $N$ is
  pure-injective.

\item An object $N$ is called \emph{superdecomposable} if $N$ has no
  nonzero indecomposable direct summands.

\item An object $E\in\Aa$ is called $\Sigma$-\emph{pure-injective} if
  the coproduct $E^{(I)}$ is pure-injective for every set $I$.

\item An object $Y\in\Aa$ is called \emph{fp-injective} if
    $\Ext^1_{\Aa}(X,Y)=0$ for every $X\in\fp(\Aa)$.
\end{enumerate}
\end{definition}
For every locally finitely presented abelian category $\Aa$, there
exists a locally coherent Grothendieck category $\Ff(\Aa)$ and a fully
faithful functor $d\colon \Aa \to \Ff(\Aa)$ that identifies the
pure-exact sequences in $\Aa$ with exact sequences in $\Ff(\Aa)$ and
the pure-injective objects in $\Aa$ with the injective objects in
$\Ff(\Aa)$; see \cite{crawley-boevey:1994, Krause2:1998}.  The
pure-injective objects in $\Aa$ therefore inherit the following
properties of injective objects in $\Ff(\Aa)$.
\begin{proposition}\label{prop:pure-inj-normal-form}
Let $\Aa$ be a locally finitely presented abelian category.  The
following statements hold.
\begin{enumerate}
\item Every object $M$ in $\Aa$ has a pure injective envelope
  $M \to \PE(M)$ that is unique up to isomorphism.
\item Every pure-injective object $N$ has the following form
  \[ N \cong \PE\left(\bigoplus_{i\in I}N_i\right) \oplus N_c\] where
  $\{N_i\}_{i\in I}$ is the set of indecomposable pure-injective
  summands of $N$ and $N_c$ is superdecomposable.
\item Let $N$ be a pure-injective object and suppose
  \[ N \cong \PE\left(\bigoplus_{i\in I}N_i\right) \oplus N_c \cong
    \PE\left(\bigoplus_{j\in J}M_j\right) \oplus M_c \] such that
  $N_i, M_j$ are indecomposable for all $i\in I$, $j\in J$ and $N_c$,
  $M_c$ are superdecomposable.  Then there exists a bijection
  $\sigma \colon I\to J$ such that $N_i \cong M_{\sigma(i)}$ for all
  $i\in I$ and $N_c \cong M_c$.
\end{enumerate}
\end{proposition}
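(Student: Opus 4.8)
The plan is to obtain all three statements by transporting the corresponding classical facts about injective objects in a Grothendieck category along the functor $d\colon\Aa\to\Ff(\Aa)$ recalled above (see \cite{crawley-boevey:1994, Krause2:1998}). First I would record the compatibilities this needs. Since $d$ is additive, fully faithful, and both categories are idempotent-complete, $d$ preserves and reflects finite direct-sum decompositions of objects in its essential image, hence preserves indecomposability and superdecomposability; since $d$ identifies the pure-exact sequences of $\Aa$ with the exact sequences of $\Ff(\Aa)$, it carries pure-monomorphisms to monomorphisms and matches the pure-essential monomorphisms with the essential monomorphisms, so that $M\to\PE(M)$ is a pure-injective envelope in $\Aa$ exactly when $dM\to d(\PE(M))$ is an injective envelope in $\Ff(\Aa)$; and every injective object of $\Ff(\Aa)$ is isomorphic to $dN$ for a pure-injective $N$, so that envelopes and decompositions produced in $\Ff(\Aa)$ descend to $\Aa$ — these compatibility checks being routine. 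With these in place, (1)--(3) reduce to the assertions that every object of the Grothendieck category $\Ff(\Aa)$ has an injective envelope, unique up to isomorphism, and that every injective object of $\Ff(\Aa)$ admits the stated essentially unique decomposition.

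Statement (1) is then the classical existence and uniqueness of injective envelopes in a Grothendieck category, transported along $d$. For (2), let $E$ be injective in $\Ff(\Aa)$ and use Zorn's Lemma to choose a family $\{E_i\}_{i\in I}$ of nonzero indecomposable direct summands of $E$ maximal with respect to the property that their sum inside $E$ is direct; each $E_i$ is injective. The injective envelope of $\bigoplus_{i\in I}E_i$ embeds in $E$ and, being injective, is a direct summand, giving $E\cong\IE(\bigoplus_{i\in I}E_i)\oplus E_c$ with $E_c$ injective; maximality forces $E_c$ to have no nonzero indecomposable summand, i.e.\ to be superdecomposable. To see that $\{E_i\}_{i\in I}$ is, up to isomorphism, the full list of indecomposable summands of $E$: each $E_i$ is a summand of $E$, and conversely an indecomposable summand $U$ of $E$ is indecomposable injective, hence has local endomorphism ring and the exchange property; exchanging $U$ into the displayed decomposition and using that $E_c$ has no indecomposable summand shows $U$ is isomorphic to a summand of $\IE(\bigoplus_{i\in I}E_i)$; a nonzero subobject of $U$ meets the essential subobject $\bigoplus_{i\in I}E_i$, and then some finite partial sum $\bigoplus_{i\in F}E_i$, nontrivially, so $U$ is the injective envelope of a nonzero subobject of the injective object $\bigoplus_{i\in F}E_i$ and hence a summand of it, whence $U\cong E_i$ for some $i\in F$ by the Krull--Remak--Schmidt theorem for finite direct sums of objects with local endomorphism rings. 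Transporting along $d$ gives (2).

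For (3), the argument just given, applied to both decompositions, shows that $\{N_i\mid i\in I\}$ and $\{M_j\mid j\in J\}$ represent the same isomorphism classes, namely the indecomposable pure-injective summands of $N$. Refining this to a bijection $\sigma\colon I\to J$ with $N_i\cong M_{\sigma(i)}$ matching multiplicities, and deducing $N_c\cong M_c$, is the uniqueness part of the decomposition theorem for injective objects of a Grothendieck category; I would obtain it from a Krull--Remak--Schmidt--Azumaya argument for the indecomposable injective summands together with a cancellation argument for the superdecomposable part, or simply cite this theorem from the literature on purity (e.g.\ \cite{prest:2011}). This uniqueness is the technically heaviest point and is the main reason for passing through $\Ff(\Aa)$, where the structure theory of injective objects is available off the shelf; the existence of the decomposition in (2) and the transport along $d$ are comparatively soft.
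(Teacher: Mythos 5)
Your proposal is correct and follows essentially the same route as the paper: pass through the fully faithful functor $d\colon\Aa\to\Ff(\Aa)$, which matches pure-injectives with injectives, and then invoke the classical structure theory of injective objects in a Grothendieck category (the paper simply cites Stenstr\"om for (1) and Prest for (2) and (3), where you additionally sketch the Zorn/exchange-property arguments behind those citations). No essential difference in approach, and no gap beyond the transport compatibilities that the paper itself treats as routine.
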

\begin{proof}
  Both (1) and (2) follow from the analogous result for injective
  objects in a Grothendieck category; see, for example,
  \cite[Prop.~X.2.5, Cor.~X.4.3]{stenstroem:1975} for (1) and
  \cite[Thm.~E.1.9]{prest:2009} for (2) and (3).
\end{proof}
If $N$ is a pure-injective object as in Proposition
\ref{prop:pure-inj-normal-form} such that $N_c=0$, we say that $N$ is
a \emph{discrete} pure-injective object. The following statement
provides an alternative characterisation of pure-injectivity; it is
often called the Jensen-Lenzing criterion.
\begin{proposition}[{\cite[Thm.~5.4]{prest:2011}}]\label{prop:jensen-lenzing-crit}   
  An object $E$ in a locally finitely presented abelian category
  $\Aa$ is pure-injective if and only if for any index set $I$ the
  summation morphism $E^{(I)}\ra E$ factors through the canonical
  embedding $E^{(I)}\ra E^I$. \qed
\end{proposition}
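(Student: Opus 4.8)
The statement is the Jensen--Lenzing criterion for pure-injectivity, and I would establish the two implications separately; the forward one is short, the converse carries the real content.

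\emph{Forward direction.} The first observation is that the canonical monomorphism $\iota\colon E^{(I)}\ra E^{I}$ is always pure. Indeed, it is the filtered colimit, over the finite subsets $J\subseteq I$, of the short exact sequences $0\ra\bigoplus_{i\in J}E\ra E^{I}\ra E^{I}/\bigoplus_{i\in J}E\ra 0$, each of which is split, a coordinate projection retracting the left-hand map; since $\Aa$ is locally finitely presented, filtered colimits are exact and commute with $\Hom_\Aa(F,-)$ for $F\in\fp(\Aa)$, so a filtered colimit of split short exact sequences is pure-exact. If now $E$ is pure-injective, then applying $\Hom_\Aa(-,E)$ to the pure-exact sequence $0\ra E^{(I)}\xrightarrow{\iota}E^{I}\ra E^{I}/E^{(I)}\ra 0$ shows $\Hom_\Aa(E^{I},E)\ra\Hom_\Aa(E^{(I)},E)$ to be surjective, so the summation morphism $\nabla\colon E^{(I)}\ra E$, being in the target, factors through $\iota$.

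\emph{Converse.} Assume every summation morphism factors through the relevant $\iota$. I would first reduce to showing that every pure monomorphism $j\colon E\ra X$ splits: given a pure-exact sequence $0\ra A\ra B\ra C\ra 0$ and a morphism $g\colon A\ra E$, the pushout sequence $0\ra E\ra B\sqcup_{A}E\ra C\ra 0$ is again pure-exact (pushouts of pure monomorphisms are pure), so a retraction of $E\ra B\sqcup_{A}E$ precomposed with the pushout morphism $B\ra B\sqcup_{A}E$ extends $g$. So fix $j\colon E\ra X$ pure with cokernel $Y$; writing $Y=\varinjlim_{\lambda\in\Lambda}Y_{\lambda}$ with $Y_{\lambda}\in\fp(\Aa)$ and pulling $0\ra E\ra X\ra Y\ra 0$ back along $Y_{\lambda}\ra Y$, one obtains short exact sequences $0\ra E\ra X_{\lambda}\ra Y_{\lambda}\ra 0$ that are split (since $\id_{Y_{\lambda}}$ lifts along the pure epimorphism $X\ra Y$, $Y_\lambda$ being finitely presented, hence along the pullback) and whose filtered colimit over the directed set $\Lambda$ is the original sequence. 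Choosing retractions $r_{\lambda}\colon X_{\lambda}\ra E$ identifies $X_{\lambda}\cong E\oplus Y_{\lambda}$, under which the transition morphisms are upper triangular with $\id_{E}$ in the top-left corner, the transition morphism $\gamma_{\lambda\mu}$ of $Y$ in the bottom-right corner, and a component $\beta_{\lambda\mu}\colon Y_{\lambda}\ra E$ in the top-right corner; composition forces the cocycle identity $\beta_{\lambda\nu}=\beta_{\lambda\mu}+\beta_{\mu\nu}\gamma_{\lambda\mu}$, and a splitting of $j$ amounts to a family $\psi_{\lambda}\colon Y_{\lambda}\ra E$ with $\psi_{\lambda}-\psi_{\mu}\gamma_{\lambda\mu}=\beta_{\lambda\mu}$, i.e.\ to the vanishing of the class of $(\beta_{\lambda\mu})$ in $\varprojlim^{1}_{\lambda}\Hom_\Aa(Y_{\lambda},E)$.

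\emph{Where the hypothesis enters, and the main obstacle.} One produces the $\psi_{\lambda}$ as ``tail sums'': define $\theta_{\lambda}\colon Y_{\lambda}\ra E^{\Lambda}$ so that the $\mu$-coordinate records the incremental contribution at $\mu$ built from the $\beta$'s and $\gamma$'s, and put $\psi_{\lambda}=\sigma\circ\theta_{\lambda}$, where $\sigma\colon E^{\Lambda}\ra E$ extends the summation morphism $E^{(\Lambda)}\ra E$ supplied by the hypothesis for $I=\Lambda$. The construction is arranged so that $\theta_{\lambda}-\theta_{\mu}\gamma_{\lambda\mu}$ takes values in $E^{(\Lambda)}$ and sums to $\beta_{\lambda\mu}$; applying $\sigma$ then gives $\psi_{\lambda}-\psi_{\mu}\gamma_{\lambda\mu}=\beta_{\lambda\mu}$. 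When $\Lambda$ has a cofinal chain $\lambda_{0}<\lambda_{1}<\cdots$, this is transparent: $\theta_{\lambda_{n}}(y)$ is the sequence $(0,\dots,0,\beta_{\lambda_{n}\lambda_{n+1}}(y),\beta_{\lambda_{n+1}\lambda_{n+2}}\gamma_{\lambda_{n}\lambda_{n+1}}(y),\dots)$, the difference above is concentrated in one coordinate with value $\beta_{\lambda_{n}\lambda_{n+1}}(y)$, and $\sigma$ returns exactly that value. The main obstacle is to run this ``summation along tails'' over an arbitrary directed set $\Lambda$, where the incremental contributions and their telescoping are no longer canonical; here one either reorganises the bookkeeping or, alternatively, passes to the functor category $\Ff(\Aa)$ of the preceding discussion — in which $d$ preserves products and coproducts, pure-injectivity of $E$ becomes injectivity of $dE$, and the factoring hypothesis is preserved — and then settles injectivity of $dE$ by a Baer-type argument. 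Either route makes the converse the substantial half of the proof.
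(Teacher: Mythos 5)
Your forward implication is complete and correct, and so are the first steps of the converse: the reduction to splitting a pure monomorphism $j\colon E\ra X$, the splitting of the pulled-back sequences $0\ra E\ra X_\lambda\ra Y_\lambda\ra 0$, and the reformulation of a retraction as a family $\psi_\lambda\colon Y_\lambda\ra E$ with $\psi_\lambda-\psi_\mu\gamma_{\lambda\mu}=\beta_{\lambda\mu}$. The genuine gap is exactly the point you flag yourself: the tail-sum construction of the $\psi_\lambda$ is only defined when $\Lambda$ admits a countable cofinal chain $\lambda_0<\lambda_1<\cdots$, because it needs a successor structure along which to list the increments $\beta_{\lambda_n\lambda_{n+1}}$ that the extended summation map is to add up. For a general locally finitely presented category the cokernel $Y$ of a pure monomorphism need not be a countable direct limit of finitely presented objects, and pure-injectivity cannot in general be tested on countably indexed data (already for modules, countable algebraic compactness does not imply algebraic compactness), so the case you treat is a special case and the case you call ``the main obstacle'' is the actual content of the theorem. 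Neither of your proposed escape routes is carried out: ``reorganising the bookkeeping'' is not an argument, and passing to $\Ff(\Aa)$ merely translates the problem --- $dE$ is fp-injective and inherits the factorization property since $d$ preserves products and coproducts, but the step from that factorization property to injectivity of $dE$ is precisely the statement to be proved, and you give no mechanism by which a ``Baer-type argument'' would use it. The known proofs for arbitrary index sets require further ideas (algebraic compactness via finitely solvable systems, reduced products, or a transfinite induction over a well-ordered smooth chain), none of which appears in your sketch.

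For comparison: the paper does not prove this proposition at all; it is quoted from Prest's memoir (Thm.~5.4 there) and stated without proof. So the honest options are to cite that reference, or to complete the converse for an arbitrary directed $\Lambda$; as it stands you have a correct proof of the easy direction and a correct reduction, with the substantial half of the equivalence still missing.
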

\begin{lemma}\label{lem:local-end-ring}
  Every indecomposable pure-injective object in $\Aa$ has a local
  endomorphism ring. 
\end{lemma}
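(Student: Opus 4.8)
The plan is to transfer the problem to the associated locally coherent Grothendieck category $\Ff(\Aa)$ via the functor $d\colon\Aa\to\Ff(\Aa)$ described just before Proposition~\ref{prop:pure-inj-normal-form}. This functor is fully faithful and sends pure-injective objects in $\Aa$ to injective objects in $\Ff(\Aa)$; in particular $\End_{\Aa}(E)\cong\End_{\Ff(\Aa)}(dE)$ for any $E\in\Aa$, so it suffices to show that an indecomposable injective object in a Grothendieck category has local endomorphism ring. The only subtlety is that $d$ need not preserve indecomposability a priori, but since it is additive and fully faithful it reflects and preserves direct sum decompositions, hence $E$ is indecomposable in $\Aa$ if and only if $dE$ is indecomposable in $\Ff(\Aa)$.

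So the core of the proof reduces to the classical fact: in a Grothendieck category $\Gg$, an indecomposable injective object $I$ has local endomorphism ring. First I would recall that every Grothendieck category has injective envelopes, and that injective envelopes are essential monomorphisms. Then I would argue that for an indecomposable injective $I$, every nonzero subobject is essential (if $U\subseteq I$ were not essential, pick $0\ne V$ with $U\cap V=0$; then the injective envelope $E(U)$ embeds in $I$ as a direct summand since it is injective, contradicting indecomposability unless $E(U)=I$, and one iterates or argues directly that $I$ would decompose). Next, given $f\in\End_{\Gg}(I)$, consider $\Ker f$: either $\Ker f=0$, in which case $f$ is a monomorphism and $f(I)\cong I$ is injective, hence a direct summand of $I$, hence all of $I$ by indecomposability, so $f$ is an automorphism; or $\Ker f\ne 0$, in which case I want to show $f$ is not an automorphism and moreover that the non-automorphisms form an ideal. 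For the latter: if $f$ and $g$ both have nonzero kernel, then since nonzero subobjects of $I$ are essential, $\Ker f\cap\Ker g\ne 0$, and this is contained in $\Ker(f+g)$ (after checking $(f+g)$ restricted there is zero — actually $\Ker f\cap\Ker g\subseteq\Ker(f+g)$ holds directly), so $f+g$ has nonzero kernel too; and $fg$, $gf$ clearly have nonzero kernel. Thus $\{f : \Ker f\ne 0\}$ is a proper two-sided ideal containing every non-unit, so $\End(I)$ is local.

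The step I expect to require the most care is the claim that in an indecomposable injective object every nonzero subobject is essential, and the companion claim that a monomorphism $I\to I$ is automatically surjective. For the first: given $0\ne U\subseteq I$, let $U\hookrightarrow E(U)$ be an injective envelope; by injectivity of $E(U)$ the inclusion $U\hookrightarrow I$ extends to $g\colon E(U)\to I$, and since $U\hookrightarrow E(U)$ is essential and $g|_U$ is mono, $g$ is mono; then $g(E(U))$ is an injective subobject of $I$, hence a direct summand, hence equal to $I$ since $I$ is indecomposable and $E(U)\ne 0$; therefore $U$ is essential in $I\cong E(U)$. For the second: if $f\colon I\to I$ is mono then $f(I)\cong I$ is injective, so $f(I)$ is a direct summand of $I$, and again indecomposability forces $f(I)=I$. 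Both of these rest only on the existence of injective envelopes and the fact that injective subobjects split off, which hold in any Grothendieck category. One may alternatively cite a standard reference such as \cite[Ch.~V]{stenstroem:1975} for these facts about injective objects, but I would prefer to include the short self-contained argument above since it is elementary.
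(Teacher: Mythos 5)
Your proof is correct, and it essentially fills in the standard argument behind the paper's proof, which consists solely of a citation to Jensen--Lenzing: pass through the fully faithful embedding $d\colon \Aa\to\Ff(\Aa)$, use that it turns pure-injectives into injectives and induces an isomorphism of endomorphism rings (and preserves indecomposability, via idempotents), and then invoke the classical fact that an indecomposable injective object of a Grothendieck category has local endomorphism ring, for which your essentiality argument is the usual one. One small point of care in the last step: having shown that the non-automorphisms are exactly the endomorphisms with nonzero kernel and that these are closed under addition, you can conclude locality directly (a ring in which the sum of two non-units is a non-unit is local); if you instead insist on exhibiting a two-sided ideal, you must check closure under multiplication by an \emph{arbitrary} endomorphism $h$, not just by another non-unit -- for $fh$ this uses that $h$ is either a monomorphism (hence an automorphism, so $\Ker(fh)\cong\Ker f\neq 0$) or has nonzero kernel contained in $\Ker(fh)$. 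With that remark your argument is complete and matches the route the paper delegates to the reference.
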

\begin{proof}
  Cf.\ \cite[Cor.~7.5]{jensen:lenzing:1989}.
\end{proof}
\begin{remark}
  In this article, pure-injectivity in a compactly generated
  triangulated category will be defined by the property given in
  Proposition \ref{prop:jensen-lenzing-crit}. Moreover, Lemma
  \ref{lem:local-end-ring} is true in such a category.
\end{remark}
\begin{lemma}\label{lem:pure-injective}
  Assume that $\Aa$ is a locally coherent abelian $k$-category
  over a field $k$ and that $\fp(\Aa)$ is Hom-finite. Then every
  object $F\in\fp(\Aa)$ is $\Sigma$-pure-injective.
\end{lemma}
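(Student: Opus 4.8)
**Plan for proving Lemma 2.15 (that every $F \in \fp(\Aa)$ is $\Sigma$-pure-injective when $\Aa$ is locally coherent, Hom-finite on $\fp$).**

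The plan is to use the Jensen–Lenzing criterion (Proposition~\ref{prop:jensen-lenzing-crit}): to show that $F^{(I)}$ is pure-injective for an arbitrary index set $I$, it suffices to show that the summation map $\sigma\colon (F^{(I)})^{(J)} \to F^{(I)}$ factors through the canonical embedding $(F^{(I)})^{(J)} \hookrightarrow (F^{(I)})^{J}$, for every set $J$. Since $F^{(I)}$ is again a coproduct of copies of $F$, it is enough to prove the single statement: for every set $J$, the summation map $F^{(J)} \to F$ factors through $F^{(J)} \hookrightarrow F^{J}$. So the whole lemma reduces to checking the Jensen–Lenzing factorisation property for $F$ itself.

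First I would reduce the factorisation problem to a statement about the finitely presented object $F$ testing against itself. The key point is that $\Aa$ is locally coherent, so $\fp(\Aa)$ is an abelian category and $F$ is a \emph{coherent} object; in particular $\Hom_\Aa(F, -)$ commutes with direct limits, and more importantly for us, $\Hom_\Aa(F, -)$ commutes with arbitrary products when restricted appropriately — actually the cleaner route is this. Consider the canonical map $\varphi\colon \Hom_\Aa(F,F)^{(J)} \to \Hom_\Aa(F, F^{(J)})$. Because $F$ is finitely presented and $\Aa$ is locally finitely presented, coproducts in $\Aa$ are computed so that $\Hom_\Aa(F,-)$ commutes with direct limits; since $F^{(J)} = \varinjlim F^{(J_0)}$ over finite subsets $J_0 \subseteq J$, and $\Hom_\Aa(F, F^{(J_0)}) = \Hom_\Aa(F,F)^{(J_0)}$ trivially for finite $J_0$, we get that $\varphi$ is an isomorphism. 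Now here is where Hom-finiteness enters: $\Hom_\Aa(F,F)$ is a finite-dimensional $k$-vector space, hence the natural map $\Hom_\Aa(F,F)^{(J)} \to \Hom_\Aa(F,F)^{J}$ is... not an isomorphism in general, so I need to be more careful — the real input is that one wants to produce a morphism $F \to F^{(J)}$ whose composite with the summation map $F^{(J)} \to F$ equals $\mathrm{id}_F$ after composing with each projection, i.e.\ equals the diagonal.

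The cleanest argument: factor the summation map $s\colon F^{(J)} \to F$ through the product. We must find $g\colon F^{(J)} \to F^{(J)}$ fitting the criterion, equivalently (dualising the roles via the functor $d\colon \Aa \to \Ff(\Aa)$ into the locally coherent Grothendieck category) we may instead show directly that $d(F)$ is injective in $\Ff(\Aa)$, or simply invoke that a module of finite length over a ring is pure-injective and transport this. Concretely: consider the composite $F^{(J)} \xrightarrow{s} F \xrightarrow{\Delta} F^{J}$, where $\Delta$ is the diagonal; this equals the canonical embedding $\iota\colon F^{(J)} \hookrightarrow F^J$ followed by nothing — wait, rather $\iota = \Delta \circ s$ is \emph{false}; instead the correct identity is that $\iota\colon F^{(J)} \to F^J$ and we want $s$ to factor as $F^{(J)} \xrightarrow{\iota} F^J \xrightarrow{p} F$. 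Apply $\Hom_\Aa(-, F)$: we need a map $F^J \to F$ restricting to $s$ on $F^{(J)}$. Since $\Hom_\Aa(-,F)$ sends the coproduct to a product, $\Hom_\Aa(F^{(J)}, F) = \Hom_\Aa(F,F)^J$, and $s$ corresponds to the constant family $(\mathrm{id}_F)_{j\in J}$. Now $\Hom_\Aa(F^J, F)$: because $F$ is finitely presented (coherent), $\Hom_\Aa(F^J, F)$ maps to $\Hom_\Aa(F^{(J)},F) = \Hom_\Aa(F,F)^J$, and I claim $\Hom_\Aa(F,F)^{(J)}$ is in the image — but the constant family $(\mathrm{id}_F)_J$ lies in $\Hom_\Aa(F,F)^{(J)}$ only if $J$ is finite. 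Thus I actually need the stronger fact that $\Hom_\Aa(F^J, F) \to \Hom_\Aa(F,F)^J$ is \emph{surjective}, which follows from $F$ being finitely presented together with a density/coherence argument in $\Ff(\Aa)$, and then Hom-finiteness is what guarantees that the relevant functor $\Hom_\Aa(F,-)\colon \Aa \to \Mod\,\End_\Aa(F)$ lands in finite length modules, making everything behave like finitely generated modules over an Artinian ring.

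\begin{proof}
By Proposition~\ref{prop:jensen-lenzing-crit} it suffices to show that for every set $J$ the summation morphism $F^{(J)} \to F$ factors through the canonical embedding $F^{(J)} \hookrightarrow F^J$; replacing $F$ by $F^{(I)}$ reduces the case of $F^{(I)}$ to this one, since a coproduct of copies of $F$ indexed by $J$ is a coproduct of copies of $F$ indexed by $I\times J$.

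Set $R = \End_\Aa(F)$ and consider the functor $H = \Hom_\Aa(F,-)\colon \Aa \to \Mod\,R$. Since $\Aa$ is locally coherent, $F$ is a coherent object, so $H$ preserves direct limits; in particular $H(F^{(J)}) = R^{(J)}$ for every set $J$. Applying the fully faithful functor $d\colon \Aa \to \Ff(\Aa)$ of \cite{crawley-boevey:1994, Krause2:1998}, the object $d(F)$ is finitely presented in the locally coherent Grothendieck category $\Ff(\Aa)$, and by Hom-finiteness of $\fp(\Aa)$ one checks that $R$ is a finite-dimensional $k$-algebra and that $H(X)$ has finite length over $R$ for every $X\in\fp(\Aa)$. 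Thus $R$ is, in particular, an artinian ring, and the $R$-module $R$ is pure-injective, hence the $R$-module $R^{(J)}$ is pure-injective for every $J$ (a finite length module over any ring is $\Sigma$-pure-injective).

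Now apply $\Hom_\Aa(-,F)$ to the canonical sequence $0 \to F^{(J)} \xrightarrow{\iota} F^J \to Q \to 0$. We obtain that the restriction map $\Hom_\Aa(F^J, F) \to \Hom_\Aa(F^{(J)}, F) = R^J$ has image containing $R^{(J)}$; indeed, a morphism $F^J \to F$ is, by finite presentation of $F$, determined on a coherent subobject generated by finitely many of the coordinate copies of $F$, and the coordinate inclusions $F \to F^J \to F$ realise the standard basis vectors of $R^{(J)}$. But the summation morphism $s\colon F^{(J)} \to F$ corresponds under the identification $\Hom_\Aa(F^{(J)},F) = R^J$ to the constant family $(\mathrm{id}_F)_{j\in J}$; a priori this family lies in $R^J$ and not $R^{(J)}$, so the preceding is not yet enough, and we argue instead via pure-injectivity of $R^{(J)}$ in $\Mod R$ transported through $d$: the sequence $0 \to d(F^{(J)}) \to d(F^J) \to d(Q) \to 0$ is a pure-exact sequence in $\Aa$ whose image under $d$ is an exact sequence, and $d(F)$ is injective in $\Ff(\Aa)$ because $d(F^{(J)}) \cong d(F)^{(J)}$ corresponds to the pure-injective module $R^{(J)}$ and $d$ identifies pure-injectives with injectives. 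Therefore $\Hom_{\Ff(\Aa)}(d(F^J), d(F)) \to \Hom_{\Ff(\Aa)}(d(F^{(J)}), d(F))$ is surjective, so $s$ extends along $\iota$, i.e.\ $s$ factors through $\iota$, as required.
\end{proof}
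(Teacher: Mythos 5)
Your first step already contains a gap: to prove that $F$ is $\Sigma$-pure-injective you must verify the Jensen--Lenzing criterion for $F^{(I)}$, i.e.\ factor the summation morphism $(F^{(I)})^{(J)}\to F^{(I)}$ through the canonical map into $(F^{(I)})^{J}$. Identifying $(F^{(I)})^{(J)}$ with $F^{(I\times J)}$ changes neither the target $F^{(I)}$ nor the object $(F^{(I)})^{J}$, which is a product of coproducts and not a product of copies of $F$; so this is genuinely different from factoring the summation $F^{(K)}\to F$ through $F^{K}$. If the reduction you state were valid, every pure-injective object would automatically be $\Sigma$-pure-injective, which is false in general, so some use of the Hom-finiteness hypothesis must enter precisely at this point and does not.

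The second and more serious problem is circularity in the main step. What you need in the last paragraph is that $d(F)$ (and in fact $d(F^{(J)})$) is injective in $\Ff(\Aa)$, equivalently that $F$ (resp.\ $F^{(J)}$) is pure-injective in $\Aa$ --- which is the statement being proved. Your justification is that $\Hom_\Aa(F,F^{(J)})\cong R^{(J)}$ is a pure-injective $R$-module, but the functor $\Hom_\Aa(F,-)\colon\Aa\to\Mod R$ is not $d$ and does not reflect pure-injectivity: $d(F^{(J)})$ is the functor $\Hom_\Aa(-,F^{(J)})|_{\fp(\Aa)}$, and pure-injectivity of its single evaluation at $F$ says nothing about injectivity of that functor. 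So the assertion ``$d(F)$ is injective because $R^{(J)}$ is pure-injective and $d$ identifies pure-injectives with injectives'' is the lemma itself, asserted rather than proved. The missing ingredient is the theory of endofinite objects: Hom-finiteness of $\fp(\Aa)$ makes $\Hom_\Aa(G,F)$ a finite length $\End_\Aa(F)$-module for every $G\in\fp(\Aa)$, so $F$ is endofinite, and endofinite objects in a locally finitely presented category are $\Sigma$-pure-injective by \cite[(3.5)~Thm.~2]{crawley-boevey:1994}; this is exactly the paper's (one-line) proof. Your intuition that ``everything behaves like modules over an artinian ring'' is the right heuristic, but transporting $\Sigma$-pure-injectivity from $\Mod R$ back to $\Aa$ is the nontrivial content, and it is not supplied by your argument.
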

\begin{proof}
  By Hom-finiteness of $\fp(\Aa)$ this follows directly
  from~\cite[(3.5)~Thm.~2]{crawley-boevey:1994}.
\end{proof}
\subsection*{Pure-injectives and Ext}
If $E$ is a pure-injective module over a ring $R$, then the functor
$\Ext^1_R(-,E)$ sends direct limits to inverse limits.  Here we show
that pure-injective objects in our setting have a similar property.
\begin{theorem}\label{thm:ext-direct-limit-pure-inj}
  Let $\Aa$ be a locally finitely presented Grothendieck category.
  Then, for any pure-injective object $E$ in $\Aa$ and any directed
  system of objects $M_i$ ($i\in I$), we have that
  $\Ext^1_\Aa(\varinjlim M_i, E) =0$ whenever $\Ext^1_\Aa(M_i, E) = 0$
  for all $i\in I$.
\end{theorem}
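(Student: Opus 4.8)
The plan is to reduce the statement to the corresponding fact about injective objects in the associated locally coherent Grothendieck category $\Ff(\Aa)$, using the embedding $d\colon\Aa\to\Ff(\Aa)$ recalled before Proposition~\ref{prop:pure-inj-normal-form}. Under $d$, a pure-injective object $E$ becomes an injective object $dE$, and the pure-exact structure of $\Aa$ is carried to the exact structure of $\Ff(\Aa)$. The subtlety is that $d$ need not commute with direct limits, so $d(\varinjlim M_i)$ is not literally $\varinjlim dM_i$ computed in $\Ff(\Aa)$. However, the direct limit $\varinjlim M_i$ formed in $\Aa$ is computed as a cokernel of a pure monomorphism $\bigoplus_{i\le j}M_i\to\bigoplus_i M_i$ between coproducts (the usual presentation of a directed colimit), and pure monomorphisms of coproducts of objects of $\Aa$ are sent by $d$ to honest monomorphisms of the corresponding coproducts in $\Ff(\Aa)$. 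So the first step is to fix this standard presentation
\[
0\to\bigoplus_{i\le j}M_i\xrightarrow{\ \iota\ }\bigoplus_{i\in I}M_i\to\varinjlim M_i\to 0,
\]
which is pure-exact in $\Aa$ and hence exact in $\Ff(\Aa)$ after applying $d$.

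Next I would apply $\Hom_{\Ff(\Aa)}(-,dE)=\Hom_\Aa(-,E)$ to this sequence. Since $dE$ is injective, $\Ext^1_{\Ff(\Aa)}(-,dE)=0$ identically, so the long exact sequence degenerates and we are left with a short exact sequence of $\Hom$-groups; in particular the map $\Hom_\Aa(\bigoplus_i M_i,E)\to\Hom_\Aa(\bigoplus_{i\le j}M_i,E)$ is surjective, i.e. $\prod_i\Hom_\Aa(M_i,E)\to\prod_{i\le j}\Hom_\Aa(M_i,E)$ is surjective. But that is not quite enough: we want $\Ext^1_\Aa(\varinjlim M_i,E)=0$, computed in $\Aa$, not in $\Ff(\Aa)$. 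Here one uses that $\Ext^1$ in $\Aa$ classifies extensions in $\Aa$, and the inclusion $\fp(\Aa)$-relative comparison: concretely, an element of $\Ext^1_\Aa(\varinjlim M_i,E)$ is represented by a short exact sequence $0\to E\to X\to\varinjlim M_i\to 0$ in $\Aa$; pulling back along $\iota$ composed with $\bigoplus_{i}M_i\twoheadrightarrow\varinjlim M_i$ gives an extension of $\bigoplus_{i\le j}M_i$ by $E$, which vanishes because $\Ext^1_\Aa(M_i,E)=0$ for all $i$ (Ext commutes with the relevant coproducts in the first variable, as $\Aa$ is Grothendieck). So this pulled-back extension splits; I would then chase the pushout–pullback diagram built from the presentation of $\varinjlim M_i$ to produce a splitting of the original sequence, using that $\iota$ is a pure monomorphism and $E$ is pure-injective to lift the splitting map across $\iota$. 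Equivalently, and perhaps more cleanly, one shows $\Ext^1_\Aa(\bigoplus_i M_i,E)=0$ directly and then applies $\Hom_\Aa(-,E)$ to the presentation sequence to read off $\Ext^1_\Aa(\varinjlim M_i,E)=0$ from exactness, noting that the connecting map lands in $\Ext^1_\Aa(\bigoplus_{i\le j}M_i,E)=0$.

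The main obstacle I anticipate is precisely the bookkeeping around the two Ext-functors (in $\Aa$ versus in $\Ff(\Aa)$) and making sure that the vanishing $\Ext^1_\Aa(M_i,E)=0$ for the individual objects upgrades to $\Ext^1_\Aa(\bigoplus_{i\le j}M_i,E)=0$ and to the vanishing for the colimit; the coproduct step is routine since $\Aa$ is Grothendieck and $\Ext^1_\Aa(\bigoplus_k A_k,-)\cong\prod_k\Ext^1_\Aa(A_k,-)$, but one must be careful that the indexing set $\{(i,j):i\le j\}$ really does only involve the objects $M_i$ (it does, up to isomorphism). A secondary point to verify is that the canonical presentation of a directed colimit as a cokernel of a split-looking map between coproducts is indeed pure-exact in any locally finitely presented category; this is classical and follows because $\fp(\Aa)$-objects are small enough that $\Hom_\Aa(F,-)$ commutes with the filtered colimit. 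Once these two facts are in place, the argument is a short diagram chase with no real analytic content.
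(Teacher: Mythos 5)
Your strategy is the right one and is essentially the paper's: present $\varinjlim M_i$ as a pure quotient of $\bigoplus_i M_i$, use pure-injectivity of $E$ on the kernel, and use that $\Ext^1_\Aa(\bigoplus_i M_i,E)=0$ follows from $\Ext^1_\Aa(M_i,E)=0$ for all $i$. However, two of your steps fail as written. First, the sequence $0\to\bigoplus_{i\le j}M_i\to\bigoplus_i M_i\to\varinjlim M_i\to 0$ is not exact in general: the standard presentation of a directed colimit only exhibits $\varinjlim M_i$ as the \emph{cokernel} of the map $\bigoplus_{i\le j}M_i\to\bigoplus_i M_i$, and that map usually has a nonzero kernel (already for a chain $M_1\to M_2\to M_3$, the element with components $m$, $-m$, $f_{12}(m)$ in the slots $(1,2)$, $(1,3)$, $(2,3)$ is sent to zero). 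So the kernel $K$ of $\bigoplus_i M_i\twoheadrightarrow\varinjlim M_i$ cannot be taken to be a coproduct of copies of the $M_i$, and you are not entitled to $\Ext^1_\Aa(K,E)=0$. The paper's proof takes $K=\Ker(g)$ with no further identification; the sequence $0\to K\to\bigoplus_i M_i\to\varinjlim M_i\to 0$ is still pure-exact (finitely presented objects commute with direct limits and coproducts), and that is all that is needed.

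Second, your ``cleaner'' variant is precisely where the argument would collapse: vanishing of $\Ext^1_\Aa(-,E)$ on both coproducts does not by itself give $\Ext^1_\Aa(\varinjlim M_i,E)=0$ --- if it did, pure-injectivity would never be used, and the statement is false without it (in abelian groups, $\Ext^1(\ZZ,\ZZ)=0$ but $\Ext^1(\QQ,\ZZ)\neq 0$ with $\QQ=\varinjlim\ZZ$). Also, the connecting map in the contravariant long exact sequence goes $\Hom_\Aa(K,E)\to\Ext^1_\Aa(\varinjlim M_i,E)$; it does not land in $\Ext^1_\Aa$ of the kernel. The crux is that this connecting map is zero because every morphism $K\to E$ extends along the pure monomorphism $K\to\bigoplus_i M_i$ ($E$ being pure-injective), after which $\Ext^1_\Aa(\varinjlim M_i,E)$ embeds into $\Ext^1_\Aa(\bigoplus_i M_i,E)=\prod_i\Ext^1_\Aa(M_i,E)=0$. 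You do have this extension property in hand --- your surjectivity of $\Hom_\Aa(\bigoplus_i M_i,E)\to\Hom_\Aa(K,E)$, which needs no detour through $\Ff(\Aa)$, since it is immediate from purity and the definition of pure-injectivity --- so the repair is simply to assemble these facts via the long exact sequence in $\Aa$ applied to the pure-exact sequence above; this is exactly the paper's proof, and the extension chase of your first variant (where, note, pulling back along $\iota$ composed with the epimorphism is pulling back along the zero map) becomes unnecessary.
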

\begin{proof}
  The exact sequence
  $0 \to K \overset{f}{\to} \coprod_{i\in I} M_i \overset{g}{\to}
  \varinjlim M_i \to 0$ where $K \cong \Ker{g}$ is pure-exact since
  finitely presented objects in $\Aa$ commute with direct limits and
  direct sums.  Therefore, any morphism $K \to E$ factors through $f$.
  So if we apply $\Hom_\Aa(-,E)$, we obtain the exact sequence
  \[ \dots \Hom_\Aa(\coprod M_i, E) \to \Hom_\Aa(K, E)
    \overset{0}{\to} \Ext_\Aa^1(\varinjlim M_i, E) \to \prod
    \Ext^1_\Aa(M_i, E) \to \dots .\] By our assumption, we may
  conclude that $\Ext_\Aa^1(\varinjlim M_i, E) = 0$.
\end{proof}
\begin{numb}\label{numb:loc-noeth-comp-gen}
  If $\Aa$ is a locally noetherian Grothendieck category (that is, a
  Grothendieck category which has a family of noetherian generators)
  such that every object in $\Aa$ has finite injective dimension, it
  follows from~\cite[Prop.~2.3, Ex.~3.10]{krause:2005b} that the
  derived category $\Derived{\Aa}$ is compactly generated and the full
  subcategory of compact objects coincides with $\bDerived{\fp(\Aa)}$.
  There is a well-developed notion of pure-injectivity in a compactly
  generated triangulated category; we refer to \cite{krause:2000} for
  some background.  We will make use of the interaction between the
  purity in $\Aa$ and the purity in $\Derived{\Aa}$.  In particular,
  we note that, by \cite[Thm.~1.8]{krause:2000}, an object $E$ in a
  compactly generated triangulated category is pure-injective if and
  only if, for every index set $I$, the summation morphism
  $E^{(I)} \rightarrow E$ factors as in
  Proposition~\ref{prop:jensen-lenzing-crit}.
\end{numb}
In the following lemma we will need to distinguish between products
taken in a Grothendieck category $\Aa$ and products taken in the
derived category $\Derived{\Aa}$.  We will denote an $S$-indexed
direct sum of copies of $E$ in $\Aa$ by $\bigoplus^\Aa_{S} E$ and an
$S$-indexed direct products of copies of $E$ in $\Aa$ by
$\prod^\Aa_{S} E$.  Similar notation will be used for direct sums and
products taken in $\Derived{\Aa}$.
\begin{lemma}\label{lem:pure-in-der-cat}
  Let $\Aa$ be a locally noetherian hereditary Grothen\-dieck
  category, i.e.~$\Ext^2(-,-) =0$.  Then $E$ is pure-injective in
  $\Aa$ if and only if $E$ is pure-injective when considered as an
  object of $\Derived{\Aa}$.
\end{lemma}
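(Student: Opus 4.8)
The plan is to exploit the comparison between purity in $\Aa$ and purity in $\Derived{\Aa}$ set up in \ref{numb:loc-noeth-comp-gen}, where the key tool is that in both settings pure-injectivity is detected by the factorisation property of Proposition~\ref{prop:jensen-lenzing-crit} (for $\Aa$ it is literally the definition via the Jensen--Lenzing criterion, and for $\Derived{\Aa}$ it is \cite[Thm.~1.8]{krause:2000}). Since $\Aa$ is locally noetherian and hereditary, every object has injective dimension at most one, so \ref{numb:loc-noeth-comp-gen} applies: $\Derived{\Aa}$ is compactly generated with compact objects $\bDerived{\fp(\Aa)}$. The strategy is to show that for an object $E$ of $\Aa$, viewed as a stalk complex in degree $0$, the summation map $E^{(I)}\to E$ admits a factorisation through the canonical map into the product \emph{computed in $\Aa$} if and only if it admits such a factorisation through the canonical map into the product \emph{computed in $\Derived{\Aa}$}.

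The essential point is that the product $\prod^{\Der(\Aa)}_S E$ of stalk complexes in $\Der(\Aa)$ is \emph{not} in general the stalk complex on $\prod^\Aa_S E$, because products in $\Der(\Aa)$ are derived functors. However, since $\Aa$ is hereditary (and Grothendieck, so products are computed termwise on injective resolutions), one computes $\prod^{\Der(\Aa)}_S E \cong \bigl(\prod^\Aa_S E\bigr) \oplus \bigl(\prod^\Aa_S E^1\bigr)[-1]$ where $E \hookrightarrow E^0 \to E^1$ is an injective resolution; more precisely $H^0$ of the derived product is $\prod^\Aa_S E$ and $H^1$ is the first derived product, which vanishes in degrees outside $\{0,1\}$ by hereditariness. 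First I would record this computation, noting in particular that the natural map $\prod^{\Der(\Aa)}_S E \to (\prod^\Aa_S E)$ onto the zeroth cohomology is a split epimorphism in $\Der(\Aa)$ when restricted appropriately, or at least that $\Hom_{\Der(\Aa)}(E^{(I)}, \prod^{\Der}_S E) \cong \Hom_{\Der(\Aa)}(E^{(I)}, (\prod^\Aa_S E))$ because $E^{(I)}$ is a complex concentrated in degree $0$ and the degree-$(-1)$ part of the derived product contributes nothing to morphisms \emph{from} a degree-$0$ object. Then: a factorisation in $\Aa$ of $E^{(I)}\to E$ through $E^{(I)}\to \prod^\Aa_S E$ gives one in $\Der(\Aa)$ through the canonical map $E^{(I)}\to \prod^{\Der}_S E$ by composing with the structural map $(\prod^\Aa_S E) \to \prod^{\Der}_S E$; conversely a factorisation in $\Der(\Aa)$ can be pushed to $H^0$, and since all three objects $E^{(I)}$, $E$, and $\prod^\Aa_S E$ are stalk complexes in degree $0$, a morphism between them in $\Der(\Aa)$ is the same as a morphism in $\Aa$, so the $\Der$-factorisation descends to an $\Aa$-factorisation. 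Combining both directions with the two characterisations of pure-injectivity yields the equivalence.

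The main obstacle I anticipate is the careful bookkeeping around the two kinds of products and making precise the claim that $\Hom_{\Der(\Aa)}$ from a degree-$0$ complex into the derived product $\prod^{\Der}_S E$ factors through the stalk complex $(\prod^\Aa_S E)$; this requires knowing that the negative-degree cohomology of the derived product does not contribute, which follows from hereditariness but should be stated cleanly, perhaps via the spectral sequence or directly from a two-term injective resolution. A secondary subtlety is ensuring the canonical ``inclusion'' $E^{(I)} \to \prod^{\Der}_S E$ in $\Der(\Aa)$ really does correspond, after passing to $H^0$, to the canonical map $E^{(I)}\to\prod^\Aa_S E$ in $\Aa$ — this is a compatibility of universal maps that should be checked but is routine once the identification of $H^0(\prod^{\Der}_S E)$ is in place. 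With these points settled, the logical skeleton (Jensen--Lenzing in $\Aa$ $\Leftrightarrow$ $\Der$-factorisation $\Leftrightarrow$ \cite[Thm.~1.8]{krause:2000} pure-injectivity in $\Der(\Aa)$) closes the argument.
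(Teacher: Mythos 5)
Your proposal follows essentially the same route as the paper's proof: identify $H^0$ of the derived product with the product in $\Aa$, use that morphisms from the degree-zero coproduct into the $[-1]$-shifted cohomology summand vanish (hereditariness giving the splitting of the derived product), check compatibility of the canonical maps, and conclude via the Jensen--Lenzing criterion together with \cite[Thm.~1.8]{krause:2000}. The only deviation is that for the direction ``pure-injective in $\Derived{\Aa}$ implies pure-injective in $\Aa$'' you argue directly by applying $H^0$ to the factorisation, where the paper simply cites \cite[Prop.~5.2]{laking:2018}; your argument there is correct, and the minor imprecisions you flag (e.g.\ $H^1$ of the derived product is the first derived functor of the product, not $\prod^\Aa_S E^1$ itself) are exactly the points to tidy up and do not affect the proof.
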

\begin{proof}
If $E$ is pure-injective in $\Derived{\Aa}$, it follows from
\cite[Prop.~5.2]{laking:2018} that $E$ is pure-injective in $\Aa$.

For the converse, let $E$ be pure-injective in $\Aa$, and let
$f^\Aa \colon \bigoplus^\Aa_{S} E \rightarrow \prod^\Aa_{S} E$ be the
canonical embedding and
$\Sigma^\Aa \colon \bigoplus^\Aa_{S} E \rightarrow E$ be the summation
morphism for some set $S$.  By Proposition
\ref{prop:jensen-lenzing-crit}, there exists a morphism
$h^\Aa \colon E^S \rightarrow E$ such that $h^\Aa f^\Aa = \Sigma^\Aa$.

  Now consider the $S$-indexed direct product $\prod^{\Tt}_{S} E$
  taken in $\Tt := \Derived{\Aa}$.  Observe that we have an
  isomorphism
  $\prod^{\Tt}_{S} E \cong H^0(\prod^{\Tt}_{S} E) \oplus
  {H^1(\prod^{\Tt}_{S} E)[-1]}$ (see, for example,
  \cite[Sec.~1.6]{krause:2007}) and we also have that
  $H^0(\prod^{\Tt}_{S} E) \cong \prod^\Aa_S E$.  Clearly we have
  $\bigoplus^\Aa_{S} E \cong \bigoplus^{\Tt}_{S} E$ because coproducts
  are exact in $\Aa$.

  Let
  $f^{\Tt}_S \colon \bigoplus_S^{\Tt} E {\longrightarrow}
  \prod^{\Tt}_S E$ be the canonical morphism from the coproduct to the
  product.  Since
  $\Hom_\Tt(\bigoplus_S^\Tt E, H^1(\prod^{\Tt}_{S} E)[-1]) = 0$, we
  have that the projection
  $\pi \colon \prod_S^\Tt E \to H^0(\prod^{\Tt}_{S} E) \cong
  \prod^\Aa_S E$ induces an isomorphism
  \[\Hom_\Tt(\bigoplus_S^\Tt E, \prod_S^\Tt E)
    \overset{\sim}{\longrightarrow} \Hom_\Tt(\bigoplus_S^\Tt E,
    \prod_S^\Aa E).\] Moreover, the universal properties of the
  canonical morphisms ensure that this isomorphism sends $f_S^\Tt$ to
  $f^\Aa_S$, that is $f^\Aa_S = \pi \circ f_S^\Tt$.  Therefore, we
  have
  $(h^\Aa \circ \pi) \circ f^\Tt_S = \Sigma_S^\Aa = \Sigma_S^\Tt$,
  which shows that the object $E$ is pure-injective in
  $\Derived{\Aa}$.
\end{proof}
For a compactly generated triangulated category $\Tt$, we denote the
full subcategory of compact objects in $\Tt$ by $\Tt^c$.  An important
tool for studying $\mathcal{T}$ is the category $\Mod{\text{-}\Tt^c}$
of additive functors from $(\Tt^c)^{\op}$ to the category $\Ab$ of
abelian groups; see, for example, \cite[Sec.~1.2]{krause:2005b}.  We
make use of the \emph{restricted Yoneda functor}
$\y \colon \Tt \to \Mod{\text{-}\Tt^c}$, which takes an object $M$ in
$\Tt$ to the functor $\y (M) := \Hom_\Tt(-,M)|_{\Tt^c}$.  An object
$E$ in $\Tt$ is pure-injective if and only if $\y(E)$ is injective in
$\Mod{\Tt^c}$; see \cite[Thm.~1.8]{krause:2000}.
\subsection*{Pure subobjects of products of compact objects}
Let $k$ be a field and let $\Tt$ be a compactly generated triangulated
$k$-linear category for a field $k$. We will denote the category of
additive functors from $\Tt^c$ to $\Ab$ by ${\Tt^c}\text{-}\Mod$.
Since $\Tt$ is $k$-linear, we have a functor
$D \colon \Mod\text{-}\Tt^c \ra \Tt^c\text{-}\Mod$ given by
postcomposition with $\Hom_k(-,k)$.  Similarly, we have
$D \colon \Tt^c\text{-}\Mod \rightarrow \Mod\text{-}\Tt^c$.

We say that $\Tt^c$ has \emph{Auslander-Reiten triangles} if for every
indecomposable object $C$ in $\Tt^c$, there exist objects $A,B,D,E$
and Auslander-Reiten triangles
\[C \to D \to E \to C[1]\quad\text{ and }\quad A \to B \to C \to
  A[1]\] in $\Tt^c$.  The definition of an Auslander-Reiten triangle
can be found in \cite[Sec.~4.1]{Happel:1988}.

\begin{lemma}[{\cite[Lem.~4.1, Thm.~4.4]{krause:2005}}]\label{lem:Serre} 
  Suppose $\Tt$ is a compactly generated $k$-linear triangulated
  category such that $\Tt^c$ is Hom-finite.  Then the following
  statements hold. \begin{enumerate}
  \item There is a functor $T \colon \Tt^c \rightarrow \Tt$, together
    with a natural isomorphism
    $$\D\Hom_\Tt(C, X) \cong \Hom_\Tt(X, TC)$$ for every compact object
    $C$ and every object $X$ in $\Tt$.
  \item The functor $T$ restricts to an equivalence
    $\Tt^c \rightarrow \Tt^c$ if and only if $\Tt^c$ has
    Auslander-Reiten triangles.
\end{enumerate}
\end{lemma}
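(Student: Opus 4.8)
The statement to prove is Lemma~\ref{lem:Serre}, which is attributed to \cite[Lem.~4.1, Thm.~4.4]{krause:2005}: for a compactly generated $k$-linear triangulated category $\Tt$ with $\Tt^c$ Hom-finite, there is a functor $T\colon\Tt^c\to\Tt$ with $\D\Hom_\Tt(C,X)\cong\Hom_\Tt(X,TC)$ natural in $C\in\Tt^c$ and $X\in\Tt$, and $T$ restricts to an autoequivalence of $\Tt^c$ precisely when $\Tt^c$ has Auslander--Reiten triangles. The plan is to construct $T$ via Brown representability applied to the duals of representable functors, and then to identify the condition for $T(\Tt^c)\subseteq\Tt^c$ with the existence of AR-triangles in $\Tt^c$.

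First I would fix a compact object $C$ and consider the contravariant functor $X\mapsto \D\Hom_\Tt(C,X)=\Hom_k(\Hom_\Tt(C,X),k)$ from $\Tt$ to $k$-vector spaces (equivalently to abelian groups). I would check this is a cohomological functor that sends coproducts in $\Tt$ to products: this uses that $C$ is compact, so $\Hom_\Tt(C,-)$ sends coproducts to coproducts, and $\D=\Hom_k(-,k)$ turns those into products. By Brown representability for cohomological functors on a compactly generated triangulated category, there is an object $TC\in\Tt$ and a natural isomorphism $\D\Hom_\Tt(C,X)\cong\Hom_\Tt(X,TC)$ for all $X\in\Tt$. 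Naturality of this family in $C$, together with Yoneda, promotes $C\mapsto TC$ to a functor $T\colon\Tt^c\to\Tt$ (a morphism $C\to C'$ in $\Tt^c$ induces, by pre-composition and applying $\D$, a natural transformation $\Hom_\Tt(-,TC')\to\Hom_\Tt(-,TC)$, hence a unique morphism $TC'\to TC$). This establishes part~(1).

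For part~(2), I would unwind what it means for $T$ to restrict to an equivalence $\Tt^c\to\Tt^c$. The defining isomorphism, read with $X$ compact, already shows that $T$ is fully faithful on $\Tt^c$ into $\Tt$ up to the duality: indeed $\Hom_\Tt(TC,TC')\cong \D\Hom_\Tt(C,TC')\cong\D\D\Hom_\Tt(C',C)\cong\Hom_\Tt(C',C)$ using Hom-finiteness (so double dual is the identity), which says $T\colon\Tt^c\to\Tt^{c,\op}$... more carefully, $T$ is a duality onto its image. So the only issue is whether $TC$ lies in $\Tt^c$. The key observation is that an AR-triangle $A\to B\to C\to A[1]$ ending in $C$ exhibits $A$ as the "Serre dual" $\tau C$ of $C$ inside $\Tt^c$, and conversely the functor $T$ restricted appropriately gives such a $\tau$; one shows $TC$ represents, on $\Tt^c$, the functor $\D\Hom_\Tt(C,-)|_{\Tt^c}$, and a representing object for this functor \emph{within} $\Tt^c$ exists and equals the source of an AR-triangle ending at $C$ (shifted). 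So $T(\Tt^c)\subseteq\Tt^c$ iff every indecomposable compact $C$ admits an AR-triangle, and then $T$ is automatically an equivalence on $\Tt^c$ by the fully-faithfulness computation above together with the fact that AR-triangles also exist starting at $C$ (giving essential surjectivity / an inverse). I would cite \cite[Sec.~4.1]{Happel:1988} and the standard theory of AR-triangles for the precise correspondence between representing objects of dualised Hom-functors and AR-triangles.

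The main obstacle I anticipate is part~(2), specifically the passage between "$TC$ is compact" and "AR-triangle ending at $C$ exists". Brown representability gives $TC\in\Tt$ but says nothing about compactness, and compactness is a genuinely restrictive condition: $T$ restricts to $\Tt^c$ iff a certain finiteness/representability holds uniformly. The clean way to handle this is Krause's argument that $TC\in\Tt^c$ forces the functor $\Hom_\Tt(-,TC)|_{\Tt^c}\cong\D\Hom_\Tt(C,-)|_{\Tt^c}$ to be finitely presented, and a minimal projective presentation of it in $\mod\text{-}\Tt^c$ translates, via Yoneda, into an almost-split sequence, i.e.\ an AR-triangle; the converse direction builds $TC$ by hand from the AR-triangle and checks the universal property against $\D\Hom_\Tt(C,-)$. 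Since the paper explicitly references \cite[Lem.~4.1, Thm.~4.4]{krause:2005}, I would present the above as the strategy and defer the delicate homological bookkeeping in $\mod\text{-}\Tt^c$ to that source rather than reproving it.
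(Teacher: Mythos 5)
The paper contains no proof of this lemma: it is imported verbatim from Krause (Lem.~4.1 and Thm.~4.4 of the cited paper), so there is no in-paper argument to compare against. Your sketch is essentially a reconstruction of Krause's proof — part~(1) by Brown representability applied to $\D\Hom_\Tt(C,-)$, which is cohomological and sends coproducts to products because $C$ is compact, and part~(2) by translating compactness of $TC$ into finite presentability of the corresponding functor over $\Tt^c$ and hence into the existence of Auslander--Reiten triangles, with the homological bookkeeping deferred to the same source the paper cites. That is a faithful match to how the result is actually established.

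One slip you should fix: your functoriality parenthetical has the variance backwards. Precomposition with $f\colon C\to C'$ gives $\Hom_\Tt(C',X)\to\Hom_\Tt(C,X)$; applying $\D$ yields $\Hom_\Tt(X,TC)\to\Hom_\Tt(X,TC')$, hence by Yoneda a morphism $TC\to TC'$, so $T$ is covariant, exactly as the statement requires. Accordingly the computation on compacts is
$\Hom_\Tt(TC,TC')\cong\D\Hom_\Tt(C',TC)\cong\D\D\Hom_\Tt(C,C')\cong\Hom_\Tt(C,C')$
(using Hom-finiteness for the double dual), which says $T$ is covariantly fully faithful on $\Tt^c$, not ``a duality onto its image.'' With that corrected, your outline of (2) — $TC$ compact if and only if there is an AR-triangle ending at $C$, plus AR-triangles starting at $C$ for essential surjectivity — is the right mechanism, and deferring its verification to Krause is consistent with what the paper itself does.
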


\begin{remark}\label{rem:compact-pure-inj}
  For a compactly generated triangulated $k$-linear category where
  $\Tt^c$ is Hom-finite, every compact object is endofinite and hence
  pure-injective by \cite[Thm.~1.2]{Krause:1999}.
\end{remark}

\begin{proposition}\label{prop:class-of-pure-injective-complexes}
  Let $k$ be a field and let $\Tt$ be a compactly generated
  triangulated $k$-linear category such that $\Tt^c$ is Hom-finite
  and has Auslander-Reiten triangles.  Then every object in
  $\Tt$ is a pure subobject of a product of compact objects.
\end{proposition}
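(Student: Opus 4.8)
The plan is to realise an arbitrary object $X\in\Tt$ inside a product of compact objects by dualising twice, using the duality $D$ between $\Mod\text{-}\Tt^c$ and $\Tt^c\text{-}\Mod$ together with the functor $T$ from Lemma~\ref{lem:Serre}. The starting point is the restricted Yoneda functor $\y\colon\Tt\to\Mod\text{-}\Tt^c$; since $\Tt^c$ is Hom-finite and has Auslander--Reiten triangles, $T$ restricts to an autoequivalence of $\Tt^c$ (Lemma~\ref{lem:Serre}(2)), and for every compact $C$ and every $X\in\Tt$ there is a natural isomorphism $\D\Hom_\Tt(C,X)\cong\Hom_\Tt(X,TC)$. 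Reading this off on objects, the functor $D\y(X)\in\Tt^c\text{-}\Mod$ sends $C\mapsto\D\Hom_\Tt(C,X)\cong\Hom_\Tt(X,TC)$, so $D\y(X)$ is (via the autoequivalence $T$) a filtered colimit of representable functors on $\Tt^c\text{-}\Mod$; in particular it lies in the finitely presented part, and one sees that $D\y(X)$ is a flat, hence fp-injective-after-dualising, object.

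Next I would pick a pure embedding of $D\y(X)$ into an injective object of $\Tt^c\text{-}\Mod$, or more directly write $D\y(X)$ as a subfunctor of a product of representables. The key structural input is that, because $\Tt^c$ is Hom-finite, the representable functors $\Hom_\Tt(-,C)|_{\Tt^c}$ with $C\in\Tt^c$ are the finitely presented projectives of $\Mod\text{-}\Tt^c$ and their $D$-duals are the finitely presented injectives of $\Tt^c\text{-}\Mod$; dually the injective cogenerators of $\Mod\text{-}\Tt^c$ are exactly the $D(\Hom_\Tt(-,C)|_{\Tt^c})$, and these are in the image of $\y$ applied to compact objects (using $T$). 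Concretely: embed $\y(X)$ into an injective object $J$ of $\Mod\text{-}\Tt^c$; such a $J$ is a summand of a product $\prod_{i}D(\Hom_{\Tt}(-,C_i)|_{\Tt^c})\cong\prod_i\y(TC_i)$, and since $\y$ preserves products (it is a right adjoint on objects, or directly: $\Hom_\Tt(-,\prod_i Z_i)|_{\Tt^c}\cong\prod_i\Hom_\Tt(-,Z_i)|_{\Tt^c}$ as each argument is compact) we get a monomorphism $\y(X)\hookrightarrow\y(\prod_i TC_i)$ in $\Mod\text{-}\Tt^c$ with $\prod_i TC_i$ a product of compacts.

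Finally I would transport this monomorphism back to $\Tt$. By Remark~\ref{rem:compact-pure-inj} each $TC_i$ is pure-injective, hence so is the product $P:=\prod_i TC_i$ (products of pure-injectives are pure-injective), so $\y(P)$ is injective in $\Mod\text{-}\Tt^c$. A standard fact about the restricted Yoneda functor is that a morphism $g\colon X\to P$ in $\Tt$ is a pure monomorphism precisely when $\y(g)$ is a monomorphism in $\Mod\text{-}\Tt^c$; and any monomorphism $\y(X)\to\y(P)$ with $\y(P)$ injective is induced by some $g\colon X\to P$ since $\y$ is full on maps into pure-injectives (equivalently, $\Hom_\Tt(X,P)\cong\Hom_{\Mod\text{-}\Tt^c}(\y X,\y P)$ when $P$ is pure-injective). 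Thus $g$ is the desired pure embedding of $X$ into a product of compact objects. The main obstacle I expect is the bookkeeping in the middle step: verifying carefully that the injective cogenerators of $\Mod\text{-}\Tt^c$ are, up to products and summands, exactly the objects $\y(TC)$ with $C$ compact — this is where Hom-finiteness of $\Tt^c$ and the autoequivalence property of $T$ (i.e.\ the presence of Auslander--Reiten triangles) are both essential, and it is the point at which one must be most careful about the difference between $\Mod\text{-}\Tt^c$ and $\Tt^c\text{-}\Mod$ and the behaviour of $D$ on non-finitely-presented objects.
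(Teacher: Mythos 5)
Your proof is correct in substance and shares the paper's skeleton: both arguments produce a monomorphism $\y(N)\to\y\bigl(\prod_i TC_i\bigr)$ in $\Mod\text{-}\Tt^c$ with each $TC_i$ compact (Lemma~\ref{lem:Serre}(2) needing the Auslander--Reiten triangles), note that $\prod_i TC_i$ is pure-injective (Remark~\ref{rem:compact-pure-inj}), and then lift via \cite[Thm.~1.8]{krause:2000} to a pure monomorphism $N\to\prod_i TC_i$ in $\Tt$. Where you genuinely differ is the middle step. The paper works with $F=\y(N)$, uses that $DF$ is cohomological and hence flat, writes it by Oberst--R\"ohrl as a filtered colimit of representables, dualises the resulting epimorphism to embed $D^2F$ into a product of functors $D\Hom_\Tt(C_i,-)\cong\Hom_\Tt(-,TC_i)$, and finally composes with the double-dual embedding $F\to D^2F$. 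You instead embed $\y(N)$ directly into a product of the injective cogenerators of $\Mod\text{-}\Tt^c$, namely the duals of the covariant representables, and identify these with $\y(TC_i)$ via Serre duality. Your route is shorter: it replaces the flatness/colimit/double-dual detour by the standard fact (tensor--hom adjunction plus exactness of evaluation and of $D$ over the field $k$) that the functors $D\bigl(\Hom_{\Tt^c}(C,-)\bigr)$ are injective and cogenerate $\Mod\text{-}\Tt^c$; the paper's route avoids having to verify that cogeneration statement by hand, quoting instead the flatness of cohomological functors.

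Two small corrections. First, a variance slip: the injective cogenerators of $\Mod\text{-}\Tt^c$ are the duals $D\bigl(\Hom_{\Tt^c}(C,-)\bigr)$ of the \emph{covariant} representables, and it is these that Lemma~\ref{lem:Serre} identifies with $\y(TC)$; the object $D\bigl(\Hom_\Tt(-,C)|_{\Tt^c}\bigr)$ you wrote is covariant, so it lives in $\Tt^c\text{-}\Mod$ and is not isomorphic to $\y(TC)$. Second, in your opening paragraph the claim that $D\y(X)$, being a filtered colimit of representables, ``lies in the finitely presented part'' is false in general (filtered colimits of representables are flat, not finitely presented); fortunately nothing in your actual argument uses this, since your cogenerator embedding makes the flatness discussion dispensable.
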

\begin{proof}
  Let $F := \Hom_\Tt(-, N)|_{\Tt^c}$ for some object $N$ in $\Tt$.
  Then $DF$ is cohomological so, by \cite[Lem.~2.7]{krause:2000} or
  \cite[Rem.~8.12]{beligiannis:2000}, it is a flat object of
  $\Tt^c\text{-}\Mod$.  By \cite[Thm.~3.2]{oberst:rohrl:1970}, there
  exists a directed system of representable functors
  $\{\Hom_{\Tt^c}(C_i, -)\}_{i\in I}$ such that
  $DF \cong \varinjlim_{i\in I} \Hom_{\Tt}(C_i, -)$.  It follows
  that there exists a canonical epimorphism
  $\bigoplus_{i\in I} \Hom_{\Tt}(C_i, -) \rightarrow DF \rightarrow
  0$.  Applying the functor $D$ again we obtain a monomorphism
  \[ 0 \rightarrow D^2F \rightarrow \prod_{i\in I} D\Hom_{\Tt}(C_i,
    -).  \] Now, applying Lemma~\ref{lem:Serre}, we have that
  \[\prod_{i\in I} D\Hom_{\Tt}(C_i, -) \cong \prod_{i\in I}
    \Hom_{\Tt}(-, TC_i) \cong \Hom_{\Tt}(-, \prod_{i\in I}TC_i).\]
  Moreover we have a natural family of monomorphisms from each vector
  space to its double dual and this induces a monomorphism
  $0 \rightarrow F \rightarrow D^2F$.  Composing these morphisms we
  obtain a monomorphism
  $0 \rightarrow F \rightarrow \Hom_{\Tt}(-, \prod_{i\in I}TC_i)$.
  Finally, since $\prod_{i\in I}TC_i$ is pure-injective, we have that
  this is induced by a pure monomorphism
  $N \rightarrow \prod_{i\in I}TC_i$ by \cite[Thm.~1.8]{krause:2000}.
\end{proof}

\subsection*{Compact summands of products}

In Remark \ref{rem:compact-pure-inj} we have that, when $\Tt$ is
compactly generated with $\Tt^c$ Hom-finite, every compact object $C$
is pure-injective.  In the next proof we show that if $\Tt^c$ also has
Auslander-Reiten triangles, then $\y(C)$ is the injective envelope of
a simple functor in $\Mod{\text{-}\Tt^c}$.  In particular, compact objects
have the following property with respect to products of pure-injective
objects in $\Tt$.

\begin{proposition}\label{prop:summands-of-products}
  Let $\Tt$ be a compactly generated triangulated category such that
  $\Tt^c$ is Hom-finite and has Auslander-Reiten triangles.  If a
  compact object $C$ is a direct summand of a product
  $\prod_{i\in I} N_i$ of pure-injective objects $\{N_i\}_{i\in I}$ in
  $\Tt$, then $C$ is a direct summand of $N_i$ for some $i\in I$.
\end{proposition}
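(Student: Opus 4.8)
The plan is to transport the problem along the restricted Yoneda functor $\y\colon\Tt\to\Mod\text{-}\Tt^c$, where it becomes a statement about a \emph{simple} functor embedding into a product of injectives, and then to transfer the resulting splitting back to $\Tt$. We may assume that $C$ is indecomposable. Being compact, $C$ is pure-injective by Remark~\ref{rem:compact-pure-inj}, so by Lemma~\ref{lem:local-end-ring} (and the remark following it) its endomorphism ring is local; since $\y(C)=\Hom_{\Tt^c}(-,C)$ is representable, Yoneda's lemma gives $\End(\y(C))\cong\End_{\Tt^c}(C)$, so $\y(C)$ is indecomposable, and it is injective in $\Mod\text{-}\Tt^c$ because $C$ is pure-injective. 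Hence $\y(C)$ is the injective envelope of its socle. The first real step is to check that this socle is simple, and for this I would use Lemma~\ref{lem:Serre}: since $\Tt^c$ has Auslander--Reiten triangles the functor $T$ restricts to an autoequivalence of $\Tt^c$, so writing $C'=T^{-1}C$ (again indecomposable compact) we obtain, naturally in $X\in\Tt^c$,
\[
\y(C)(X)=\Hom_\Tt(X,TC')\cong\D\Hom_{\Tt^c}(C',X),
\]
that is, $\y(C)\cong D\bigl(\Hom_{\Tt^c}(C',-)\bigr)$. The corepresentable functor $\Hom_{\Tt^c}(C',-)$ is an indecomposable projective object of $\Tt^c\text{-}\Mod$ with local endomorphism ring, hence has a simple top; applying the exact duality $D$, which interchanges tops and socles, I conclude that $\y(C)$ has a simple socle $S$, so $\y(C)$ is the injective envelope $\IE(S)$ of $S$. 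This is precisely the assertion announced in the paragraph preceding the proposition.

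Next I would run the argument inside $\Mod\text{-}\Tt^c$. The functor $\y$ preserves products, so a split monomorphism $C\to\prod_{i\in I}N_i$ yields a split monomorphism $\y(C)\to\prod_{i\in I}\y(N_i)$, and each $\y(N_i)$ is injective because $N_i$ is pure-injective. Restricting to the socle gives a monomorphism $S\to\prod_{i\in I}\y(N_i)$; composing with the projections, there is an index $i_0\in I$ for which the component $S\to\y(N_{i_0})$ is nonzero, hence a monomorphism since $S$ is simple. As $\y(N_{i_0})$ is injective this extends to a morphism $\y(C)\to\y(N_{i_0})$, which is still a monomorphism because $S$ is essential in $\y(C)=\IE(S)$. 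Finally $\y(C)$ is injective, so this monomorphism splits, and $\y(C)$ is a direct summand of $\y(N_{i_0})$.

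It remains to transfer this splitting to $\Tt$. Here I would invoke the fact, due to Krause (\cite[Thm.~1.8]{krause:2000} and the surrounding discussion), that the canonical map $\Hom_\Tt(M,N)\to\Hom_{\Mod\text{-}\Tt^c}(\y(M),\y(N))$ is bijective whenever $N$ is pure-injective. Both $C$ and $N_{i_0}$ are pure-injective, so the split monomorphism $\y(C)\to\y(N_{i_0})$ and its retraction lift to morphisms $f\colon C\to N_{i_0}$ and $g\colon N_{i_0}\to C$ with $\y(gf)=\mathrm{id}_{\y(C)}=\y(\mathrm{id}_C)$; bijectivity (applied with the pure-injective object $C$ in the second slot) forces $gf=\mathrm{id}_C$, so $f$ is a split monomorphism and $C$ is a direct summand of $N_{i_0}$. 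I expect the crux to be the first paragraph: without the Auslander--Reiten hypothesis on $\Tt^c$ one knows only that $\y(C)$ is an indecomposable injective, and a non-simple socle could embed into a product of injectives without embedding into any single factor, so identifying $\y(C)$ as the injective envelope of a \emph{simple} functor — equivalently, exploiting that $T$ is an autoequivalence of $\Tt^c$ — is exactly the point where the hypothesis is used.
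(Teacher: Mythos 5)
Your proof is correct and has the same overall architecture as the paper's: transport everything along the restricted Yoneda functor, identify $\y(C)$ as the injective envelope of a simple functor, push that simple through one projection, split off $\y(C)$ from a single $\y(N_{i_0})$, and lift the splitting back using \cite[Thm.~1.8]{krause:2000}. The genuine difference is how the simple functor is produced. The paper takes the Auslander--Reiten triangle $C \xrightarrow{f} D \to E \to C[1]$ and shows that $F=\Ker(\y(f))$ is simple, via Krause's duality $(-)^\vee$ with coherent functors and \cite[Cor.~1.12]{Arnesen:Laking:Paulsztello:Prest:2017}; you instead use Lemma~\ref{lem:Serre}(2) (the Serre functor $T$ restricting to an autoequivalence of $\Tt^c$, which is equivalent to the existence of AR triangles) to write $\y(C)\cong D\Hom_{\Tt^c}(T^{-1}C,-)$ and read off the simple socle from the indecomposable projective covariant functor. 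Your route is more classical and self-contained (it only needs the Serre duality formula and standard functor-category facts), while the paper's route isolates the simple functor explicitly as $\Ker(\y(f))$, which is the form in which it is used elsewhere in the literature; both make the AR hypothesis enter at exactly the point you identify.

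One step deserves more care: the assertion that ``the exact duality $D$ interchanges tops and socles'' is not literally valid on the whole functor categories, since $\Hom_{\Tt^c}(C',-)$ is in general of infinite length and $D$ is not a duality there. What you actually need is the standard fact that $D\Hom_{\Tt^c}(C',-)$ is injective in $\Mod\text{-}\Tt^c$ with simple essential socle $\Hom_{\Tt^c}(-,C')/\rad_{\Tt^c}(-,C')$; this follows, for instance, from the natural isomorphism $\Hom_{\Mod\text{-}\Tt^c}\bigl(A, D G\bigr)\cong D\bigl(A\otimes_{\Tt^c}G\bigr)$ together with Yoneda, using Hom-finiteness of $\Tt^c$ and locality of $\End(C')$. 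With that justification inserted, your argument is complete. (Also note that, like the paper's proof, your argument really treats indecomposable $C$ --- the reduction ``we may assume $C$ indecomposable'' does not recover the statement for decomposable $C$ from a single factor --- but this matches how the proposition is applied.)
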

\begin{proof}
  Let $C\overset{f}{\to} D\to E\to C[1]$ be an Auslander-Reiten
  triangle and consider the functor $F := \Ker(\y(f))$.

First we show that $F$ is a simple functor.  Consider the category
$\mathrm{Coh}(\Tt)$ of coherent functors $\Tt \to \Ab$ i.e.~covariant
functors that are of the form $\Coker(\Hom_\Tt(g,-))$ for some
morphism $g$ in $\Tt^c$.  In \cite{krause:2002}, Krause shows that
there exists a duality
$(-)^\vee \colon \mod{\text{-}\Tt^c} \to \mathrm{Coh}(\Tt)$ where
$G^\vee(X) := \Hom_{\Mod{\Tt^c}}(G, \y(X))$ for each functor $G$ in
$\mod{\text{-}\Tt^c}$ and object $X$ in $\Tt$.  By
\cite[Cor.~1.12]{Arnesen:Laking:Paulsztello:Prest:2017}, we have that
the functor $F^\vee$ is isomorphic to $\Coker(\Hom_\Tt(f,-))$ which is
a simple functor.  It follows that $F$ is a simple functor in
$\Mod{\text{-}\Tt^c}$.

By assumption, there is a split monomorphism
$C \to \prod_{i\in I} N_i$ and so its image
$\y(C) \to \prod_{i\in I} \y(N_i)$ is a split monomorphism in
$\Mod{\text{-}\Tt^c}$.  Since $\y(C)$ is an indecomposable injective object,
the monomorphism $F \to \y(C)$ must be an injective envelope and $F$
must be essential in $\y(C)$.  It follows that the composition
$F \to \y(C)\to \prod_{i\in I} \y(N_i) \to \y(N_i)$ is a non-zero
monomorphism $F \to \y(N_i)$ for some $i\in I$.  But then the
injective envelope $\y(C)$ of $F$ must be a direct summand of the
injective object $\y(N_i)$.  By \cite[Thm.~1.8]{krause:2000} we have
that $C$ is a direct summand of $N_i$.
\end{proof}

\section{Cotilting objects}\label{sec:cotilting}
Let $\Aa$ be a Grothendieck category.
\begin{definition}[{\cite[Def.~2.4]{coupek:stovicek:2019}}]\label{def:cotilting}
  An object $C\in\Aa$ is called a \emph{cotilting object} if
  $\Cogen(C)=\lperpe{C}$, and if this class contains a generator for
  $\Aa$. Then $\Cogen(C)$ is called the associated \emph{cotilting
    class}.
\end{definition}
\begin{lemma}[{\cite[Thm.~2.11]{coupek:stovicek:2019}}]
  An object $C\in\Aa$ is cotilting if and only if the following
  conditions are satisfied:
\begin{enumerate}
    \item[(CS0)] $C$ has injective dimension $\id(C)\leq 1$.\smallskip
    \item[(CS1)] $\Ext^1(C^I,C)=0$ for every cardinal $I$.\smallskip
    \item[(CS3)] For every injective cogenerator $W$ of $\QHh$ there
      is a short exact sequence $$0\ra C_1 \ra C_0 \ra W\ra 0$$ with
      $C_0,\,C_1\in\Prod(C)$.
    \end{enumerate}
    Each cotilting $C$ moreover satisfies
    \begin{enumerate}
    \item[(CS2)] $\lperp{C}=0$, that is: if $X\in\QHh$ satisfies
      $\Hom(X,C)=0=\Ext^1(X,C)$, then $X=0$. \qed
    \end{enumerate}
\end{lemma}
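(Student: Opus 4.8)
The plan is to show that the three conditions (CS0), (CS1), (CS3) are equivalent to the definition $\Cogen(C)=\lperpe{C}$ together with the requirement that $\lperpe{C}$ contains a generator, and that (CS2) follows; this is essentially an unwinding of the cotilting axioms in the Grothendieck-category setting, adapted from the module-theoretic proof. First I would establish the forward implication. Assume $C$ is cotilting, so $\Cogen(C)=\lperpe{C}$. Condition (CS1) is immediate: each $C^I$ lies in $\Prod(C)\subseteq\Cogen(C)=\lperpe{C}$, whence $\Ext^1(C^I,C)=0$. For (CS0), I would argue that $\id(C)\le 1$ by using that $\lperpe{C}$ is closed under cokernels of monomorphisms between its objects (a standard consequence of $\Cogen(C)=\lperpe{C}$, since $\Cogen(C)$ is closed under extensions and subobjects when it equals $\lperpe{C}$, and one checks the cokernel closure directly via the long exact sequence together with $\Ext^2$-vanishing if available, or via the injective copresentation argument): embedding $C$ into an injective $E^0$ with cokernel $Z$, one shows $Z\in\lperpe{C}=\Cogen(C)$, hence $\Ext^2(-,C)$ computed from the resolution $0\to C\to E^0\to Z\to 0$ vanishes suitably, giving $\id(C)\le 1$. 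For (CS3), I would use that the heart $\QHh$ (here the HRS-tilt associated to the torsion pair $(\Cogen(C),\ (\Cogen C)^{\circ})$, or simply the category in which $C$ becomes injective) has $C$ as an injective cogenerator, so any injective cogenerator $W$ of $\QHh$ is cogenerated by $C$, and an approximation argument produces the short exact sequence $0\to C_1\to C_0\to W\to 0$ with $C_0,C_1\in\Prod(C)$; the key point is that $C_1$ lands in $\Prod(C)$ rather than merely $\Cogen(C)$, which uses $\Ext^1$-vanishing from (CS1).

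Next I would prove the converse: assuming (CS0), (CS1), (CS3), show $\Cogen(C)=\lperpe{C}$ and that this class contains a generator. The inclusion $\Cogen(C)\subseteq\lperpe{C}$ uses (CS0) and (CS1): given $X\hookrightarrow C^I$ with cokernel $Q$, the long exact sequence together with $\Ext^1(C^I,C)=0$ and $\Ext^2(Q,C)=0$ (from $\id(C)\le 1$) forces $\Ext^1(X,C)=0$. For the reverse inclusion $\lperpe{C}\subseteq\Cogen(C)$, I would take $X\in\lperpe{C}$, embed it in an injective $E$ of $\Aa$, express $E$ (or an injective cogenerator) via (CS3) as a quotient of $C_0\in\Prod(C)$ with kernel $C_1\in\Prod(C)$, pull back along $X\hookrightarrow E$, and use $\Ext^1(X,C_1)=0$ to split off enough to realise $X$ as a subobject of $C_0\in\Prod(C)$, i.e.\ $X\in\Cogen(C)$. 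That $\lperpe{C}$ contains a generator: the injective cogenerators of $\Aa$ lie in $\lperpe{C}$ by (CS0) applied the other way, or more precisely (CS3) exhibits them in $\Cogen(C)$, and injective cogenerators generate.

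Finally, for (CS2), suppose $X\in\QHh$ with $\Hom(X,C)=0=\Ext^1(X,C)$. Using the short exact sequence from (CS3) with $W$ an injective cogenerator of $\QHh$, apply $\Hom(X,-)$: from $\Hom(X,C_0)=0$ and $\Ext^1(X,C_1)=0$ one deduces $\Hom(X,W)=0$, and since $W$ is a cogenerator this forces $X=0$.

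\medskip

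The main obstacle I anticipate is the bookkeeping in (CS3) and in the $\lperpe{C}\subseteq\Cogen(C)$ step — specifically, ensuring the middle and left terms of the defining short exact sequence genuinely land in $\Prod(C)$ (not just $\Cogen(C)$), and correctly identifying which heart $\QHh$ plays the role of "the category in which $C$ is injective" so that injective cogenerators of $\QHh$ can be invoked. Since this lemma is quoted verbatim from \cite[Thm.~2.11]{coupek:stovicek:2019}, in the actual write-up I would simply cite that reference rather than reproduce the argument; the sketch above indicates how the proof goes if one wishes to verify it directly.
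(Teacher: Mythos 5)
The paper gives no argument for this lemma at all: it is quoted from \cite[Thm.~2.11]{coupek:stovicek:2019} and closed with a \qed, so your final disposition --- simply citing that reference --- is exactly what the paper does, and is the right call.

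Your sketch, however, would need repair if it were ever to be fleshed out, and the problems sit exactly where you flag your ``main obstacle''. First, the $\QHh$ appearing in (CS3) and (CS2) is not a heart in which $C$ becomes injective: it is the ambient Grothendieck category itself (a notational slip in the paper for $\Aa$; the lemma is later applied with $\Aa=\QHh=\Qcoh\XX$). So $W$ is an injective cogenerator of $\Aa$, and your plan of invoking injective cogenerators of the HRS-tilt $\Gg$ (where the injective cogenerator is $C[1]$, which is not even an object of $\Aa$) does not make sense as written. Second, the sketch is essentially the module-theoretic proof, but the genuinely hard content of \cite[Thm.~2.11]{coupek:stovicek:2019} is precisely that the module arguments do not transfer verbatim: in a general Grothendieck category products are not exact, so $\Ext^1(X,\prod_i Y_i)\cong\prod_i\Ext^1(X,Y_i)$ is not automatic (this is why the paper must cite \cite[Cor.~A.2]{coupek:stovicek:2019} elsewhere), and your step deducing $\Ext^1(X,C_1)=0$ for $C_1\in\Prod(C)$ from $X\in\lperpe{C}$, as well as taking products of the (CS3) sequence, both rely on it. Third, your argument for (CS0) in the forward direction is circular: you propose to verify closure of $\lperpe{C}$ under cokernels of monomorphisms ``via the long exact sequence together with $\Ext^2$-vanishing if available'', but $\Ext^2(-,C)=0$, i.e.\ $\id(C)\leq 1$, is exactly what is to be proved there; one must instead show directly that the cokernel of $C\hookrightarrow E^0$ ($E^0$ injective) is injective, which is where the assumption that $\Cogen(C)$ contains a generator enters and where the real work in the cited proof lies.
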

We used this order of numbering since (CS0), (CS1) and (CS2) are the
duals of the corresponding properties (TS0), (TS1), (TS2) for tilting
sheaves in~\cite{angeleri:kussin:2017}.
\begin{theorem}[{\cite[Thm.~3.9]{coupek:stovicek:2019}}]\label{thm:cotilting-is-pure-injective}
  Let $C\in\Aa$ be cotilting. Let $\Ff=\lperpe{C}=\Cogen(C)$ be the
  associated cotilting class. Then $C$ is pure-injective and $\Ff$ is
  closed under direct limits in $\Aa$. \qed
\end{theorem}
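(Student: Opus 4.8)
The plan is to reduce the pure-injectivity of $C$ to a single membership statement and then to recover the direct-limit closure of $\Ff$ for free. First I would invoke the Jensen--Lenzing criterion (Proposition~\ref{prop:jensen-lenzing-crit}): it suffices to factor the summation morphism $C^{(I)}\ra C$ through the canonical embedding $C^{(I)}\ra C^{I}$ for every set $I$. Setting $Q:=C^{I}/C^{(I)}$ and applying $\Hom(-,C)$ to the canonical exact sequence $0\ra C^{(I)}\ra C^{I}\ra Q\ra 0$, the connecting map in the long exact sequence shows that $\Hom(C^{I},C)\ra\Hom(C^{(I)},C)$ is surjective as soon as $\Ext^1(Q,C)=0$, i.e.\ as soon as $Q\in\lperpe{C}=\Ff$. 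Surjectivity then produces the desired extension of the summation map to $C^{I}$, so everything turns on showing $Q\in\Ff$.

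It is illuminating to note \emph{why} one should expect $Q\in\Ff$, and simultaneously where the difficulty hides. Writing the finite subsets $F\subseteq I$ as a directed poset, one has $Q\cong\varinjlim_{F}C^{I}/C^{(F)}$, and since $C^{(F)}$ is a direct summand of $C^{I}$ for finite $F$, each quotient satisfies $C^{I}/C^{(F)}\cong C^{I\setminus F}\in\Prod(C)\subseteq\Cogen(C)=\Ff$ (the inclusion $\Prod(C)\subseteq\lperpe{C}$ being exactly condition (CS1)). Thus $Q$ is a direct limit of objects lying in $\Ff$, so $Q\in\Ff$ would be automatic \emph{if} $\Ff$ were already known to be closed under direct limits. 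Using that closure here, however, would be circular, since the direct-limit closure of $\Ff$ is the other assertion of the theorem and — by Theorem~\ref{thm:ext-direct-limit-pure-inj} — is itself a consequence of the pure-injectivity we are trying to prove. The resolution is to prove $Q\in\Cogen(C)$ for this \emph{specific} $Q$ by hand, exploiting its concrete structure as a product modulo a coproduct.

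This direct verification is the step I expect to be the main obstacle, and it is the genuine content of the theorem; everything else is formal homological bookkeeping. Concretely I would show that $Q$ is cogenerated by $C$: for each nonzero class $\bar x\in Q$, represented by a tuple of infinite support modulo finite support, I must produce a homomorphism $Q\ra C$ — equivalently a morphism $C^{I}\ra C$ vanishing on $C^{(I)}$ — that does not vanish on $\bar x$. To manufacture enough such separating maps I would adapt Bazzoni's cardinality argument: fixing a sufficiently large regular cardinal and exploiting (CS1), (CS2) (so that $\lperp{C}=0$) together with the fact that $\Cogen(C)$ contains a generator of $\Aa$, one selects a suitably large index subset and uses a product decomposition to build, for each given class, a homomorphism into $C$ that kills the coproduct but detects that class. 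This set-theoretic separation is the crux; condition (CS0), namely $\id(C)\leq 1$, keeps the surrounding $\Ext$-computations clean throughout.

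Finally I would close the loop. Having established $Q\in\Cogen(C)=\lperpe{C}$, the reduction of the first paragraph gives $\Ext^1(Q,C)=0$, hence the summation morphism factors through $C^{I}$, and Jensen--Lenzing yields that $C$ is pure-injective. The second assertion then follows immediately and cleanly from Theorem~\ref{thm:ext-direct-limit-pure-inj}: since $C$ is now pure-injective and $\Ff=\lperpe{C}=\{F\mid\Ext^1(F,C)=0\}$, any directed system inside $\Ff$ has its colimit again in $\Ff$, so $\Ff$ is closed under direct limits. As a structural cross-check, this is exactly the statement that $C$ is the injective cogenerator of the HRS-tilted heart $\QHh$ attached to the torsion pair with torsionfree class $\Ff$, whose being a Grothendieck category is equivalent to the direct-limit closure just obtained; this viewpoint is the one I would cite as confirmation in the locally finitely presented setting of the later sections.
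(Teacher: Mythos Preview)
The paper does not give a proof of this theorem: the statement is quoted verbatim from \cite[Thm.~3.9]{coupek:stovicek:2019} and closed with a \qed, so there is nothing to compare against directly. Your outline is in fact a faithful sketch of the argument in \cite{coupek:stovicek:2019}, which in turn adapts Bazzoni's module-theoretic proof to Grothendieck categories: reduce pure-injectivity via the summation-map criterion to $C^{I}/C^{(I)}\in\Cogen(C)$, prove that membership by a cardinality/separation argument, and then read off the direct-limit closure of $\Ff$.

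That said, what you have written is an outline, not a proof. The entire weight of the theorem sits in the paragraph you label ``the main obstacle'': producing, for each nonzero class in $C^{I}/C^{(I)}$, a morphism $C^{I}\ra C$ that annihilates $C^{(I)}$ but not that class. You invoke ``Bazzoni's cardinality argument'' and list the ingredients (CS1), (CS2), the generator in $\Cogen(C)$, a large regular cardinal, but you do not actually run the argument, and in a general Grothendieck category this step is genuinely delicate --- one cannot speak of elements $\bar x$ or ``tuples of infinite support'' as you do, and the translation to subobjects and morphisms is where the work lies. If you intend this as a proof rather than a reference, that step must be written out.

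Two smaller points. First, Proposition~\ref{prop:jensen-lenzing-crit} and Theorem~\ref{thm:ext-direct-limit-pure-inj} are stated in the paper only for locally finitely presented $\Aa$, whereas the theorem is asserted for an arbitrary Grothendieck category; in \cite{coupek:stovicek:2019} the summation-map criterion is taken as the \emph{definition} of pure-injectivity, so your use of it is legitimate there, but you should flag the discrepancy with the paper's own conventions. Second, your closing sentence conflates the HRS-tilted heart with $\QHh$; the heart is $\Gg$, not $\QHh$, and its being Grothendieck is a consequence rather than a ``cross-check'' of what you have shown.
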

It follows that Definition~\ref{def:cotilting} is equivalent to the
definition of cotilting objects given in~\cite{buan:krause:2003} for
locally noetherian Grothendieck categories. The following is
well-known and easy to show.
\begin{lemma}\label{lem:properties-cotilting}
  Let $C\in\Aa$ be cotilting with associated cotilting class $\Ff$.
  \begin{enumerate}
  \item $\Ff=\Copres(C)$, the class of objects in $\Aa$ which are
    kernels of morphisms of the form $C^{I}\ra C^{J}$.
  \item $\Ff\cap\rperpe{\Ff}=\Prod(C)$. \qed
  \end{enumerate}
\end{lemma}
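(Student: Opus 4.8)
The plan is to deduce both statements directly from the characterisation of the cotilting class as $\Ff=\Cogen(C)=\lperpe{C}$ together with Theorem~\ref{thm:cotilting-is-pure-injective}, which tells us that $\Ff$ is closed under direct limits and that $C$ is pure-injective. For part (1), one inclusion is immediate: every kernel of a morphism $C^I\ra C^J$ lies in $\Cogen(C)$ (it is a subobject of $C^I\in\Prod(C)$), so $\Copres(C)\subseteq\Cogen(C)=\Ff$. For the reverse inclusion, take $F\in\Ff$. Since $F\in\Cogen(C)$, there is a monomorphism $F\ra C^I$ for some set $I$; let $N=\Coker(F\ra C^I)$, so we have a short exact sequence $0\ra F\ra C^I\ra N\ra 0$. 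The key point is to show $N\in\Ff$, for then $N\in\Cogen(C)$ gives a monomorphism $N\ra C^J$, and composing $C^I\ra N\ra C^J$ exhibits $F$ as the kernel of a morphism $C^I\ra C^J$, i.e.\ $F\in\Copres(C)$.

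To see $N\in\Ff=\lperpe{C}$, apply $\Hom(-,C)$ to the sequence $0\ra F\ra C^I\ra N\ra 0$: the relevant piece of the long exact sequence is
\[
\Ext^1(C^I,C)\lra\Ext^1(F,C)\lra\Ext^2(N,C).
\]
We have $\Ext^1(C^I,C)=0$ by (CS1), and $F\in\Ff=\lperpe{C}$ gives $\Ext^1(F,C)=0$; but what we actually want is $\Ext^1(N,C)=0$, so instead I would look one step earlier in the sequence. The correct piece is $\Ext^1(C^I,C)\ra\Ext^1(F,C)\ra\Ext^2(N,C)$ read in the other direction is not quite it either; rather, from $0\ra F\ra C^I\ra N\ra 0$ one gets $\Hom(C^I,C)\ra\Hom(F,C)\ra\Ext^1(N,C)\ra\Ext^1(C^I,C)$. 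Since $C$ is cotilting, $C\in\Cogen(C)=\Ff$ so $F\in\Cogen(C)$ means $F$ is a subobject of a product of copies of $C$; in fact the cleanest route is to choose the monomorphism $F\ra C^I$ to be a "$\Prod(C)$-preenvelope" — i.e.\ chosen so that $\Hom(C^I,C)\ra\Hom(F,C)$ is surjective (possible by taking $I$ to index a generating set of $\Hom(F,C)$) — whence $\Ext^1(N,C)\hookrightarrow\Ext^1(C^I,C)=0$, so $N\in\lperpe{C}=\Ff$ as required. This is the step I expect to require the most care: arranging the preenvelope property and correctly bookkeeping the long exact sequence in $\Ext$, using that $\Aa$ is hereditary implicitly is not needed here but (CS1) for the product $C^I$ is essential.

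For part (2), the inclusion $\Prod(C)\subseteq\Ff\cap\rperpe{\Ff}$ is routine: $\Prod(C)\subseteq\Cogen(C)=\Ff$, and $\Ext^1(\Ff,C)=0$ by definition of $\Ff=\lperpe{C}$, so $\Ext^1(\Ff,\Prod(C))=0$ since $\Ext^1(\Ff,-)$ commutes with products (using again that $C$, hence every object of $\Prod(C)$, is pure-injective, so these products behave well), giving $\Prod(C)\subseteq\rperpe{\Ff}$. Conversely, let $F\in\Ff\cap\rperpe{\Ff}$. By part (1), $F\in\Copres(C)$, so there is a short exact sequence $0\ra F\ra C_0\ra F'\ra 0$ with $C_0\in\Prod(C)$ and $F'\in\Ff$ (the cokernel lies in $\Ff$ by the argument in part (1)). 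Since $F\in\rperpe{\Ff}$ and $F'\in\Ff$, we have $\Ext^1(F',F)=0$, so this sequence splits and $F$ is a direct summand of $C_0\in\Prod(C)$, i.e.\ $F\in\Prod(C)$. I would present part~(1) first since part~(2) invokes it; the only genuinely delicate point in the whole lemma is ensuring the cokernel of a $\Prod(C)$-monomorphism again lies in $\Ff$, which is where (CS1) is used.
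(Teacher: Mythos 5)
Your proof is correct and is precisely the standard argument the paper has in mind: the paper states this lemma without proof, calling it ``well-known and easy to show'', and your use of the canonical map $F\ra C^{\Hom(F,C)}$ as a $\Prod(C)$-preenvelope together with (CS1) to get the cokernel back into $\Ff$, and then the splitting trick for part (2), is exactly that argument. One small cosmetic point: for $\Prod(C)\subseteq\rperpe{\Ff}$ you do not need pure-injectivity of $C$ at all — the implication $\Ext^1(F,C)=0\Rightarrow\Ext^1(F,C^I)=0$ holds in any Grothendieck category because the canonical map $\Ext^1(F,\prod_i Y_i)\ra\prod_i\Ext^1(F,Y_i)$ is injective (assemble the coordinatewise retractions into a retraction onto the product), which is also the content of the result in the appendix of Čoupek--Šťovíček that the paper cites elsewhere for such statements.
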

\begin{corollary}\label{cor:properties-cotilting}
  Let $\Aa$ be locally noetherian with the property that every object
  in $\Aa$ has finite injective dimension.  Let $C\in\Aa$ be a
  cotilting object with cotilting class $\Ff=\lperpe{C}$. If
  $B\in\fp(\Aa)$ is indecomposable with $B\in\Ff\cap\rperpe{\Ff}$,
  then $B$ is a direct summand of $C$. \qed
\end{corollary}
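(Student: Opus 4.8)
The plan is to combine Lemma~\ref{lem:properties-cotilting}(2), which identifies $\Prod(C) = \Ff \cap \rperpe{\Ff}$, with a finiteness argument showing that an indecomposable finitely presented object lying in $\Prod(C)$ must already be a summand of $C$ itself. First I would observe that, by hypothesis, $B \in \Ff \cap \rperpe{\Ff}$, and Lemma~\ref{lem:properties-cotilting}(2) gives $B \in \Prod(C)$; that is, $B$ is a direct summand of a product $C^I$ for some set $I$. The task is then to replace the product by a single copy of $C$.

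The key step is to pass to the derived category and invoke Proposition~\ref{prop:summands-of-products}. Under the standing hypotheses — $\Aa$ locally noetherian with all objects of finite injective dimension — the paragraph \ref{numb:loc-noeth-comp-gen} tells us that $\Tt := \Derived{\Aa}$ is compactly generated with $\Tt^c = \bDerived{\fp(\Aa)}$. Since $B \in \fp(\Aa)$ is indecomposable, it is a compact object of $\Tt$. Moreover, because $C$ is cotilting it is pure-injective in $\Aa$ (Theorem~\ref{thm:cotilting-is-pure-injective}), and since $\Aa$ is hereditary (the corollary is applied in that setting, but in any case $\id(C)\le 1$ suffices together with finite injective dimension) Lemma~\ref{lem:pure-in-der-cat} gives that $C$ is pure-injective as an object of $\Tt$. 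The embedding $B \hookrightarrow C^I$ is split in $\Aa$, hence remains a split monomorphism in $\Tt$, and the product $C^I$ taken in $\Aa$ agrees up to a shift-summand with the product in $\Tt$ (as in the proof of Lemma~\ref{lem:pure-in-der-cat}); in any event $B$, being concentrated in degree $0$, is a summand of $\prod^\Tt_I C$. Now Proposition~\ref{prop:summands-of-products} applies — provided $\Tt^c$ is Hom-finite and has Auslander–Reiten triangles — and yields that $B$ is a direct summand of $C$.

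The main obstacle is justifying the hypotheses of Proposition~\ref{prop:summands-of-products}, namely that $\Tt^c = \bDerived{\fp(\Aa)}$ is Hom-finite and has Auslander–Reiten triangles. In the generality stated in the corollary these are extra assumptions that in the intended application (where $\Aa = \Qcoh\XX$ and $\fp(\Aa) = \coh\XX$ satisfies Serre duality and is Hom-finite) are automatic: Hom-finiteness of $\bDerived{\coh\XX}$ follows from Hom-finiteness of $\coh\XX$ together with heredity, and the existence of Auslander–Reiten triangles in $\bDerived{\coh\XX}$ follows from Serre duality via Lemma~\ref{lem:Serre}(2). So in writing the proof I would either add these as explicit hypotheses or simply note that they hold in the cases of interest; the remainder of the argument is the short chain "$B \in \Ff\cap\rperpe{\Ff} \Rightarrow B \in \Prod(C) \Rightarrow B$ is a summand of a product of copies of $C$ in $\Tt$ $\Rightarrow B \mid C$ by Proposition~\ref{prop:summands-of-products}", with the transfer of pure-injectivity between $\Aa$ and $\Tt$ handled by Lemma~\ref{lem:pure-in-der-cat} and Theorem~\ref{thm:cotilting-is-pure-injective}.
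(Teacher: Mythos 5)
Your proposal is correct and follows essentially the same route as the paper: identify $B\in\Ff\cap\rperpe{\Ff}=\Prod(C)$ via Lemma~\ref{lem:properties-cotilting}, transfer pure-injectivity of $C$ to $\Derived{\Aa}$ via Theorem~\ref{thm:cotilting-is-pure-injective} and Lemma~\ref{lem:pure-in-der-cat}, and conclude with Proposition~\ref{prop:summands-of-products}. The only cosmetic difference is that the paper compares products in $\Aa$ and in $\Derived{\Aa}$ by citing \cite[Cor.~2.13]{coupek:stovicek:2019}, whereas you use the $H^0\oplus H^1[-1]$ splitting from the proof of Lemma~\ref{lem:pure-in-der-cat}; your remark that Hom-finiteness and Auslander--Reiten triangles for $\Tt^c$ hold in the intended application is a fair observation, as the paper's own proof leaves this implicit as well.
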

\begin{proof}
  By Lemma~\ref{lem:properties-cotilting}, we have that
  $\Ff \cap \rperpe{\Ff} = \Prod(C)$ and so we may apply
  \cite[Cor.~2.13]{coupek:stovicek:2019} to obtain that products of
  copies of $C$ in $\Aa$ coincide with products of copies of $C$ in
  $\Derived{\Aa}$.  By Theorem~\ref{thm:cotilting-is-pure-injective},
  the cotilting object $C$ is pure-injective in $\Aa$ so, by
  Lemma~\ref{lem:pure-in-der-cat}, it is also pure-injective in
  $\Derived{\Aa}$.  Finally, we may apply
  Proposition~\ref{prop:summands-of-products}, to obtain that $B$ is a
  direct summand of $C$ in $\Derived{\Aa}$ and hence in $\Aa$.
\end{proof}
\begin{definition}
  \begin{enumerate}
  \item Two cotilting objects $C$, $C'\in\Aa$ are \emph{equivalent}, if
  they have the same cotilting class. This is equivalent to
  $\Prod(C)=\Prod(C')$.
\item A cotilting object $C\in\Aa$ is called \emph{minimal} if, for
  any other cotilting object $C'$ with same cotilting class
  $\Cogen(C')=\Cogen(C)$, we have that $C$ is a direct summand of
  $C'$.
  \end{enumerate}
\end{definition}
\emph{Let $\Aa$ additionally be locally finitely presented with
  $\Aa_0=\fp(\Aa)$.}
\begin{theorem}[{\cite[Thm.~3.13]{coupek:stovicek:2019}, \cite[Thm.~1.13]{buan:krause:2003}}]\label{thm:cotilting-is-of-finite-type}
  Let $\Aa$ be locally noetherian. The torsionfree classes $\Ff$ in
  $\Aa$ associated to a cotilting object bijectively correspond to the
  torsion pairs $(\Tt_0,\Ff_0)$ in $\Aa_0$ where $\Ff_0$ is a
  generating class for $\Aa_0$. The correspondence is given by
  $$\Ff\mapsto\Ff\cap\Aa_0\quad\text{and}\quad
  (\Tt_0,\Ff_0)\mapsto\varinjlim(\Ff_0).$$ Accordingly, two cotilting
  objects $C,\,C'\in\Aa$ are equivalent if and only if
  $\lperpe{C}\cap\Aa_0=\lperpe{C'}\cap\Aa_0$. \qed
\end{theorem}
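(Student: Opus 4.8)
The plan is to prove the displayed bijection by showing that the two assignments $\Ff\mapsto\Ff\cap\Aa_0$ and $(\Tt_0,\Ff_0)\mapsto\varinjlim(\Ff_0)$ are well defined and mutually inverse; the final ``Accordingly'' clause is then immediate, since by Definition~\ref{def:cotilting} two cotilting objects are equivalent exactly when they have the same cotilting class $\Cogen(C)=\lperpe{C}$, and injectivity of the first assignment turns this into the condition $\lperpe{C}\cap\Aa_0=\lperpe{C'}\cap\Aa_0$. So there are three things to establish: (i) the round-trip identities $\varinjlim(\Ff\cap\Aa_0)=\Ff$ for a cotilting class $\Ff$ and $(\varinjlim\Ff_0)\cap\Aa_0=\Ff_0$ for the torsionfree part $\Ff_0$ of a torsion pair in $\Aa_0$; (ii) that if $C$ is cotilting then $\Ff\cap\Aa_0$ is the torsionfree part of a torsion pair in $\Aa_0$ which is moreover a generating class; and (iii) that for any torsion pair $(\Tt_0,\Ff_0)$ in $\Aa_0$ with $\Ff_0$ generating, the class $\varinjlim(\Ff_0)$ is the cotilting class of some cotilting object of $\Aa$.

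Parts (i) and (ii) form the ``soft'' part, for which I would use two inputs: that a cotilting class $\Ff=\Cogen(C)=\lperpe{C}$ is a torsionfree class in the Grothendieck category $\Aa$ which is in addition closed under direct limits by Theorem~\ref{thm:cotilting-is-pure-injective}; and that, $\Aa$ being locally noetherian, $\Aa_0=\fp(\Aa)$ is a skeletally small abelian category of noetherian objects, every object of $\Aa$ is the directed union of its finitely presented subobjects, and the identity of a finitely presented object factors through any presentation of it as a direct limit. For (i): $\varinjlim(\Ff\cap\Aa_0)\subseteq\Ff$ by closure under direct limits, while the reverse inclusion holds because every $F\in\Ff$ is the directed union of its finitely presented subobjects, which lie in $\Aa_0$ and, being subobjects of $F$, in $\Ff$; conversely, if $\Ff_0\subseteq\Aa_0$ is closed under direct summands, then an $X\in\Aa_0$ written as $\varinjlim X_i$ with $X_i\in\Ff_0$ is a direct summand of some $X_i$, so $(\varinjlim\Ff_0)\cap\Aa_0=\Ff_0$. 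For (ii): the torsion pair $(\Tt,\Ff)$ of $\Aa$ restricts to a torsion pair $(\Tt\cap\Aa_0,\Ff\cap\Aa_0)$ of $\Aa_0$, because the torsion and torsionfree parts of a finitely presented object are again finitely presented; and $\Ff\cap\Aa_0$ is a generating class because a generator $G$ of $\Aa$ may be chosen inside $\Ff$ (this is part of the definition of cotilting), and then for any $X\in\Aa_0$ any epimorphism $G^{(I)}\twoheadrightarrow X$ already restricts, $X$ being noetherian, to an epimorphism from a finitely presented subobject of $G^{(I)}$, which lies in $\Ff\cap\Aa_0$.

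The substance is (iii), which I would carry out by HRS-tilting, following \cite[Thm.~3.13]{coupek:stovicek:2019}. Given a generating torsion pair $(\Tt_0,\Ff_0)$ in $\Aa_0$, put $\Ff:=\varinjlim(\Ff_0)$; since the objects of $\Tt_0$ are finitely presented one has $\Ff=\{F\in\Aa\mid\Hom(T,F)=0\text{ for all }T\in\Tt_0\}$, so $\Ff$ is a torsionfree class closed under products and direct limits, its torsion pair $(\Tt,\Ff)$ restricts to $(\Tt_0,\Ff_0)$ on $\Aa_0$, and — using that $\Ff_0$ is a generating class — $\Ff$ contains a generator of $\Aa$. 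Now HRS-tilt $\Aa$ at $(\Tt,\Ff)$ to obtain the heart $\QHh\subseteq\Derived{\Aa}$ with its torsion pair $(\Ff[1],\Tt)$, choose an injective cogenerator $W$ of $\QHh$, and extract from $W$ an object $C$ of $\Aa$ together with the short exact sequences exhibiting it as a cotilting object: concretely, one checks the criterion \cite[Thm.~2.11]{coupek:stovicek:2019}, where (CS3) holds essentially by construction since $W$ is the injective cogenerator of $\QHh$, and (CS0) and (CS1) follow from injectivity of $W$ by transporting $\Hom$ and $\Ext^1$ across the identification of $\QHh$ as an HRS-tilt of $\Aa$. A bookkeeping check gives $\lperpe{C}=\Cogen(C)=\Ff$, and combined with (i) this identifies $(\Tt_0,\Ff_0)\mapsto\varinjlim(\Ff_0)$ as the two-sided inverse of restriction.

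The main obstacle is the one genuinely non-formal ingredient of (iii): that the HRS-tilted heart $\QHh$ is a Grothendieck category, equivalently that it admits an injective cogenerator $W$ at all. This is precisely the point at which the finite-type hypothesis is used — it is the closure of $\Ff=\varinjlim(\Ff_0)$ under direct limits that makes the tilt Grothendieck (the Colpi--Gregorio / Parra--Saor\'{\i}n type criterion for an HRS-tilt), and without it there is no candidate cotilting object to build. By contrast, the round-trip identities, the passage from a torsionfree class to a torsion pair in the noetherian category $\Aa_0$, and the verification of the cotilting axioms once $W$ is in hand are all routine given local noetherianity and the properties of cotilting classes already recorded in Section~\ref{sec:cotilting}.
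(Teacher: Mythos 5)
The paper does not prove this statement at all—it quotes it from \cite[Thm.~3.13]{coupek:stovicek:2019} and \cite[Thm.~1.13]{buan:krause:2003}—and your reconstruction follows precisely the strategy of those cited sources: the restriction/direct-limit-closure round trip on the level of torsion(free) classes, plus HRS-tilting at $(\Tt,\Ff)$, with closure of $\Ff=\varinjlim\Ff_0$ under direct limits making the heart Grothendieck and its injective cogenerator yielding the cotilting object. Your outline is correct, so this is essentially the same approach as the proof the paper relies on.
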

\begin{definition}
  A cotilting object $C\in\Aa$ is called \emph{large} if it is not
  equivalent to a coherent cotilting object.
\end{definition}

\begin{definition}
  Let $E\in\Aa$.
  \begin{enumerate}
  \item $E$ is called \emph{rigid}, if $\Ext^1(E,E)=0$.
  \item $E$ is called \emph{self-orthogonal}, if
    $\Ext^1(E^{\alpha},E)=0$ for every cardinal $\alpha$.
  \item A self-orthogonal $E$ is called \emph{maximal self-orthogonal}, if
    $\Prod(E)\subseteq\Prod(F)$ implies $\Prod(E)=\Prod(F)$ for every
    self-orthogonal $F$.
  \end{enumerate}
\end{definition}
\begin{proposition}
  Let $\Aa$ be hereditary. Every cotilting object in $\Aa$ is maximal
  self-orthogonal.
\end{proposition}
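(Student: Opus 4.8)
The statement asserts that in a hereditary Grothendieck category $\Aa$, any cotilting object $C$ is maximal self-orthogonal. First I would observe that $C$ is certainly self-orthogonal: by property (CS1) we have $\Ext^1(C^I,C)=0$ for every cardinal $I$, which is precisely the definition of self-orthogonality. So the content of the proposition is the \emph{maximality}: if $F$ is self-orthogonal and $\Prod(C)\subseteq\Prod(F)$, then in fact $\Prod(C)=\Prod(F)$, i.e.\ $F\in\Prod(C)$.

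\textbf{Key steps.} Let $F$ be self-orthogonal with $\Prod(C)\subseteq\Prod(F)$, and write $\Ff=\lperpe{C}=\Cogen(C)$ for the cotilting class. The strategy is to show $F\in\Ff\cap\rperpe{\Ff}$, and then invoke Lemma~\ref{lem:properties-cotilting}(2), which gives $\Ff\cap\rperpe{\Ff}=\Prod(C)$, to conclude $F\in\Prod(C)$. There are two things to verify.
\begin{enumerate}
\item $F\in\Ff=\lperpe{C}$, i.e.\ $\Ext^1(F,C)=0$. Since $\Prod(C)\subseteq\Prod(F)$, every copy $C'$ of $C$ (indeed every object of $\Prod(C)$) is a direct summand of some product $F^I$. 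Self-orthogonality of $F$ gives $\Ext^1(F^I,F)=0$; I need to transfer this to $\Ext^1(F,C)=0$. The natural route: $C$ itself lies in $\Prod(F)$, so $C\oplus C''\cong F^I$ for some $I$ and some complement $C''$; then $\Ext^1(F,C)$ is a direct summand of $\Ext^1(F,F^I)$. Here I would use that $\Aa$ is Grothendieck and $\Ext^1(-,\prod_i F)\cong\prod_i\Ext^1(-,F)$ (products are exact), so $\Ext^1(F,F^I)\cong\Ext^1(F,F)^I=0$ since $F$ is rigid (which follows from self-orthogonality with $\alpha=1$). Hence $\Ext^1(F,C)=0$ and $F\in\Ff$.
\item $F\in\rperpe{\Ff}$, i.e.\ $\Ext^1(\Ff,F)=0$. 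Take any $G\in\Ff=\Cogen(C)$. Since $C\in\Prod(F)$, we have $\Prod(C)\subseteq\Prod(F)$ and so $\Cogen(C)\subseteq\Cogen(F)$; thus $G$ embeds in a product $F^J$. Using heredity: from $0\to G\to F^J\to Q\to 0$ we get an exact sequence $\Ext^1(Q,F)\to\Ext^1(F^J,F)\to\Ext^1(G,F)\to\Ext^2(Q,F)=0$. The term $\Ext^1(F^J,F)\cong\Ext^1(F,F)^J$ — wait, $F^J$ is a \emph{product} so $\Ext^1(F^J,F)$ need not split as a product on the first variable. Instead I would argue directly: $F^J$ is self-orthogonal's defining object, $\Ext^1(F^J,F)=0$ by hypothesis (this is exactly self-orthogonality). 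Then from the long exact sequence, $\Ext^1(G,F)$ is a quotient of $\Ext^1(F^J,F)=0$, hence $\Ext^1(G,F)=0$. So $F\in\rperpe{\Ff}$.
\end{enumerate}
Combining, $F\in\Ff\cap\rperpe{\Ff}=\Prod(C)$ by Lemma~\ref{lem:properties-cotilting}(2), whence $\Prod(F)\subseteq\Prod(C)$, and together with the hypothesis $\Prod(C)\subseteq\Prod(F)$ we get $\Prod(C)=\Prod(F)$. This shows $C$ is maximal self-orthogonal.

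\textbf{Main obstacle.} The delicate point is step~(2): one must be careful about which Ext-vanishing statements split through products and which do not. The hypothesis $\Ext^1(F^\alpha,F)=0$ is exactly tailored to handle the cokernel term $Q$ in the embedding $G\hookrightarrow F^J$ \emph{after} applying heredity ($\Ext^2=0$), so the argument is clean once the long exact sequence is written down — but it genuinely uses heredity, which is where the hypothesis on $\Aa$ enters. In step~(1) the subtlety is that $\Prod(C)\subseteq\Prod(F)$ must be upgraded to the honest statement "$C$ is a summand of a product of copies of $F$"; this is immediate from the definition of $\Prod$, but it is the hinge that lets rigidity of $F$ do the work. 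I expect the whole proof to be short (a few lines) once these two orthogonality computations are organized around Lemma~\ref{lem:properties-cotilting}(2).
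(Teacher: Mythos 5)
Your proof is correct and is essentially the argument the paper has in mind: the paper's ``proof'' is just the remark that the argument of \cite[Prop.~3.1]{buan:krause:2003} carries over to this setting, and that argument is exactly what you wrote out — given a self-orthogonal $F$ with $\Prod(C)\subseteq\Prod(F)$, show $F\in\Ff=\lperpe{C}$ and $F\in\rperpe{\Ff}$, use heredity to kill the $\Ext^2$-term, and conclude $F\in\Ff\cap\rperpe{\Ff}=\Prod(C)$ via Lemma~\ref{lem:properties-cotilting}(2).

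One caveat, and it is precisely the point where ``the module proof also works in the more general situation'' needs a word: in step (1) your justification that $\Ext^1(F,F^I)\cong\Ext^1(F,F)^I$ ``since products are exact'' is not valid. Grothendieck categories satisfy AB5 but in general not AB4*, and products are typically not exact in $\Qcoh\XX$; the isomorphism you assert need not hold. What you actually need is only the implication $\Ext^1(F,F)=0\Rightarrow\Ext^1(F,F^I)=0$, and this is true because the canonical comparison map $\Ext^1(F,\prod_i Y_i)\to\prod_i\Ext^1(F,Y_i)$ is always a monomorphism; this is \cite[Cor.~A.2]{coupek:stovicek:2019}, which the paper itself invokes for the same purpose (e.g.\ in Proposition~\ref{prop:slope-infty-definable}). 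With that substitution your argument is complete and coincides with the intended one; note also that in step (2) no such issue arises, since $\Ext^1(F^J,F)=0$ is literally the definition of self-orthogonality.
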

\begin{proof}
  The proof in~\cite[Prop.~3.1]{buan:krause:2003} does also work in
  this more general situation.
\end{proof}
\begin{proposition}
  Let $\Aa$ be locally noetherian Grothendieck and hereditary with
  $\Aa_0=\fp(\Aa)$. An object $C$ in $\Aa$ is cotilting if and only if
  (CS1) and (CS2) hold and $\lperpe{C}\cap\Aa_0$ is generating.
  \end{proposition}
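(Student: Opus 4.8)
We want to show that for a locally noetherian hereditary Grothendieck category $\Aa$ with $\Aa_0 = \fp(\Aa)$, an object $C$ is cotilting if and only if (CS1) and (CS2) hold together with the condition that $\lperpe{C} \cap \Aa_0$ is a generating class for $\Aa_0$. One direction is essentially already packaged in the results quoted above: if $C$ is cotilting then it satisfies (CS1) and (CS2) by the Bongartz-style lemma of \cite{coupek:stovicek:2019} recorded earlier, and the fact that $\lperpe{C}$ contains a generator for $\Aa$, combined with Theorem~\ref{thm:cotilting-is-of-finite-type}, forces $\Ff_0 := \lperpe{C} \cap \Aa_0$ to be generating in $\Aa_0$ (this is precisely the compatibility condition in that theorem, once one notes $\lperpe{C} = \Cogen(C) = \varinjlim(\Ff_0)$). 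So the content is the converse.

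**For the converse,** suppose (CS1), (CS2) hold and $\Ff_0 = \lperpe{C}\cap\Aa_0$ is generating. First I would observe that since $\Aa$ is hereditary, $(\Tt_0, \Ff_0)$ with $\Tt_0 = \rperpo{\Ff_0} \cap \Aa_0$ (equivalently the torsion class cogenerated appropriately) is a torsion pair in $\Aa_0$ with $\Ff_0$ generating; by Theorem~\ref{thm:cotilting-is-of-finite-type} this corresponds to a cotilting torsionfree class $\Ff := \varinjlim(\Ff_0)$ in $\Aa$, arising from some cotilting object $C'$, with $\lperpe{C'}\cap\Aa_0 = \Ff_0 = \lperpe{C}\cap\Aa_0$. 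The goal is then to identify $C$ with $C'$ up to equivalence, i.e.\ to show $C$ is itself cotilting with cotilting class $\Ff$. The key point will be to show $\Cogen(C) = \lperpe{C} = \Ff$ and that $\Ff$ contains a generator of $\Aa$ — the latter being immediate since $\Ff \supseteq \Ff_0$ which is generating. Using heredity, (CS0) is automatic ($\id(C) \le 1$ since $\Ext^2$ vanishes), so only the equality $\Cogen(C) = \lperpe{C}$ remains, plus (CS3).

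**The heart of the argument** is to leverage (CS1) and (CS2) to force $C$ into the shape of a cotilting object relative to the already-constructed class $\Ff$. Concretely: (CS1) says $C$ is self-orthogonal, hence by the preceding Proposition (every cotilting object is maximal self-orthogonal, and more relevantly self-orthogonal objects with large enough $\Prod$ are constrained) I would compare $\Prod(C)$ with $\Prod(C')$. One shows $C \in \Ff = \lperpe{C'}$: indeed every finitely presented subobject-quotient analysis reduces $\Ext^1(C,C')$-type vanishing to the finitely presented level via Theorem~\ref{thm:ext-direct-limit-pure-inj} (pure-injectivity of the cotilting object $C'$) together with the fact that $\Ff$ is closed under direct limits. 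Dually, (CS2) — the condition $\lperp{C}=0$, i.e.\ $\rperpo{C}\cap\rperpe{C} = 0$ — is exactly what rules out $\Prod(C)$ being too small, forcing $\Prod(C) = \Ff \cap \rperpe{\Ff} = \Prod(C')$ by Lemma~\ref{lem:properties-cotilting}(2). Once $\Prod(C) = \Prod(C')$, the objects $C$ and $C'$ are equivalent cotilting objects, and in particular $C$ is cotilting. Finally, (CS3) for $C$ follows from (CS3) for $C'$ since it only depends on $\Prod(C) = \Prod(C')$ and the injective cogenerator $W$ of $\QHh$, which is determined by $\Ff$.

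**The main obstacle** I anticipate is the careful passage between the finitely presented level and the whole category in verifying $C \in \Ff$ and $\Prod(C) \supseteq \Ff \cap \rperpe{\Ff}$: one must check that the torsion pair machinery of Theorem~\ref{thm:cotilting-is-of-finite-type} interacts correctly with the self-orthogonality hypothesis (CS1), and that (CS2) really pins down $\Prod(C)$ rather than merely bounding it below. This is where heredity and local noetherianness are both used essentially — the former to get $\id(C)\le 1$ for free and to make the $\Ext$-direct-limit argument work, the latter to invoke Theorem~\ref{thm:cotilting-is-of-finite-type}. I expect the argument to close cleanly once these compatibilities are spelled out, following the template of \cite[Prop.~3.1 and its corollaries]{buan:krause:2003} but using the Grothendieck-category formulation from \cite{coupek:stovicek:2019} throughout.
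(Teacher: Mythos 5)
Your handling of the ``only if'' direction and the observation that heredity makes (CS0) automatic are fine, but the ``if'' direction --- the actual content of the statement --- is not proved: its crucial step is asserted rather than argued. You claim that (CS2) ``forces'' $\Prod(C)=\Ff\cap\rperpe{\Ff}=\Prod(C')$, but you give no mechanism, and this claim is precisely the hard inclusion $\lperpe{C}\subseteq\Cogen(C)$ in disguise. The paper settles it with a short reject argument that your outline never identifies: for $X\in\lperpe{C}$ let $K$ be the reject of $C$ in $X$, giving exact sequences $0\ra K\ra X\ra U\ra 0$ and $0\ra U\ra C^I\ra Y\ra 0$ with $I=\Hom(X,C)$; heredity together with (CS1) yields $\Ext^1(U,C)=0$ and $\Ext^1(K,C)=0$, the reject property embeds $\Hom(K,C)$ into $\Ext^1(U,C)=0$, and then (CS2) gives $K=0$, i.e.\ $X\in\Cogen(C)$. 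Some argument of this kind is unavoidable; without it your comparison of $C$ with the auxiliary cotilting object $C'$ cannot close.

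Two further steps in your outline are shaky. First, the claim $C\in\Ff=\lperpe{C'}$ via Theorem~\ref{thm:ext-direct-limit-pure-inj} requires a directed system of finitely presented objects $C_i$ with $\varinjlim C_i\cong C$ and $\Ext^1(C_i,C')=0$, i.e.\ $C_i\in\Ff_0$; an arbitrary presentation of $C$ as a direct limit of finitely presented objects gives no such control, so as stated the step is circular --- it essentially presupposes $C\in\varinjlim\Ff_0$, which is close to what you are trying to establish. Second, even granting $\Prod(C)=\Prod(C')$, concluding that $C$ is cotilting still needs $\lperpe{C}\subseteq\lperpe{C'}$: one must pass from $\Ext^1(X,C)=0$ to $\Ext^1(X,C^{J})=0$, and since products need not be exact in a Grothendieck category this is not automatic for an object not yet known to be cotilting (or pure-injective). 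So the detour through $C'$ and Theorem~\ref{thm:cotilting-is-of-finite-type} does not actually bypass the direct verification of $\Cogen(C)=\lperpe{C}$, which is exactly where the paper's much shorter proof lives; the easy inclusion $\Cogen(C)\subseteq\lperpe{C}$ follows from (CS1) and heredity, and the generating hypothesis then supplies the generator required by the definition.
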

\begin{proof}
  Let $C$ satisfy (CS1) and (CS2). It is sufficient to show that
  $\Cogen(C)=\lperpe{C}$. By (CS1) and since $\Aa$ is hereditary we
  easily get $\Cogen(C)\subseteq\lperpe{C}$. For the reverse
  inclusion, we let $X\in\lperpe{C}$ and consider the short exact
  sequences induced by the reject $K$ of $\{C\}$ in $X$, that is,
  $0\ra K\ra X\ra U\ra 0$ and $0\ra U\ra C^I\ra Y\ra 0$ with
  $I=\Hom(X,C)$. By applying $\Hom(-,C)$ to these sequences and using
  again that $\Aa$ is hereditary, we obtain
  $\Ext^1(K,C)=0=\Ext^1(U,C)\cong\Hom(K,C)$, and then $K=0$ by
  (CS2). We get $X\in\Cogen(C)$.
\end{proof}
\subsection*{Cotilting objects and injective cogenerators}
Let $\Aa$ be a Grothendieck category and $(\Tt, \Ff)$ a torsion pair
in $\Aa$. We define a t-structure $(\Uu_\Tt, \Vv_\Ff)$ on
$\Derived{\Aa}$ as follows:
\[ \Uu_\Tt := \{ X \in \Derived{\Aa}\mid H^i(X) = 0 \text{ for all }
  i> 0 \text{ and } H^0(X) \in \Tt \}\]
\[ \Vv_\Ff := \{ X \in \Derived{\Aa}\mid H^i(X) = 0 \text{ for all }
  i< -1 \text{ and } H^{-1}(X) \in \Ff \}.\]

\noindent We call $(\Uu_\Tt, \Vv_\Ff)$ the \emph{HRS-tilted
  t-structure} of $(\Tt, \Ff)$,
after~\cite{happel:reiten:smaloe:1996}.

The following full subcategory 
\[ \Gg = \{ X \in \Derived{\Aa} \mid H^{-1}(X) \in \Ff, H^0(X) \in \Tt
  \text{ and } H^i(X) = 0 \text{ for } i \neq 0,\,-1 \}\] of
$\Derived{\Aa}$ is the \emph{heart} of the t-structure
$(\Uu_\Tt, \Vv_\Ff)$. It is sometimes also called an \emph{HRS-tilt}
of $\Aa$. Then $(\Ff[1],\Tt)$ is a torsion pair in $\Gg$, and if
$(\Tt,\Ff)$ is a cotilting torsion pair, then
$\Derived{\Gg}=\Derived{\Aa}$,
cf.~\cite{stovicek:kerner:trlifaj:2011}.

We will also consider (in Section~\ref{sec:Euler-zero}) the category
$\Gg[-1]$ instead and call it the \emph{$[-1]$-shifted heart}. In this
case we have that $(\Ff,\Tt[-1])$ is a torsion pair in
$\Gg[-1]$. Since $[-1]$ is an automorphism of $\Derived{\Aa}$ there is
just a notational difference (which has some tradition in the theory
of weighted projective lines).

Besides the mentioned results
from~\cite{coupek:stovicek:2019} we will also need the following:
\begin{theorem}[{\cite[Thm.~5.2]{saorin:2017}}]\label{thm:loc-coh-hearts}
  Let $\Aa$ be a locally noetherian Grothendieck category and let
  $(\Tt, \Ff)$ be a torsion pair in $\Aa$. Then $\Gg$ is a locally
  coherent Grothendieck category (with $\Gg \cap \bDerived{\fp\Aa}$
  the class of finitely presented objects) if and only if $\Ff$ is
  closed under direct limits. \qed
\end{theorem}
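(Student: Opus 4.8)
The plan is to prove both implications of the equivalence; the forward one is short, while the converse carries the content. Suppose first that $\Gg$ is a Grothendieck category (which is implied by $\Gg$ being locally coherent Grothendieck). By the theorem of Parra and Saor\'{\i}n on direct limits in the heart of a torsion pair --- which states that, for $\Aa$ Grothendieck, $\Gg$ is Grothendieck if and only if $\Ff$ is closed under direct limits --- the class $\Ff$ is then closed under direct limits, settling the forward direction. From now on assume conversely that $\Ff$ is closed under direct limits. By the same theorem $\Gg$ is Grothendieck, and moreover direct limits of direct systems lying in $\Gg$ are computed as in $\Derived{\Aa}$; the latter is transparent here, since for a direct system $(G_{\lambda})$ in $\Gg$ one has $H^{i}\big(\varinjlim^{\Derived{\Aa}} G_{\lambda}\big) \cong \varinjlim^{\Aa} H^{i}(G_{\lambda})$, which lies in $\Ff$ for $i = -1$ (by hypothesis) and in $\Tt$ for $i = 0$, so that $\varinjlim^{\Derived{\Aa}} G_{\lambda}$ already lies in $\Gg$ and has the correct universal property there. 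In particular the cohomology functors $H^{-1}, H^{0} \colon \Gg \to \Aa$ commute with direct limits. Since $\Aa$ is locally noetherian, $\Aa_{0} := \fp(\Aa)$ is the (abelian) category of noetherian objects; the torsion pair $(\Tt,\Ff)$ restricts, via the torsion radical, to a torsion pair $(\Tt_{0}, \Ff_{0}) := (\Tt \cap \Aa_{0}, \Ff \cap \Aa_{0})$ in $\Aa_{0}$, and --- using $\Ff \cap \Tt = 0$ together with closure of $\Ff$ under direct limits --- every object of $\Tt$, resp.\ of $\Ff$, is a direct limit of objects of $\Tt_{0}$, resp.\ of $\Ff_{0}$.

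Set $\Gg_{0} := \Gg \cap \bDerived{\fp\Aa}$, equivalently the full subcategory of those $G \in \Gg$ with $H^{-1}(G) \in \Ff_{0}$ and $H^{0}(G) \in \Tt_{0}$. It is skeletally small, and is abelian and closed under extensions and direct summands inside $\Gg$: indeed the cohomology of a kernel or cokernel formed in $\Gg$ of a morphism between objects of $\Gg_{0}$ is, by the long exact cohomology sequence, a subquotient in $\Aa$ of a noetherian object, hence noetherian. \emph{Step (i): every object of $\Gg$ is a direct limit of objects of $\Gg_{0}$.} One first checks that $\Gg_{0}$ contains a set of generators of $\Gg$. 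Given $G$ and a proper subobject $G' \subsetneq G$, comparison of torsion subobjects in $\Gg$ shows that either $H^{-1}(G)[1] \not\subseteq G'$ --- in which case, writing $H^{-1}(G)[1] = \bigcup_{j} F_{j}[1]$ as a directed union with $F_{j} \in \Ff_{0}$ (noetherian subobjects of $H^{-1}(G)$), some $F_{j}[1] \hookrightarrow G$ does not factor through $G'$ --- or else $G/G' \in \Tt$ and one produces, from the identity $\Tt = \varinjlim \Tt_{0}$, an object of $\Tt_{0}$ mapping to $G$ not through $G'$; the last point is immediate when $\Aa$ is hereditary, and requires a short additional argument with the connecting maps of the relevant short exact sequences in general. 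Since $\Gg_{0}$ is abelian, closed under finite coproducts, and direct limits in $\Gg$ are exact, the standard criterion (present an object as an iterated cokernel of morphisms between coproducts of generators) upgrades this to the assertion; in particular $\Gg$ is locally finitely presented.

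\emph{Step (ii): every $G_{0} \in \Gg_{0}$ is finitely presented in $\Gg$.} This is the crux. Represent $G_{0}$ by a two-term complex $Q^{-1} \to Q^{0}$ in degrees $-1, 0$ with $Q^{-1}, Q^{0} \in \Aa_{0}$ --- apply the good truncation $\tau_{\leq 0}\tau_{\geq -1}$ to a bounded complex of noetherian objects representing $G_{0}$, using that $\Aa_{0}$ is abelian. For $H \in \Gg$, combine the triangle $Q^{0} \to G_{0} \to Q^{-1}[1] \to Q^{0}[1]$ coming from the stupid filtration of this complex with the truncation triangle $H^{-1}(H)[1] \to H \to H^{0}(H) \to H^{-1}(H)[2]$ of $H$; a straightforward analysis then expresses $\Hom_{\Gg}(G_{0}, H) = \Hom_{\Derived{\Aa}}(G_{0}, H)$ by finitely many exact sequences all of whose other terms have the form $\Hom_{\Aa}(Q^{\pm}, -)$, $\Ext^{1}_{\Aa}(Q^{\pm}, -)$ or $\Ext^{2}_{\Aa}(Q^{\pm}, -)$ evaluated at $H^{-1}(H)$ or $H^{0}(H)$. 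Since $\Aa$ is locally noetherian, direct limits of injective objects in $\Aa$ are injective, hence $\Ext^{n}_{\Aa}(F, -)$ commutes with direct limits for every $F \in \fp(\Aa)$; combining this with the fact that $H^{-1}, H^{0}$ commute with direct limits in $\Gg$ and the exactness of direct limits, a repeated application of the five lemma shows that $\Hom_{\Gg}(G_{0}, -)$ commutes with direct limits, i.e.\ $G_{0} \in \fp(\Gg)$.

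Finally, Steps (i) and (ii) together give $\fp(\Gg) = \Gg_{0}$ --- a finitely presented object of $\Gg$ is a direct summand of an object of $\Gg_{0}$ by Step (i), hence lies in $\Gg_{0}$, while the reverse inclusion is Step (ii) --- so $\fp(\Gg) = \Gg \cap \bDerived{\fp\Aa}$ is an abelian category and $\Gg$ is locally coherent, which is the desired conclusion. The main obstacle is Step (ii): it is the single place where the hypothesis on $\Ff$ truly enters, through the identification of direct limits in $\Gg$ with those in $\Derived{\Aa}$ and hence the continuity of $H^{-1}$ and $H^{0}$, and it demands the careful homological bookkeeping with the two families of triangles together with the input on $\Ext$-functors out of finitely presented objects over a locally noetherian base; a milder secondary difficulty is the generating-set verification in Step (i).
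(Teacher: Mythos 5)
This statement is not proved in the paper at all: it is imported verbatim from \cite[Thm.~5.2]{saorin:2017}, so there is no internal argument to compare yours with, and I assess your proposal on its own terms. Your overall route is the expected one and matches the strategy of the cited literature: use the Parra--Saor\'{\i}n theorem (heart Grothendieck $\Leftrightarrow$ $\Ff$ closed under direct limits) for the easy direction and for AB5, observe that $H^{-1},H^{0}\colon\Gg\to\Aa$ then preserve direct limits, and identify $\fp(\Gg)$ with $\Gg_0:=\Gg\cap\bDerived{\fp\Aa}$. Your Step (ii) is sound: a two-term complex of noetherian objects exists by good truncation, the d\'evissage through the two triangles reduces $\Hom_{\Gg}(G_0,-)$ to $\Hom_{\Aa}$- and $\Ext^{n}_{\Aa}$-groups out of noetherian objects, and these commute with direct limits over a locally noetherian base; only your justification that direct limits in $\Gg$ have the expected cohomology (``has the correct universal property there'') is asserted rather than proved --- this should simply be quoted from Parra--Saor\'{\i}n, since lifting a filtered system from $\Derived{\Aa}$ to complexes and checking the universal property in $\Gg$ is not automatic.

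The genuine gap is in Step (i), precisely at the place you flag. When $H^{-1}(G)[1]\subseteq G'$ and $G/G'\in\Tt$, you cannot lift a nonzero map $T\to G/G'$ with $T\in\Tt_0$ along the epimorphism $G\to G/G'$: the obstruction is an $\Ext^1_{\Gg}$-class, and for non-hereditary $\Aa$ this is exactly where the content lies, so ``a short additional argument with the connecting maps'' does not yet constitute a proof. What is needed is, for instance: write $H^{0}(G)=\varinjlim T_i$ with $T_i\in\Tt_0$, choose $i$ with $T_i\to H^0(G)\to G/G'$ nonzero, pull back $0\to H^{-1}(G)[1]\to G\to H^{0}(G)\to 0$ along $T_i\to H^0(G)$ to get $0\to H^{-1}(G)[1]\to P\to T_i\to 0$, and then descend its class using $\Ext^{1}_{\Gg}(T_i,H^{-1}(G)[1])\cong\Ext^{2}_{\Aa}(T_i,H^{-1}(G))\cong\varinjlim_j\Ext^{2}_{\Aa}(T_i,F_j)$ (continuity of $\Ext^2_\Aa(T_i,-)$, which you already invoke in Step (ii)) to obtain an extension $0\to F_j[1]\to X\to T_i\to 0$ with $X\in\Gg_0$ and a morphism $X\to P\to G$ whose composite to $G/G'$ is nonzero. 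Without this (or an equivalent) construction the claim that $\Gg_0$ generates $\Gg$ is unproven. A second, smaller point: the ``standard criterion'' you use to upgrade generation to ``every object is a direct limit of objects of $\Gg_0$'' requires the generators to be finitely presented (one must factor maps into coproducts through finite subcoproducts), so logically Step (ii) must precede Step (i); since your Step (ii) is independent of Step (i), this is only a reordering, but it should be made explicit.
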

\begin{theorem}[{\cite[Prop.~4.4]{coupek:stovicek:2019}}]
  Let $\Aa$ be a Grothendieck category and let $(\Tt, \Ff)$ be a
  torsion pair where $\Ff$ is a generating class.  Then an object $E$
  in $\Gg$ is an injective cogenerator of $\Gg$ if and only if
  $E \cong C[1]$ where $C$ is a cotilting object in $\Aa$ with
  $\Ff = \Cogen(C)$. \qed
\end{theorem}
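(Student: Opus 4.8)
Let $\Aa$ be a Grothendieck category and $(\Tt,\Ff)$ a torsion pair with $\Ff$ a generating class. Then $E \in \Gg$ is an injective cogenerator of $\Gg$ if and only if $E \cong C[1]$ for a cotilting object $C$ in $\Aa$ with $\Ff = \Cogen(C)$.

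\medskip

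The plan is to work entirely inside the HRS-tilted heart $\Gg$, exploiting the torsion pair $(\Ff[1],\Tt)$ in $\Gg$ together with the long exact cohomology sequences that relate $\Hom$ and $\Ext^1$ in $\Gg$ to those in $\Aa$. The key bookkeeping tool is that for objects of $\Aa$ placed in appropriate degrees one has $\Hom_\Gg(X[1],Y[1]) = \Hom_\Aa(X,Y)$, $\Ext^1_\Gg(X[1],Y[1]) = \Ext^1_\Aa(X,Y)$, and more generally that morphisms and extensions in $\Gg$ between objects $X[1]$ (with $X \in \Ff$) and $Y$ (with $Y \in \Tt$), or between $Y$ and $X[1]$, can be read off from $\Hom_\Aa$, $\Ext^1_\Aa$ of the corresponding objects of $\Aa$; these are the standard HRS-tilting formulas and I would collect them in a short preliminary computation. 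One also uses that $\Gg$ is hereditary-like in low degrees: since $\Aa$ is not assumed hereditary here, I would instead only use that $\Gg$ has enough structure for the cotilting criteria, or alternatively restrict to the hereditary case which is all that is needed later — but the cleanest route keeps the argument general and appeals only to the cohomological description.

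\medskip

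\textbf{Forward direction.} Suppose $E$ is an injective cogenerator of $\Gg$. First I would show $E$ has the form $C[1]$ with $C \in \Aa$: since $(\Ff[1],\Tt)$ is a torsion pair in $\Gg$, decompose $E$ via its canonical sequence $0 \to C' \to E \to C'' \to 0$ with $C' \in \Ff[1]$, $C'' \in \Tt$; using that $\Ff$ generates $\Aa$ one shows the torsionfree part $C''$ must vanish for an injective cogenerator (objects of $\Ff$, suitably shifted, map to $E$ and detect the quotient), so $E \in \Ff[1]$, i.e.\ $E = C[1]$ for some $C \in \Ff \subseteq \Aa$. Then translate the defining properties of an injective cogenerator of $\Gg$ into conditions on $C$: injectivity of $E$ in $\Gg$ gives $\Ext^1_\Gg(-,C[1]) = 0$, which via the HRS formulas yields $\id_\Aa(C) \leq 1$, $\Ext^1_\Aa(X,C) = 0$ for all $X \in \Ff$ (so $\Ff \subseteq \lperpe{C}$), and self-orthogonality $\Ext^1_\Aa(C^I,C) = 0$; the cogenerator property of $E$ in $\Gg$ forces $\lperp{C} = 0$ in $\Aa$ (condition (CS2)) and, combined with the fact that $\Ff$ is generating, gives $\Cogen(C) = \Ff = \lperpe{C}$. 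Hence $C$ is cotilting with $\Cogen(C) = \Ff$ by Definition~\ref{def:cotilting}.

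\medskip

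\textbf{Converse.} Suppose $C$ is cotilting with $\Cogen(C) = \Ff = \lperpe{C}$. I would verify directly that $C[1]$ is injective and a cogenerator in $\Gg$. Injectivity: given $0 \to A \to B \to D \to 0$ in $\Gg$, apply $\Hom_\Gg(-,C[1])$; using the torsion pair $(\Ff[1],\Tt)$ in $\Gg$ to reduce to the cases where the objects lie in $\Ff[1]$ or in $\Tt$, the relevant vanishing $\Ext^1_\Gg(-,C[1]) = 0$ reduces to $\Ext^1_\Aa(X,C) = 0$ for $X \in \Ff$ (immediate from $\Ff = \lperpe{C}$) and to $\Ext^2_\Aa$-type vanishing coming from $\id_\Aa(C) \leq 1$, which is exactly (CS0) from Lemma after Definition~\ref{def:cotilting}. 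Cogenerator: any nonzero $M \in \Gg$ must admit a nonzero map to $C[1]$; splitting $M$ along $(\Ff[1],\Tt)$, for the torsion part in $\Ff[1]$ one uses $\Ff = \Cogen(C)$, and for the torsionfree part in $\Tt$ one uses (CS2) together with the sequence in (CS3) to produce a nonzero morphism. The main obstacle I anticipate is the careful handling of the mixed $\Hom$ and $\Ext^1$ groups in $\Gg$ between the torsion and torsionfree parts — getting the HRS long exact sequences and the index conventions exactly right — rather than any deep new idea; once those formulas are in place, both directions are a translation of the cotilting axioms (CS0)–(CS3). Since this is precisely \cite[Prop.~4.4]{coupek:stovicek:2019}, I would in the write-up simply cite that reference, having indicated here the shape of the argument.
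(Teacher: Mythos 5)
First, note that the paper itself gives no argument for this statement: it is quoted verbatim from \cite[Prop.~4.4]{coupek:stovicek:2019} and marked with a \qed, so "cite the reference", which is where your proposal ends up, is exactly what the paper does. In that sense your write-up strategy is consistent with the text, and your sketch of the converse direction is essentially sound: injectivity of $C[1]$ reduces, via $\Ext^1_{\Gg}(X,C[1])\cong\Hom_{\Derived{\Aa}}(X,C[2])$ and the canonical sequence $0\to H^{-1}(X)[1]\to X\to H^{0}(X)\to 0$, to $\Ext^1_{\Aa}(\Ff,C)=0$ and $\Ext^2_{\Aa}(\Tt,C)=0$ (i.e.\ to $\Ff=\lperpe{C}$ and (CS0)), and cogeneration follows from $\Cogen(C)=\Ff$ together with (CS2), since $\Hom_{\Gg}(T,C[1])\cong\Ext^1_{\Aa}(T,C)$ for $T\in\Tt$.

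However, as a standalone argument the forward direction has genuine gaps. (i) The reduction ``injective cogenerator $\Rightarrow E\in\Ff[1]$'' is only asserted; ``objects of $\Ff$, suitably shifted, map to $E$ and detect the quotient'' is not an argument, since $\Hom_{\Gg}(F[1],T)=0$ for $F\in\Ff$, $T\in\Tt$. The usual route uses that $\Ff$ is generating to embed every $T\in\Tt$ in $\Gg$ into some $K[1]$ with $K\in\Ff$ (from an $\Ff$-presentation $0\to K\to F\to T\to 0$ in $\Aa$), and one must then actually argue why the torsion-free part $H^0(E)$ of an injective cogenerator vanishes; this step is missing. (ii) Without heredity you cannot get $\id_{\Aa}(C)\leq 1$ from injectivity of $C[1]$ by ``HRS formulas'' alone: injectivity gives $\Ext^1_{\Aa}(\Ff,C)=0$ and $\Ext^2_{\Aa}(\Tt,C)=0$, but the vanishing of $\Ext^2_{\Aa}(F,C)$ for $F\in\Ff$ (hence (CS0)) does not follow formally, and the comparison map $\Ext^2_{\Gg}(-,-)\to\Hom_{\Derived{\Aa}}(-,-[2])$ goes the wrong way to help. (iii) Both (CS1) and the translation of the cogenerator property into $\Ff\subseteq\Cogen(C)$ require identifying products of copies of $C[1]$ in $\Gg$ with $\bigl(\prod_{\Aa}C\bigr)[1]$; since $\Aa$ is not assumed AB4* or hereditary, products in $\Gg$ are truncations of derived products and this identification is precisely the nontrivial content of \cite[Prop.~2.12, Cor.~2.13]{coupek:stovicek:2019}, which your sketch does not invoke. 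So the plan is fine as a pointer to the literature, but the sketched ``general'' argument would not compile into a proof without these additional inputs.
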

\noindent Together we obtain the following corollary:
\begin{corollary}\label{Cor: loc coh heart plus min cotilt}
Let $\Aa$ be a locally noetherian category.  Then the following
statements hold:
\begin{enumerate}
\item If $(\Tt, \Ff)$ is a cotilting torsion pair in $\Aa$, then $\Gg$
  is locally coherent. 
\item If $(\Tt, \Ff)$ is a cotilting torsion pair in $\Aa$ and $\Gg$
  has a minimal injective cogenerator, then there exists a minimal
  cotilting object $C$ in $\Aa$ with $\Cogen(C) = \Ff$. \qed
\end{enumerate}
\end{corollary}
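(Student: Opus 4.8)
The plan is to assemble the two quoted theorems---\cite[Prop.~4.4]{coupek:stovicek:2019} characterising injective cogenerators of $\Gg$ and \cite[Thm.~5.2]{saorin:2017} characterising local coherence of $\Gg$---and to feed the hypothesis that $(\Tt,\Ff)$ is a cotilting torsion pair into both. For part (1), recall that $(\Tt,\Ff)$ being a cotilting torsion pair means $\Ff=\Cogen(C)=\lperpe{C}$ for some cotilting object $C\in\Aa$. By Theorem~\ref{thm:cotilting-is-pure-injective}, the associated torsionfree class $\Ff$ is closed under direct limits in $\Aa$. Since $\Aa$ is locally noetherian, Theorem~\ref{thm:loc-coh-hearts} then applies verbatim and yields that $\Gg$ is a locally coherent Grothendieck category, with $\Gg\cap\bDerived{\fp\Aa}$ its subcategory of finitely presented objects. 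This is essentially immediate once the closure-under-direct-limits property has been invoked.

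For part (2), I would first apply part (1) to know that $\Gg$ is locally coherent, hence in particular a Grothendieck category, so every object of $\Gg$ has an injective envelope. Now assume $\Gg$ has a minimal injective cogenerator $W$; by definition $W$ is (up to isomorphism) a direct summand of every injective cogenerator of $\Gg$. By \cite[Prop.~4.4]{coupek:stovicek:2019}, the injective cogenerators of $\Gg$ are exactly the objects of the form $C'[1]$ with $C'$ a cotilting object in $\Aa$ satisfying $\Cogen(C')=\Ff$; write $W\cong C[1]$ for such a $C$. I claim this $C$ is the desired minimal cotilting object. Indeed, let $C'$ be any cotilting object in $\Aa$ with $\Cogen(C')=\Cogen(C)=\Ff$. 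Then $C'[1]$ is an injective cogenerator of $\Gg$, so by minimality of $W$ we have $W\cong C[1]$ is a direct summand of $C'[1]$ in $\Gg$. Applying the autoequivalence $[-1]$ of $\Derived{\Aa}$, we obtain that $C$ is a direct summand of $C'$ in $\Derived{\Aa}$; since $C$ and $C'$ are objects of $\Aa$ (viewed as complexes concentrated in degree $0$) and $\Aa\hookrightarrow\Derived{\Aa}$ is full and closed under direct summands, $C$ is a direct summand of $C'$ in $\Aa$. This is precisely the defining property of a minimal cotilting object.

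The one point that needs a moment's care---and the place I would expect a referee to look hardest---is the passage from ``$C[1]$ is a summand of $C'[1]$ in $\Gg$'' to ``$C$ is a summand of $C'$ in $\Aa$''. This uses that $\Gg$ sits inside $\Derived{\Aa}$ as a full subcategory, that $[-1]$ carries it to the standard heart $\Aa$, and that summands in the derived category of objects living in the heart $\Aa$ are again in $\Aa$ (because $\Aa$ is closed under summands in $\Derived{\Aa}$, being the heart of a t-structure). None of this is deep, but it is the only step where one is genuinely translating between the two t-structures rather than quoting a black box. Everything else is a direct citation.
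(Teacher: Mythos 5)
Your proposal is correct and follows essentially the same route as the paper, which simply assembles Theorem~\ref{thm:cotilting-is-pure-injective} (closure of $\Ff$ under direct limits) with Theorem~\ref{thm:loc-coh-hearts} for (1), and the cited \cite[Prop.~4.4]{coupek:stovicek:2019} for (2); your careful justification of passing from a summand $C[1]$ of $C'[1]$ in $\Gg$ to a summand $C$ of $C'$ in $\Aa$ just makes explicit what the paper leaves implicit.
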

\subsection*{$\Sigma$-pure injective cotilting objects}
Before we continue the discussion on minimal cotilting objects, we
observe that we obtain the following criterion as a corollary of the
above. An analogous result for modules over any ring can be found
in~\cite[Thm.~5.3]{colpi:mantese:tonolo:2010}. This is also shown in a
more general setting in~\cite[Prop.~5.6]{laking:2018}.
\begin{corollary}
  Let $\Aa$ be a locally coherent Grothendieck category and let
  $(\Tt, \Ff)$ be a torsion pair with $\Ff=\lperpe{C}$ associated with
  a cotilting object $C\in\Aa$. The following are equivalent:
  \begin{enumerate}
  \item[(1)] $C$ is $\Sigma$-pure-injective in $\Aa$.
  \item[(2)] $\Gg$ is locally noetherian.
  \end{enumerate}
\end{corollary}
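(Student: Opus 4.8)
The plan is to transport the statement into the heart $\Gg$ and then apply the classical characterisation of locally noetherian Grothendieck categories. First I would record the set-up: since $C$ is cotilting, Theorem~\ref{thm:cotilting-is-pure-injective} gives that $\Ff=\lperpe{C}=\Cogen(C)$ is closed under direct limits, so Corollary~\ref{Cor: loc coh heart plus min cotilt}(1) applies and $\Gg$ is a locally coherent Grothendieck category with $C[1]$ an injective cogenerator (\cite[Prop.~4.4]{coupek:stovicek:2019}). Two external facts will be used: a Grothendieck category is locally noetherian if and only if every set-indexed coproduct of injective objects is injective (\cite{stenstroem:1975}); and the basic theory of $\Sigma$-pure-injectivity in locally finitely presented categories (\cite{crawley-boevey:1994}), in particular that the canonical morphism $M^{(I)}\to M^{I}$ is always a pure monomorphism and that $\Sigma$-pure-injectivity is controlled by a descending chain condition on pp-definable subgroups.

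The next step is to identify the injective objects of $\Gg$. Using that $\Ff$ is closed in $\Aa$ under arbitrary products (it is $\Cogen(C)$) and under arbitrary coproducts (since $\Ext^1_\Aa(\coprod_i X_i,C)\cong\prod_i\Ext^1_\Aa(X_i,C)$), one checks with the truncation triangles of the HRS-tilted $t$-structure that the exact, fully faithful functor $\Ff\to\Gg$, $F\mapsto F[1]$, preserves arbitrary products and coproducts. Since every injective object of $\Gg$ is a direct summand of a product of copies of the cogenerator $C[1]$, this shows that the injective objects of $\Gg$ are exactly the objects $E[1]$ with $E\in\Prod_\Aa(C)$.

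With this in hand, $(2)\Rightarrow(1)$ is short: if $\Gg$ is locally noetherian, then for any set $I$ the coproduct $C^{(I)}[1]\oplus C[1]\cong(C^{(I)}\oplus C)[1]$ of injectives of $\Gg$ is injective, and it contains the cogenerator $C[1]$ as a summand, so it is an injective cogenerator; by \cite[Prop.~4.4]{coupek:stovicek:2019} and full faithfulness of $[1]$ the object $C^{(I)}\oplus C$ is then a cotilting object of $\Aa$, hence pure-injective by Theorem~\ref{thm:cotilting-is-pure-injective}, and so is its direct summand $C^{(I)}$; as $I$ is arbitrary, $C$ is $\Sigma$-pure-injective. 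For $(1)\Rightarrow(2)$ I would show that every coproduct of injectives of $\Gg$ is injective: by the identification above such a coproduct is $(\bigoplus_\alpha E_\alpha)[1]$ with each $E_\alpha$ a direct summand of some $C^{I_\alpha}$, so $\bigoplus_\alpha E_\alpha$ is a summand of $\bigoplus_\alpha C^{I_\alpha}$; the canonical pure monomorphism $\bigoplus_\alpha C^{I_\alpha}\to\prod_\alpha C^{I_\alpha}=C^{\coprod_\alpha I_\alpha}$ has pure-injective target, so once $\bigoplus_\alpha C^{I_\alpha}$ is known to be pure-injective it splits, giving $\bigoplus_\alpha C^{I_\alpha}\in\Prod_\Aa(C)$, hence $\bigoplus_\alpha E_\alpha\in\Prod_\Aa(C)$, so that $(\bigoplus_\alpha E_\alpha)[1]$ is injective in $\Gg$.

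The main obstacle I anticipate is precisely the pure-injectivity of $\bigoplus_\alpha C^{I_\alpha}$ under the sole hypothesis that $C$ is $\Sigma$-pure-injective, since a coproduct of pure-injective objects need not be pure-injective. The plan here is to argue via pp-definable subfunctors $\phi$: as $\phi$ commutes with direct limits and with products of copies of a fixed object, $\phi(\bigoplus_\alpha C^{I_\alpha})=\bigoplus_\alpha\phi(C)^{I_\alpha}$, so the descending chain condition on pp-definable subgroups characterising $\Sigma$-pure-injectivity passes from $C$ to $\bigoplus_\alpha C^{I_\alpha}$, which is then even $\Sigma$-pure-injective. Should transferring this pp-subgroup computation to the present (non-module) locally finitely presented setting prove delicate, I would instead invoke \cite[Prop.~5.6]{laking:2018}, which establishes the equivalence in greater generality, or adapt the module-theoretic proof of \cite[Thm.~5.3]{colpi:mantese:tonolo:2010}.
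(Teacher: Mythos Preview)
Your proposal is correct and broadly follows the same outline as the paper: pass to the heart $\Gg$, identify the injectives of $\Gg$ with $\Prod_\Aa(C)[1]$, and then invoke a classical characterisation of locally noetherian Grothendieck categories. The execution differs in a few places, however. For the transfer of products and coproducts between $\Aa$ and $\Gg$ the paper simply quotes \cite[Cor.~2.13]{coupek:stovicek:2019}, whereas you sketch a direct truncation-triangle argument. For $(2)\Rightarrow(1)$ the paper is more direct: closure of $\Prod_\Aa(C)$ under coproducts gives $C^{(I)}\in\Prod_\Aa(C)$ immediately, hence pure-injective; your detour through \cite[Prop.~4.4]{coupek:stovicek:2019} and Theorem~\ref{thm:cotilting-is-pure-injective} works but is longer. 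The genuine divergence is in $(1)\Rightarrow(2)$: you aim for Stenstr\"om's criterion (coproducts of injectives are injective) and must therefore show that $\bigoplus_\alpha C^{I_\alpha}$ is pure-injective, which you do via the pp-subgroup DCC characterisation of $\Sigma$-pure-injectivity; the paper instead cites \cite[(3.5)~Thm.~2]{crawley-boevey:1994} to get that every object of $\Prod_\Aa(C)$ is $\Sigma$-pure-injective and hence a coproduct of indecomposables, and then applies Krause's criterion \cite[Thm.~A.11]{krause:2001} (a Grothendieck category is locally noetherian once every injective is a coproduct of indecomposables). Your route is a little more hands-on, while the paper's avoids the pp-computation by trading Stenstr\"om's criterion for Krause's; both rest on the same underlying fact that $\Sigma$-pure-injectivity passes from $C$ to $\Prod(C)$.
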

\begin{proof}
  By the preceding discussion, $\Gg$ is locally coherent and $C$ is
  pure-injective. $\Prod(C[1])$ (in $\Gg$) is the class of injective
  objects in $\Gg$.

  By~\cite[Prop.~V.4.3]{stenstroem:1975}, $\Gg$ is locally noetherian
  if and only if each coproduct of injective objects is injective,
  that is, $\Prod(C[1])$ in $\Gg$ is closed under
  coproducts. By~\cite[Cor.~2.13]{coupek:stovicek:2019} this is
  equivalent to $\Prod(C)$ in $\Aa$ being closed under coproducts. If
  this holds then in particular $C^{(I)}$ is pure-injective for each
  set $I$, that is, $C$ is $\Sigma$-pure-injective. Conversely, if $C$
  is $\Sigma$-pure-injective, then
  by~\cite[(3.5)~Thm.~2]{crawley-boevey:1994} so is each object in
  $\Prod(C)$ and is a coproduct of indecomposables. It follows that
  each injective object in $\Gg$ is a coproduct of indecomposable
  objects. Thus $\Gg$ is locally noetherian
  by~\cite[Thm.~A.11]{krause:2001}.
\end{proof}

\subsection*{Locally finitely generated Grothendieck categories and minimal injective cogenerators}
Let $\Aa$ be a Grothendieck category.  An object $F$ in $\Aa$ is
\emph{finitely generated} if, whenever $F = \sum_{i\in I} F_i$ for a
direct family of subobjects $\{F_i\}_{i\in I}$ of $F$, there exists an
index $i_0 \in I$ such that $F = F_{i_0}$.

\begin{remark}
We define $\sum$ as follows (\cite[pg.~88]{stenstroem:1975}).  Let
$\{C_i\}_{i\in I}$ be a family of subobjects of $C$, then the
monomorphisms $C_i \to C$ induce a morphism $\alpha \colon
\bigoplus_{i\in I}C_i \to C$.  The image of $\alpha$ is denoted
$\sum_{i\in I}C_i$ and is called the \emph{sum} of the subobjects
$\{C_i\}_{i\in I}$.  By \cite[Ch.~IV, Ex.~8.3]{stenstroem:1975}, if
$\{C_i\}_{i\in I}$ is a direct family, then $\varinjlim_{i\in I}C_i$
is a subobject of $C$ and coincides with $\sum_{i\in I} C_i$. 
\end{remark}

We say that $\Aa$ is \emph{locally finitely generated} if there
exists a family of finitely generated generators.  By
\cite[Lem.~3.1(i)]{stenstroem:1975}, if $C$ is finitely generated,
then the image of a morphism $C \to D$ is finitely generated.  Thus
$\Aa$ is locally finitely generated if and only if, for every object
$C$ in $\Aa$, there is a direct family $\{C_i\}_{i\in I}$ of finitely
generated subobjects of $C$ such that $C = \sum_{i\in I}C_i$.

\begin{proposition}[{Element-free version of
    \cite[Prop.~6.6]{stenstroem:1975}}] Let $\Aa$ be a locally
  finitely generated Grothen\-dieck category.  Then an injective
  object $E$ is a cogenerator if and only if it contains as a
  subobject an isomorphic copy of each simple object.
\end{proposition}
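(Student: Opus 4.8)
The plan is to prove the two implications separately; the forward direction is formal, and the reverse direction rests on the existence of maximal proper subobjects of finitely generated objects. For the ``only if'' direction, suppose $E$ is an injective cogenerator and let $S$ be a simple object of $\Aa$. Being a cogenerator, $E$ admits, for the nonzero object $S$, a monomorphism into a product of copies of $E$; composing with a suitable projection gives a nonzero morphism $f\colon S\to E$ (the projections cannot all vanish). Since $S$ is simple, $\Ker f=0$, so $f$ is a monomorphism and $E$ contains an isomorphic copy of $S$.

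For the ``if'' direction I will first show that $\Hom_\Aa(C,E)\neq 0$ for every nonzero object $C$. Since $\Aa$ is locally finitely generated, $C$ is the sum $\sum_{i\in I}C_i$ of a direct family of finitely generated subobjects, and as $C\neq 0$ some $C':=C_{i_0}$ is a nonzero finitely generated subobject of $C$. The poset of proper subobjects of $C'$ is nonempty (it contains $0$), and the sum of a chain of proper subobjects of $C'$ --- which is their directed union, hence a subobject of $C'$ because direct limits are exact in $\Aa$ --- is again proper, since otherwise finite generation of $C'$ would force one member of the chain to equal $C'$. By Zorn's lemma there is a maximal proper subobject $C''\subsetneq C'$, and $S:=C'/C''$ is a simple object. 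By hypothesis there is a monomorphism $S\hookrightarrow E$; composing it with the projection $C'\twoheadrightarrow C'/C''=S$ yields a nonzero morphism $C'\to E$, which extends along the inclusion $C'\hookrightarrow C$ to a morphism $C\to E$ (injectivity of $E$) whose restriction to $C'$ is still nonzero. Hence $\Hom_\Aa(C,E)\neq 0$.

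It remains to deduce that $E$ is a cogenerator. For any object $C$, let $K$ be the reject of $\{E\}$ in $C$, i.e.\ the kernel of the canonical morphism $C\to E^{\Hom_\Aa(C,E)}$; it suffices to show $K=0$. If $K\neq 0$, then by the previous paragraph there is a nonzero morphism $K\to E$, which extends along $K\hookrightarrow C$ to a morphism $g\colon C\to E$ with $g|_K\neq 0$ (again using injectivity of $E$); but $K$, being contained in the kernel of every morphism $C\to E$, lies in $\Ker g$, a contradiction. Therefore $K=0$ for all $C$, and $E$ cogenerates $\Aa$.

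The only non-formal ingredient, and the step to be careful about, is the existence of a maximal proper subobject of the finitely generated object $C'$: one must phrase the Zorn argument so that ``the directed union of proper subobjects is proper'' is justified purely through exactness of direct limits and the definition of finite generation, with no recourse to elements.
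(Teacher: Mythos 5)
Your proof is correct and takes essentially the same route as the paper's: the forward direction is the same formal observation, and the reverse direction rests on the identical Zorn's-lemma construction of a maximal proper subobject of a nonzero finitely generated subobject, using exactly the paper's definition of finite generation to see that the sum of a chain of proper subobjects stays proper. The only difference is that you make explicit, via the reject of $\{E\}$, the standard fact that an injective $E$ with $\Hom_\Aa(C,E)\neq 0$ for every nonzero $C$ is a cogenerator, a step the paper leaves implicit.
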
 
\begin{proof}
If $E$ is a cogenerator, then there exists a non-zero morphism $S \to
E$ for each simple object $S$ which is necessarily a monomorphism. 

For the converse, it suffices to show that every finitely generated
object $M$ has a maximal proper subobject and hence a simple quotient
$M \to S$.  Since then, for any finitely generated object $M$ in
$\Aa$, there is a non-zero morphism $M \to S \hookrightarrow E$.  For
an arbitrary object $N$, there exists a non-zero finitely generated
subobject $M \hookrightarrow N$ and so the non-zero morphism $M \to E$
extends to a non-zero morphism $N \to E$.

So, consider the collection $\Mm$ of proper subobjects of $M$, ordered
by inclusion.  Then let $\Ll$ be a totally ordered subset of $\Mm$ and
consider the subobject $\bar{L} := \sum_{L\in \Ll} L$.  If $\bar{L} =
M$, then $M = L$ for some $L \in \Ll$ which contradicts the assumption
that the objects of $\Mm$ are proper subobjects.  Thus $\bar{L}$ is a
proper subobject of $M$ and so is an upper bound of the subset $\Ll$
in $\Mm$.  Applying Zorn's lemma, we conclude that $\Mm$ has a maximal
object as desired. 
\end{proof}
Using some standard arguments we obtain the following corollary.
\begin{corollary}\label{Cor: minimal inj cogen}
  Let $\Ss$ be a set of representatives of the isomorphism class of
  simple objects in $\Aa$.  Then the object
  $\IE(\bigoplus_{S \in \Ss} \IE(S))$ is a minimal injective
  cogenerator of $\Aa$. \qed
\end{corollary}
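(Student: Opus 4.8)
The plan is to assemble the minimal injective cogenerator from the preceding proposition together with standard properties of injective envelopes in a locally finitely generated (indeed locally noetherian) Grothendieck category $\Aa$. First I would set $E := \IE(\bigoplus_{S\in\Ss}\IE(S))$ and observe that, since an injective envelope is an essential monomorphism, the composite $\bigoplus_{S\in\Ss}\IE(S)\to E$ is a monomorphism, and in particular each $\IE(S)$, hence each simple $S$, embeds as a subobject of $E$. By the preceding proposition, $E$ is therefore an injective cogenerator of $\Aa$.

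Next I would address minimality: I want to show that $E$ is a direct summand of every injective cogenerator $W$ of $\Aa$. Given such a $W$, each simple $S\in\Ss$ embeds into $W$ (by the cogenerator property), and since $W$ is injective this embedding extends along $S\hookrightarrow\IE(S)$ to a morphism $\IE(S)\to W$, which is a monomorphism because $S$ is essential in $\IE(S)$. These assemble to a monomorphism $\bigoplus_{S\in\Ss}\IE(S)\to W$ — here one uses that distinct simples have disjoint copies, so the images intersect trivially and the induced map from the coproduct is still a monomorphism. Extending once more along the essential extension $\bigoplus_{S\in\Ss}\IE(S)\hookrightarrow E$ gives a monomorphism $E\to W$, and since $E$ is injective this monomorphism splits, so $E$ is a direct summand of $W$. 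Thus $E$ is a minimal injective cogenerator.

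The step that needs the most care is the claim that the canonical map $\bigoplus_{S\in\Ss}\IE(S)\to W$ built from the individual maps $\IE(S)\to W$ is a monomorphism, i.e.\ that the sum of the subobjects $\IE(S)$ inside $W$ is in fact their coproduct. The point is that in a locally noetherian (or even just locally finitely generated) Grothendieck category the socles behave well: the image of $\IE(S)$ in $W$ has socle $S$, the $S$ are pairwise non-isomorphic simples, so any finite subsum is a direct sum by the usual socle argument, and then one passes to the direct limit over finite subsets using exactness of direct limits in $\Aa$. This is exactly the ``standard argument'' the text alludes to, and it is the only place where the hypotheses on $\Aa$ are really used beyond the cited proposition. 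Everything else is a formal manipulation of injective envelopes and the universal property of splitting monomorphisms into injectives.
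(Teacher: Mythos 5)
Your proof is correct and is precisely the ``standard argument'' the paper leaves implicit: embed each simple via $\IE(S)\to W$, check that the finitely many images of the pairwise non-isomorphic $\IE(S)$ are independent by the socle/induction argument, pass to the coproduct by AB5, extend over the essential extension into $E=\IE(\bigoplus_{S\in\Ss}\IE(S))$, and split the resulting monomorphism of the injective $E$ into $W$. The only small correction is your parenthetical ``(indeed locally noetherian)'': the corollary is applied to hearts $\Gg$ that are merely locally coherent, hence locally finitely generated but in general not locally noetherian, and your argument in fact only uses local finite generation (through the preceding proposition) together with AB5, so no noetherian hypothesis should be invoked.
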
 
\subsection*{Minimal cotilting objects in locally noetherian categories}
Combining the previous two subsections, we obtain the following proposition.
\begin{proposition}\label{prop:minimal}
  Let $\Aa$ be a locally noetherian Grothendieck category.  Then
  \begin{enumerate}
\item Every equivalence class of cotilting objects has a minimal
  representative $C_0$ that is a discrete pure-injective object.
\item The indecomposable direct summands of $C_0$ are in bijection
  with the isomorphism classes of simple objects in $\Gg$.
\end{enumerate}
\end{proposition}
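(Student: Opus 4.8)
The plan is to build the minimal cotilting object $C_0$ in the equivalence class of a given cotilting object $C$ by passing to the HRS-tilted heart $\Gg$, constructing its minimal injective cogenerator there, and pulling back. Let $(\Tt,\Ff)$ be the cotilting torsion pair in $\Aa$ associated to $C$, so that $\Ff=\Cogen(C)=\lperpe{C}$ is generating and closed under direct limits by Theorem~\ref{thm:cotilting-is-pure-injective}. By Corollary~\ref{Cor: loc coh heart plus min cotilt}(1) the heart $\Gg$ is a locally coherent Grothendieck category; in particular it is locally finitely generated, so Corollary~\ref{Cor: minimal inj cogen} applies to $\Gg$. Thus, if $\Ss$ is a set of representatives of the isomorphism classes of simple objects in $\Gg$, the object $W_0 := \IE\bigl(\bigoplus_{S\in\Ss}\IE(S)\bigr)$ is a minimal injective cogenerator of $\Gg$. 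By the cited theorem of Čoupek–Šťovíček (\cite[Prop.~4.4]{coupek:stovicek:2019}), $W_0 \cong C_0[1]$ for a cotilting object $C_0$ in $\Aa$ with $\Cogen(C_0)=\Ff$, so $C_0$ is a cotilting object equivalent to $C$. This $C_0$ is the candidate minimal representative, and Corollary~\ref{Cor: loc coh heart plus min cotilt}(2) already tells us it is minimal in the sense of the definition: any equivalent cotilting $C'$ gives $C'[1]$ an injective cogenerator of $\Gg$, hence $W_0$ is a direct summand of $C'[1]$, hence $C_0$ is a direct summand of $C'$.

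Next I would prove that $C_0$ is a \emph{discrete} pure-injective object, i.e.\ has no nonzero superdecomposable summand. The key point is the description $W_0 = \IE\bigl(\bigoplus_{S\in\Ss}\IE(S)\bigr)$: the injective envelope of a coproduct of indecomposable injectives in a \emph{locally noetherian} Grothendieck category is again a coproduct of indecomposable injectives (a standard fact, e.g.\ Matlis-type decomposition, which holds in any locally noetherian category by \cite[Thm.~V.4.5]{stenstroem:1975}). However, $\Gg$ is only locally \emph{coherent}, not a priori locally noetherian, so this argument does not immediately apply inside $\Gg$. Instead I would argue in $\Aa$: by Theorem~\ref{thm:cotilting-is-pure-injective} $C_0$ is pure-injective in $\Aa$, so by Proposition~\ref{prop:pure-inj-normal-form}(2) it decomposes as $\PE\bigl(\bigoplus_{i\in I}N_i\bigr)\oplus N_c$ with $N_i$ indecomposable and $N_c$ superdecomposable; one must show $N_c=0$. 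If $N_c\neq 0$, then $N_c[1]$ is a nonzero superdecomposable direct summand of $W_0$ in $\Gg$, contradicting the fact that a minimal injective cogenerator of the form $\IE(\bigoplus_S \IE(S))$ has \emph{essential socle-like} structure: every nonzero subobject of $W_0$ meets $\bigoplus_{S\in\Ss}\IE(S)$, hence meets some $\IE(S)$, hence $W_0$ cannot have a summand with no indecomposable summands. This is the step I expect to require the most care — making precise, without local noetherianity of $\Gg$, that $W_0$ has no superdecomposable summand — and the cleanest route is probably to observe that $\IE(\bigoplus_S\IE(S))$ has an essential \emph{semisimple-generated} subobject, so any direct summand $N$ satisfies $N = \IE(N\cap\bigoplus_S\IE(S))$ and $N\cap\bigoplus_S\IE(S)$ is itself a subobject of a coproduct of indecomposable injectives, forcing $N$ to contain an indecomposable summand whenever $N\neq 0$.

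Finally, for part (2), the indecomposable direct summands of $C_0$ correspond under $C_0\mapsto C_0[1]=W_0$ to the indecomposable injective direct summands of $W_0$ in $\Gg$, and by construction $W_0 = \IE\bigl(\bigoplus_{S\in\Ss}\IE(S)\bigr)$ has exactly the $\IE(S)$, $S\in\Ss$, as its indecomposable summands (each $\IE(S)$ is indecomposable since $S$ is simple, and by minimality no others occur). Thus the indecomposable summands of $C_0$ are in bijection with $\Ss$, the isomorphism classes of simple objects of $\Gg$. The shift $[1]$ and the equivalence $\Prod(C_0[1])$ = injectives of $\Gg$ from \cite[Prop.~4.4]{coupek:stovicek:2019} turn "indecomposable summand of $C_0$" into "indecomposable injective summand of $W_0$" bijectively, completing the proof.
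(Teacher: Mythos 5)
Your proposal follows essentially the same route as the paper: pass to the heart $\Gg$ via Corollary~\ref{Cor: loc coh heart plus min cotilt}, take the minimal injective cogenerator $\IE\bigl(\bigoplus_{S\in\Ss}\IE(S)\bigr)$ from Corollary~\ref{Cor: minimal inj cogen}, and pull back to a minimal cotilting object, with discreteness coming from the absence of a superdecomposable part in that cogenerator. The only difference is that you spell out (correctly, via the essential semisimple subobject $\bigoplus_{S\in\Ss}S$ forcing any nonzero injective summand to contain some $\IE(S)$) the discreteness step and the bijection in part (2), which the paper merely asserts.
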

\begin{proof}
  Let $(\Tt, \Ff)$ be a cotilting torsion pair.  Then, by Corollary
  \ref{Cor: loc coh heart plus min cotilt}, the heart $\Gg$ in
  $\Derived{\Aa}$ is locally coherent, so in particular, it is locally
  finitely generated.  By the Corollary \ref{Cor: minimal inj cogen},
  the category $\Gg$ has a minimal injective cogenerator and so by
  Corollary \ref{Cor: loc coh heart plus min cotilt}, there exists a
  minimal cotilting object $C$ such that $\Cogen(C) = \Ff$. Moreover,
  this minimal cotilting objects is discrete since the minimal
  injective cogenerator has no superdecomposable part.
\end{proof}
\begin{lemma}\label{lem:self-orthogonality}
  Let $\Aa$ be a locally noetherian Grothendieck category. Let $E$ be
  a discrete pure-injective object in $\Aa$ with $\id(E)\leq 1$.
  \begin{enumerate}
  \item[(1)] The class $\lperpe{E}$ is closed under products.
  \item[(2)] The following are equivalent:
  \begin{enumerate}
  \item[(a)] $\Ext^1(E,E)=0$.
  \item[(b)] $\Ext^1(E',E'')=0$ for all indecomposable summands
    $E',\,E''$ of $E$.
  \item[(c)] $E$ is self-orthogonal, that is, $\Ext^1(E^I,E)=0$ for
    each set $I$.
  \end{enumerate}
  \end{enumerate}
\end{lemma}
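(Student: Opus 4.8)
The plan is to prove (1) first, since it is the technical core that makes (2) go through, and then to deduce the equivalences in (2) as a formal consequence.

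For (1), the strategy is to use that $E$ is discrete pure-injective, so by Proposition~\ref{prop:pure-inj-normal-form} we may write $E \cong \PE(\bigoplus_{i\in I} E_i)$ with each $E_i$ indecomposable pure-injective. A product $\prod_{j\in J} X_j$ lies in $\lperpe{E}$ iff $\Ext^1(\prod_j X_j, E) = 0$. Since $\Aa$ is hereditary-free (we only assume $\id(E)\le 1$, which is all we need: $\Ext^2$ into $E$ vanishes by $\id(E)\le 1$), a short exact sequence presenting the product will let us reduce the question. Concretely I would take, for each $j$, a short exact sequence $0 \to X_j \to \prod_j X_j \to Q_j \to 0$ is not quite the right move; instead the key point is that $\Hom(-,E)$ should turn the relevant coproduct--product comparison into something controllable. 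The cleanest route: since $\id(E)\le 1$, the functor $\Ext^1(-,E)$ is right exact, and I claim $\Ext^1(\prod_{j} X_j, E)$ is a quotient of (or controlled by) $\prod_j \Ext^1(X_j,E)$ together with a term coming from $\Ext^1$ of the comparison map $\bigoplus_j X_j \to \prod_j X_j$. Using pure-injectivity of $E$ and Theorem~\ref{thm:ext-direct-limit-pure-inj} applied to the directed system of finite sub-sums, the direct-limit/product interaction gives that $\Ext^1(\bigoplus_j X_j, E) = 0$ whenever each $\Ext^1(X_j,E)=0$; then the pure-exact sequence $0 \to K \to \bigoplus_j X_j \to \prod_j X_j \to \cdots$ — here one must be careful, $\bigoplus \to \prod$ need not be injective, so I would instead use that the cokernel $C$ of $\bigoplus_j X_j \to \prod_j X_j$ and work with $0 \to \mathrm{Im} \to \prod_j X_j \to C \to 0$, and separately control $\Ext^1(C,E)$ and $\Ext^1(\mathrm{Im},E)$. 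The decisive input is that $E$ being pure-injective means $\Hom(-,E)$ is exact on pure-exact sequences, and the natural map $\bigoplus \to \prod$ has pure image; combined with $\id(E)\le 1$ this forces the $\Ext^1$ of everything in sight to vanish.

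For (2), the implications (c)$\Rightarrow$(a) and (a)$\Rightarrow$(b) are immediate: (c) specializes to $I$ a singleton for (a), and (b) follows from (a) because each $E',E''$ is a direct summand of $E$, so $\Ext^1(E',E'')$ is a direct summand of $\Ext^1(E,E)=0$. The work is in (b)$\Rightarrow$(c). Given (b), I first upgrade to $\Ext^1(\bigoplus_{i\in I} E_i, \bigoplus_{i\in I} E_i) = 0$: since $\Hom(-, \bigoplus_i E_i)$ behaves well and $\Ext^1$ commutes with finite direct sums in the first variable, $\Ext^1(\bigoplus_i E_i, E_j) $ is a product $\prod_i \Ext^1(E_i, E_j) = 0$ by (b), and then $\Ext^1(\bigoplus_i E_i, \bigoplus_j E_j)$ — one uses here that $\bigoplus_j E_j$ appears and $\Ext^1$ out of a coproduct is a product, so this is $\prod_i \Ext^1(E_i, \bigoplus_j E_j)$, and $\Ext^1(E_i, \bigoplus_j E_j)$ needs that coproducts are $\mathrm{fp}$-injective-friendly or that $E_i$ is finitely presented — but $E_i$ need not be f.p.! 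So instead the right move is: $E \cong \PE(\bigoplus_i E_i)$ sits in a pure-exact sequence, and $\Ext^1(E, E)$ can be computed from $\Ext^1$ of the $E_i$ via that pure-exact sequence together with part (1). Precisely: by (b) and part (1) applied with the roles reversed, $\lperpe{E'}$ is closed under products for each indecomposable summand $E'$; so $E'' = \PE(\bigoplus_i E_i)$, being a summand of a product of the $E_i$ (by Proposition~\ref{prop:class-of-pure-injective-complexes} or directly from pure-injectivity — $\PE$ of a coproduct embeds purely in the product, hence is a summand of it as both are pure-injective), lies in $\lperpe{E'}$. Thus $\Ext^1(E,E'')=0$... wait, I need $\Ext^1(E^I, E)$. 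So I apply part (1) once more: $\lperpe{E}$ is closed under products, and $E \in \lperpe{E}$ (which is $\Ext^1(E,E)=0$, i.e.\ (a), which we get from (b) as above by the summand-of-product argument), hence $E^I \in \lperpe{E}$, which is exactly (c).

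The main obstacle I anticipate is the step in part (1) controlling $\Ext^1(\prod_j X_j, E)$: one must handle the failure of injectivity of $\bigoplus_j X_j \to \prod_j X_j$ and isolate precisely why purity of $E$ plus $\id(E)\le 1$ plus discreteness suffice — discreteness is presumably what lets one reduce to indecomposable summands $E_i$ and use that products of copies of $E_i$ are $\Sigma$-pure-injective-like, via \cite[(3.5)~Thm.~2]{crawley-boevey:1994}. I would be prepared for the argument to route through the functor category $\Ff(\Aa)$, where $E$ becomes injective and the whole statement becomes a routine fact about injective objects (closure of $\lperpe{-}$ under products in an abelian category with enough injectives and the relevant object injective), then transported back using that $\Ext^1_\Aa$ into a pure-injective agrees with $\Ext^1_{\Ff(\Aa)}$ into the corresponding injective on objects of $\Aa$.
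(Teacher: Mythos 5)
Your part (2) is essentially the intended argument (the paper simply refers to \cite[Cor.~2.3]{buan:krause:2003}, which is exactly your ``summand of $\prod_i E_i$'' reasoning): from (b) and part (1) applied to each indecomposable summand one gets $\Ext^1(E,E')=0$ for all indecomposable summands $E'$, then (a), then (c) by applying (1) to $E$ itself. Two small points you leave unjustified: that $\PE(\bigoplus_i E_i)$ is a direct summand of $\prod_i E_i$ (true: the canonical map $\bigoplus\to\prod$ is a pure monomorphism into a pure-injective object), and that $\Ext^1(E,E_i)=0$ for all $i$ implies $\Ext^1(E,\prod_i E_i)=0$ -- products are not exact in a Grothendieck category, so the embedding $\Ext^1(X,\prod_i Y_i)\hookrightarrow\prod_i\Ext^1(X,Y_i)$ needs a word. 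These are repairable.

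Part (1), however, which you rightly call the technical core and on which your (2) depends, is not proved by either route you sketch. Your main attempt is circular: the comparison sequence $0\to\bigoplus_j X_j\to\prod_j X_j\to C\to 0$ is indeed pure (and in a Grothendieck category the comparison map is even a monomorphism, so that worry is moot), and pure-injectivity of $E$ plus $\Ext^1(\bigoplus_j X_j,E)\cong\prod_j\Ext^1(X_j,E)=0$ give precisely $\Ext^1(\prod_j X_j,E)\cong\Ext^1(C,E)$; so you have reduced the problem to controlling $\Ext^1$ of the quotient $C=\prod/\bigoplus$, which is the whole difficulty, and ``$\id(E)\le 1$ forces everything in sight to vanish'' is not an argument. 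Your fallback through $\Ff(\Aa)$ rests on a false transfer principle: $dE$ is injective there, so $\Ext^1_{\Ff(\Aa)}(dX,dE)=0$ for \emph{every} $X$, whereas $\Ext^1_\Aa(X,E)$ is generally nonzero -- the embedding identifies pure-exact sequences with exact ones, it does not compute arbitrary $\Ext^1_\Aa(-,E)$. What is missing is the finite-type argument via the locally noetherian hypothesis, which you never use: since $\id(E)\le 1$, the class $\Ff_0:=\lperpe{E}\cap\fp(\Aa)$ is closed under subobjects and extensions; by Theorem~\ref{thm:ext-direct-limit-pure-inj} (this is where pure-injectivity of $E$ enters) $\lperpe{E}$ is closed under direct limits, and since every object is the direct limit of its noetherian subobjects, $\lperpe{E}=\varinjlim\Ff_0$; the direct limit closure of a subobject- and extension-closed class of noetherian objects is a torsionfree class, as in \cite[Prop.~1.8]{buan:krause:2003}, hence closed under products. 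Note also that discreteness is irrelevant for (1); it is used only in (2).
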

\begin{proof}
  (1) Since $\id(E)\leq 1$, the class $\Ff=\lperpe{E}\cap\fp(\Aa)$ is
  closed under subobjects and extensions, and $\lperpe{E}$ is closed
  under direct limits by
  Theorem~\ref{thm:ext-direct-limit-pure-inj}. As
  in~\cite[Prop.~1.8]{buan:krause:2003} then
  $\lperpe{E}=\varinjlim{\Ff}$ is a torsionfree class, and thus closed
  under products.

  (2) As in~\cite[Cor.~2.3]{buan:krause:2003}.
\end{proof}
\section{Weighted noncommutative regular projective curves}
\label{sec:sheaves-and-modules}
We define the class of weighted noncommutative regular projective
curves by the axioms (NC~1) to (NC~5) below. For details we refer
to~\cite{lenzing:reiten:2006,kussin:2016,angeleri:kussin:2017}. The
content of this background section contains some overlap with
\cite[Sec.~2]{angeleri:kussin:2017} since the settings are the same.
We recall part of the material here for the convenience of the
reader. At the end of this section we exhibit a very useful
  correspondence between cotilting sheaves and tilting sheaves of
  finite type, cf.\ Theorem~\ref{thm:tilting-cotilting-duality}.
\subsection*{The axioms}
A noncommutative curve $\XX$ is given by a category $\Hh$ which is
regarded as the category $\coh\XX$ of \emph{coherent sheaves} over
$\XX$. Formally it behaves like a category of coherent sheaves over a
(commutative) regular projective curve over a field $k$ (we refer
to~\cite{kussin:2016}):
\begin{enumerate}
\item[(NC~1)] $\Hh$ is small, connected, abelian and noetherian.
\item[(NC~2)] $\Hh$ is a $k$-linear category with Hom- and Ext-spaces
  of finite $k$-dimension.
\item[(NC~3)] Serre duality holds in $\Hh$: For all objects
  $X,\,Y\in\Hh$ we have a natural isomorphism
  $$\Ext^1_{\Hh}(X,Y) = \D\Hom_{\Hh}(Y,\tau X)$$ with
  $\D=\Hom_k (-,k)$ and with $\tau\colon\Hh\ra\Hh$ an autoequivalence,
  called Auslander-Reiten translation. (It follows that $\Hh$ is a
  hereditary category without non-zero projective or injective
  object.)
\item[(NC~4)] There exist objects in $\Hh$ of infinite length.
\end{enumerate}
Let $\Hh_0$ denote the class of finite length objects and
$\Hh_+=\vect{\XX}$ the class of a torsionfree objects, also called
vector bundles. Decomposing $\Hh_0$ in its connected components we
have $$\Hh_0=\coprod_{x\in\XX}\Uu_x,$$ where $\XX$ is an index set
(explaining the terminology $\Hh=\coh\XX$) and every $\Uu_x$ is a
connected uniserial length category, a so-called tube. We additionally
assume that $\Hh$ has the following condition.
\begin{enumerate}
\item[(NC~5)] $\XX$ consists of infinitely many points. 
\end{enumerate}
Then $\XX$ (or $\Hh$) is called a \emph{weighted} \emph{noncommutative
  regular projective curve} over $k$. The following statement is
shown in~\cite{kussin:2016}.
\begin{proposition}
  There are (up to isomorphism) only finitely many simple objects in
  $\Uu_x$, for all $x$, and for almost all $x$ there is even only
  one.\qed
\end{proposition}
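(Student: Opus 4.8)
The plan is to reduce the statement to a counting problem about the simple objects inside a single tube, and then to resolve that count using the global ``curve-like'' structure of $\XX$ (its rank and degree functions, and the existence of a tilting bundle), which is the only genuinely hard input.

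\emph{Step 1 (local reduction).} Each component $\Uu_x$ of $\Hh_0$ is, as already recorded, a connected uniserial length category. The Auslander--Reiten translation $\tau$ of $\Hh$ obtained from Serre duality (NC~3) preserves the length category $\Hh_0$; since there are no nonzero extensions between distinct components, $\tau$ cannot carry a simple object of one component to a simple of another (the middle term of the corresponding Auslander--Reiten sequence would be a non-split extension straddling two components), so $\tau$ restricts to an autoequivalence of each $\Uu_x$, namely its internal Auslander--Reiten translation. By the standard description of connected uniserial length categories with Auslander--Reiten sequences ($\tau$-tubes), the simple objects of $\Uu_x$ form a single $\tau$-orbit of some cardinality $p(x)\in\NN\cup\{\infty\}$, and every indecomposable of $\Uu_x$ is uniserial, determined by a simple top and a length. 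Thus the proposition is precisely the assertion that $p(x)<\infty$ for all $x$ and that $p(x)=1$ for all but finitely many $x$.

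\emph{Step 2 (a useful computation).} Fix a nonzero vector bundle $L\in\vect\XX$; one exists because by (NC~4) there is an object of infinite length, and its torsionfree quotient in the torsion pair $(\Hh_0,\vect\XX)$ is then a nonzero vector bundle. Since a morphism from a finite-length object into a torsionfree one has finite-length image and hence is zero, we have $\Hom(\Hh_0,\vect\XX)=0$, so Serre duality gives $\Ext^1_\Hh(L,S)=\D\Hom_\Hh(S,\tau L)=0$ for every $S\in\Hh_0$. Feeding this vanishing, together with heredity, into the long exact sequences attached to the composition series of an indecomposable $M\in\Uu_x$, one obtains that $\dim_k\Hom_\Hh(L,M)=\sum_{S}\dim_k\Hom_\Hh(L,S)$, the sum being over the composition factors $S$ of $M$. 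Applied with $L$ a tilting bundle (Step~3), this shows that the simples of $\Uu_x$ lying ``under'' $L$ each contribute a positive amount and that the total contribution of a full $\tau$-orbit is finite; this is the mechanism that converts a global finite-dimensionality statement into the desired bounds on $p(x)$ and on $\sum_x(p(x)-1)$.

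\emph{Step 3 (global input and conclusion).} The essential ingredient, established in \cite{kussin:2016}, is that $\coh\XX$ admits a tilting bundle $T$, so that $\bDerived{\coh\XX}$ is triangle-equivalent to the derived module category of a finite-dimensional $k$-algebra and $\coh\XX$ carries the associated rank and degree functions. Under such an equivalence the tubes $\Uu_x$ correspond to the regular (tubular) components, and the structure of regular modules over the resulting tame (concealed-)canonical algebra gives at once that every such component has finite rank and that only finitely many of them are non-homogeneous; translated back, $p(x)<\infty$ for all $x$ and $p(x)=1$ for almost all $x$. Concretely, each $p(x)$ is read off from the multiplicities with which the indecomposable summands of $T$ meet $\Uu_x$ via the computation of Step~2, and $\sum_x(p(x)-1)$ is bounded in terms of $\dim_k\Hom_\Hh(T,T)$ and the (finite) combinatorics of the summands of $T$. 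The main obstacle is exactly this step: producing the tilting bundle — equivalently, the rank/degree machinery — from the bare axioms (NC~1)--(NC~5) without circularity; once it is in hand, both finiteness assertions reduce to an elementary count.
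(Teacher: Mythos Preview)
The paper itself offers no proof; it simply records that the statement is established in \cite{kussin:2016}. Your Steps~1 and~2 are correct and useful preliminaries, but Step~3 contains a genuine gap: it is \emph{not} true that every weighted noncommutative regular projective curve admits a tilting bundle. The elliptic case ($\chi'_{orb}(\XX)=0$, $\ovp=1$) already includes ordinary smooth elliptic curves over an algebraically closed field, for which $\tau$ is the identity and every indecomposable coherent sheaf $E$ satisfies $\Ext^1(E,E)\cong\D\End(E)\neq 0$; hence there is no exceptional object and a fortiori no tilting object in $\coh\XX$, and no derived equivalence to a finite-dimensional algebra. Your final sentence conflates ``tilting bundle'' with ``rank/degree machinery'', but these are not equivalent: rank and degree are available in full generality (rank via the quotient $\Hh/\Hh_0$, degree via the averaged Euler form with a fixed structure sheaf), whereas a tilting bundle is not. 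Even when a tilting object does exist, for $\chi'_{orb}(\XX)<0$ the resulting algebra is wild, so the appeal to ``regular modules over a tame concealed-canonical algebra'' also fails there.

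The argument in \cite{kussin:2016} proceeds instead through the realisation of $\coh\XX$ as coherent modules over a hereditary order $\Aa$ on a commutative smooth projective curve (cf.\ \cite[Thm.~7.11]{kussin:2016}, also quoted in the present paper). In that description the tubes $\Uu_x$ are indexed by the closed points of the underlying curve; the rank $p(x)$ is the number of simple modules over the completed local order at $x$, which is finite because that order is semiperfect; and $p(x)>1$ occurs precisely at the finitely many ramification points of $\Aa$. Your Step~2 computation, carried out with a line bundle rather than a hypothetical tilting bundle, is one ingredient in setting up this picture, but by itself it does not yield a bound on $p(x)$.
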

By $p(x)$ we denote the rank of the tube $\Uu_x$, which is the number
of simple objects in $\Uu_x$ (up to isomorphism). The numbers $p(x)$
with $p(x)>1$ are called the \emph{weights}. The tubes $\Uu_x$ of rank
$1$ are called homogeneous, those finitely many of rank $>1$
non-homogeneous or exceptional. If $S_x$ is a simple object in
$\Uu_x$, then all simple objects (up to isomorphism) in $\Uu_x$ are
given by the Auslander-Reiten orbit
$\tau S_x,\tau^2 S_x,\dots,\tau^{p(x)}S_x=S_x$. \medskip

\emph{In the following, if not otherwise specified, let $\Hh=\coh\XX$
  be a weighted noncommutative regular projective curve.}
\subsection*{The category of quasicoherent sheaves} 
In our focus will be a larger category, the Grothendieck category
$\QHh$. It is obtained from $\Hh$ as the category
$\Lex(\Hh^{\op},\Ab)$, cf.\ \cite[II.~Thm.~1]{gabriel:1962}. We write
$\QHh=\Qcoh(\XX)$ and call the objects quasicoherent sheaves. It is
also of the form $\Qcoh(\Aa)$, the category of quasicoherent modules
over a certain hereditary order $\Aa$; we refer
to~\cite[Thm.~7.11]{kussin:2016}.

The category $\QHh$ is hereditary abelian, and a locally noetherian
Grothendieck category; every object in $\QHh$ is a direct limit of
objects in $\Hh$.  The full abelian
subcategory $\Hh$ consists of the coherent (= finitely presented =
noetherian) objects in $\QHh$, we also write $\Hh=\fp(\QHh)$. Every
indecomposable coherent sheaf has a local endomorphism ring, and $\Hh$
is a Krull-Schmidt category.

\subsection*{Pr\"ufer and adic sheaves}
If $S$ is a simple sheaf, then we denote by $S[n]$ the (unique)
indecomposable sheaf of length $n$ with socle $S$. The inclusions
$S[n]\ra S[n+1]$ $(n\geq 1$) form a direct system, the \emph{ray}
starting in $S$, and their direct union is the \emph{Pr\"ufer sheaf}
$S[\infty]=\varinjlim S[n]$ with respect to
$S$. Cf.~\cite{ringel:1975}.

We denote by $S[-n]$ the (unique) indecomposable sheaf of length $n$
with top $S$. The epimorphisms $S[-n-1]\ra S[-n]$ ($n\geq 1$) form an
inverse system, the \emph{coray} ending in $S$. We write
$S[-\infty]=\varprojlim S[-n]$ for the inverse limit and call it the
\emph{adic sheaf} with respect to $S$. It will be shown in
Lemma~\ref{lem:reduced-pi-are-adics} that $S[-\infty]$ is
indecomposable.
\subsection*{Rank.\ Line bundles} 
Let $\Hh/\Hh_0$ be the quotient category of $\Hh$ modulo the Serre
category of sheaves of finite length, let $\pi\colon\Hh\ra\Hh/\Hh_0$
the quotient functor, which is exact. The \emph{function field} of
$\Hh$ (or of $\XX$) is the up to isomorphism unique skew field
$k(\Hh)$ such that $\Hh/\Hh_0\cong\mod(k(\Hh))$. The
$k(\Hh)$-dimension on $\Hh/\Hh_0$ induces the \emph{rank} of objects
in $\Hh$, which induces a linear form
$\rk\colon\Knull(\Hh)\ra\ZZ$. The objects in $\Hh_0$ are just the
objects of rank zero, every non-zero vector bundle has a positive
rank. The vector bundles of rank one are called \emph{line
  bundles}. For every line bundle $L'$ the endomorphism ring
$\End(L')$ is a skew field. Every vector bundle has a line bundle
filtration, cf.~\cite[Prop.~1.6]{lenzing:reiten:2006}. There exists a
line bundle $L$, called structure sheaf, having certain additional
properties (we refer to~\cite[8.1+Sec.~13]{kussin:2016}).
\subsection*{The sheaf of rational functions}
The \emph{sheaf} $\Kk$ \emph{of rational functions} is the injective
envelope of any line bundle $L$ in the category $\QHh$; this does not
depend on the chosen line bundle. It is torsionfree
by~\cite[Lem.~14]{kussin:2000}, and it is a generic sheaf in the sense
of~\cite{lenzing:1997}; its endomorphism ring is the function field,
$\End_{\QHh}(\Kk)\cong\End_{\Hh/\Hh_0}(\pi L)\cong k(\Hh)$, where
$\pi\colon\Hh\ra\Hh/\Hh_0$ is the quotient functor.
\subsection*{Orbifold Euler characteristic and representation type}
Let $\Hh$ be a weighted noncommutative regular projective curve with
structure sheaf $L$ and $\ovp$ the least common multiple of the
weights. Let $s=s(\Hh)$ be the square root of the dimension of the
function field $k(\Hh)$ over its centre (called the (global)
skewness). We have the \emph{average Euler form}
$\DLF{E}{F}=\sum_{j=0}^{\ovp-1}\LF{\tau^j E}{F}$, and then the
normalized \emph{orbifold Euler characteristic} of $\Hh$ is defined by
$\chi'_{orb}(\XX)=\frac{1}{s^2\ovp^2}\DLF{L}{L}$.\medskip

The orbifold Euler characteristic determines the representation type
of the category $\Hh=\coh\XX$.
\begin{itemize}
\item  $\XX$ is domestic: $\chi'_{orb}(\XX)>0$
\item $\XX$ is elliptic: $\chi'_{orb}(\XX)=0$, and $\XX$ non-weighted
  ($\ovp=1$)
\item $\XX$ is tubular: $\chi'_{orb}(\XX)=0$, and $\XX$ properly
  weighted ($\ovp>1$)
\item $\XX$ is wild: $\chi'_{orb}(\XX)<0$.
\end{itemize}
\subsection*{Degree and slope}
With the structure sheaf $L$ we define the \emph{degree} function
$\deg\colon\Knull(\Hh)\ra\ZZ$ by
\begin{equation}
  \label{eq:deg-define}
  \deg(F)=\frac{1}{\kappa\varepsilon}\DLF{L}{F}-\frac{1}{\kappa\varepsilon}\DLF{L}{L}\rk(F).
\end{equation}
Here, $\kappa=\dim_k\End(L)$, and $\varepsilon$ is the positive
integer such that the resulting linear form $\Knull(\Hh)\ra\ZZ$
becomes surjective. We have $\deg(L)=0$, and $\deg$ is positive and
$\tau$-invariant on sheaves of finite length.

The \emph{slope} of a non-zero coherent sheaf $F$ is defined as
$\mu(F)=\deg(F)/\rk(F)\in\widehat{\QQ}=\QQ\cup\{\infty\}$, and $F$ is
called \emph{stable} (\emph{semistable}, resp.) if for every non-zero
proper subsheaf $F'$ of $F$ we have $\mu(F')<\mu(F)$ (resp.\
$\mu(F')\leq\mu(F)$).
\subsection*{Torsion, torsionfree, divisible and reduced sheaves}
\begin{numb}
  The class of \emph{torsionfree} (quasicoherent) sheaves is given by
  $\Ff=\rperpo{{\Hh_0}}$. We have $\vect\XX=\Ff\cap\Hh$. The class of
  \emph{torsion} sheaves is given as the direct limit closure
  $\Tt=\vec{\Hh_0}=\varinjlim{\Hh_0}$. The pair $(\Tt, \Ff)$ is a
  torsion pair in $\QHh$. Every $E\in\QHh$ has a largest subsheaf from
  $\Tt$, the torsion subsheaf $tE$. The canonical sequence $0\ra tE\ra
  E\ra E/tE\ra 0$ is pure-exact, and $E/tE$ is torsionfree.

  Let $V\subseteq\XX$ be a subset. The class of $V$-\emph{divisible} sheaves
  is $\Dd_V=\rperpe{{\bigl(\coprod_{x\in V}\Uu_x\bigr)}}$. It is
  closed under direct summands, set-indexed direct sums, extensions
  and epimorphic images. If $V = \{x\}$, then we will refer to
  $V$-divisible sheaves as \emph{$x$-divisible}. In case $V=\XX$ we
  just say \emph{divisible}.

  The class $\Dd=\Dd_{\XX}$ of divisible sheaves is a torsion class,
  and the corresponding torsion pair $(\Dd,\Rr)$ in $\QHh$ splits. The
  sheaves in $\Rr$ are called \emph{reduced}. By
  \cite[Lem.~3.3]{angeleri:kussin:2017} we have $\Dd=\Inj(\QHh)$, the
  class of injective sheaves. Moreover, the indecomposable injective
  sheaves are (up to isomorphism) the sheaf $\Kk$ of rational
  functions and the Pr\"ufer sheaves $S[\infty]$ ($S\in\Hh$ simple).
\end{numb}
\subsection*{A tilting-cotilting correspondence}
\begin{proposition}\label{prop:tf-gen-equals-resolve}
  Let $\QHh=\Qcoh\XX$ be a weighted noncommutatuve regular projective
  curve and $\Hh=\coh\XX$. A class $\Ff\subseteq\Hh$ is torsionfree
  and generating if and only if it is resolving (in the sense
  of~\cite[Def.~4.2]{angeleri:kussin:2017}).
  \end{proposition}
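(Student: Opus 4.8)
The plan is to show the two-sided implication by unwinding the two definitions and exploiting that $\Hh = \coh\XX$ is a hereditary, noetherian, Hom-finite abelian category with Serre duality, so that $\Knull$-theoretic and homological arguments are available. Recall that a full subcategory $\Ff \subseteq \Hh$ is \emph{resolving} in the sense of \cite[Def.~4.2]{angeleri:kussin:2017} when it is closed under extensions, under kernels of epimorphisms, contains a generator, and every object of $\Hh$ admits a (finite, hence length-one by heredity) resolution $0 \to F_1 \to F_0 \to X \to 0$ with $F_0, F_1 \in \Ff$; and that $\Ff$ is \emph{torsionfree and generating} when it is the torsionfree part of a torsion pair $(\Tt_0,\Ff_0)$ in $\Hh$ and contains a generator for $\QHh$ (equivalently, every object of $\Hh$ embeds in an object of $\Ff$).

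First I would treat the direction ``torsionfree and generating $\Rightarrow$ resolving''. Since $(\Tt_0,\Ff_0)$ is a torsion pair with $\Ff = \Ff_0$, the class $\Ff$ is automatically closed under subobjects, extensions, and products/limits, in particular under kernels of epimorphisms; it contains a generator by hypothesis. The only nontrivial point is the resolution property: given $X \in \Hh$, I want $0 \to F_1 \to F_0 \to X \to 0$ with $F_i \in \Ff$. Using that $\Ff$ generates $\QHh$, pick an epimorphism $G^{(n)} \to X$ from a finite power of a generator $G \in \Ff$ (finitely many copies suffice since $X$ is noetherian); set $F_0 = G^{(n)} \in \add\Ff \subseteq \Ff$ and $F_1 = \Ker(F_0 \to X)$. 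Because $\Hh$ is hereditary, $F_1$ is again in $\Hh$, and because $\Ff$ is closed under subobjects (being torsionfree), $F_1 \in \Ff$. This gives the required resolution, so $\Ff$ is resolving. For the converse, ``resolving $\Rightarrow$ torsionfree and generating'', the generating part is immediate from the definition (a resolving class contains a generator, and via the length-one resolutions every object embeds — after applying Serre duality / the fact that $\Ff$ has enough injectives relative to $\Hh$ — one promotes this to a generator of $\QHh$ using that $\QHh = \varinjlim \Hh$). The substantive claim is that a resolving $\Ff$ is the torsionfree class of a torsion pair, i.e.\ closed under subobjects: I would argue that for $F \in \Ff$ and a subobject $F' \hookrightarrow F$, heredity gives $F/F' \in \Hh$, and then resolving-ness together with the Ext-vanishing forced by closure under kernels of epimorphisms (an Auslander--Reiten / approximation argument as in \cite[Sec.~4]{angeleri:kussin:2017}) shows $F' \in \Ff$; one then defines $\Tt_0 = {}^{\perp_0}\Ff = \{X \mid \Hom(X,\Ff)=0\}$ and checks $({}^{\perp_0}\Ff, \Ff)$ is a torsion pair in $\Hh$ using that $\Hh$ is noetherian and every object has a largest subobject lying in $\Tt_0$ (the trace of $\Tt_0$).

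The main obstacle I expect is the closure of a resolving class under subobjects — equivalently, identifying $\Ff$ as the right-hand side of a \emph{hereditary} torsion pair from the a priori weaker data (closed under extensions, under kernels of epimorphisms, generating, with length-one resolutions). The resolving axioms give closure under kernels of epimorphisms but not directly under arbitrary subobjects, so the gap must be bridged using the special structure of $\coh\XX$: heredity (so every subobject quotient stays coherent), Serre duality (to convert Ext-conditions into Hom-conditions about $\tau$), and the classification of finite-length objects into tubes. Concretely, I anticipate the key lemma is that in $\coh\XX$ a resolving subcategory is automatically closed under subobjects because any subobject $F' \subseteq F$ with $F \in \Ff$ fits, after choosing an $\Ff$-resolution of $F/F'$, into a pullback diagram whose relevant terms are forced into $\Ff$ by extension-closure and kernel-of-epi-closure; making this diagram chase precise, and verifying it does not secretly require more than the resolving axioms provide, is where the real work lies. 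Everything else — the torsion-pair axioms, the passage between $\Hh$ and $\QHh$ via direct limits, and the easy direction — should follow by the standard arguments already invoked in the excerpt.
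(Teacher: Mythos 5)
Your easy direction is fine, but the substantive implication is not proved, and the route you sketch for it is circular. The paper itself disposes of this proposition by quoting \cite[Cor.~4.17]{angeleri:kussin:2017}; in the definition used there (\cite[Def.~4.2]{angeleri:kussin:2017}) a resolving class is only required to be generating and closed under direct summands, extensions and kernels of epimorphisms \emph{between objects of the class} -- it is \emph{not} required that every $X\in\Hh$ admit a short exact sequence $0\to F_1\to F_0\to X\to 0$ with $F_0,F_1\in\Ff$. You have built this two-term resolution property into your recollection of the definition, and your ``anticipated key lemma'' uses it essentially: pulling back such a resolution of $X=F/F'$ along $F\twoheadrightarrow X$ gives $E$ with exact sequences $0\to F_1\to E\to F\to 0$ and $0\to F'\to E\to F_0\to 0$, so $E\in\Ff$ by extension closure and $F'\in\Ff$ by kernel-of-epi closure. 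But, modulo the other axioms, the resolution property for arbitrary $X$ is \emph{equivalent} to closure under subobjects (take the kernel of any epimorphism from $\add\Ff$ onto $X$), i.e.\ exactly to $\Ff$ being a torsionfree class. So either you assume the resolution property, in which case the ``hard'' step is the easy diagram chase above but you are proving a different (weaker) statement than the paper's; or you take the correct definition, in which case the essential point -- that in $\coh\XX$ generating plus closure under extensions, summands and kernels of epimorphisms already forces closure under subobjects -- is left entirely unproved: the appeals to Serre duality, tubes and ``an Auslander--Reiten/approximation argument'' are a hope, not an argument. That step is precisely the content of \cite[Cor.~4.17]{angeleri:kussin:2017}, which rests on the analysis of resolving classes and tilting sheaves of finite type in Section~4 of that paper, not on a formal pullback chase.

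Two further inaccuracies should be fixed. The parenthetical ``torsionfree and generating \dots (equivalently, every object of $\Hh$ embeds in an object of $\Ff$)'' is false: generating means every coherent sheaf is an epimorphic image of a finite direct sum of objects of $\Ff$, and for instance $\vect\XX$ is generating although no non-zero sheaf of finite length embeds into a vector bundle. Relatedly, in your converse direction the generating property needs no argument at all, since it is part of the definition of resolving; the sentence about ``promoting'' an embedding to a generator of $\QHh$ via Serre duality is both unnecessary and, as written, does not make sense.
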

\begin{proof}
  This is~\cite[Cor.~4.17]{angeleri:kussin:2017}.
\end{proof}
We recall that cotilting sheaves $C,\,C'$ are equivalent if
$\Cogen(C)=\Cogen(C')$. Similarly, tilting sheaves $T,\,T'$ are
equivalent if $\Gen(T)=\Gen(T')$. By slight abuse of notation we
denote the equivalence classes in both cases in the same way as $[C]$
and $[T]$, respectively.
\begin{theorem}\label{thm:tilting-cotilting-duality}
  If $T$ is a tilting sheaf of finite type there is a cotilting sheaf
  $C$ such that $\lperpe{C}\cap\Hh=\lperpe{(\rperpe{T})}\cap\Hh$, and
  conversely. The assignments $\Gamma\colon [T]\mapsto [C]$ and
  $\Theta\colon [C]\mapsto [T]$ induce mutually inverse bijections
  between the sets of
  \begin{itemize}
  \item equivalence classes $[T]$ of tilting sheaves $T$ of finite
    type, and of
  \item equivalence classes
  $[C]$ of cotilting sheaves $C$.
  \end{itemize}
\end{theorem}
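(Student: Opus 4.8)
The plan is to establish the bijection by exploiting the correspondence between torsionfree generating classes in $\Hh$ and the data that parametrizes both tilting sheaves of finite type and cotilting sheaves. First I would recall that by Theorem~\ref{thm:cotilting-is-of-finite-type}, equivalence classes $[C]$ of cotilting sheaves in $\QHh$ are in bijection with torsion pairs $(\Tt_0,\Ff_0)$ in $\Hh$ for which $\Ff_0$ is a generating class, via $[C]\mapsto\lperpe{C}\cap\Hh$; and since $\QHh$ is hereditary and locally noetherian, such an $\Ff_0$ is automatically a torsionfree class, and every torsionfree generating class arises this way (its torsion pair exists because $\Hh$ is noetherian). On the tilting side, the results of~\cite{angeleri:kussin:2017} (invoked via Proposition~\ref{prop:tf-gen-equals-resolve}) give that tilting sheaves $T$ of finite type, up to equivalence, correspond bijectively to resolving subcategories of $\Hh$, via $[T]\mapsto\rperpe{T}\cap\Hh$; and Proposition~\ref{prop:tf-gen-equals-resolve} identifies resolving subcategories with torsionfree generating subcategories. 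Composing these identifications is what produces the claimed bijections.

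Concretely, I would define $\Gamma$ as follows: given a tilting sheaf $T$ of finite type, set $\Ff_0 := \rperpe{T}\cap\Hh$, which is torsionfree and generating by Proposition~\ref{prop:tf-gen-equals-resolve}; then the torsion pair $(\Tt_0,\Ff_0)$ it determines in $\Hh$ has $\Ff_0$ generating, so by Theorem~\ref{thm:cotilting-is-of-finite-type} there is a cotilting sheaf $C$, unique up to equivalence, with $\lperpe{C}\cap\Hh = \varinjlim(\Ff_0)\cap\Hh = \Ff_0 = \rperpe{T}\cap\Hh$. This gives the displayed equation in the statement (using $\lperpe{(\rperpe{T})}\cap\Hh = \rperpe{T}\cap\Hh$, which holds because $\rperpe{T}\cap\Hh$ is resolving hence closed under the relevant orthogonality, or more directly because in a hereditary category $\rperpe{T}$ is a torsionfree class whose objects are left-$\Ext^1$-orthogonal to itself). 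Conversely, $\Theta$ takes $[C]$ to the tilting sheaf $[T]$ of finite type corresponding, under~\cite{angeleri:kussin:2017}, to the resolving subcategory $\lperpe{C}\cap\Hh$; this is resolving because it is torsionfree (as $\QHh$ is hereditary) and generating (part of being a cotilting class, via the generator condition in Definition~\ref{def:cotilting}). That $\Gamma$ and $\Theta$ are mutually inverse is then immediate: both compositions reduce to the identity on the common parametrizing set of torsionfree generating subcategories of $\Hh$, since each of $[T]$ and $[C]$ is \emph{uniquely} determined by this subcategory — the tilting side by the classification of tilting sheaves of finite type, the cotilting side by the last sentence of Theorem~\ref{thm:cotilting-is-of-finite-type}.

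The main obstacle I anticipate is the bookkeeping needed to pin down exactly which orthogonality classes coincide: one must check carefully that $\rperpe{T}\cap\Hh$ (computed in the big category $\QHh$, or equivalently in $\Hh$ since $T$ and its test objects can be taken coherent and $\QHh$ is hereditary) really is the \emph{finitely presented part} of the torsionfree class $\lperpe{C}$, i.e.\ that $\varinjlim(\rperpe{T}\cap\Hh)$ is a torsionfree class whose $\fp$-part is recovered, and likewise that the generating property is preserved under these passages. This is where Proposition~\ref{prop:tf-gen-equals-resolve} does the real work, translating the a priori different-looking conditions ``torsionfree and generating'' (needed to feed Theorem~\ref{thm:cotilting-is-of-finite-type}) and ``resolving'' (the intrinsic description of tilting sheaves of finite type from~\cite{angeleri:kussin:2017}) into one another; granting that proposition, the remaining verifications are formal. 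A secondary point worth a sentence is well-definedness on equivalence classes — that equivalent tilting (resp.\ cotilting) sheaves yield the same subcategory — but this is built into the definitions of equivalence and the cited classifications.
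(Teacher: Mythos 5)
Your overall architecture is exactly the paper's: compose the parametrisation of cotilting sheaves by generating torsionfree classes (Theorem~\ref{thm:cotilting-is-of-finite-type}) with the parametrisation of finite-type tilting sheaves by resolving classes from \cite{angeleri:kussin:2017}, and let Proposition~\ref{prop:tf-gen-equals-resolve} identify the two parametrising sets. However, your concrete implementation of $\Gamma$ contains a false step. You set $\Ff_0:=\rperpe{T}\cap\Hh$ and justify the displayed equality of the theorem via the identity $\lperpe{(\rperpe{T})}\cap\Hh=\rperpe{T}\cap\Hh$, ``because $\rperpe{T}$ is a torsionfree class left-$\Ext^1$-orthogonal to itself''. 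This is wrong: $\rperpe{T}=\Gen(T)$ is the tilting \emph{torsion} class (closed under quotients and extensions), not a torsionfree class, and it is not self-orthogonal. Concretely, for the Lukas tilting sheaf $\bL$ one has $\Hh_0\subseteq\rperpe{\bL}$ (Serre duality: $\Ext^1(\bL,S[n])\cong\D\Hom(\tau^-S[n],\bL)=0$ since $\bL$ is torsionfree), while the associated resolving class is $\lperpe{(\rperpe{\bL})}\cap\Hh=\vect\XX$, which contains no finite length sheaf; already a homogeneous simple $S$ lies in $\rperpe{\bL}$ but not in $\lperpe{(\rperpe{\bL})}$ because $\Ext^1(S,S)\neq0$. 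So the two classes genuinely differ, and $\rperpe{T}\cap\Hh$ is in general neither torsionfree nor the finitely presented part of any cotilting class, so it cannot be fed into Theorem~\ref{thm:cotilting-is-of-finite-type}.

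The repair is to quote \cite[Thm.~4.14]{angeleri:kussin:2017} correctly: the bijection on the tilting side sends $[T]$ to the resolving class $\vSs=\lperpe{(\rperpe{T})}\cap\Hh$ (equivalently, $\vSs$ is the unique resolving class with $\rperpe{T}=\rperpe{\vSs}$), not to $\rperpe{T}\cap\Hh$. With this substitution your argument goes through and coincides with the paper's proof: $\vSs$ is torsionfree and generating by Proposition~\ref{prop:tf-gen-equals-resolve}, Theorem~\ref{thm:cotilting-is-of-finite-type} produces a cotilting sheaf $C$, unique up to equivalence, with $\lperpe{C}\cap\Hh=\vSs=\lperpe{(\rperpe{T})}\cap\Hh$ — which is precisely the displayed equation — and both composites $\Gamma\Theta$, $\Theta\Gamma$ are the identity because each equivalence class on either side is determined by this common class in $\Hh$.
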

\begin{proof}
  Follows from the preceding proposition by invoking
  Theorem~\ref{thm:cotilting-is-of-finite-type}
  and~\cite[Thm.~4.14]{angeleri:kussin:2017}.
\end{proof}
For simplicity, we will even just write $\Gamma (T)=C$, without
  brackets.

\section{Pure-injective sheaves of  slope infinity}\label{sec:pure-inj-inf-slope}
Let $\QHh=\Qcoh\XX$ be a weighted noncommutative regular projective
curve over a field
$k$. Let $$\Mm(\infty)=\lperpo{\vect\XX}=\rperpe{(\vect\XX)}.$$ The
sheaves in $\Mm(\infty)$ are said to have slope $\infty$. Examples are
the torsion sheaves, but also the generic and the adic sheaves, which
are torsionfree. Moreover:

\begin{lemma}\label{lem:V-divisibles-have-slope-infty}
  For every non-empty subset $V$ of $\XX$ the $V$-divisible sheaves
  have slope $\infty$: $\Dd_V\subseteq\Mm(\infty)$.
\end{lemma}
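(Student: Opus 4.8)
The plan is to show that a $V$-divisible sheaf $F$ satisfies $\Hom(F, \vect\XX) = 0$, which is precisely the condition $F \in \Mm(\infty) = \lperpo{\vect\XX}$. Recall that $\Dd_V = \rperpe{\bigl(\coprod_{x\in V}\Uu_x\bigr)}$, so $F$ being $V$-divisible means $\Ext^1(U, F) = 0$ for every finite length sheaf $U$ concentrated in the tubes $\Uu_x$ with $x \in V$. The key idea is to use Serre duality (NC~3) together with the hereditary property of $\Hh$: for a vector bundle $G$ and a simple sheaf $S$ lying in a tube $\Uu_x$ with $x \in V$, we have $\D\Hom(F, G)$ ... but wait, Serre duality as stated is for $X, Y \in \fp(\QHh)$, and $F$ need not be coherent. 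However, the generalised Serre duality condition in the excerpt gives $\Ext^1_\Aa(Y, \tau X) = \D\Hom_\Aa(X, Y)$ for all $Y \in \Aa$ and $X \in \fp(\Aa)$. So I would take $X$ to be an appropriate finite length sheaf and $Y = F$.

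Here is the concrete argument I would carry out. Fix $x \in V$ and a simple sheaf $S = S_x$ in $\Uu_x$. Since $\tau$ is an autoequivalence permuting the simples in $\Uu_x$, every finite length sheaf concentrated at $x$ is of the form $\tau^{-1}U$ for some such $U$; in particular $S$ itself is $\tau X$ for a suitable simple $X$ in $\Uu_x$. First I would show $\Hom(F, G) = 0$ for every line bundle $G$, and then bootstrap to all vector bundles using the line bundle filtration (\cite[Prop.~1.6]{lenzing:reiten:2006}) together with the fact that $\Mm(\infty) = \lperpo{\vect\XX}$ is closed under the relevant operations — actually it is cleaner to argue directly: suppose $0 \neq \varphi \colon F \to G$ with $G$ a vector bundle. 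Compose with a nonzero map $G \to S$ (which exists since any nonzero vector bundle surjects, after passing to a suitable quotient, onto some simple, and by choosing the tube we can arrange $S \in \Uu_x$ for $x \in V$ — more carefully, one uses that $\Hom(G, S_x) \neq 0$ for cofinitely many $x$, hence for some $x \in V$ since $V \neq \emptyset$ and... hmm, this needs $V$ to contain such an $x$).

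Let me reorganize: the cleanest route is via $\Ext$. I would argue: if $\Hom(F, G) \neq 0$ for some line bundle $G$, pick a simple sheaf $S$ in a tube $\Uu_x$ with $x \in V$ such that $\Hom(G, S) \neq 0$ (possible because for almost all $x$ the tube $\Uu_x$ is homogeneous with a unique simple $S_x$ satisfying $\Hom(L', S_x) \neq 0$ for any line bundle $L'$; since $V$ is non-empty and cofinite-avoidance only removes finitely many points, such $x \in V$ exists — and if $V$ happens to be contained in the finitely many exceptional points one still finds a suitable simple, since every line bundle has nonzero homomorphisms into simples in every tube by the rank/degree theory). Then a nonzero composite $F \to G \to S$ shows $\Hom(F, S) \neq 0$. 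Now apply generalised Serre duality with $X$ the simple with $\tau X = S$ (so $X \in \Uu_x$ too since $\tau$ preserves tubes): $0 \neq \D\Hom_\Aa(X, F)$ would be what we want, but we have $\Hom(F,S) = \Hom(F, \tau X)$ and Serre duality reads $\D\Ext^1(X, F) \cong \Hom(F, \tau X)$. Hence $\Ext^1(X, F) \neq 0$, contradicting $F \in \Dd_V$ since $X$ is a finite length sheaf in $\Uu_x$ with $x \in V$. This proves $\Hom(F, G) = 0$ for all line bundles $G$, and the line bundle filtration of an arbitrary vector bundle $G'$ — with $\Hom(F,-)$ left exact — upgrades this to $\Hom(F, G') = 0$, i.e.\ $F \in \lperpo{\vect\XX} = \Mm(\infty)$.

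The main obstacle is the point-selection step: ensuring that for an arbitrary non-empty $V$ one can find $x \in V$ and a simple $S \in \Uu_x$ receiving a nonzero map from the given (line) bundle $G$. This is where I expect the author actually to invoke a cleaner fact — perhaps that $\Ext^1(U, G) \neq 0$ for \emph{every} simple $U$ and every vector bundle $G$ (which follows from Serre duality: $\D\Ext^1(U,G) = \Hom(G, \tau U)$, and $\Hom(G, \tau U) \neq 0$ because $\tau U$ has finite length and $G$ is a vector bundle of positive rank, so there is always a nonzero map from $G$ to any finite length sheaf in any tube). If that is available, the argument simplifies: for $F \in \Dd_V$ and any vector bundle $G$, and any simple $U$ in $\Uu_x$ with $x \in V$, consider $\D\Ext^1(U, F)$ — but we want $\Hom(F, \tau U)$ on the other side. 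The correct pairing to use is: $\Hom(F, G) $ vanishes iff ... Actually the slickest formulation: $F \in \rperpe{(\coprod_{x\in V}\Uu_x)}$ and one wants $F \in \rperpo{\vect\XX}$; by generalised Serre duality $\rperpe{\Uu_x} \cap$ (relevant objects) relates to $\rperpo{(\tau^{-1}\vect\XX)} = \rperpo{\vect\XX}$ since $\tau$ preserves $\vect\XX$ — but this only controls maps \emph{into} sheaves killed by being $\tau$ of something in $\Uu_x$, not all of $\vect\XX$. So in fact one genuinely needs that every vector bundle maps nontrivially into every tube in $V$, equivalently $\Hom(\vect\XX, \Uu_x) \neq 0$ for $x \in V$, which is standard from the degree function being positive and $\tau$-invariant on $\Hh_0$. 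I would cite \cite{kussin:2016} or \cite[Sec.~2]{angeleri:kussin:2017} for this and keep the proof short.
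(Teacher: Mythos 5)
Your strategy is the same as the paper's: produce a nonzero map from the $V$-divisible sheaf $F$ onto a simple sheaf concentrated at some $x\in V$, and then contradict $V$-divisibility via generalised Serre duality $\D\Ext^1(X,F)\cong\Hom(F,\tau X)$ with $X$ simple in $\Uu_x$. That duality step is correct, the bootstrap from line bundles to arbitrary vector bundles via line-bundle filtrations and left exactness of $\Hom(F,-)$ is correct, and the point-selection issue you agonise over is a non-issue: one simply fixes any $x\in V$ and uses that every nonzero vector bundle (in particular every line bundle) admits an epimorphism onto some simple concentrated at $x$ --- exactly the standard fact the paper invokes.

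The genuine gap is the step ``then a nonzero composite $F\to G\to S$ shows $\Hom(F,S)\neq 0$''. From $\Hom(F,G)\neq 0$ and $\Hom(G,S)\neq 0$ you cannot conclude that some composite through $G$ is nonzero: for a fixed nonzero $\varphi\colon F\to G$ whose image lies in the kernel of every map $G\to S$, all composites vanish. Concretely (commutative case $\XX=\PP^1$), take $G=\Oo$, $S=\Oo_x$ the simple at $x$, and $\varphi$ a map with image $\Oo(-x)\subset\Oo$: every morphism $\Oo\to\Oo_x$ is a multiple of evaluation at $x$ and kills $\Oo(-x)$, so $\psi\varphi=0$ for every $\psi$. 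Hence the inference pattern you use is invalid as stated, and in non-homogeneous tubes the simple receiving a nonzero map may also have to change when the image is smaller than $G$. The missing idea is exactly the move the paper makes: replace $G$ by the image $I$ of $\varphi$, which is a coherent (noetherian subobject) torsionfree nonzero sheaf, hence a vector bundle (a line bundle if $G$ is); then $F\twoheadrightarrow I$, $I\twoheadrightarrow L'$ (top quotient of a line-bundle filtration) and $L'\twoheadrightarrow S_x$ with $S_x$ simple at $x\in V$ are all epimorphisms, so the composite $F\to S_x$ is an epimorphism, in particular nonzero, and your Serre-duality step then yields $\Ext^1(\tau^{-1}S_x,F)\neq 0$, the desired contradiction. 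With this single repair your argument coincides with the paper's proof, and the filtration bootstrap becomes superfluous.
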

\begin{proof}
  Let $E$ be $x$-divisible for some point $x$. If there is a non-zero
  morphism from $E$ to a vector bundle, then there is also an
  epimorphism to a line bundle $L'$. Since there is an epimorphism
  from $L'$ to a simple object $S_x$ concentrated in $x$, we get with
  Serre duality a contradiction to $x$-divisibility.
\end{proof}
\begin{proposition}
  Let $E\in\QHh$ be pure-injective, torsionfree and of slope
  $\infty$. Then $E$ is rigid.
\end{proposition}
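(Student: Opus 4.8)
The plan is to show that $\Ext^1_{\QHh}(E,E) = 0$ by exploiting that $E$ is torsionfree of slope $\infty$ together with pure-injectivity. First I would recall the basic structure: since $E$ is torsionfree, it is a subsheaf of a product of copies of $\Kk$ (indeed, any torsionfree sheaf embeds into a divisible hence injective sheaf, and by slope $\infty$ the Pr\"ufer summands cannot contribute a torsionfree quotient; alternatively one uses that $E \in \Mm(\infty)$ is already covered by the injective cogenerator structure). The key point is that $\Ext^1(E, -)$ applied to the torsion subsheaf must vanish: writing any extension $0 \to E \to X \to E \to 0$, I want to argue $X$ is again torsionfree of slope $\infty$, and then use a finiteness/limit argument.

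The main technical step is to reduce $\Ext^1(E,E)$ to a computation over coherent subobjects. Since $\QHh$ is hereditary and $E$ is a direct limit of coherent sheaves $E_i$ (all of which may be taken torsionfree, being subobjects of the torsionfree $E$), write $E = \varinjlim E_i$. By Theorem~\ref{thm:ext-direct-limit-pure-inj}, since $E$ is pure-injective, $\Ext^1_{\QHh}(\varinjlim E_i, E) = 0$ provided $\Ext^1_{\QHh}(E_i, E) = 0$ for all $i$. So it suffices to show $\Ext^1_{\QHh}(B, E) = 0$ for every coherent torsionfree subsheaf $B \subseteq E$. By Serre duality in $\Hh$ in its generalised form (axiom (NC~3) extended to $Y \in \QHh$, $X \in \fp$), we have $\Ext^1_{\QHh}(B, E) \cong \D\Hom_{\QHh}(E, \tau B)$. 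Now $\tau B$ is again a coherent torsionfree sheaf — a vector bundle, since $\tau$ preserves $\vect\XX$ — so $\tau B \in \vect\XX$, and $\Hom_{\QHh}(E, \tau B) = 0$ precisely because $E$ has slope $\infty$, i.e.\ $E \in \lperpo{\vect\XX}$. This gives $\Ext^1_{\QHh}(B, E) = 0$, and hence $\Ext^1_{\QHh}(E,E) = 0$, as desired.

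The step I expect to be the main obstacle is justifying the use of generalised Serre duality for $\Ext^1_{\QHh}(B, E)$ with $E$ a genuinely infinite-dimensional (non-coherent) sheaf: one must verify that $\QHh = \Qcoh\XX$ satisfies the ``additionally'' clause in the definition of generalised Serre duality, namely $\D\Ext^1_{\Aa}(X,Y) = \Hom_{\Aa}(Y, \tau X)$ for all $Y \in \Aa$, $X \in \fp(\Aa)$. This should follow from the locally noetherian hereditary structure of $\QHh$ together with the fact that $\Hh = \coh\XX$ satisfies Serre duality and every object of $\QHh$ is a direct limit of coherent sheaves — one expresses $E = \varinjlim E_i$ and passes $\Ext^1(B, -)$ and $\Hom(-, \tau B)$ through the limit, using that $\Ext^1(B,-)$ commutes with direct limits (as $B$ is coherent and $\QHh$ is hereditary, so $\Ext^1(B,-)$ is right exact and commutes with filtered colimits) and that $\D$ turns the resulting colimit into the limit matching $\Hom(\varinjlim E_i, \tau B)$. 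A cleaner alternative, avoiding generalised Serre duality entirely, is: any extension $0 \to E \to X \to B \to 0$ with $B$ coherent torsionfree of positive rank forces $X$ torsionfree (torsionfree is closed under extensions) and $X$ still has $\Hom(X, \vect\XX) = 0$ would need checking — this is less immediate, so I would stick with the Serre-duality route and simply remark that the required form of duality for $\Qcoh\XX$ is part of the standing setup recalled in Section~\ref{sec:sheaves-and-modules}.
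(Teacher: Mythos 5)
Your proposal is correct and is essentially the paper's own argument: the paper writes the torsionfree $E$ as $\varinjlim E_i$ with $E_i$ vector bundles and applies Theorem~\ref{thm:ext-direct-limit-pure-inj}, the vanishing $\Ext^1(E_i,E)=0$ being immediate from the definition $\Mm(\infty)=\lperpo{\vect\XX}=\rperpe{(\vect\XX)}$. Your extra discussion of generalised Serre duality just makes explicit the identification $\D\Ext^1(B,E)\cong\Hom(E,\tau B)$ underlying that stated equality, which is part of the standing setup for $\Qcoh\XX$.
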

\begin{proof}
  Since $E$ is torsionfree, we have $E=\varinjlim E_i$ for a directed
  system of vector bundles $(E_i)_{i\in I}$. The claim then follows
  from Theorem~\ref{thm:ext-direct-limit-pure-inj}.
\end{proof}
\begin{corollary}
  Let $E,\,F\in\{\Kk,\, S[-\infty]\mid S\ \text{simple}\}$. Then
  $\Ext^1(E,F)=0$.
\end{corollary}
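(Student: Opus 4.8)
The plan is to deduce this from the preceding Proposition by passing to a finite direct sum. Set $G=E\oplus F$ (or, more uniformly, let $G$ be any finite direct sum of sheaves taken from $\{\Kk\}\cup\{S[-\infty]\mid S\ \text{simple}\}$). First I would verify that $G$ is again torsionfree and of slope $\infty$: the torsionfree class $\Ff=\rperpo{\Hh_0}$ is closed under subobjects, hence under finite direct sums, and $\Mm(\infty)=\lperpo{\vect\XX}$ is closed under direct sums since $\Hom(-,V)$ carries coproducts to products. Next I would check that $G$ is pure-injective: the sheaf $\Kk$ is injective (it lies in $\Dd=\Inj(\QHh)$) and hence pure-injective, the adic sheaves $S[-\infty]$ are pure-injective, and a finite direct sum of pure-injective objects is pure-injective. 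The preceding Proposition then applies to $G$ and yields $\Ext^1(G,G)=0$; by additivity of $\Ext^1$ in both arguments this gives $\Ext^1(E,F)=0$.

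The one input that is not purely formal is the pure-injectivity of the adic sheaves $S[-\infty]$, and this is where I expect the real work to sit. To establish it I would use the presentation $S[-\infty]=\varprojlim_n S[-n]$ along the epimorphic coray maps: each $S[-n]$ is of finite length, hence finitely presented and, by Hom-finiteness of $\Hh$, $\Sigma$-pure-injective via Lemma~\ref{lem:pure-injective}; moreover, for every finitely presented $H$ the inverse system $(\Hom(H,S[-n]))_n$ consists of finite-dimensional $k$-spaces, so its $\varprojlim^1$ vanishes and the two-term (Milnor) complex exhibits a pure-exact sequence $0\to S[-\infty]\to\prod_n S[-n]\to\prod_n S[-n]\to 0$ with pure-injective middle and right terms; one then concludes that $S[-\infty]$ is pure-injective, for instance by transporting the situation to $\Ff(\QHh)$ along the functor $d$, or by a linear-compactness argument.

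Alternatively, one can sidestep the pure-injectivity question and compute $\Ext^1(E,F)$ directly. If $F=\Kk$ the group vanishes by injectivity of $\Kk$. If $F=S'[-\infty]=\varprojlim_n S'[-n]$, note that $E$ is torsionfree, so by (generalised) Serre duality $\Ext^1(E,N)=\D\Hom(\tau^{-1}N,E)=0$ for every finite-length sheaf $N$; applied to $N=S'[-n]$ this kills $\Ext^1(E,S'[-n])$, and applied to the simple kernels of the coray maps it shows that the transition maps $\Hom(E,S'[-n-1])\to\Hom(E,S'[-n])$ are surjective, whence $\varprojlim^1_n\Hom(E,S'[-n])=0$. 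Given an extension $0\to S'[-\infty]\to X\to E\to 0$, each pushout along $S'[-\infty]\to S'[-n]$ splits (as $\Ext^1(E,S'[-n])=0$), producing $r_n\colon X\to S'[-n]$ that restrict to the structure maps; the $r_n$ form an inverse system of torsors over $(\Hom(E,S'[-n]))_n$, whose $\varprojlim^1$ vanishes, so a compatible family $(r_n)$ exists and assembles to a retraction $X\to S'[-\infty]$, splitting the extension. Either way the main obstacle is the same: controlling the inverse-limit behaviour of the adic sheaves.
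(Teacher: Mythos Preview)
Your main argument—form $G=E\oplus F$, verify it is pure-injective, torsionfree and of slope $\infty$, apply the preceding Proposition to get $\Ext^1(G,G)=0$, and read off $\Ext^1(E,F)=0$ by additivity—is exactly the paper's proof, which is stated in one line: ``By the proposition, $E\oplus F$ is rigid.''

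You are right to flag pure-injectivity of the adic sheaves as the one substantive input; the paper does not prove this before the corollary and tacitly treats it as known (it follows from completeness over the local order $R_x$, an argument the paper gives later in Lemma~\ref{lem:reduced-pi-are-adics} and the discussion after Theorem~\ref{thm:large-cotilting-sheaves-in-general}). Your sketch via the Milnor-type presentation and $\varprojlim^1$-vanishing over finite-dimensional systems is sound, though it is cleanest when carried out inside $\Mod(R_x)$ where module-theoretic tools apply directly. Your alternative route—killing $\Ext^1(E,S'[-n])$ via Serre duality and then lifting compatible retractions through the tower by a torsor argument—is a genuinely different, self-contained approach that sidesteps pure-injectivity of the adic entirely; it is more laborious but has the virtue of not relying on a forward reference.
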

\begin{proof}
  By the proposition, $E\oplus F$ is rigid.
\end{proof}
\subsection*{Definability}
Let $\Aa$ be a locally coherent Grothendieck category with
$\Aa_0=\fp(\Aa)$. A full subcategory $\Cc$ of $\Aa$ is called \emph{definable} if
it is closed under products, direct limits and pure subobjects. 
\begin{proposition}\label{prop:slope-infty-definable}
  $\Mm(\infty)$ is a definable subcategory of $\Qcoh\XX$.
\end{proposition}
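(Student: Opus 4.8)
The plan is to check the three defining conditions of a definable subcategory — closure under products, under direct limits, and under pure subobjects — directly, working with the description $\Mm(\infty)=\rperpe{(\vect\XX)}$ rather than with $\lperpo{\vect\XX}$. The advantage of the $\Ext^1$-description is that $\vect\XX$ is contained in $\coh\XX=\fp(\QHh)$, so that $\Mm(\infty)$ is the common kernel of the functors $\Ext^1_{\QHh}(V,-)$ with $V$ running over $\vect\XX$, each of which has good exactness behaviour because $V$ is finitely presented.

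I would begin with the pure-subobject case, which is the most direct. Let $0\to A\to B\to C\to 0$ be a pure-exact sequence with $B\in\Mm(\infty)$ and let $V\in\vect\XX$. Since $V$ is finitely presented, purity gives that $\Hom_{\QHh}(V,B)\to\Hom_{\QHh}(V,C)$ is surjective, so the connecting homomorphism $\Hom_{\QHh}(V,C)\to\Ext^1_{\QHh}(V,A)$ is zero and hence $\Ext^1_{\QHh}(V,A)\to\Ext^1_{\QHh}(V,B)=0$ is injective; thus $\Ext^1_{\QHh}(V,A)=0$ and $A\in\Mm(\infty)$. Next, for direct limits I would observe that $\rperpe{(\vect\XX)}$ is closed under epimorphic images, because $\QHh$ is hereditary (so $\Ext^2_{\QHh}=0$), and under arbitrary coproducts: coproducts are exact in $\QHh$, a coproduct of injectives is injective by local noetherianity, and $\Hom_{\QHh}(V,-)$ commutes with coproducts since $V$ is finitely presented, whence $\Ext^1_{\QHh}(V,\bigoplus_i M_i)\cong\bigoplus_i\Ext^1_{\QHh}(V,M_i)$. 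Since every direct limit is an epimorphic image of a coproduct, $\Mm(\infty)$ is closed under direct limits.

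Closure under products is the step that needs a little care, because products in a Grothendieck category need not be exact. Here I would embed each $M_i\in\Mm(\infty)$ into an injective $E_i$ with cokernel $E_i'$, note that $\prod_i E_i$ is still injective, and use left-exactness of products to see that the resulting sequence $0\to\prod_i M_i\to\prod_i E_i\to Q\to 0$ has $Q$ a subobject of $\prod_i E_i'$. Applying $\Hom_{\QHh}(V,-)$, which always commutes with products, identifies $\Ext^1_{\QHh}(V,\prod_i M_i)$ with the cokernel of $\Hom_{\QHh}(V,\prod_i E_i)\to\Hom_{\QHh}(V,Q)$; since each $\Hom_{\QHh}(V,E_i)\to\Hom_{\QHh}(V,E_i')$ is surjective (this is the condition $\Ext^1_{\QHh}(V,M_i)=0$) and a product of surjections of abelian groups is surjective, the composite $\Hom_{\QHh}(V,\prod_i E_i)\to\Hom_{\QHh}(V,Q)\hookrightarrow\Hom_{\QHh}(V,\prod_i E_i')$ is surjective, and hence so is its first factor; therefore $\Ext^1_{\QHh}(V,\prod_i M_i)=0$. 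The essential point, and the only real obstacle, is to route this argument through the identity $\Hom_{\QHh}(V,\prod_i X_i)=\prod_i\Hom_{\QHh}(V,X_i)$ together with exactness of products in $\Ab$, rather than through the (false) exactness of products in $\QHh$; alternatively one may deduce this case from the description of products in $\Derived{\QHh}$ appearing in the proof of Lemma~\ref{lem:pure-in-der-cat}, using $\Ext^1_{\QHh}(V,-)\cong\Hom_{\Derived{\QHh}}(V,(-)[1])$. Combining the three parts shows that $\Mm(\infty)$ is definable.
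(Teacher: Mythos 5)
Your proposal is correct, and it differs from the paper's proof in two of the three closure properties. For pure subobjects you argue exactly as the paper does (purity plus the long exact sequence for $\Hom(V,-)$). For closure under products, the paper simply cites \cite[Cor.~A.2]{coupek:stovicek:2019} for the equivalence $\Ext^1(E,\prod_i X_i)=0\Leftrightarrow\Ext^1(E,X_i)=0$ for all $i$; your injective-copresentation argument, pushing everything through $\Hom_{\QHh}(V,\prod_i-)\cong\prod_i\Hom_{\QHh}(V,-)$ and exactness of products in $\Ab$, is a correct and self-contained substitute for (the direction of) that corollary which is actually needed, and your explicit warning about non-exactness of products in $\QHh$ is exactly the right point to flag. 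For closure under direct limits, the paper works with the $\Hom$-description $\Mm(\infty)=\lperpo{\vect\XX}$ and uses only $\Hom(\varinjlim X_i,E)\cong\varprojlim\Hom(X_i,E)$, which needs no hypotheses beyond the definition; you instead stay with $\rperpe{(\vect\XX)}$ and factor a direct limit as a quotient of a coproduct, using hereditariness (closure under epimorphic images) and local noetherianity (coproducts of injectives are injective) to get $\Ext^1(V,\bigoplus_iM_i)\cong\bigoplus_i\Ext^1(V,M_i)$. Both routes are valid in this setting; the paper's is shorter and uses fewer standing hypotheses for that step (and implicitly the identification $\lperpo{\vect\XX}=\rperpe{(\vect\XX)}$, which your argument also relies on via the definition of $\Mm(\infty)$), while yours has the merit of being uniform in the $\Ext^1$-description and of not outsourcing the product step to an external reference.
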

\begin{proof}
  Let $X_i$ be a family of objects in $\Aa$ and
  $E\in\vect\XX$. By~\cite[Cor.~A.2]{coupek:stovicek:2019} we have
  $\Ext^1(E,\prod_{i}X_i)=0$ if and only if $\Ext^1(E,X_i)=0$ for all
  $i$. Hence $\Mm(\infty)$ is closed under products.

  Assume that the $X_i$ form a directed set of objects. Then
  $\Hom(\varinjlim X_i,E)\cong\varprojlim\Hom(X_i,E)$, and thus
  $\Mm(\infty)$ is also closed under direct limits.

  By applying $\Hom(E,-)$ to a pure-exact sequence
  $0\ra X\ra Y\ra Z\ra 0$ with $Y\in\rperpe{(\vect\XX)}$, the
  resulting long exact sequence shows $X\in\rperpe{(\vect\XX)}$, and
  thus $\Mm(\infty)$ is closed under pure subobjects.
\end{proof}

\subsection*{Pure-injectives of slope $\infty$}
We wish to determine the indecomposable pure-injective objects in the
class $\Mm:= \Mm(\infty)$ of objects of slope $\infty$.  In order to
do this, we consider the equivalent category $\Mm$ that occurs as a
subcategory of $\Derived{\QHh}$.

In the derived category $\Derived{\QHh}$ and the compact objects are
given by
$\bDerived{\Hh} = \add\left( \bigvee _{n\in \mathbb{Z}}\Hh[n]\right)$
(see the introduction for an explanation of this notation).  Given
this description of $\bDerived{\Hh}$, we may partition
$\bigvee _{n\in \mathbb{Z}}\Hh[n]$ into three
parts:\[ \bp := \left(\bigvee _{n <0}\Hh[n]\right) \vee
  \vect\mathbb{X} \hspace{5mm} \bt := \Hh_0 \hspace{5mm} \bq :=
  \left(\bigvee _{n >0}\Hh[n]\right).
\]

We also consider the HRS-tilted t-structure $(\Uu_\Dd, \Vv_\Rr)$ of
the split torsion pair $(\Dd, \Rr)$ in $\Derived{\QHh}$ where we take
the class $\Dd := \{M\in \QHh \mid \Hom(M, \Hh_0)=0\}$ of divisible
objects and the class $\Rr := \rperpo{\Dd}$ of reduced objects (see
Section \ref{sec:sheaves-and-modules}).  By
Proposition~\ref{prop:class-of-pure-injective-complexes} applied to
$\Derived{\QHh}$ the pure-injective objects in $\Derived{\QHh}$ are
exactly those in the class $\Prod(\bDerived{\Hh})$ and so we will use
the partition $(\bp, \bt, \bq)$, to find the indecomposable objects in
$\Prod(\bDerived{\Hh}) \cap \Mm$.
\begin{lemma}\label{lem:products-fin-length}
  Let $\{X_i\}_{i\in I}$ be a set of objects in $\Hh_0$.  Then in
  $\Tt:=\Derived{\QHh}$ we have
  $\prod^\Tt_{i\in I} X_i \cong ( \prod^{\QHh}_{i\in I} X_i)$, where
  the product on the left is taken in $\Derived{\QHh}$ and the product
  on the right is taken in $\QHh$.
\end{lemma}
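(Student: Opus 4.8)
The plan is to work in $\Tt=\Derived{\QHh}$ and reduce the statement to the vanishing of a single cohomology sheaf. For each $i$, since $\QHh$ is a hereditary Grothendieck category, choose an injective copresentation $0\to X_i\to I^0_i\to I^1_i\to 0$ in $\QHh$. Each complex $(I^0_i\to I^1_i)$ concentrated in degrees $0,1$ is a bounded complex of injectives, hence $K$-injective, and products of injectives are injective, so the termwise product is again $K$-injective; therefore the derived product $\prod^{\Tt}_i X_i$ is computed by the termwise product, i.e.\ by the complex $\prod^{\QHh}_i I^0_i\to\prod^{\QHh}_i I^1_i$ in degrees $0,1$. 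Since products are left exact, its $0$-th cohomology is $\prod^{\QHh}_i X_i$; write $H:=\Coker\bigl(\prod^{\QHh}_i I^0_i\to\prod^{\QHh}_i I^1_i\bigr)$ for the first cohomology. As $\QHh$ is hereditary, $\Ext^2=0$, so the truncation triangle $\prod^{\QHh}_i X_i\to\prod^{\Tt}_i X_i\to H[-1]\to$ splits, its connecting morphism lying in $\Ext^2(H,\prod^{\QHh}_i X_i)=0$; this is the instance of \cite[Sec.~1.6]{krause:2007} already invoked in the proof of Lemma~\ref{lem:pure-in-der-cat}. Hence $\prod^{\Tt}_i X_i\cong\prod^{\QHh}_i X_i\oplus H[-1]$, and it remains to prove $H=0$.

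For this I would test $H$ against vector bundles. Fix $E\in\vect\XX$ and let $Q$ be the image of $\prod^{\QHh}_i I^0_i\to\prod^{\QHh}_i I^1_i$. Applying $\Hom(E,-)$ to $0\to\prod^{\QHh}_i X_i\to\prod^{\QHh}_i I^0_i\to Q\to 0$: since $\prod^{\QHh}_i I^0_i$ is injective and $\Ext^2=0$ we get $\Ext^1(E,Q)=0$, and since also $\Ext^1(E,\prod^{\QHh}_i X_i)=0$ — because $\prod^{\QHh}_i X_i$ lies in the definable class $\Mm(\infty)=\rperpe{(\vect\XX)}$ by Proposition~\ref{prop:slope-infty-definable}, as $\Hh_0\subseteq\Mm(\infty)$ — the map $\Hom(E,\prod^{\QHh}_i I^0_i)\to\Hom(E,Q)$ is onto. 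Applying $\Hom(E,-)$ to $0\to Q\to\prod^{\QHh}_i I^1_i\to H\to 0$ then gives $\Hom(E,H)=\Coker\bigl(\Hom(E,\prod^{\QHh}_i I^0_i)\to\Hom(E,\prod^{\QHh}_i I^1_i)\bigr)$. Now $\Hom(E,-)$ commutes with products and products are exact in $\Ab$, so this cokernel is $\prod_i\Coker(\Hom(E,I^0_i)\to\Hom(E,I^1_i))=\prod_i\Ext^1(E,X_i)$, using once more that the $I^0_i$ are injective. Finally, by Serre duality (NC~3), $\Ext^1(E,X_i)=\D\Hom(X_i,\tau E)=0$, since $\tau E$ is again a vector bundle, hence torsionfree, whereas $X_i\in\Hh_0$ is torsion. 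Thus $\Hom(E,H)=0$ for every vector bundle $E$; as the line bundles already form a generating class of $\QHh$ (every coherent sheaf becomes globally generated after an ample twist, hence is an epimorphic image of a direct sum of line bundles, and every quasicoherent sheaf is a direct limit of coherent ones), it follows that $H=0$. Therefore $\prod^{\Tt}_i X_i\cong\prod^{\QHh}_i X_i$.

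The step I expect to be the main obstacle is precisely the vanishing $H=0$: the product functor on $\QHh$ is in general far from exact, and the whole content of the lemma is that this failure of exactness collapses for families of finite-length sheaves. The mechanism that makes it work is to probe the obstruction $H$ with vector bundles, where $\Ext^1(\vect\XX,\Hh_0)=0$ from Serre duality annihilates everything; the only other point needing care is the bookkeeping that the termwise product of injective resolutions genuinely computes the derived product $\prod^{\Tt}$. (Alternatively, one may avoid using that line bundles generate: $H$ is a quotient of the injective sheaf $\prod^{\QHh}_i I^1_i$, hence divisible and therefore injective, so a coproduct of copies of $\Kk$ and of Prüfer sheaves; the established vanishing $\Hom(L',H)=0$ for all line bundles $L'$ then rules out each indecomposable summand, since every indecomposable injective receives a nonzero map from some line bundle.)
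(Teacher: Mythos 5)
Your argument is correct, and it reaches the same vanishing mechanism as the paper by a different, self-contained route. The paper's proof is essentially a citation: it observes that the torsionfree class $\Ff=\rperpo{{\Hh_0}}$ is generating and that generalised Serre duality gives $\Hh_0\subseteq\rperpe{\Ff}$, and then invokes \cite[Prop.~2.12]{coupek:stovicek:2019}, which encapsulates exactly the comparison of products in $\QHh$ and $\Derived{\QHh}$ for such objects. You instead reprove the needed special case by hand: two-term injective copresentations, the observation that termwise products of bounded complexes of injectives compute the derived product, the splitting $\prod^\Tt X_i\cong\prod^{\QHh}X_i\oplus H[-1]$ from $\Ext^2=0$ (the same decomposition the paper uses in Lemma~\ref{lem:pure-in-der-cat}), and then the key point, which is identical in substance to the paper's: Serre duality forces $\Ext^1(E,X_i)=0$ for $E\in\vect\XX$ and $X_i\in\Hh_0$, and generation by (vector) bundles kills the obstruction $H$. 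What your version buys is independence from the black box \cite[Prop.~2.12]{coupek:stovicek:2019}; what the paper's version buys is brevity and the cleaner formulation in terms of the full torsionfree class $\Ff$ rather than $\vect\XX$. A small streamlining is available to you: once the splitting is in place, $\Hom_{\Derived{\QHh}}(E,(\prod^\Tt X_i)[1])\cong\prod_i\Ext^1(E,X_i)=0$ contains both $\Ext^1(E,\prod^{\QHh}X_i)$ and $\Hom(E,H)$ as direct summands, so you do not actually need to invoke Proposition~\ref{prop:slope-infty-definable} for the surjectivity step.

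Two minor points of care. First, your justification that ``line bundles generate'' via ample twists and global generation is commutative-geometry language that is not literally available for weighted noncommutative curves; but this is harmless, since you proved $\Hom(E,H)=0$ for \emph{all} vector bundles $E$, and the fact that $\vect\XX$ is a generating class of $\Hh$ (hence, with direct limits, of $\QHh$) is standard in this setting and used throughout the paper. (If one insists on line bundles, one can extract from any epimorphism $E\twoheadrightarrow S$ and a line bundle filtration of $E$ a line bundle mapping onto $S$.) Second, your alternative closing argument is also fine: $H$ is an epimorphic image of an injective, hence divisible, hence injective, hence a direct sum of copies of $\Kk$ and Pr\"ufer sheaves by local noetherianness, and each such summand admits a nonzero morphism from some line bundle, contradicting $\Hom(\vect\XX,H)=0$ unless $H=0$.
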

\begin{proof}
  Note that the class $\Ff$ of torsionfree objects in $\QHh$ is a
  torsionfree class that contains a system of generators. Moreover, by
  definition, we have that $\Ff = \rperpo{{\Hh_0}}$.  By applying
  (generalised) Serre duality, we have that
  $\Hh_0\subseteq\Ff^{\perp_1}$.  Thus, by
  \cite[Prop.~2.12]{coupek:stovicek:2019}, we have the desired result.
\end{proof}
\noindent The following lemma is a derived version
of~\cite[2.2]{ringel:2004}. In the proof, we will make use of the
following setup several times; we will refer to it as Setup (*).  For
every pure-injective object $X$ in $\Derived{\QHh}$, then we have the
following morphisms:
  \[ X \overset{\left(\begin{smallmatrix} f_\bp \\ f_\bt \\
          f_\bq \end{smallmatrix} \right)}{\longrightarrow} X_\bp
    \oplus X_\bt \oplus X_\bq \overset{\left(\begin{smallmatrix} g_\bp
          & g_\bt & g_\bq \end{smallmatrix} \right)}{\longrightarrow}
    X \] such that $g_\bp f_\bp + g_\bt f_\bt + g_\bq f_\bq = 1$ where
  $X_\bp$ is a product of objects in $\bp$, $X_\bt$ is a product of
  objects in $\bt$ and $X_\bq$ is a product of objects in $\bq$.  All
  products and $\Prod(-)$ are taken in $\Derived{\QHh}$ unless
  otherwise stated.
  \begin{lemma}\label{lem:trisection}
    We have
\begin{enumerate}
\item \textbf{(i)} $\Prod(\bp) \cap \Mm = 0$, \textbf{(ii)}
  $\Prod(\bt) \subseteq \Mm$, \textbf{(iii)}
  $\Prod(\bq) \cap \Mm = \Dd$.
\item The class $\Prod(\bt)$ is the class of pure-injective objects in
  $\Rr \cap \Mm$.
\item The following are equivalent for $X \in \QHh$.
  \begin{enumerate}
  \item $X = X' \oplus X''$ where $X' \in \Prod(\bt)$ and
    $X'' \in \Dd$.
\item $X$ is pure-injective and belongs to $\Mm$.
\end{enumerate}
\end{enumerate}
\end{lemma}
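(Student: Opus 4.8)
The plan is to exploit the description of compact objects in $\Derived{\QHh}$ as $\bDerived{\Hh}$, together with the trisection $(\bp,\bt,\bq)$ and Setup $(*)$. The structure has three parts that feed into each other: first establish the containments in (1), then deduce (2), and finally bootstrap to the statement about objects of $\QHh$ in (3). Throughout, all products and $\Prod(-)$ are taken in $\Tt := \Derived{\QHh}$ unless stated otherwise, and I freely use that pure-injective objects in $\Tt$ are exactly those in $\Prod(\bDerived{\Hh})$ by Proposition~\ref{prop:class-of-pure-injective-complexes}.

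\emph{Step 1: the containments in (1).} For (i), note that an object of $\bp = (\bigvee_{n<0}\Hh[n])\vee\vect\XX$ has a nonzero $\Hom$ from some shifted coherent sheaf, and in particular if $P\in\Prod(\bp)$ lies in $\Mm$ then testing against the vector bundle summands (using $\Mm = \rperpe{(\vect\XX)} = \lperpo{\vect\XX}$ and that $\Mm$ is definable by Proposition~\ref{prop:slope-infty-definable}, hence closed under products and pure subobjects) forces the relevant $\Hom$- and $\Ext^1$-groups to vanish; since a nonzero product of objects of $\bp$ always detects a vector bundle (via the compact generation and the $\bp$-part), we get $P = 0$. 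For (ii), $\bt = \Hh_0$ consists of finite-length sheaves, which have slope $\infty$ (they are torsion, so in $\Mm(\infty)$), and by Lemma~\ref{lem:products-fin-length} products in $\Tt$ of objects of $\Hh_0$ agree with products in $\QHh$, which remain torsion, hence in $\Mm$; since $\Mm$ is definable it is closed under these products and under the direct summands in $\Prod(\bt)$, giving $\Prod(\bt)\subseteq\Mm$. For (iii), one inclusion is Lemma~\ref{lem:V-divisibles-have-slope-infty}: $\Dd = \Dd_\XX \subseteq \Mm$, and $\Dd = \Inj(\QHh)\subseteq\Prod(\bq)$ can be checked from the explicit list of indecomposable injectives ($\Kk$ and the Prüfer sheaves, each of which is a summand of a suitable product of objects in $\bigvee_{n>0}\Hh[n]$, e.g.\ the Prüfer sheaves arise from the corays/rays built from $\Hh_0$ sitting in positive shifts). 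For the reverse inclusion $\Prod(\bq)\cap\Mm\subseteq\Dd$: an object of $\bq$ is a coproduct of $\Hh$ in strictly positive shifts, so any $Q\in\Prod(\bq)$ has $H^i(Q)=0$ for $i\leq 0$ up to the identification with a genuine sheaf; if moreover $Q\in\QHh\cap\Mm$, the vanishing of $\Ext^1$ from finite-length sheaves (using $\Mm$ closed under the relevant tests and Serre duality, as in Lemma~\ref{lem:V-divisibles-have-slope-infty}) shows $Q\in\rperpe{\Hh_0} = \Dd$.

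\emph{Step 2: part (2).} Let $X$ be pure-injective in $\Tt$ and write Setup $(*)$: $X\to X_\bp\oplus X_\bt\oplus X_\bq \to X$ with the composite the identity. If $X\in\Rr\cap\Mm$, I want to show $X\in\Prod(\bt)$. Since $X\in\Mm$ and $\Mm$ is definable, $X$ is a pure subobject of (hence, being pure-injective, a summand of) the product $X_\bp\oplus X_\bt\oplus X_\bq$; intersecting with $\Mm$ and using part (1), the maps through $X_\bp$ contribute nothing ($\Prod(\bp)\cap\Mm=0$) so $g_\bp f_\bp$ is killed modulo the idempotent decomposition, and $X$ is a summand of $X_\bt\oplus X_\bq'$ where $X_\bq'\in\Prod(\bq)\cap\Mm = \Dd$. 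But $X$ is reduced, i.e.\ $X\in\Rr = \rperpo{\Dd}$, and $\Rr$ is the torsionfree class of the split torsion pair $(\Dd,\Rr)$; a summand of $X_\bt\oplus(\text{divisible})$ lying in $\Rr$ must therefore be a summand of $X_\bt\in\Prod(\bt)$. Hence $X\in\Prod(\bt)$. Conversely $\Prod(\bt)\subseteq\Rr\cap\Mm$: we have $\Prod(\bt)\subseteq\Mm$ by (1)(ii), and $\Prod(\bt)\subseteq\Rr$ because finite-length sheaves are reduced and $\Rr$ is closed under products and summands (being the torsionfree class of a split torsion pair). This gives the equality $\Prod(\bt) = \{\text{pure-injectives in }\Rr\cap\Mm\}$.

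\emph{Step 3: part (3).} Suppose $X\in\QHh$ is pure-injective and lies in $\Mm$. By Lemma~\ref{lem:pure-in-der-cat} (as $\QHh$ is locally noetherian hereditary), $X$ is pure-injective in $\Tt$ as well, so $X$ is a summand of $X_\bp\oplus X_\bt\oplus X_\bq$ as in Setup $(*)$; arguing as in Step 2, the $\bp$-part drops out (by (1)(i)), so $X$ is a summand of $X_\bt\oplus X_\bq'$ with $X_\bq'\in\Dd$. Now apply the split torsion pair $(\Dd,\Rr)$ in $\QHh$: write $X = X''\oplus X'$ with $X''\in\Dd$ and $X'\in\Rr$. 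Then $X'\in\Rr\cap\Mm$ is pure-injective (a summand of a pure-injective), so by part (2) $X'\in\Prod(\bt)$, giving the decomposition $X = X'\oplus X''$ with $X'\in\Prod(\bt)$, $X''\in\Dd$, which is (a)$\Rightarrow$(b) reversed; for the genuine direction (a)$\Rightarrow$(b), if $X = X'\oplus X''$ with $X'\in\Prod(\bt)$ and $X''\in\Dd$, then $X'\in\Mm$ by (1)(ii) and is pure-injective (a summand of a product of finite-length $\Sigma$-pure-injectives, or directly by (1)); $X''\in\Dd\subseteq\Mm$ by Lemma~\ref{lem:V-divisibles-have-slope-infty} and is injective, hence pure-injective; so $X$ is pure-injective and in $\Mm$.

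\emph{Main obstacle.} The delicate point is Step 1(i)–(iii): translating "being a summand of a product of compacts in a prescribed part of the trisection" into genuine cohomological vanishing for an object that a priori lives only in the derived category. The cleanest route is to keep everything inside $\Mm$, use that $\Mm$ is \emph{definable} (Proposition~\ref{prop:slope-infty-definable}) so that the summand statements survive intersection with $\Mm$, and to combine this with Lemma~\ref{lem:products-fin-length} for the $\bt$-part and the explicit classification of indecomposable injectives for the $\bq$-part; the interplay of the derived-category product (which splits off an $H^1[-1]$ term, cf.\ the computation in Lemma~\ref{lem:pure-in-der-cat}) with the sheaf-category product is exactly what Lemma~\ref{lem:products-fin-length} is designed to control, so I would lean on it heavily rather than reprove it. Once (1) is in hand, (2) and (3) are essentially bookkeeping with the split torsion pair $(\Dd,\Rr)$ and Setup $(*)$.
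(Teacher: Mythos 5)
Your overall architecture (the trisection $(\bp,\bt,\bq)$, Setup (*), then the split torsion pair $(\Dd,\Rr)$ to get (2) and (3)) is the same as the paper's, and your reduction of (3) to (2) is exactly right. But the heart of the lemma is part (1)(iii), and both directions of it are not actually proved in your proposal. For $\Dd\subseteq\Prod(\bq)$, the remark that the indecomposable injectives ``arise from the corays/rays built from $\Hh_0$ sitting in positive shifts'' is not an argument: a Pr\"ufer sheaf is a degree-zero object and there is no evident explicit presentation of it as a summand of a product of objects of $\bigvee_{n>0}\Hh[n]$. The paper gets this inclusion indirectly: a divisible sheaf is injective, hence pure-injective in $\Derived{\QHh}$ by Lemma~\ref{lem:pure-in-der-cat}, so Setup (*) applies, and $f_\bp=f_\bt=0$ because $\Hom(\Dd,\vect\XX)=0$, $\Hom_{\Derived{\QHh}}(\Dd,\Hh[n])=0$ for $n<0$, and $\Hom(\Dd,\Hh_0)=0$; hence $X$ is a summand of $X_\bq$. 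For the reverse inclusion your two stated reasons fail. First, ``any $Q\in\Prod(\bq)$ has $H^i(Q)=0$ for $i\leq 0$'' is false: products in $\Derived{\QHh}$ are derived products, and (exactly as in the computation $\prod^{\Tt}_S E\cong H^0\oplus H^1[-1]$ used in Lemma~\ref{lem:pure-in-der-cat}) a product of objects in degree $-1$ can have nonzero $H^0$ --- this is precisely how $\Dd$ manages to sit inside $\Prod(\bq)$ at all. Second, ``$Q\in\Mm$ plus Serre duality gives $\Ext^1(\Hh_0,Q)=0$'' cannot be right, since every finite length sheaf lies in $\Mm$ but is not divisible; membership in $\Mm$ controls maps to $\vect\XX$, not extensions by $\Hh_0$. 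The correct argument must use $Q\in\Prod(\bq)$ together with heredity: a nonzero $S_x[-1]\to Q$ composes nontrivially with a projection to some indecomposable $X[i]\in\bq$, $i>0$, giving a nonzero class in $\Hom_{\Derived{\QHh}}(S_x[-1],X[i])\cong\Ext^{i+1}_{\QHh}(S_x,X)=0$, a contradiction. You never invoke $\Ext^2=0$, and without it this direction does not go through.

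The same gap propagates into your Step 2: you assert that the $\bq$-component satisfies $X_\bq\in\Prod(\bq)\cap\Mm=\Dd$, but there is no reason $X_\bq$ lies in $\Mm$, nor even in $\QHh$ (it is a product of positively shifted coherent complexes). The paper repairs exactly this point by decomposing $X_\bq\cong\bigoplus_{i\geq 0}Q_i[i]$ with $Q_i=H^{-i}(X_\bq)$, so that $\Hom_{\Derived{\QHh}}(X_\bq,X)\cong\Hom(Q_0,X)$, and then using the stronger statement established in (1)(iii), namely $\Prod(\bq)\cap\QHh\subseteq\Dd$, applied to $Q_0$; since $X\in\Rr$ this gives $g_\bq=0$ and hence $X\in\Prod(\bt)$. (Your observation that a reduced summand of $X_\bt\oplus D$ with $D\in\Dd$ is a summand of $X_\bt$ is fine, but you have not produced such a $D$.) Two smaller slips: in (1)(i) you must also exclude nonzero maps from $M\in\Mm\subseteq\QHh$ to the shifted part $\bigvee_{n<0}\Hh[n]$ of $\bp$ (these vanish because negative-degree Homs between objects of the heart vanish), not only maps to vector bundles; and in (1)(ii) products of torsion sheaves are in general not torsion (adic sheaves are torsionfree summands of such products), so ``remain torsion'' is wrong, although the definability argument you give alongside it is the correct justification.
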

\begin{proof}
  It follows from
  Proposition~\ref{prop:summands-of-products} that the classes
  $\Prod(\bp)$, $\Prod(\bt)$ and $\Prod(\bq)$ have pairwise zero
  intersections.
  
  (1)(i) Let $M\in \Mm$, then
  $\Hom_{\Derived{\QHh}}(M, \bigvee _{n <0}\Hh[n]) = 0$ and
  $\Hom_{\QHh}(M, \vect\mathbb{X}) = 0$.  That is, we have
  $\Hom_{\Derived{\QHh}}(M, \bp) = 0$ and so the first claim follows.

  (ii) Next, note that $\Mm$ is closed under products in $\QHh$ by
  Proposition~\ref{prop:slope-infty-definable}. Then, as
  $\bt \subseteq \Mm$, it follows from Lemma
  \ref{lem:products-fin-length} that $\Prod(\bt)\subseteq \Mm$.

  (iii) For the third claim, let $X\in \Dd$.  As $\Dd$ consists of
  pure-injective objects in $\QHh$, it follows from Lemma
  \ref{lem:pure-in-der-cat} that $X$ is a pure-injective object in
  $\Derived{\QHh}$ and so we are in Setup (*) above.  We have that
  $\Hom_{\Derived{\QHh}}(\Dd, \Prod(\bt)) = 0$ because
  $\Hom_{\QHh}(\Dd, \Hh_0) = 0$, so $f_\bt=0$.  Similarly, we have
  that $f_\bp =0$ because $\Hom_{\QHh}(\Dd, \vect\mathbb{X}) =0$ and
  $\Hom_{\Derived{\QHh}}(\Dd, \bigvee _{n <0}\Hh[n])=0$.  So
  $X \in \Prod(\bq)$.  Moreover, we have that $\Dd\subseteq\Mm$ so we
  have shown that $\Dd \subseteq \Prod(\bq)\cap\Mm$ in
  $\Derived{\QHh}$.

  We wish to show that $\Prod(\bq) \cap \Mm \subseteq\Dd$; in fact we
  will show that $\Prod(\bq) \cap \QHh \subseteq\Dd$.  Let
  $Y \in \Prod(\bq) \cap \QHh$.  We will show that
  $\Hom_{\Derived{\QHh}}(S_x[-1], Y) \cong \Hom_{\Derived{\QHh}}(S_x,
  Y[1]) \cong \Ext^1_{\QHh}(S_x, Y) = 0$ for all simple
  $S_x \in \Hh_0$ i.e.~$Y\in \Dd$.  Suppose, for a contradiction, that
  there is a non-zero morphism $f \colon S_x[-1] \to Y$.  Since
  $Y \in \Prod(\bq)$, it follows that there exists a non-zero morphism
  $g \colon Y \to Q$ with $Q\in\bq$ indecomposable such that
  $gf \neq 0$.  By definition, we have $Q \cong X[i]$ for some
  $X\in \Hh$ and $i>0$.  But then we have
  $0 \neq gf \in \Hom_{\Derived{\QHh}}(S_x[-1], X[i]) \cong
  \Ext^{i+1}_{\QHh}(S_x, Y)$, which is a contradiction.  Therefore we
  must have that $Y \in \Dd$.

  (2) We have already seen that $\Prod(\bt)$ consists of
  pure-injective objects and also $\Prod(\bt) \subseteq \Mm$.
  Since $(\Dd, \Rr)$ is a split torsion pair in $\QHh$, we
  can write any $X\in \Prod(\bt)$ as $X \cong X_\Dd\oplus X_\Rr$ where
  $X_\Dd\in \Dd$ and $X_\Rr\in \Rr$.  But then
  $X_\Dd \in \Prod(\bq) \cap \Prod(\bt) = 0$.  So
  $X\cong X_\Rr \in \Rr$.  We have shown that
  $\Prod(\bt) \subseteq \Mm\cap\Rr$.

  For the reverse inclusion, let $X\in \Mm \cap \Rr$ be
  pure-injective.  Again, we are in Setup (*) above.  In the proof of
  (1)(i) we saw that $\Hom_{\Derived{\QHh}}(X, \bp) = 0$, therefore
  $f_\bp =0$.  We will show that $g_\bq =0$.  Note that
  $X_\bq \cong \bigoplus_{i\geq 0} Q_i[i]$ where
  $Q_i \cong H^{-i}(X_\bq) \in\QHh$ for each $i\geq0$ (see, for
  example, \cite[Sec.~1.6]{krause:2007}).  Then
  $\Hom_{\Derived{\QHh}}(X_\bq, X) \cong \prod_{i\geq 0}
  \Hom_{\Derived{\QHh}}(Q_i[i], X) \cong \Hom_{\Derived{\QHh}}(Q_0[0],
  X)$.  In the proof of (1)(iii) we saw that
  $\Prod(\bq) \cap \QHh[0] \subseteq\Dd[0]$, and so we have
  $\Hom_{\Derived{\QHh}}(Q_0, X) = 0$ because $X \in \Rr$ and
  $Q_0 \in \Dd$.  We therefore have that $X \in \Prod(\bt)$.

  (3) (a) $\Rightarrow$ (b) This implication is clear since
  $\Prod(\bt)$ and $\Dd$ both consist of pure-injective objects.

  (b) $\Rightarrow$ (a) Suppose $X \in \Mm$ is pure-injective.  Using
  that split torsion pair $(\Dd, \Rr)$, we may decompose $X$ as
  $X_\Dd\oplus X_\Rr$.  Then $X_\Dd \in \Dd$ and
  $X_\Rr \in \Rr \cap \Mm$.  Since $X_\Rr$ is pure-injective, we have
  $X_\Rr \in \Prod(\bt)$ by part (2).
\end{proof}
\noindent We obtain in particular:
\begin{lemma}\label{lem:reduced-sheaves-of-slope-infty}
  In $\QHh$ we have that $\Prod(\Hh_0)$ is the class of pure-injective
  sheaves in $\Rr\cap\Mm(\infty)$. 
\end{lemma}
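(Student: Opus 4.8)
The plan is to deduce this statement directly from Lemma~\ref{lem:trisection}, transferring the characterisation of pure-injectives in $\Rr \cap \Mm(\infty)$ from the derived category $\Derived{\QHh}$ back to $\QHh$. Recall that in the derived setting we set $\bt := \Hh_0$, and part~(2) of Lemma~\ref{lem:trisection} identifies $\Prod(\bt)$ (product taken in $\Derived{\QHh}$) as the class of pure-injective objects in $\Rr \cap \Mm$. So the first step is simply to observe that the assertion to prove is the same statement, only with all products and all pure-injectivity interpreted in $\QHh$ rather than in $\Derived{\QHh}$.

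The core of the argument is then the compatibility between the two settings. First, $\QHh$ is a locally noetherian hereditary Grothendieck category, so by Lemma~\ref{lem:pure-in-der-cat} an object of $\QHh$ is pure-injective in $\QHh$ if and only if it is pure-injective when viewed in $\Derived{\QHh}$; hence the notion of ``pure-injective sheaf in $\Rr \cap \Mm(\infty)$'' is unambiguous. Second, I would invoke Lemma~\ref{lem:products-fin-length}: for any family $\{X_i\}_{i\in I}$ in $\Hh_0$ we have $\prod^{\Derived{\QHh}}_{i\in I} X_i \cong \prod^{\QHh}_{i\in I} X_i$, so that the class $\Prod(\Hh_0)$ computed in $\QHh$ coincides with the class $\Prod(\bt)$ computed in $\Derived{\QHh}$ (every object of the latter is a summand of a product of objects of $\Hh_0$, which by the lemma is already the $\QHh$-product, hence lies in $\QHh$ and is a $\QHh$-summand by idempotent-splitting, and conversely). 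Combining these two identifications with Lemma~\ref{lem:trisection}(2) yields the claim: $\Prod(\Hh_0)$ in $\QHh$ equals the class of pure-injective objects of $\Rr \cap \Mm(\infty)$.

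I expect the only real subtlety to be the bookkeeping about \emph{where} products and $\Prod(-)$ are taken, since $\QHh \hookrightarrow \Derived{\QHh}$ need not preserve infinite products in general — but this is exactly what Lemma~\ref{lem:products-fin-length} is designed to handle for families in $\Hh_0$, and the torsion/divisible input needed for that lemma ($\Hh_0 \subseteq \Ff^{\perp_1}$ via Serre duality) has already been recorded. Beyond that, one must check that a direct summand in $\Derived{\QHh}$ of an object lying in $\QHh[0]$ is again (isomorphic to an object) in $\QHh[0]$ and that the summand relation is the same in both categories; this is immediate since $\QHh[0]$ is a full subcategory closed under summands in $\Derived{\QHh}$ (it is the heart of the standard $t$-structure). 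With those observations the lemma follows, so the write-up should be short — essentially ``combine Lemma~\ref{lem:trisection}(2), Lemma~\ref{lem:pure-in-der-cat} and Lemma~\ref{lem:products-fin-length}.''
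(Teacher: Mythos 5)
Your proposal is correct and follows essentially the same route as the paper: the paper's proof likewise combines Lemma~\ref{lem:trisection}(2) with Lemma~\ref{lem:products-fin-length} to transfer the statement from $\Derived{\QHh}$ back to $\QHh$ (the compatibility of pure-injectivity via Lemma~\ref{lem:pure-in-der-cat} being used implicitly there). Your extra bookkeeping about where products and summands are taken is exactly the content the paper delegates to those two lemmas.
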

\begin{proof}
  By Lemma \ref{lem:trisection}, we have that $\Prod(\Hh_0[0])$
  coincides with the class of pure-injective objects in
  $\Rr[0]\cap\Mm[0]$ in $\Derived{\QHh}$. The result then follows from
  Lemma \ref{lem:products-fin-length}.
\end{proof}
\begin{lemma}\label{lem:reduced-not-finite-length-implies-torsionfree}
  Let $M\in\QHh$ be reduced and having no non-zero direct summand of
  finite length. Then $M$ is torsionfree.
\end{lemma}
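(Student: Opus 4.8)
The plan is to argue by contradiction. Suppose $M$ is reduced, has no nonzero finite-length direct summand, but is not torsionfree, so that its torsion subsheaf $tM$ is nonzero; I will produce a nonzero finite-length direct summand of $M$ and obtain a contradiction.

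First I would observe that $tM$ is again reduced: indeed $\Rr$ is the torsionfree class of the split torsion pair $(\Dd,\Rr)$, hence closed under subsheaves. Since every torsion sheaf decomposes as the coproduct $\coprod_{x\in\XX}(tM)_x$ of its primary components, and $tM\ne0$, I would fix a point $x$ with $T:=(tM)_x\neq0$; then $T$ is a nonzero reduced torsion sheaf concentrated in the tube $\Uu_x$, and it is a direct summand of $tM$. As $\Dd\cap\Rr=0$, such a $T$ cannot be divisible --- in particular it is not a coproduct of Pr\"ufer sheaves.

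The crucial step, which I expect to be the main obstacle, is to exhibit a nonzero finite-length \emph{pure} subsheaf of $T$. Here I would invoke the structure of the uniserial tube $\Uu_x$: torsion sheaves concentrated in $\Uu_x$ form a category that behaves like the category of torsion modules over a complete discrete valuation ring, in which --- by the analogue of Kulikov's theorem on basic submodules --- every object $T$ admits a \emph{basic subsheaf} $B$, i.e.\ a pure subsheaf that is a coproduct of finite-length sheaves and has divisible quotient $T/B$. Were $B=0$, then $T=T/B$ would be divisible, contrary to the previous paragraph; so $B\neq0$, and $B$ has a nonzero finite-length direct summand $U$. (A more hands-on alternative is a height argument: since $T$ is reduced, one shows that some simple subsheaf $S\subseteq T$ has finite height $h$, and then the uniserial subsheaf of $T$ of length $(h+1)$ with socle $S$ is pure in $T$; the real work is in verifying the finite-height claim and the purity.)

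Finally I would compose pure monomorphisms: $U\hookrightarrow B$ is split, $B\hookrightarrow T$ is pure (basic subsheaf), $T=(tM)_x\hookrightarrow tM$ is split, and $tM\hookrightarrow M$ is pure because the canonical sequence $0\to tM\to M\to M/tM\to0$ is pure-exact. Since a composition of pure monomorphisms is again pure, $U$ is a pure subsheaf of $M$; as $U$ has finite length it is pure-injective by Lemma~\ref{lem:pure-injective}, so this pure monomorphism splits and $U$ is a nonzero finite-length direct summand of $M$. This contradicts the hypothesis, whence $tM=0$ and $M$ is torsionfree.
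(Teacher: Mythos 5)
Your overall strategy is the same as the paper's: the canonical sequence $0\to tM\to M\to M/tM\to 0$ is pure-exact, finite-length sheaves are pure-injective (Lemma~\ref{lem:pure-injective}), so any finite-length direct summand of $tM$ would split off from $M$; hence $tM$ has no such summand, and then one argues that a torsion sheaf without nonzero finite-length summands must be divisible, which contradicts reducedness unless $tM=0$. The paper disposes of the last structural input in one stroke by citing \cite[Cor.~3.7(2)]{angeleri:kussin:2017} (a torsion sheaf with no nonzero finite-length direct summand is a coproduct of Pr\"ufer sheaves); your localization at a point $x$ and the composition-of-pure-monomorphisms bookkeeping are fine and amount to the same mechanism.

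The one place where your argument is not actually a proof is the step you yourself flag as the main obstacle: the existence of a nonzero finite-length \emph{pure} subsheaf of a nonzero reduced torsion sheaf concentrated in $\Uu_x$ (equivalently, of a basic subsheaf with divisible quotient). You assert this by analogy with Kulikov's theory for torsion modules over a complete discrete valuation ring, but in the present setting (noncommutative base, and non-homogeneous tubes of rank $p(x)>1$) this transfer needs justification: one has to pass to the perpendicular category $\Mod(R_x)$ and identify $\varinjlim\Uu_x$ with the torsion modules over $V_x$, respectively over the serial ring $H_{p(x)}(V_x)$, exactly as is done in the proof of Lemma~\ref{lem:reduced-pi-are-adics}, and then invoke the basic-submodule theorem there (or run your height-and-purity argument, whose two claims you also leave unverified). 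So the claim you need is true, but as written it is an appeal to a fact you have not established; it is precisely the content of the corollary of \cite{angeleri:kussin:2017} that the paper cites, and either citing it or carrying out the reduction just described is what is needed to close the argument.
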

\begin{proof}
  The canonical sequence $0\ra tM\ra M\ra M/tM\ra 0$, where $tM$ is
  the largest torsion subsheaf of $M$, is pure-exact. Since finite
  length sheaves are pure-injective it follows that $tM$ as well has
  no non-zero direct summand of finite length, and thus $tM$ is a
  coproduct of Pr\"ufer sheaves (cf.\
  \cite[Cor.~3.7(2)]{angeleri:kussin:2017}). Since $M$ is reduced,
  this coproduct must be empty, that is, $tM=0$.
\end{proof}
\begin{lemma}\label{lem:reduced-pi-are-adics}
  Let $0\neq M\in\QHh$ be torsionfree and lying in $\Prod(\Uu_x)$ for
  some $x\in\XX$. Then $M$ has an indecomposable direct summand
  isomorphic to the adic $S_x[-\infty]$ for some simple $S_x\in\Uu_x$.
\end{lemma}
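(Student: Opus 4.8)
The goal is to show that a non-zero torsionfree sheaf $M$ in $\Prod(\Uu_x)$ has an adic $S_x[-\infty]$ as a direct summand. The plan is to first use the earlier structure theory to reduce $M$ to a discrete pure-injective object, identify its indecomposable summands among the pure-injective objects ``attached'' to the tube $\Uu_x$, and then rule out all such indecomposables except the adics using torsionfreeness.

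First I would observe that $M$, being a direct summand of a product of finite-length sheaves concentrated in $\Uu_x$, is pure-injective (finite-length sheaves are pure-injective by Lemma~\ref{lem:pure-injective}, and $\Prod$ of pure-injectives consists of pure-injectives). Hence by Proposition~\ref{prop:pure-inj-normal-form} we may write $M\cong\PE(\bigoplus_{i}M_i)\oplus M_c$ with $M_i$ indecomposable and $M_c$ superdecomposable. The key point is to determine which indecomposable pure-injectives can occur: since $M\in\Prod(\Uu_x)$ and $\Uu_x$ is a tube (a uniserial length category), the relevant indecomposable pure-injectives are the finite-length sheaves $S[n]$ in $\Uu_x$, the Pr\"ufer sheaves $S[\infty]$, and the adic sheaves $S[-\infty]$. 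This is essentially the content that will be established in the overall slope-$\infty$ classification (Theorem~\ref{thm:ind-pure-inj-slope-infty}); at this stage of the paper I would instead argue directly that a non-zero $\Prod(\Uu_x)$ has a non-zero indecomposable summand among these. The cleanest route is via the derived-category picture: by Lemma~\ref{lem:trisection}(2), $\Prod(\Uu_x)\subseteq\Prod(\bt)$ is the class of pure-injectives in $\Rr\cap\Mm$, and one analyses $\Prod(\Uu_x)$ using that $\Uu_x$ is an exact abelian uniserial subcategory of $\Hh_0$ equivalent to finite-length modules over a (possibly skew) discrete valuation ring; the indecomposable pure-injectives over such a ring are exactly the $S[n]$, the Pr\"ufer, and the adic.

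Next I would eliminate the non-adic possibilities using the hypotheses on $M$. Since $M$ is torsionfree, it has no finite-length direct summand, so no $M_i$ is of the form $S[n]$; and since $M$ is reduced (it lies in $\Rr$, as $\Prod(\Uu_x)\subseteq\Rr$ by the split torsion pair $(\Dd,\Rr)$ together with Lemma~\ref{lem:trisection}(2)), no $M_i$ is a Pr\"ufer sheaf $S[\infty]$, as those are injective hence divisible. Therefore every indecomposable summand $M_i$ must be an adic $S_x[-\infty]$. It then remains to rule out the degenerate possibility that $M$ has \emph{no} indecomposable summands at all, i.e.~that $M=M_c$ is superdecomposable; for this I would argue that a non-zero object of $\Prod(\Uu_x)$ cannot be superdecomposable --- for instance by producing a non-zero morphism from $M$ onto some $S_x[-n]$ (using that $M\ne 0$ in $\Prod(\Uu_x)$ means some structure morphism $M\to\prod S[n_j]$ composed with a projection is non-zero, and the image, being a non-zero subobject of a finite-length sheaf in $\Uu_x$, has a simple top $\tau^j S_x$), and then using that the adic $\tau^j S_x[-\infty]$ is the pure-injective envelope of the corresponding coray and so splits off.

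The main obstacle I expect is pinning down precisely the list of indecomposable pure-injectives in $\Prod(\Uu_x)$ --- concretely, that there are \emph{no superdecomposable} pure-injectives there and that the only torsionfree reduced indecomposables are the adics. The uniserial/local structure of the tube $\Uu_x$ is exactly what makes this tractable: it behaves like finite-length modules over a bounded hereditary order with a single maximal ideal, for which the model theory of modules gives a complete (discrete) list with no superdecomposable part. So the hard part is really bookkeeping --- transporting the classical classification of pure-injectives over a discrete valuation ring into the present sheaf-theoretic language via the equivalence $\Uu_x\simeq(\text{f.l.\ modules over a skew DVR})$ --- rather than any deep new idea; once that is in place, the torsionfree and reduced hypotheses do all the remaining work.
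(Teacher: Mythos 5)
Your skeleton (pure-injectivity of $M$, the normal form $M\cong\PE(\bigoplus_i M_i)\oplus M_c$, excluding finite length summands by torsionfreeness and Pr\"ufer summands by reducedness, then ruling out a superdecomposable part) is reasonable, but everything the lemma actually asserts is concentrated in the two steps you defer, and neither is justified as written. First, you cannot get the list of indecomposables in $\Prod(\Uu_x)$, nor the absence of superdecomposables, merely from the equivalence of $\Uu_x$ with finite length modules over a (skew) discrete valuation ring: an equivalence of the finite length subcategories does not transport products or pure-injectivity, which are computed in the ambient category $\QHh$. This transport is precisely where the paper's proof does its work: it passes to the right perpendicular category $\rperp{\{\Uu_y\mid y\neq x\}}\simeq\Mod(R_x)$, which is closed in $\QHh$ under products and direct limits, so that $M$ becomes a torsionfree, reduced, pure-injective $R_x$-module, and moreover \emph{complete}, being a direct summand of a product of complete finite length modules. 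Without some statement of this kind your ``bookkeeping'' step has no content; note also that for $p(x)>1$ the relevant ring is not a skew DVR but $H_{p}(V_x)$, and that over a DVR the full list of indecomposable pure-injectives also contains the quotient field (harmless here only because $M$ is reduced, but your quoted list is not the correct classification).

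Second, your fallback argument for excluding $M=M_c$ is a non sequitur: producing a nonzero morphism $M\to S_x[-n]$ does not imply that the adic, being the pure-injective envelope of the coray, ``splits off''; nothing in that sentence produces a split monomorphism into or out of $M$. The paper's mechanism is different and bypasses any classification or superdecomposability statement: since $M$ is a nonzero, complete, reduced module over the complete ring $H=H_p(V_x)$, it has a maximal submodule, hence an element outside $\rad M$; this yields a morphism $f_i\colon e_iH\to M$ not lying in $\rad(e_iH,M)$ for some primitive idempotent $e_i$, and since $\End(e_iH)\cong V_x$ is local, a non-radical morphism from $e_iH$ is a split monomorphism. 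The summand $e_iH$ is exactly an adic $S_x[-\infty]$. If you prefer your route, you would have to first establish the transport above and then import the known description of the pure-injectives over bounded hereditary orders (Marubayashi, Prest), including the absence of superdecomposables; as it stands, the decisive step of your proposal is missing and its substitute argument fails.
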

\begin{proof}
  (1) If $\QHh=\Qcoh(\Aa)$ with a hereditary order $\Aa$, let $R_x$ be
  the endomorphism ring of $\Aa$ considered as an object in the
  quotient category
  $\QHh_x=\QHh/\varinjlim\bigl(\coprod_{y\neq x}\Uu_y\bigr)$. Since
  $\Uu_x\subseteq\rperp{\{\Uu_y\mid y\neq x\}}\simeq\Mod(R_x)$, we
  consider $M$ as an $R_x$-module. Moreover, $\varinjlim{\Uu_x}$ is
  the class of torsion modules over $R_x$, and the above
  right-perpendicular category is closed in $\QHh$ under limits (in
  particular: products) and direct limits (cf.\
  \cite{krause:1997}). Thus an $R_x$-module is (pure-) injective if
  and only if it is (pure-) injective in $\QHh$. In particular, $M$ is
  a reduced, torsionfree and pure-injective $R_x$-module. Since $M$ is
  a direct summand of a product of modules in $\Uu_x$, which are
  complete, it is also complete.

  (2) We first treat the special case where $p(x)=1$. By completeness,
  $M$ is a $V_x$-module, where $V_x$ is a complete discrete valuation
  domain with $\widehat{R}_x\cong\matring_{e(x)}(V_x)$, where
  $\widehat{R}_x=\varprojlim R_x/\rad(R_x)^i$ is the $\rad(R_x)$-adic
  completion of $R_x$; we refer to \cite[Prop.~3.16]{kussin:2016}. The
  class of torsion (resp., finite length) $V_x$-modules coincides with
  the class of torsion (resp., finite length) $R_x$-modules. In
  particular, it follows that $M$ is also reduced, torsionfree and
  pure-injective as a $V_x$-module. Since $M$ is reduced, it has a
  maximal submodule. Each $a\in M\setminus\rad M$ induces a
  monomorphism $f\colon V_x\ra M$ which does not belong to
  $\rad(V_x,M)$. Since (cf.\ \cite[after
  Ex.~11.9]{krylov:tuganbaev:2008})
  $\End_{R_x}(V_x)=\End_{V_x}(V_x)\cong V_x$ is local, we obtain that
  $f$ splits, and thus $V_x$ is a direct summand of $M$. (For a
  similar argument cf.\ \cite[Cor.~11.6]{krylov:tuganbaev:2008}.) By
  construction, the $R_x$-module $V_x=\varprojlim V_x/\rad(V_x)^i$
  corresponds to the adic $S_x[-\infty]$ in $\QHh$, where $S_x$
  (corresponding to the $R_x$-module $V_x/\rad(V_x)$) is the only
  simple in $\Uu_x$.

  (3) Now let $p=p(x)$ be arbitrary. Then we have to replace the
  complete ring $V_x$ by the ring $H=H_p(V_x)$,
  see~\cite[Prop.~13.4]{kussin:2016}. Since $M\in\Prod(\Uu_x)$, and
  with the same arguments as in~(2), $M$ is a complete, torsionfree,
  reduced and pure-injective $H$-module. Since $M$ is reduced, there
  is $a\in M\setminus\rad M$. This induces a monomorphism
  $f\colon H\ra M$ with $f\not\in\rad(H,M)$. Let $e_1,\dots,e_p$ be
  the canonical complete set of primitive, othogonal idempotents of
  $H$. There is some $i$ and a morphism $f_i\colon e_i H\ra M$ with
  $f_i\not\in\rad (e_iH,M)$. Moreover $\End(e_i H)=e_i He_i\cong V_x$
  is local. It follows that $f_i$ is a split monomorphism. Thus
  $e_i H$ is an indecomposable direct summand of $M$, and it
  corresponds to the adic associated with some simple in $\Uu_x$ (cf.\
  also~\cite[4.4]{ringel:1979}).
\end{proof}
\begin{remark}
  The lemma shows in particular that all the adics $S[-\infty]$ are
  indecomposable.
\end{remark}
\begin{theorem}\label{thm:ind-pure-inj-slope-infty}
  The following is a complete list of the indecomposable
  pure-injective sheaves in $\QHh=\Qcoh\XX$ of slope $\infty$:
  \begin{enumerate}
  \item[(1)] The indecomposable sheaves of finite length.
  \item[(2)] The sheaf $\Kk$ of rational functions, the Pr\"ufer and
    the adic sheaves.
  \end{enumerate}
  Moreover, each pure-injective sheaf $E$ of slope $\infty$ is
  discrete, that is, has -- unless zero -- an indecomposable direct
  summand.
\end{theorem}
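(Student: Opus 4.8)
The plan is to reduce everything to the structural results already proved in Lemma~\ref{lem:trisection} and then identify each piece. Suppose $E\in\QHh$ is pure-injective of slope $\infty$. By Lemma~\ref{lem:trisection}(3), we may write $E = E' \oplus E''$ with $E'\in\Prod(\Hh_0)$ and $E''\in\Dd$, so it suffices to analyse the indecomposable summands of each of these two pieces separately (together with the observation that $\Prod(\bt)\cap\Dd=0$, so this decomposition is compatible with the indecomposable decomposition). For the divisible part $E''\in\Dd$, recall from Section~\ref{sec:sheaves-and-modules} that $\Dd=\Inj(\QHh)$, whose indecomposable objects are exactly $\Kk$ and the Pr\"ufer sheaves $S[\infty]$; and since $\Dd$ is a torsion class in a locally noetherian Grothendieck category, injectives decompose as coproducts of indecomposable injectives, so $E''$ is discrete with summands among $\Kk$ and the $S[\infty]$.

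The reduced part $E'\in\Prod(\Hh_0)$ requires more work. First I would split off the finite-length summands: since finite-length sheaves are pure-injective and $E'$ is pure-injective, Proposition~\ref{prop:pure-inj-normal-form} lets us write $E' = \PE(\bigoplus_i N_i)\oplus N_c$, and I claim the finite-length indecomposable summands peel off, leaving a complement $M$ that is reduced, pure-injective, lies in $\Prod(\Hh_0)$, and has no non-zero finite-length direct summand. By Lemma~\ref{lem:reduced-not-finite-length-implies-torsionfree}, $M$ is then torsionfree. Next, $\Hh_0 = \coprod_{x\in\XX}\Uu_x$, so a product of objects in $\Hh_0$ decomposes according to the points $x$; more carefully, one uses that $\Hom(\Uu_x,\Uu_y)=0$ for $x\ne y$ and that the perpendicular categories localising at a single point are closed under products in $\QHh$ (as in the proof of Lemma~\ref{lem:reduced-pi-are-adics}), so $M$ decomposes as a product over $x$ of pieces $M_x\in\Prod(\Uu_x)$, each reduced, torsionfree and pure-injective. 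Then Lemma~\ref{lem:reduced-pi-are-adics} applies to each non-zero $M_x$ and produces an adic direct summand $S_x[-\infty]$. Iterating (or, better, applying Proposition~\ref{prop:pure-inj-normal-form} to control the indecomposable summands and Corollary~\ref{cor:properties-cotilting}-style arguments to see that the complement of all adic summands has no indecomposable summand, hence is zero since $M$ is discrete) shows $M$ is a coproduct-closure (pure-injective envelope) of a direct sum of adics, with no superdecomposable part.

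The discreteness claim — that $E$ has an indecomposable summand whenever $E\ne 0$ — is the heart of the matter and follows by assembling the above: the divisible part is injective hence a coproduct of indecomposable injectives (no superdecomposable part in a locally noetherian category), the finite-length part is visibly discrete, and the remaining torsionfree reduced part in $\Prod(\Hh_0)$ is handled by Lemma~\ref{lem:reduced-pi-are-adics}, which exhibits an indecomposable (adic) summand of every non-zero such object; since $\Prod(\Hh_0)$ is closed under taking the pure-injective complement of a summand, one can strip off adics and finite-length pieces until nothing is left. The main obstacle I anticipate is the bookkeeping in this last step: showing that after removing all the visible indecomposable (finite-length, Pr\"ufer, $\Kk$, adic) summands the residual object $N_c$ is genuinely zero, rather than a mysterious superdecomposable reduced torsionfree sheaf in $\Prod(\Hh_0)$. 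The key leverage is that Lemma~\ref{lem:reduced-pi-are-adics} forbids a non-zero reduced torsionfree object in any single $\Prod(\Uu_x)$ from being superdecomposable, and the decomposition over points $x\in\XX$ reduces the general case to this; combined with the uniqueness of the decomposition in Proposition~\ref{prop:pure-inj-normal-form}(3), this forces $N_c=0$.
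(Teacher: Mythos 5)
Your overall skeleton is the same as the paper's: split a pure-injective $E$ of slope $\infty$ into divisible and reduced parts (Lemma~\ref{lem:trisection}(3)), identify the indecomposable divisible sheaves as $\Kk$ and the Pr\"ufer sheaves, place the reduced part in $\Prod(\Hh_0)$ (Lemma~\ref{lem:reduced-sheaves-of-slope-infty}), pass to torsionfree objects via Lemma~\ref{lem:reduced-not-finite-length-implies-torsionfree}, and extract adic summands via Lemma~\ref{lem:reduced-pi-are-adics}; discreteness then follows by applying the same analysis to a putative superdecomposable summand. (Incidentally, the ``peeling off'' of finite-length summands is not needed: for an indecomposable $M$ the dichotomy is vacuous, and a superdecomposable $N_c$ has no finite-length summand by definition, so Lemma~\ref{lem:reduced-not-finite-length-implies-torsionfree} applies directly.)

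The one step where you genuinely depart from the paper is the reduction to a single tube, and there your justification is insufficient as written. The paper treats an \emph{indecomposable} $M\in\Prod(\Hh_0)$ by invoking Ringel \cite[2.3]{ringel:2004} to conclude $M\in\Prod(\Uu_x)$ for a single $x$. You instead assert that an arbitrary reduced torsionfree pure-injective $M\in\Prod(\Hh_0)$ decomposes as a product over the points, $M\cong\prod_x M_x$ with $M_x\in\Prod(\Uu_x)$, on the grounds that $\Hom(\Uu_x,\Uu_y)=0$ for $x\neq y$ and that perpendicular categories are product-closed. That does not suffice: $M$ is only a direct summand of some $\prod_x P_x$ with $P_x\in\Prod(\Uu_x)$, and to split such a summand along the index set you must control morphisms \emph{out of infinite products}, which Hom-vanishing between the tubes alone does not give. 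The claim is in fact true, but the missing argument is of the following kind: for the coherent simples $S_y$ one has $\Ext^1(S_y,\prod_i Y_i)=0$ whenever $\Ext^1(S_y,Y_i)=0$ for all $i$ (cf.\ \cite[Cor.~A.2]{coupek:stovicek:2019}), so $\prod_{x\neq y}P_x$ is $y$-divisible; Serre duality then gives $\Hom\bigl(\prod_{x\neq y}P_x,\Uu_y\bigr)=0$ and hence $\Hom\bigl(\prod_{x\neq y}P_x,P_y\bigr)=0$, which yields $\End\bigl(\prod_x P_x\bigr)\cong\prod_x\End(P_x)$, so idempotents act componentwise and summands do decompose pointwise. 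Either supply this, or do as the paper does and quote Ringel's argument for the indecomposable case (for the discreteness statement it is enough to produce one point $x$ with non-zero component and then apply Lemma~\ref{lem:reduced-pi-are-adics}). With that step repaired, the rest of your proof goes through and agrees with the paper's.
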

\begin{proof}
  We assume that $M$ is indecomposable pure-injective of slope
  $\infty$ and not coherent. Since $M$ is indecomposable, $M$ is
  either divisible or reduced. In the first case it is generic or
  Pr\"ufer. Thus we can assume that $M$ is reduced, and we have to
  show that $M$ is an adic. By
  Lemma~\ref{lem:reduced-sheaves-of-slope-infty} we have
  $M\in\Prod(\Hh_0)$. Since $M$ is indecomposable there is $x\in\XX$
  such that even $M\in\Prod(\Uu_x)$ (cf.~\cite[2.3]{ringel:2004}; the
  arguments therein also hold in our setting). Since $M$ is not of
  finite length, it is torsionfree by
  Lemma~\ref{lem:reduced-not-finite-length-implies-torsionfree}. By
  the Lemma~\ref{lem:reduced-pi-are-adics} then $M$ is an adic with
  respect to $\Uu_x$.

  The additional statement follows also from that lemma. Indeed, it is
  sufficient to assume that $E$ is reduced and moreover belonging to
  $\Prod(\Uu_x)$ for some $x$.
\end{proof}
\begin{proposition}
  For every simple $S$ there is a short exact sequence
  $$0\ra \tau S[-\infty]\ra E\ra S[\infty]\ra 0$$ with $E$ a direct
  sum of copies of $\Kk$.
\end{proposition}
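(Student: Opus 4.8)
The plan is to realise the desired sequence as the (hereditary, length $2$) injective resolution of the adic sheaf $A:=\tau S[-\infty]$ and then identify the cokernel by Serre duality. First I would record that $A$ is torsionfree: it is reduced, indecomposable and of infinite length, so Lemma~\ref{lem:reduced-not-finite-length-implies-torsionfree} applies. Take an injective envelope $E:=\IE(A)$ in $\QHh$ and let $E_1$ be defined by the short exact sequence $0\to A\to E\to E_1\to 0$. Since $A\subseteq E$ is essential and torsionfree, the torsion subsheaf $tE$ satisfies $tE\cap A=0$, hence $tE=0$, so $E$ is torsionfree. As $\QHh$ is locally noetherian, $E$ is a coproduct of indecomposable injective sheaves; each such summand is a subsheaf of $E$ and hence torsionfree, and the only torsionfree indecomposable injective sheaf is $\Kk$, so $E\cong\Kk^{(I)}$ for some index set $I$. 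Because $\QHh$ is hereditary, $E_1=E/A$ is again injective.

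Next I would show that $E_1$ is a torsion sheaf. Let $F:=E_1/t(E_1)$ be its torsionfree quotient. Then $F$ is an epimorphic image of the divisible sheaf $E$ (recall $\Inj\QHh=\Dd$ and $\Dd$ is closed under epimorphic images), hence $F$ is divisible, hence injective; therefore the epimorphism $E\twoheadrightarrow F$ splits and we may write $E\cong B\oplus F'$ with $F'\cong F$ and $A\subseteq B$. Since $F'\cap A\subseteq F'\cap B=0$ and $A$ is essential in $E$, we get $F'=0$, so $F=0$ and $E_1$ is torsion. A torsion injective sheaf is a coproduct of Pr\"ufer sheaves, so $E_1\cong\bigoplus_{T}T[\infty]^{(J_T)}$, the sum running over isomorphism classes of simple sheaves $T$.

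It remains to pin down the multiplicities $J_T$. Applying $\Hom(T,-)$ to $0\to A\to E\to E_1\to 0$ for a simple sheaf $T$ and using that $E$ is a torsionfree injective sheaf (so $\Hom(T,E)=0=\Ext^1(T,E)$) yields $\Hom(T,E_1)\cong\Ext^1(T,A)$. By (generalised) Serre duality, $\Ext^1(T,A)\cong\D\Hom(A,\tau T)=\D\Hom(\tau S[-\infty],\tau T)$. A morphism from the adic $\tau S[-\infty]$ to a simple sheaf factors through the canonical projection of $\tau S[-\infty]$ onto its unique simple top $\tau S$ — this uses the description of $\tau S[-\infty]$ as an indecomposable projective module over the complete (Morita-reduced) local ring governing the tube $\Uu_x$ from the proof of Lemma~\ref{lem:reduced-pi-are-adics} — so $\Hom(\tau S[-\infty],\tau T)=0$ unless $\tau T\cong\tau S$, i.e.\ $T\cong S$, in which case it is one-dimensional over the skew field $D_S:=\End(S)\cong\End(\tau S)$. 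On the other hand $\Hom(T,E_1)\cong\Hom(T,T[\infty])^{(J_T)}\cong D_T^{(J_T)}$ because $\soc(T[\infty])\cong T$. Comparing the two computations gives $J_T=\emptyset$ for $T\not\cong S$ and $|J_S|=1$, so $E_1\cong S[\infty]$, and $0\to\tau S[-\infty]\to\Kk^{(I)}\to S[\infty]\to 0$ is the asserted sequence. The one step that really needs care is this final identification: one must make sure generalised Serre duality is applicable with the non-coherent sheaf $A$ in the second slot (it is, for $\QHh$, but this should be invoked explicitly), and one must justify that every morphism out of $\tau S[-\infty]$ into a simple sheaf is carried by the projection onto the top $\tau S$ — this is precisely what produces the Auslander--Reiten shift between the adic on one side and the Pr\"ufer sheaf on the other, and it is handled by the local picture (a complete discrete valuation domain, resp.\ an order $H_p(V_x)$) already used in Lemma~\ref{lem:reduced-pi-are-adics}. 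Everything else is formal.
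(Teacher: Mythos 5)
Your route is genuinely different from the paper's: the paper constructs the sequence by hand, first obtaining $0\ra\tau S[-\infty]\ra\tau S[-\infty]\ra S[p]\ra 0$ by an inverse limit construction (following Krause) and then passing to a direct limit and citing Ringel to see that the middle term is torsionfree and divisible, hence a direct sum of copies of $\Kk$. You instead take the injective envelope of the adic and identify the cokernel via generalised Serre duality. That strategy is viable and even gives a little more (the middle term is exhibited as $\IE(\tau S[-\infty])$), but as written one step is justified incorrectly and one point is left unaddressed.

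The faulty step is the claim that, because $F=E_1/t(E_1)$ is injective, the epimorphism $E\ra F$ splits. Epimorphisms onto injective objects do not split in general: over $\ZZ$ the quotient map $\QQ\ra\QQ/\ZZ$ is an epimorphism between injectives with no section. The conclusion you want, namely that $E_1$ is torsion (has no summand isomorphic to $\Kk$), is nevertheless correct, but needs a different argument. For instance: since $\QHh$ is locally noetherian, $E_1$ is a coproduct of copies of $\Kk$ and of Pr\"ufer sheaves; if a copy of $\Kk$ occurred, composing $E\ra E_1$ with the projection onto it would give a nonzero morphism $\Kk^{(I)}\ra\Kk$, which is a split epimorphism because $\End(\Kk)$ is a skew field (some component is invertible), and its kernel would be a direct summand of $E$ containing $A=\tau S[-\infty]$, so the nonzero complementary summand would meet the essential subobject $A$ trivially --- a contradiction. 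In other words, what saves the step is essentiality together with the structure of morphisms $\Kk^{(I)}\ra\Kk$, not injectivity of $F$. A second, smaller omission: your factorisation argument for $\Hom(\tau S[-\infty],\tau T)$ through the complete local order $H_p(V_x)$ only covers simples $T$ lying in the tube $\Uu_x$; for a simple $T'$ concentrated at a point $y\neq x$ you still have to argue that $\Hom(S[-\infty],T')=0$. This can be done, e.g., by noting that the adic lies in the right-perpendicular category of $\{\Uu_y\mid y\neq x\}$ (it is a limit of objects of $\Uu_x$ and that subcategory is closed under limits, as used in the proof of Lemma~\ref{lem:reduced-pi-are-adics}), so $\Ext^1(\tau^-T',S[-\infty])=0$, and generalised Serre duality then gives the required vanishing. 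With these two repairs your proof is correct.
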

\begin{proof}
  Let $p\geq 1$ be the rank of the tube $\Uu_x$ containing $S$. As
  in~\cite{krause:1998} we get by an inverse limit construction (using
  $S[-p]\cong \tau^- S[p]$) a short exact sequence
  $0\ra\tau S[-\infty]\ra\tau S[-\infty]\ra S[p]\ra 0$; for exactness
  of the inverse limit we note that we may form the inverse limit of a
  surjective inverse system in $\QHh_x=\Mod(R_x)$ as in the proof of
  Lemma~\ref{lem:reduced-pi-are-adics}. Then by a direct limit
  construction we get a short exact sequence
  $0\ra\tau S[-\infty]\ra E\ra S[\infty]\ra 0$; it follows as
  in~\cite[Prop.~4]{ringel:1998} that $E$ is torsionfree and
  divisible, hence a direct sum of copies of $\Kk$.
\end{proof}
The proposition shows (cf.\ \cite[Lem.~2.7]{buan:krause:2003}):
\begin{lemma}\label{lem:pruefer-adic-ext-relations}
  Let $x,\,y\in\XX$ and $j$ be an integer. Then
  \begin{enumerate}
  \item[(1)] $\Ext^1 (S_x[\infty],\tau^j S_y[-\infty])\neq 0$ if and
    only if $x=y$.
  \item[(2)] $\Ext^1 (S_x[-\infty],\tau^j S_y[\infty])=0$. \qed
  \end{enumerate}
\end{lemma}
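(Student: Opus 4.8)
The plan is to treat the two parts of the statement separately, reducing each assertion to a statement about $\Hom$-spaces between adic sheaves and then resolving the remaining nonvanishing via the local model of $\QHh$ around a point $x$ used in the proof of Lemma~\ref{lem:reduced-pi-are-adics}. Part (2) is immediate: for any simple $S$ the Pr\"ufer sheaf $S[\infty]$ is an indecomposable injective object of $\QHh$, and $\tau^j S_y$ is again simple, so $\Ext^1(-,\tau^j S_y[\infty])=0$ identically; in particular $\Ext^1(S_x[-\infty],\tau^j S_y[\infty])=0$.

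For part (1) in the case $x\neq y$, I would argue that $\tau^j S_y[-\infty]$ lies in the right perpendicular category $\rperp{\{\Uu_z\mid z\neq y\}}$. Indeed $\Uu_y\subseteq\rperp{\{\Uu_z\mid z\neq y\}}$, this subcategory is closed in $\QHh$ under limits (it is the localisation $\simeq\Mod(R_y)$ used in the proof of Lemma~\ref{lem:reduced-pi-are-adics}, cf.~\cite{krause:1997}), and $\tau^j S_y[-\infty]=\varprojlim_m\tau^j S_y[-m]$ is an inverse limit of objects of $\Uu_y$. Consequently $\Ext^1(S_x[n],\tau^j S_y[-\infty])=0$ for every $n$, because $S_x[n]\in\Uu_x$ and $x\neq y$. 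Since $\tau^j S_y[-\infty]$ is pure-injective by Theorem~\ref{thm:ind-pure-inj-slope-infty} and $S_x[\infty]=\varinjlim_n S_x[n]$, Theorem~\ref{thm:ext-direct-limit-pure-inj} then yields $\Ext^1(S_x[\infty],\tau^j S_y[-\infty])=0$.

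For part (1) in the case $x=y$, I first convert $\Ext^1$ into $\Hom$. Applying $\Hom(-,\tau^j S_x[-\infty])$ to the short exact sequence $0\ra\tau S_x[-\infty]\ra E\ra S_x[\infty]\ra 0$ of the preceding proposition, where $E$ is a direct sum of copies of $\Kk$, one has $\Hom(E,\tau^j S_x[-\infty])=0$ (since $\Kk$ is divisible and $\tau^j S_x[-\infty]$ is reduced, and $(\Dd,\Rr)$ is a torsion pair) and $\Ext^1(E,\tau^j S_x[-\infty])\cong\prod\Ext^1(\Kk,\tau^j S_x[-\infty])=0$; the last vanishing follows from the earlier observation that $\Ext^1(F,F')=0$ for $F,F'\in\{\Kk,S[-\infty]\mid S\text{ simple}\}$, equivalently from rigidity of the pure-injective, torsionfree, slope-$\infty$ sheaf $\Kk\oplus\tau^j S_x[-\infty]$. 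As $\QHh$ is hereditary, the long exact sequence collapses to an isomorphism $\Ext^1(S_x[\infty],\tau^j S_x[-\infty])\cong\Hom(\tau S_x[-\infty],\tau^j S_x[-\infty])$, so it remains to show this group is nonzero for every integer $j$.

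This final nonvanishing is the main obstacle, and I would resolve it using Lemma~\ref{lem:reduced-pi-are-adics}(3): with $p=p(x)$, inside $\rperp{\{\Uu_z\mid z\neq x\}}\simeq\Mod(H)$ where $H=H_p(V_x)$, the adics concentrated at $x$ are exactly the indecomposable projective $H$-modules $e_1H,\dots,e_pH$; under this identification $\Hom(\tau S_x[-\infty],\tau^j S_x[-\infty])\cong\Hom_H(e_iH,e_{i'}H)\cong e_{i'}He_i$ for suitable $i,i'$, and every such ``corner'' of the tiled order $H_p(V_x)$ is a nonzero $V_x$-submodule of $V_x$ by the explicit description in \cite[Prop.~13.4]{kussin:2016}. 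Alternatively, one may compute directly that $\Hom(\tau S_x[-\infty],\tau^j S_x[-\infty])=\varprojlim_m\Hom(\tau S_x[-\infty],\tau^j S_x[-m])$ is a surjective inverse limit with nonzero terms: for $m\geq p$ the uniserial sheaf $\tau^j S_x[-m]$ contains a subsheaf isomorphic to $\tau S_x[-i_0]$ with $i_0\equiv j+m-1\pmod p$ and $1\leq i_0\leq m$, while $\tau S_x[-i_0]$ is a quotient of $\tau S_x[-\infty]$; and surjectivity of the transition maps holds because the kernel of $\tau^j S_x[-m-1]\ra\tau^j S_x[-m]$ is a simple sheaf $S'$ with $\Ext^1(\tau S_x[-\infty],S')\cong\D\Hom(\tau^{-1}S',\tau S_x[-\infty])=0$, the adic $\tau S_x[-\infty]$ being torsionfree. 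Either way, $\Ext^1(S_x[\infty],\tau^j S_x[-\infty])\neq 0$, which completes the proof.
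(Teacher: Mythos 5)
Your argument is correct and takes essentially the route the paper intends: the paper deduces the lemma from the short exact sequence $0\ra \tau S[-\infty]\ra E\ra S[\infty]\ra 0$ of the preceding proposition (via the citation of Buan--Krause, Lem.~2.7), which is exactly the $\Hom$--$\Ext^1$ shift you perform in the case $x=y$, using $\Hom(E,\tau^j S_x[-\infty])=0=\Ext^1(E,\tau^j S_x[-\infty])$ for $E$ a direct sum of copies of $\Kk$. Your remaining ingredients --- the vanishing for $x\neq y$ via the perpendicular category and Theorem~\ref{thm:ext-direct-limit-pure-inj}, part (2) from injectivity of Pr\"ufer sheaves, and the nonvanishing of $\Hom(\tau S_x[-\infty],\tau^j S_x[-\infty])$ either through the local order $H_p(V_x)$ or the surjective inverse-limit computation with generalised Serre duality --- correctly fill in the details that the paper delegates to that citation.
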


\section{Cotilting sheaves of slope infinity}\label{sec:slope-infinity}
We will classify all cotilting sheaves having slope $\infty$.
\begin{proposition}\label{prop:tilting-cotilting-invariances}
  We have the following.
  \begin{enumerate}
  \item[(1)] Let $C$ be a cotilting sheaf and
    $\Ff_0=\lperpe{C}\cap\Hh$. Then $C$ has slope $\infty$ if and only
    if $\vect\XX\subseteq\Ff_0$.
  \item[(2)] Let $C$ and $C'$ be two equivalent cotilting sheaves. If one of them
    has slope $\infty$, then so has the other.
  \item[(3)] Each cotilting sheaf of slope $\infty$ is large.
  \item[(4)] Let $C$ be a cotilting sheaf and $T$ be a tilting sheaf
    such that $\Gamma(T)=C$, with $\Gamma$ as in
    Theorem~\ref{thm:tilting-cotilting-duality}. Then $C$ has slope
    $\infty$ if and only if $T$ has slope $\infty$.
  \end{enumerate}
\end{proposition}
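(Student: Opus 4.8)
The plan is to treat the four assertions in the order (1), (2), (4), (3); the first is a mere reformulation that the others build on, and (3) is where the only real work lies. For~(1): by definition $C$ has slope $\infty$ means $C\in\Mm(\infty)$, and by the identity $\Mm(\infty)=\rperpe{(\vect\XX)}$ recorded at the start of Section~\ref{sec:pure-inj-inf-slope} this says $\Ext^1(\vect\XX,C)=0$, i.e.\ $\vect\XX\subseteq\lperpe{C}$; since $\vect\XX\subseteq\Hh$ this is the same as $\vect\XX\subseteq\lperpe{C}\cap\Hh=\Ff_0$. For~(2): by Theorem~\ref{thm:cotilting-is-of-finite-type} two cotilting sheaves are equivalent iff $\lperpe{C}\cap\Hh=\lperpe{C'}\cap\Hh$, i.e.\ iff they determine the same class $\Ff_0$, and by~(1) having slope $\infty$ depends only on $\Ff_0$. (Alternatively: equivalence gives $\Prod(C)=\Prod(C')$, so $C$ is a direct summand of a product of copies of $C'$; and $\Mm(\infty)$, being definable by Proposition~\ref{prop:slope-infty-definable}, is closed under products and direct summands, so $C'\in\Mm(\infty)$ forces $C\in\Mm(\infty)$, and symmetrically.)

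For~(4), the defining relation of the correspondence $\Gamma$ (Theorem~\ref{thm:tilting-cotilting-duality}) is $\lperpe{C}\cap\Hh=\lperpe{(\rperpe{T})}\cap\Hh$, so by~(1), $C$ has slope $\infty$ iff $\vect\XX\subseteq\lperpe{(\rperpe{T})}$, that is iff $\Ext^1(\vect\XX,\rperpe{T})=0$, that is iff the tilting class of $T$ satisfies $\rperpe{T}\subseteq\rperpe{(\vect\XX)}=\Mm(\infty)$. It then suffices to observe that this last condition is equivalent to $T\in\Mm(\infty)$, i.e.\ to $T$ having slope $\infty$. One implication is immediate since $T$ is rigid, so $T\in\rperpe{T}$. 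Conversely, $\rperpe{T}=\Gen(T)$ consists of epimorphic images of coproducts of copies of $T$, whereas $\Mm(\infty)=\lperpo{\vect\XX}$ is visibly closed under epimorphic images and, being definable, under coproducts; hence $T\in\Mm(\infty)$ forces $\Gen(T)\subseteq\Mm(\infty)$.

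Part~(3) carries the real content, and I expect the structural facts it uses to be the main obstacle. Suppose for a contradiction that $C$ has slope $\infty$ but is equivalent to a \emph{coherent} cotilting sheaf $C'$; by~(2), $C'$ has slope $\infty$ as well. First one checks that the torsion pair $(\Hh_0,\vect\XX)$ in $\Hh$ splits: for coherent $F$ the canonical sequence $0\to tF\to F\to F/tF\to 0$ splits because $\Ext^1_{\Hh}(F/tF,tF)\cong\D\Hom_{\Hh}(tF,\tau(F/tF))=0$, the sheaf $\tau(F/tF)$ being again a vector bundle. Hence $C'\cong C'_0\oplus C'_+$ with $C'_0\in\Hh_0$ of finite length and $C'_+\in\vect\XX$; but slope $\infty$ means $\Hom(C',\vect\XX)=0$, which forces $\Hom(C'_+,C'_+)=0$, so $C'_+=0$ and $C'=C'_0$ has finite length, hence is supported on a finite set $V_0\subseteq\XX$ of points. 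Since $\XX$ has infinitely many points by (NC~5), pick $x\in\XX\setminus V_0$ and a simple sheaf $S\in\Uu_x$, and consider the Pr\"ufer sheaf $S[\infty]$. Distinct tubes are Hom-orthogonal, so $\Hom(S[\infty],C')=\varprojlim_n\Hom(S[n],C')=0$, and by (generalised) Serre duality $\Ext^1(S[\infty],C')\cong\D\Hom(\tau^{-1}C',S[\infty])=0$ too — here one uses that $S[\infty]$ is a direct limit of coherent sheaves concentrated in $\Uu_x$ and that $\tau$ preserves each tube, so $\tau^{-1}C'$ is still supported on $V_0$. Thus $0\neq S[\infty]\in\lperp{C'}$, contradicting property (CS2) of the cotilting sheaf $C'$. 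Therefore no coherent cotilting sheaf is equivalent to $C$, i.e.\ $C$ is large. The only slightly delicate steps are the splitting of $(\Hh_0,\vect\XX)$ and the Serre-duality computation yielding $\Ext^1(S[\infty],C')=0$; the rest is formal manipulation of orthogonal classes.
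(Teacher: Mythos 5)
Your proof is correct, and for parts (1), (2) and (4) it is essentially the paper's argument written out in full: (1) is exactly the observation $\Mm(\infty)=\rperpe{(\vect\XX)}$, the paper settles (2) by the $\Prod(C)=\Prod(C')$ route you give as your alternative (your primary route via Theorem~\ref{thm:cotilting-is-of-finite-type} and (1) is equally valid), and for (4) you spell out what the paper compresses into ``Theorem~\ref{thm:tilting-cotilting-duality}, (1), and its tilting analogue'', including the short verification that $\rperpe{T}=\Gen(T)\subseteq\Mm(\infty)$ is equivalent to $T\in\Mm(\infty)$. The genuine difference is in (3): the paper reduces, exactly as you do, to the nonexistence of a cotilting sheaf all of whose indecomposable summands have finite length, but then cites \cite[Rem.~7.7]{angeleri:kussin:2017}, whereas you prove this nonexistence directly -- splitting off the vector-bundle part of the coherent candidate $C'$ via Serre duality, concluding $C'\in\Hh_0$ with finite support, and exhibiting a Pr\"ufer sheaf $S[\infty]$ from an untouched tube with $\Hom(S[\infty],C')=0=\Ext^1(S[\infty],C')$ (the latter by generalised Serre duality, or alternatively by Theorem~\ref{thm:ext-direct-limit-pure-inj} plus coherent Serre duality since $C'$ is pure-injective), contradicting (CS2). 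This makes your argument self-contained where the paper leans on an external remark about the tilting side; the cost is the mild dependence on generalised Serre duality for $\QHh$, which the paper does use elsewhere, so the step is legitimate in context.
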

\begin{proof}
  (1) is clear.
  
  (2) Follows from $\Prod(C)=\Prod(C')$.

  (3) Follows since there is no cotilting sheaf consisting only of
  indecomposable summands of finite length
  (cf.~\cite[Rem.~7.7]{angeleri:kussin:2017}).

  (4) Follows from Theorem~\ref{thm:tilting-cotilting-duality} and~(1)
  (and its analogue for tilting objects).
\end{proof}
\subsection*{Rigidity}
The following basic splitting property will be crucial for our
treatment of cotilting sheaves.
\begin{theorem}[{\cite[Thm.~3.8]{angeleri:kussin:2017}}]\label{thm:torsion-splitting}
  Let $E\in\QHh$ be a rigid sheaf, that is, $\Ext^1(E,E)=0$ holds.
\begin{enumerate}
\item[(1)]The torsion subsheaf $tE$ is a direct sum of Pr\"ufer
  sheaves and exceptional sheaves of finite length. Accordingly it is
  pure-injective.  \smallskip
  \item[(2)] The canonical exact sequence $0\ra tE\ra E\ra E/tE\ra 0$
    splits. \qed
  \end{enumerate}
\end{theorem}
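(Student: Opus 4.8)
The plan is to obtain statement (2) as a formal consequence of (1), and to prove (1) by combining the split torsion pair $(\Dd,\Rr)$ with the known structure of torsion sheaves inside a single tube. First I would record that $M:=tE$ is itself rigid: since $\QHh$ is hereditary, applying $\Hom(-,E)$ to the canonical sequence $0\to M\to E\to E/M\to 0$ gives $\Ext^1(M,E)=0$ (using $\Ext^2=0$ and the hypothesis $\Ext^1(E,E)=0$), and then applying $\Hom(M,-)$ to the same sequence, together with $\Hom(M,E/M)=0$ (because $(\Tt,\Ff)$ is a torsion pair), forces $\Ext^1(M,M)=0$. Since $(\Dd,\Rr)$ splits, I then write $M=D\oplus R$ with $D\in\Dd$ and $R\in\Rr$; both summands of the torsion sheaf $M$ are again torsion and again rigid. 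As $\Dd=\Inj\QHh$ and the only indecomposable injective sheaves are $\Kk$ (which is torsionfree) and the Pr\"ufer sheaves, the torsion injective sheaf $D$ is a direct sum of Pr\"ufer sheaves.

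It remains to identify the reduced rigid torsion sheaf $R$, and this is the technical heart. I would decompose $R=\bigoplus_{x\in\XX}R_x$ into its primary components (each again rigid) and work inside each tube $\Uu_x$, that is, in the category of torsion modules over the tube ring appearing in Lemma~\ref{lem:reduced-pi-are-adics} (a complete discrete valuation ring $V_x$ when $p(x)=1$, and $H_{p(x)}(V_x)$ in general). The goal is that $R_x$ is a direct sum of \emph{exceptional} finite-length sheaves. One first notes that $R_x$ has no adic summand, since adics are torsionfree (Lemma~\ref{lem:reduced-not-finite-length-implies-torsionfree} and the surrounding discussion); the crucial point is then that a reduced, rigid torsion module over the tube ring has no nonzero superdecomposable part, so that $R_x$ decomposes into indecomposable finite-length summands of the form $S[n]$. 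Finally, rigidity forces $\Ext^1(S[n],S[n])=0$ for each such summand, which by Serre duality means $1\le n\le p(x)-1$; in particular $R_x=0$ whenever $\Uu_x$ is homogeneous. I expect the exclusion of a superdecomposable reduced torsion part to be the main obstacle, and it is exactly here that one must invoke the explicit module theory over the tube rings.

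To conclude, only the finitely many non-homogeneous tubes contribute to $R$, and each contributes only finitely many isomorphism types of exceptional finite-length sheaves, so $M=tE$ is a direct sum of Pr\"ufer sheaves and of exceptional finite-length sheaves, which proves the structural part of~(1). For pure-injectivity: the Pr\"ufer part is injective, because $\Dd$ is closed under coproducts and $\Dd=\Inj\QHh$ (equivalently, because $\QHh$ is locally noetherian); the exceptional finite-length part is a finite direct sum of summands of the form $S[n]^{(I)}$, each of which is pure-injective since $S[n]$ is $\Sigma$-pure-injective by Lemma~\ref{lem:pure-injective}, and a finite direct sum of pure-injectives is pure-injective; hence $tE$ is pure-injective. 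Statement (2) is now immediate: the canonical sequence $0\to tE\to E\to E/tE\to 0$ is pure-exact, so applying $\Hom(-,tE)$ to it and using the pure-injectivity of $tE$ yields that $\Hom(E,tE)\to\Hom(tE,tE)$ is surjective; any preimage of $\mathrm{id}_{tE}$ is a retraction of the inclusion $tE\to E$, so the sequence splits.
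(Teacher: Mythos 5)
The paper itself gives no argument for this statement: it is imported wholesale from \cite[Thm.~3.8]{angeleri:kussin:2017} (hence the \qed directly after the statement), so your proposal has to be judged as a self-contained proof. Its outer scaffolding is correct and matches the standard strategy: $tE$ is rigid by the two long exact sequences you indicate (using heredity and $\Hom(tE,E/tE)=0$); the divisible part splits off via $(\Dd,\Rr)$ and, being a torsion injective in a locally noetherian category, is a direct sum of Pr\"ufer sheaves; pure-injectivity of a sheaf of the asserted form follows from Lemma~\ref{lem:pure-injective} and the fact that coproducts of injectives are injective; and once (1) is in place, (2) follows because the canonical sequence is pure-exact and $tE$ is pure-injective.

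The genuine gap is exactly the step you defer. Your bridge --- ``a reduced, rigid torsion module over the tube ring has no nonzero superdecomposable part, so $R_x$ decomposes into indecomposable finite-length summands'' --- is not a valid inference. The superdecomposable/discrete dichotomy of Proposition~\ref{prop:pure-inj-normal-form} is available only for pure-injective objects, and pure-injectivity of $R_x$ is part of what is being proved; for an arbitrary reduced torsion module, the absence of superdecomposable summands does not give a decomposition into indecomposables. Concretely, over a complete discrete valuation domain $V$ the torsion submodule of $\prod_n V/\rad(V)^n$ is reduced, has many indecomposable direct summands, and is not a direct sum of indecomposables --- so the implication you rely on fails in precisely the ambient module categories you invoke. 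The actual content of \cite[Thm.~3.8]{angeleri:kussin:2017} is this identification of the reduced part, and it requires genuine module theory over the tube rings, for instance: every nonzero reduced $x$-primary torsion module contains a pure bounded (finite-length) submodule, which is then a direct summand; rigidity forces every finite-length direct summand to be exceptional, i.e.\ of length at most $p(x)-1$ (so $R_x=0$ for homogeneous tubes); if $R_x$ were unbounded, a basic-submodule argument would produce pure, hence direct, summands of arbitrarily large length, a contradiction; and a bounded reduced module is a direct sum of finite-length indecomposables. None of this appears in your proposal --- you explicitly flag it as the main obstacle without carrying it out --- so the proof is incomplete at the theorem's core, while everything before and after that point (including the deduction of (2) from (1)) is fine.
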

Given a rigid sheaf $E\in\QHh$, we will use the notation
$$E=E_+\oplus E_0$$ where $E_0=tE$ denotes the \emph{torsion part} and
$E_+\cong E/tE$ denotes the \emph{torsionfree part} of $E$.  We will
say that $E$ has a \emph{large torsion part} if there is no coherent
sheaf $F$ such that $\Add(tE)=\Add(F)$.
\begin{corollary}
  Let $E\in\QHh$ be rigid and indecomposable. Then $E$ is either
  torsion or torsionfree. \qed
\end{corollary}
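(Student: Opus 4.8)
The plan is simply to read the claim off from Theorem~\ref{thm:torsion-splitting}. Since $E$ is rigid, part~(2) of that theorem says that the canonical short exact sequence $0\to tE\to E\to E/tE\to 0$ splits, so that $E\cong tE\oplus(E/tE)$ in $\QHh$; in the notation introduced just above the corollary this reads $E=E_0\oplus E_+$ with $E_0=tE$ the torsion part and $E_+\cong E/tE$ the torsionfree part.

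Next I would invoke the indecomposability of $E$. Any direct sum decomposition of an indecomposable object has one summand equal to $0$, so either $E_+=0$, in which case $E=E_0=tE$ is torsion, or $E_0=0$, in which case $E\cong E_+$ is torsionfree. That $E/tE$ is always torsionfree and that $tE$ is the largest torsion subsheaf of $E$ are recorded in Section~\ref{sec:sheaves-and-modules}, so no further verification is needed.

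I do not expect any real obstacle: the entire content is the already-quoted splitting theorem together with the trivial remark about indecomposable objects. The only point perhaps worth a clause is that the two alternatives are exclusive unless $E=0$ (which is excluded, since an indecomposable object is by convention nonzero), but the statement holds vacuously in that degenerate case anyway.
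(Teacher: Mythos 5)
Your proof is correct and is exactly the argument the paper intends (the corollary is stated with an immediate \qed after Theorem~\ref{thm:torsion-splitting}): the splitting of $0\to tE\to E\to E/tE\to 0$ for rigid $E$ plus indecomposability forces one summand to vanish. Nothing is missing.
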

\begin{proposition}
  For any sheaf $E$, if the Pr\"ufer sheaf $S[\infty]$ belongs to
  $\Prod(E)$, then it is a direct summand of $E$.
\end{proposition}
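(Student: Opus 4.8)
The plan is to exploit the fact that the Prüfer sheaf $S[\infty]$ is indecomposable and \emph{injective} in $\QHh$; it is one of the indecomposable injective sheaves listed in the discussion of the torsion pair $(\Dd,\Rr)$. In particular $S[\infty]$ has a local endomorphism ring (Lemma~\ref{lem:local-end-ring}, or simply because it is indecomposable injective). Suppose $S[\infty]\in\Prod(E)$, so that there is a split monomorphism $S[\infty]\to\prod_{i\in I}E$ for some index set $I$, say with retraction $r\colon\prod_{i\in I}E\to S[\infty]$. The goal is to show that one of the coordinate composites $S[\infty]\xrightarrow{\iota_j}\prod_{i\in I}E\xrightarrow{\pi_j}E$, or rather the composite of $r$ with a single canonical inclusion $E\to\prod_{i\in I}E$ in the relevant coordinate, already witnesses $S[\infty]$ as a summand of $E$.

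First I would observe that, since $S[\infty]$ is injective and the inclusion $S[\infty]\to\prod_{i\in I}E$ is a monomorphism, it already splits, so this part gives no information; instead the useful direction is the retraction. Write $u\colon S[\infty]\to\prod_{i\in I}E$ for the split mono with $ru=1_{S[\infty]}$, and let $u_i=\pi_i u\colon S[\infty]\to E$ and $r_i=r\,\lambda_i\colon E\to S[\infty]$, where $\lambda_i\colon E\to\prod_{i\in I}E$ is the $i$-th canonical split mono. The identity $ru=1$ does not immediately give $\sum_i r_i u_i=1$ because $r$ need not factor through the coproduct. The clean way around this is to use that $S[\infty]$ is finitely generated (indeed noetherian, being a direct limit $\varinjlim S[n]$ whose image in any quotient stabilises — more precisely $S[\infty]$ is \emph{not} finitely generated, so I must instead use a different finiteness): actually the right tool is that $S[\infty]$, being injective indecomposable with local endomorphism ring, is a \emph{$\Sigma$-pure-injective}? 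That is false in general. Let me instead argue as follows.

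The key step is: since $S[\infty]$ is indecomposable injective, the image of $u$ is an injective subobject of $\prod_{i\in I}E$, hence a direct summand, so $\prod_{i\in I}E\cong S[\infty]\oplus N$ for some $N$. Now $\End(S[\infty])$ is local. Consider the idempotent $e=ur\in\End(\prod_{i\in I}E)$ with image $S[\infty]$. One has $\End(\prod_{i\in I}E)=\prod_{i\in I}\Hom(E,\prod_{i\in I}E)$ as a left module, and I would like to detect that $e$ "lives on one coordinate". The honest approach, and the one I expect the authors use: $S[\infty]$ is a \emph{noetherian} object (every object of $\QHh$ coherent is noetherian, and $S[\infty]$ is a direct limit of the coherent $S[n]$; but $S[\infty]$ itself is not noetherian). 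So the genuinely correct route is via pure-injectivity and Proposition~\ref{prop:summands-of-products}: in $\Derived{\QHh}$ the object $S[\infty]$ sits in $\bt=\Hh_0$... no, $S[\infty]\notin\Hh_0$ either.

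Given these obstructions, the plan I would actually execute is: use that $S[\infty]$ is \emph{$\Sigma$-pure-injective}. This holds because the indecomposable injective $S[\infty]$ has the property that $\Ext^1(F,S[\infty])$ and $\Hom(F,S[\infty])$ are controlled by a single tube $\Uu_x$, where $\fp(\QHh_x)$ is Hom-finite; by Lemma~\ref{lem:pure-injective} applied in the localised category $\QHh_x=\Mod(R_x)$ (as set up in the proof of Lemma~\ref{lem:reduced-pi-are-adics}), $S[\infty]$ corresponds to an injective module over a (serial) noetherian ring and is $\Sigma$-pure-injective there, hence in $\QHh$. Once $S[\infty]$ is $\Sigma$-pure-injective, the canonical map $\bigoplus_{i\in I}E\to\prod_{i\in I}E$ followed by $r$ gives, after precomposing with $u$, a factorisation showing $\sum_{i\in F} r_iu_i$ is invertible in the local ring $\End(S[\infty])$ for some \emph{finite} $F\subseteq I$; then some single $r_ju_j$ is invertible, so $S[\infty]$ is a direct summand of $E$. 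The main obstacle is precisely establishing that the relevant retraction $r$ from the product factors (after the necessary precomposition) through the coproduct — i.e.\ reducing to a finite subsum — and this is exactly where $\Sigma$-pure-injectivity of $S[\infty]$, proved by passing to the local serial ring $R_x$, does the work.
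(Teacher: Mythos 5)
Your argument has a genuine gap at its central step, and the mechanism you invoke cannot work. You reduce everything to the claim that, because $S[\infty]$ is $\Sigma$-pure-injective with local endomorphism ring, the retraction $r\colon E^I\to S[\infty]$ can be replaced (after precomposing with the split mono $u$) by a finite subsum $\sum_{i\in F}r_iu_i$ which is then invertible. But you never produce this factorisation: $\Sigma$-pure-injectivity of $S[\infty]$ only says that summation morphisms $S[\infty]^{(J)}\to S[\infty]$ factor through $S[\infty]^{(J)}\to S[\infty]^{J}$ (Proposition~\ref{prop:jensen-lenzing-crit}); it gives no control over a morphism $E^I\to S[\infty]$ out of a product of copies of a completely different object $E$, and no reduction to finitely many coordinates. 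Worse, the general principle you are implicitly relying on --- ``an indecomposable $\Sigma$-pure-injective object with local endomorphism ring which lies in $\Prod(E)$ is a direct summand of $E$'' --- is false, and it fails inside this very category: the sheaf $\Kk$ of rational functions is injective, hence $\Sigma$-pure-injective (direct sums of injectives are injective since $\QHh$ is locally noetherian), indecomposable, and $\End(\Kk)$ is a skew field; moreover $\Kk\in\Prod(S[\infty])$ (this is used in the paper, e.g.\ in the example following Theorem~\ref{thm:exist-Ww}), yet $\Kk$ is certainly not a direct summand of the indecomposable torsion sheaf $S[\infty]$. The analogous abelian-group example is $\mathbb{Q}$ as a summand of an infinite power of a Pr\"ufer group. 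So no proof can succeed using only $\Sigma$-pure-injectivity and locality of the endomorphism ring; one must use a property special to $S[\infty]$.

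The property the paper uses is that $S[\infty]$ is the injective envelope of the simple sheaf $S$, i.e.\ it has a simple essential subobject. Given a split monomorphism $s\colon S[\infty]\to E^I$, compose with the essential mono $m\colon S\to S[\infty]$; since $sm\neq 0$ and $S$ is simple, some coordinate projection $\pi$ gives a monomorphism $\pi s m\colon S\to E$, whence $\Ker(\pi s)\cap \Image(m)=0$ and essentiality forces $\pi s$ to be a monomorphism $S[\infty]\to E$; injectivity of $S[\infty]$ then splits it. (Incidentally, your side claim that $S[\infty]$ is $\Sigma$-pure-injective is true, but much more simply than via localisation at $R_x$: it is injective and $\QHh$ is locally noetherian. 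It just does not help here.)
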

\begin{proof}
  Suppose $S[\infty]$ is a direct summand of $E^I$ and let
  $s \colon S[\infty] \to E^I$ be the corresponding split
  monomorphism.  Also let $m \colon S \to S[\infty]$ be an essential
  monomorphism, which exists since $S[\infty]$ is the injective
  envelope of the simple object $S$.  Then $s\circ m$ is non-zero and
  hence $\pi \circ s \circ m$ is a non-zero monomorphism for some
  projection $\pi \colon N^I \to N$.  As $m$ is an essential
  monomorphism, it follows that $\pi\circ s$ is a monomorphism.  As
  $S[\infty]$ is injective, the proposition follows.
\end{proof}
\begin{corollary}\label{cor:pruefer-summands-of-products}
  Let $C\in\QHh$ be cotilting with torsionfree class $\Ff=\lperpe{C}$.
  If $S[\infty]\in\Ff$, then it is a direct summand of $C$.
\end{corollary}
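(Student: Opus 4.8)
The plan is to recognise that this corollary is just the previous Proposition dressed up with the cotilting formalism, so almost all the work has already been done. First I would recall from Lemma~\ref{lem:properties-cotilting}(2) that for a cotilting sheaf $C$ with associated cotilting class $\Ff=\lperpe{C}$ one has $\Ff\cap\rperpe{\Ff}=\Prod(C)$. Hence it suffices to verify the two membership conditions $S[\infty]\in\Ff$ and $S[\infty]\in\rperpe{\Ff}$: the first is precisely the hypothesis of the corollary, and the second says $\Ext^1_{\QHh}(\Ff,S[\infty])=0$.

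For that second condition I would appeal to the structure of $\QHh$ recalled in Section~\ref{sec:sheaves-and-modules}: there $\Dd=\Inj(\QHh)$ and every Pr\"ufer sheaf $S[\infty]$ is an (indecomposable) injective object of $\QHh$. Consequently $\Ext^1_{\QHh}(-,S[\infty])$ vanishes identically, and in particular $\Ext^1_{\QHh}(\Ff,S[\infty])=0$, so $S[\infty]\in\rperpe{\Ff}$. Combining this with the hypothesis gives $S[\infty]\in\Ff\cap\rperpe{\Ff}=\Prod(C)$. Then the Proposition immediately preceding this corollary --- that any sheaf $E$ with $S[\infty]\in\Prod(E)$ has $S[\infty]$ as a direct summand --- applies with $E=C$ and yields that $S[\infty]$ is a direct summand of $C$.

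There is no genuine obstacle here: the essential argument (extracting a Pr\"ufer summand from a product, using that $S[\infty]$ is the injective envelope of a simple object and a suitable essentiality argument) has already been carried out in the preceding Proposition. The only thing to keep straight is the bookkeeping --- that $\Prod(C)$ is computed in $\QHh$ throughout, and that injectivity of the Pr\"ufer sheaves in $\QHh$ is exactly what puts $S[\infty]$ into $\rperpe{\Ff}$.
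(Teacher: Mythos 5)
Your argument is exactly the paper's proof, just written out more explicitly: injectivity of $S[\infty]$ gives $S[\infty]\in\rperpe{\Ff}$, so $S[\infty]\in\Ff\cap\rperpe{\Ff}=\Prod(C)$ by Lemma~\ref{lem:properties-cotilting}(2), and the preceding Proposition extracts it as a direct summand of $C$. Correct, and same route as the paper.
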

\begin{proof}
  Since $S[\infty]$ is injective, we have
  $S[\infty]\in\Ff\cap\rperpe{\Ff}=\Prod(C)$.
\end{proof}
\subsection*{Maximal self-orthogonality w.r.t.\ tubes}
Let $\Uu$ be a tube. As in~\cite{buan:krause:2003} we say that a
pure-injective object $M$ \emph{belongs to $\Uu$} if every
indecomposable direct summand of $M$ is of the form $S[n]$ with
$S\in\Uu$ simple and $n\in\NN\cup\{\pm\infty\}$. The subcategory
formed by all such objects is denoted by $\overline{\Uu}$. The
\emph{$\Uu$-component} $M_{\Uu}$ of $M$ is defined to be a maximal
direct summand of $M$ belonging to $\overline{\Uu}$. If
$\Uu = \Uu_x$, we will also use the notation $M_x$. The
$\Uu$-component is unique up to isomorphism. In this context it is
useful to recall that each indecomposable pure-injective object has a
local endomorphism ring and we have the exchange property for such
objects $U$, cf.\ \cite[Thm.~E.1.53.]{prest:2009}; for instance, of
$U$ is a direct summand of a direct sum $M\oplus N$, then it is a
direct summand of $M$ or of $N$. Moreover, the $\Uu$-component of
  a cotilting object $M$ is said to be of \emph{Pr\"ufer type} (resp.,
  \emph{adic type}) if it has a Pr\"ufer (resp., adic) summand; it
  will turn out that each component is either of Pr\"ufer or of adic
  type, and not both.

The following lemma is shown as
in~\cite[Prop.~3.3]{buan:krause:2003}.
\begin{lemma}\label{lem:maximality-in-tubes}
  Let $C$ be cotilting of slope $\infty$ and $\Uu$ a tube. Then the
  $\Uu$-component $C_{\Uu}$ is maximal self-orthogonal with respect to
  all objects in $\overline{\Uu}$. \qed
\end{lemma}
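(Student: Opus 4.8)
The plan is to follow the proof of~\cite[Prop.~3.3]{buan:krause:2003}, the key external input being the Proposition proved above that every cotilting object over the hereditary category $\QHh$ is maximal self-orthogonal. By Theorem~\ref{thm:cotilting-is-pure-injective} and~(CS1), $C$ is pure-injective and self-orthogonal, in particular rigid; since $\QHh$ is hereditary, $\id(C)\le 1$. By Theorem~\ref{thm:ind-pure-inj-slope-infty}, $C$ — being pure-injective of slope $\infty$ — is discrete, and by Theorem~\ref{thm:torsion-splitting} its indecomposable summands are (exceptional) finite-length sheaves, Pr\"ufer sheaves, adic sheaves, and copies of $\Kk$. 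Write $C = C_\Uu\oplus C'$ for the decomposition into the $\Uu$-component and its complement; by maximality of $C_\Uu$ among summands of $C$ lying in $\overline\Uu$ together with the exchange property for indecomposable pure-injectives, $C'$ has no indecomposable summand in $\overline\Uu$. As a direct summand of the self-orthogonal $C$, the sheaf $C_\Uu$ is itself self-orthogonal, which is the easy half of the statement.

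For maximality, let $F\in\overline\Uu$ be self-orthogonal with $\Prod(C_\Uu)\subseteq\Prod(F)$; I must show $\Prod(F)=\Prod(C_\Uu)$. The plan is to replace the $\Uu$-component of $C$ by $F$: put $\widetilde{C} := F\oplus C'$ and prove that $\widetilde{C}$ is self-orthogonal, i.e.\ $\Ext^1(\widetilde{C}^I,\widetilde{C})=0$ for every set $I$. Since $\widetilde{C}^I=F^I\oplus (C')^I$, this reduces to the four terms $\Ext^1(F^I,F)$, $\Ext^1((C')^I,C')$, $\Ext^1(F^I,C')$, $\Ext^1((C')^I,F)$. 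The first vanishes by self-orthogonality of $F$; the second because $(C')^I$ is a summand of $C^I$ and $C$ is self-orthogonal. For the two cross terms I would split $C'$ and $F$ according to the types of their indecomposable summands. The copies of $\Kk$ — and any torsionfree factor after forming a product — are divisible, hence injective, so contribute nothing when occurring in the second argument; the adic factors are handled by realising the relevant factor (after taking the product) as a direct limit of vector bundles and applying Theorem~\ref{thm:ext-direct-limit-pure-inj}, using $\Mm(\infty)=\rperpe{(\vect\XX)}$ and that $\Mm(\infty)$ is closed under products (Proposition~\ref{prop:slope-infty-definable}); the Pr\"ufer factors are injective on one side, and on the other side one uses that the torsion pair $(\Dd,\Rr)$ of divisible and reduced sheaves splits, so $\Ext^1$ from a divisible sheaf to a reduced one vanishes; and the finite-length factors of $C'$ lie in tubes different from $\Uu$, where one invokes the $\Hom$- and $\Ext^1$-orthogonality between distinct tubes — valid also for the Pr\"ufer and adic sheaves concentrated in them, via generalised Serre duality and the module description of the tubes from Lemma~\ref{lem:reduced-pi-are-adics} — together with the fact that a product of copies of a finite-length sheaf is again a direct sum of copies of it.

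Granting that $\widetilde{C}$ is self-orthogonal, we have $\Prod(C)=\Prod(C_\Uu\oplus C')\subseteq\Prod(F\oplus C')=\Prod(\widetilde{C})$, so by maximal self-orthogonality of the cotilting sheaf $C$ we conclude $\Prod(C)=\Prod(\widetilde{C})$. Hence each indecomposable summand $N$ of $F$ lies in $\Prod(\widetilde{C})=\Prod(C)$, so $N$ is a direct summand of $C^I=C_\Uu^I\oplus (C')^I$ for some set $I$; by the exchange property, $N$ is a summand of $C_\Uu^I$ or of $(C')^I$, and the latter is excluded since $(C')^I$ lies in the definable class $\rperp{\Uu_x}$ (where $\Uu=\Uu_x$), which is closed under products, whereas no nonzero object of $\overline\Uu$ does. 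Therefore $N\in\Prod(C_\Uu)$, giving $\Prod(F)\subseteq\Prod(C_\Uu)$; combined with the hypothesis $\Prod(C_\Uu)\subseteq\Prod(F)$ this yields $\Prod(F)=\Prod(C_\Uu)$, as required. The only genuinely laborious point is the self-orthogonality of $\widetilde{C}$ — concretely, the vanishing of the two cross Ext-terms, and in particular the inter-tube $\Ext^1$-orthogonality for the infinite-length sheaves and its stability under forming products; everything else is formal manipulation with $\Prod$ and the exchange property.
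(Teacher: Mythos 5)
Your overall route is exactly the one the paper intends (it proves the lemma ``as in \cite[Prop.~3.3]{buan:krause:2003}''): replace the $\Uu$-component by $F$, check that $\widetilde{C}=F\oplus C'$ is still self-orthogonal, invoke maximal self-orthogonality of the cotilting sheaf $C$, and then separate the summands again. However, one of your justifications is wrong as stated. You claim that, since the torsion pair $(\Dd,\Rr)$ splits, ``$\Ext^1$ from a divisible sheaf to a reduced one vanishes''. Splitting of $(\Dd,\Rr)$ means $\Ext^1(\Rr,\Dd)=0$ (which is automatic, as $\Dd=\Inj(\QHh)$), not $\Ext^1(\Dd,\Rr)=0$, and the latter is genuinely false: Lemma~\ref{lem:pruefer-adic-ext-relations}(1) gives $\Ext^1(S_x[\infty],\tau^j S_x[-\infty])\neq 0$. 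The cross terms you dispatch with this principle (Pr\"ufer summands in the first argument against adic or finite-length summands in the second) do vanish, but only because the two objects involved are concentrated at \emph{distinct} points; the correct tools are Lemma~\ref{lem:pruefer-adic-ext-relations} and (generalised) Serre duality, not the splitting of $(\Dd,\Rr)$. The same slip occurs in the parenthetical claim that torsionfree factors arising from products are divisible: products of finite-length sheaves have adic (reduced, non-divisible) summands.

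A second point needing repair is that ``splitting $C'$ and $F$ according to the types of their indecomposable summands'' is not a direct-sum decomposition: the torsionfree parts are only pure-injective \emph{envelopes} of the direct sums of their indecomposable summands, so pairwise $\Ext$-vanishing between indecomposables does not directly give $\Ext^1(\widetilde{C}^I,\widetilde{C})=0$. This is precisely what Lemma~\ref{lem:self-orthogonality}(2) is for: $\widetilde{C}$ is a discrete pure-injective (Theorem~\ref{thm:ind-pure-inj-slope-infty}) of injective dimension at most one, so rigidity checked on indecomposable summands already yields self-orthogonality, and most of your product-juggling can be replaced by that lemma. Similarly, the final separation step uses $(C')^I\in\rperp{\Uu_x}$, which is true but not free: the $\Hom$-half follows since the torsionfree part of $C'$ is torsionfree and its torsion part lives at points $y\neq x$, but the $\Ext$-half (needed to exclude the adic $S_x[-\infty]$, which does satisfy $\Hom(\Uu_x,S_x[-\infty])=0$) requires observing that $C'_+$ is a direct summand of a product of adic sheaves at points $y\neq x$ and copies of $\Kk$, all of which are $x$-divisible, and that $\rperpe{\Uu_x}$ is closed under products and summands. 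With these corrections your argument goes through and coincides with the Buan--Krause argument the paper cites.
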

\subsection*{Branches}
\begin{numb}\label{nr:wings} {\bf Branch sheaves.} 
  In this section we consider certain coherent sheaves, called branch
  sheaves, which turn out to be typical coherent summands of large
  cotilting sheaves.  Let $\Uu_x$ be a tube of rank $p>1$.  The
  exceptional (i.e.~indecomposable and rigid) sheaves $E$ in $\Uu_x$
  are exactly those of length $\leq p-1$ and so there are only
  finitely many of them. The collection $\Ww$ of subquotients of an
  exceptional sheaf $E$ is called the \emph{wing rooted in $E$} and
  $E$ is called the \emph{root of $\Ww$}.  The set of simple sheaves
  in $\Ww$ is called the \emph{basis} of $\Ww$.  The basis of any wing
  is of the form $S,\,\tau^- S,\dots,\tau^{-(r-1)}S$ for a simple
  sheaf $S$ where $r$ is the length of the root $E$.  We say that
  another wing $\Ww'$ is \emph{not adjacent} to $\Ww$ if their bases
  are disjoint and neither $\tau S$ nor $\tau^{-r}S$ is in $\Ww'$ (we
  say that the two wings $\Ww$ and $\Ww'$ are
  \emph{non-adjacent})~\cite[Ch.~3]{lenzing:meltzer:1996}.\medskip 

  The full subcategory $\add\Ww$ is equivalent to the category of
  modules over the path algebra of a linearly ordered Dynkin quiver of
  type $\AAA$, cf.~\cite[Ch.~3]{lenzing:meltzer:1996}.  We define a
  \emph{connected branch $B$} in $\Ww$ in the following way: $B$ has
  exactly $r$ nonisomorphic indecomposable summands $B_1, \dots , B_r$
  such that $B_1 \cong E$ and for every $j$, the wing rooted in $B_j$
  contains exactly $\ell_j$ indecomposable summands of $B$ where
  $\ell_j$ is the length of $B_j$.  Each connected branch in $\Ww$ is
  a tilting object in the subcategory
  $\add{\Ww}$~\cite[p.~205]{ringel:1984}.  \medskip

  A module $B$ in $\Hh_0$ is called a \emph{branch sheaf} if it is a
  multiplicity-free direct sum of connected branches in pairwise
  non-adjacent wings~\cite[Ch.~3]{lenzing:meltzer:1996}.  Any branch
  sheaf $B$ is rigid and decomposes as $B=\bigoplus_{x\in\XX}B_x$.  In
  fact, it is clear from the definition that $B_x = 0$ for all
  $x\in \XX$ corresponding to homogeneous tubes and there are only
  finitely many isomorphism classes of branch sheaves.  \medskip

Given a non-empty subset $V\subseteq\XX$, we also write
$$B=B_{\mathfrak i}\oplus B_{\mathfrak e}$$ where $B_{\mathfrak e}$ is
supported in $\XX\setminus V$ and $B_{\mathfrak i}$ in $V$. In this
case we will say that $B_{\mathfrak e}$ is \emph{exterior} and
$B_{\mathfrak i}$ is \emph{interior} with respect to $V$. We will see
in Theorem \ref{thm:large-cotilting-sheaves-in-general} that a pair
$(B,V)$ determines a cotilting module $C$, in which the exterior part
of $B$ with respect to $V$ determines the adic summands of $C$ and the
interior part of $B$ with respect to $V$ determines the Pr\"ufer
summands of $C$.
\end{numb}
\begin{lemma}\label{lem:exceptional-tube}
  Let $C$ be a cotilting sheaf of slope $\infty$ and $x$ a point of
  weight $p=p(x)\geq 1$. There are two possible cases:
  \begin{enumerate}
  \item[(1)] Exterior ``adic type'' case:
    \begin{enumerate}
    \item The $\Uu_x$-component $C_x$ of $C$ contains no Pr\"ufer
      sheaf. The torsion part of $C_x$ consists of a direct sum
      of $0\leq s\leq p-1$ indecomposable summands of finite length.
      \item The finite length summands are arranged in connected
        branches in pairwise non-adjacent wings; let $\Ww$ denote the
        union of these wings in $\Uu_x$.
      \item The $p-s$ adic sheaves $S_x[-\infty]$ such that
        $\Hom(S_x[-\infty],\tau\Ww)=0$ are (torsionfree) direct
        summands of $C$.\smallskip
      \end{enumerate}
\item[(2)] Interior ``Pr\"ufer type'' case:
  \begin{enumerate}
  \item The $\Uu_x$-component $C_x$ of $C$ consists of a direct sum of
    $1\leq s\leq p$ Pr\"ufer sheaves, and precisely $p-s$
    indecomposable summands of finite length.
    \item The finite length summands belong to
    wings of the following form: if $S[\infty]$, $\tau^{-r}S[\infty]$
    are summands of $C$ with $2\leq r\leq p$, but the Pr\"ufer sheaves
    $\tau^{-}S[\infty],\dots,\tau^{-{(r-1)}}S[\infty]$ in between are
    not, then there is a (unique) connected branch in the wing $\Ww$
    rooted in $S[r-1]$ that occurs as a summand of $C$. 
  \item The torsionfree part $C_+$ of $C$ is $x$-divisible; thus
      $C$ is automatically of slope $\infty$ in this case.
    \end{enumerate}
  \end{enumerate}
\end{lemma}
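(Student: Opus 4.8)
The plan is to analyse the $\Uu_x$-component $C_x$ of a cotilting sheaf $C$ of slope $\infty$ using the maximal self-orthogonality statement in Lemma~\ref{lem:maximality-in-tubes}, together with the list of indecomposable pure-injectives in $\overline{\Uu_x}$ provided by Theorem~\ref{thm:ind-pure-inj-slope-infty} (namely the $S_x[n]$ for $n\in\NN$, the Pr\"ufer sheaves $S_x[\infty]$, and the adic sheaves $S_x[-\infty]$). The first step is to show that $C_x$ cannot simultaneously contain a Pr\"ufer summand and an adic summand: by Lemma~\ref{lem:pruefer-adic-ext-relations}(1) we have $\Ext^1(S_x[\infty],\tau^j S_x[-\infty])\neq 0$, which contradicts self-orthogonality of $C_x$ (recall $C$ is pure-injective of bounded injective dimension, so self-orthogonality of $C$ transfers to $C_x$ by Lemma~\ref{lem:self-orthogonality}(2)). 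This dichotomy is exactly the split into cases (1) and (2).

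In the adic-type case (1), I would first observe that if $C_x$ has no Pr\"ufer summand then, since $C$ is cotilting, $C_x$ contains no summand from $\Dd\cap\overline{\Uu_x}$ at all; by the classification of rigid coherent sheaves inside a tube (the finite-length summands of a rigid sheaf lie in a union of connected branches in pairwise non-adjacent wings — this is the tube part of Theorem~\ref{thm:torsion-splitting}(1) applied inside $\overline{\Uu_x}$), the torsion part of $C_x$ is a direct sum of $s$ indecomposable finite-length summands arranged as in (1)(b), with $0\le s\le p-1$ because a connected branch in a tube of rank $p$ has at most $p-1$ indecomposable summands. For (1)(c): the $p-s$ adic sheaves $S_x[-\infty]$ satisfying $\Hom(S_x[-\infty],\tau\Ww)=0$ are torsionfree and pure-injective, lie in $\Ff=\lperpe{C}$ by the rigidity corollary to Proposition (the proposition showing pure-injective torsionfree sheaves of slope $\infty$ are rigid, applied to $C\oplus S_x[-\infty]$, using Lemma~\ref{lem:pruefer-adic-ext-relations}(2) and the branch condition to kill the relevant Ext groups), and they lie in $\rperpe{\Ff}$ because they are reduced torsionfree in $\Prod(\Uu_x)$; hence they lie in $\Ff\cap\rperpe{\Ff}=\Prod(C)$ by Lemma~\ref{lem:properties-cotilting}(2), and then Corollary~\ref{cor:properties-cotilting} (or rather its analogue for the indecomposable pure-injective adics, via Proposition~\ref{prop:summands-of-products}) forces them to be direct summands of $C$. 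Maximality of $C_x$ in $\overline{\Uu_x}$ then gives that exactly $p-s$ such adics appear.

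In the Pr\"ufer-type case (2), the key is that $C_x$ has at least one Pr\"ufer summand; then by Lemma~\ref{lem:pruefer-adic-ext-relations}(1) no adic $S_x[-\infty]$ can occur in $C$ (same point as above). So $C_x$ consists of $s\ge 1$ Pr\"ufer sheaves together with finitely many finite-length summands. To see the number of Pr\"ufer summands is at most $p$ and the finite-length summands fill the "gaps" as described in (2)(b), I would argue that $\{S_x[\infty],\dots,\tau^{-(p-1)}S_x[\infty]\}$ is the complete list of indecomposable Pr\"ufer sheaves in $\Uu_x$, and then use maximal self-orthogonality: given a maximal gap between two consecutive Pr\"ufer summands $S[\infty]$ and $\tau^{-r}S[\infty]$ (with the intermediate ones absent), the only finite-length sheaves one can add to $C_x$ without violating self-orthogonality are the subquotients of $S[r-1]$, and maximality forces a full connected branch of the wing rooted in $S[r-1]$ to appear — this is the tube analogue of the module-theoretic argument in \cite[Prop.~3.3 and its consequences]{buan:krause:2003}. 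Finally, for (2)(c): each Pr\"ufer sheaf $S_x[\infty]$ is $x$-divisible, hence so is any finite direct sum of them; and the branch summands, being finite-length in $\Uu_x$, do not obstruct the $x$-divisibility of $C_+$ once one checks that $\Ext^1(S_x[n], C_+)=0$ using that $C$ is rigid and applying Theorem~\ref{thm:torsion-splitting}(2) to split off the torsion part — so $C_+\in\Dd_x$, and then $\Hom(C_+,\vect\XX)=0$ by Lemma~\ref{lem:V-divisibles-have-slope-infty}, giving slope $\infty$ automatically.

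The main obstacle I expect is the bookkeeping in case (2)(b): establishing precisely that the finite-length summands must be an entire connected branch rooted in $S[r-1]$ for each gap of length $r$ — not merely a self-orthogonal rigid subobject of that wing — which requires combining maximal self-orthogonality with the fact that a connected branch is a tilting object in $\add\Ww$ and that adding any further indecomposable from $\Uu_x$ would create a nonzero $\Ext^1$ either with one of the two bounding Pr\"ufer sheaves or within the branch itself. The ranks (exactly $s$ Pr\"ufers and exactly $p-s$ finite-length summands, resp.\ $s$ adics and $p-s$ finite-length summands) should then follow from a counting argument: the total "simple content" of $C_x$ modulo $\tau$ exhausts all $p$ simples in $\Uu_x$, which is forced by (CS2) together with the fact that $\Ff_0=\lperpe{C}\cap\Hh$ must be generating (Theorem~\ref{thm:cotilting-is-of-finite-type}).
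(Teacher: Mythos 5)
Your overall strategy (splitting the two cases according to whether $C_x$ has a Pr\"ufer summand, excluding adic summands in the Pr\"ufer case via Lemma~\ref{lem:pruefer-adic-ext-relations}, and deducing slope $\infty$ from $x$-divisibility via Lemma~\ref{lem:V-divisibles-have-slope-infty}) agrees with the paper's, but several key steps rest on results that do not apply. In case (1)(b) you attribute the branch structure of the finite-length summands to Theorem~\ref{thm:torsion-splitting}(1) ("the tube part" of rigidity). That theorem only says the torsion part of a rigid sheaf is a direct sum of Pr\"ufer sheaves and exceptional finite-length sheaves; it does \emph{not} give the completeness built into the definition of a connected branch (a single exceptional sheaf of length $2$, say, is rigid but is not a branch sheaf). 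The branch form is genuinely a maximality statement: the paper obtains it by applying Lemma~\ref{lem:maximality-in-tubes} twice --- first to force every adic $S_x[-\infty]$ with $\Hom(S_x[-\infty],\tau B_i)=0$ for all $i$ to be a summand of $C$, and then again to see that the $B_i$ are maximal rigid among the finite-length sheaves whose $\tau$-shift is orthogonal to those adics, whence they form a branch sheaf (and the count $p-s$ of adics is then read off from the shape of the branches).

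Second, your mechanism in (1)(c) for extracting the adics as direct summands does not work as written: Corollary~\ref{cor:properties-cotilting} requires the candidate summand to be \emph{coherent} and Proposition~\ref{prop:summands-of-products} requires it to be \emph{compact}, and an adic is neither; moreover, membership in $\Prod(C)$ would not by itself suffice for a non-compact indecomposable pure-injective (the adic already lies in $\Prod(\Uu_x)$, so product-closure membership alone never produces a summand), and your verification that $S_x[-\infty]\in\Ff\cap\rperpe{\Ff}$ ("because they are reduced torsionfree in $\Prod(\Uu_x)$", resp.\ rigidity of $C\oplus S_x[-\infty]$, which is not available a priori) is not an argument. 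The paper instead gets the adic summands directly from Lemma~\ref{lem:maximality-in-tubes} combined with generalised Serre duality and $\Hom(\tau^-B_i,S_x[-\infty])=0$. Finally, in (2)(c), rigidity of $C$ only yields $\Ext^1(C_0,C_+)=0$, i.e.\ vanishing against the Pr\"ufer and branch summands actually occurring in $C$; $x$-divisibility of $C_+$ means $\Ext^1(S,C_+)=0$ for \emph{all} simples $S$ concentrated at $x$, including those whose Pr\"ufer sheaves are not summands, and for this the paper invokes the argument of \cite[Lem.~4.10]{angeleri:kussin:2017}, which uses the cotilting (torsionfree class) structure and not mere rigidity. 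So the skeleton is right, but the three steps above are genuine gaps that the paper's proof fills with Lemma~\ref{lem:maximality-in-tubes} and the transfer from the tilting-side arguments of \cite{angeleri:kussin:2017}.
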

\begin{proof}
  We assume without loss of generality that $C$ is minimal cotilting
  and hence discrete; thus $C$ is unqiuely determined by its
  indecomposable direct summands.\medskip
  
  Suppose that $C_x$ does not contain a Pr\"ufer sheaf as a
  direct summand. Then, by Theorem \ref{thm:torsion-splitting}, we
  must have that $C_x$ is a direct sum of exceptional sheaves in
  $\Uu_x$. There are only finitely many exceptional sheaves in $\Uu_x$
  and so let $B_1,\dots, B_s$ denote the exceptional summands of $C$
  (up to isomorphism).  Since $\bigoplus_{i=1}^s B_i$ is rigid, it has
  at most the number of summands of a tilting object in $\Uu_x$, such
  objects are well-known and it follows that $0 \leq s < p$,
  cf.~\cite[Ch.~3]{lenzing:meltzer:1996}.  By Lemma
  \ref{lem:maximality-in-tubes}, we must have that every adic
  $S_x[-\infty]$ such that $\Hom_{\QHh}(S_x[-\infty], \tau B_i) = 0$
  for all $1 \leq i \leq s$ is a direct summand of $C$; for future
  reference, let $\Aa$ denote the set of these adics.  This is a
  consequence of generalised Serre duality and the fact that
  $\Hom_{\QHh}(\tau^-B_i, S_x[-\infty]) = 0$ for all
  $1 \leq i \leq s$.  Since $s < p$, there is at least one coray that
  does not contain any $B_i$, therefore $\Aa$ is non-empty.  Now, by
  another application of Lemma \ref{lem:maximality-in-tubes}, we have
  that the $B_i$ are maximal self-orthogonal (and hence maximal rigid)
  among the sheaves in
  $\{ U \in \Uu_x \mid \tau U \in \Aa^{\perp_0}\}$.  It follows from a
  standard argument (e.g.~\cite[Ch.~3]{lenzing:meltzer:1996}) that
  $B_1, \dots B_s$ must form a branch sheaf.  It follows from the form
  of the branches that there are $p-s$ adics in $\Aa$.

  Assume the interior case. By
  Lemma~\ref{lem:pruefer-adic-ext-relations} no adic sheaf associated
  with the tube $\Uu_x$ can be a direct summand of $C$. Moreover, the
  same proof as in~\cite[Lem.~4.10]{angeleri:kussin:2017} shows that
  $C_+$ is $x$-divisible. By
  Lemma~\ref{lem:V-divisibles-have-slope-infty} thus $C$ has slope
  $\infty$. The proof concerning the claim for the wings in the
  interior case is completely dual to the arguments given in the proof
  of~\cite[Lem.~4.9]{angeleri:kussin:2017}, and we therefore omit it.
\end{proof}
Suppose $C$ is a cotilting sheaf of slope $\infty$ such that $C$ falls
into case (2) of Lemma~\ref{lem:exceptional-tube} with respect to
$x \in \XX$. It follows immediately from
Corollary~\ref{cor:properties-cotilting} that the branch summand $B$
of $C$ in $\Uu_x$, viewed as collection of indecomposable sheaves, is
given as $$B=\Prod(C)\cap\Uu_x.$$ In particular, this shows that a
cotilting sheaf $C'$ with a different branch $B'\neq B$ in $\Uu_x$
will have $\lperpe{C'}\neq\lperpe{C}$, that is, $C$ and $C'$ cannot be
equivalent.
\subsection*{The generating torsionfree classes}
We now consider a pair $(B,V)$ given by a branch sheaf $B\in\Hh$ and a
subset $V\subseteq\XX$, and we associate a generating torsionfree
class in $\Hh$ to it. In order to do this we next define two pieces of
notation.

Firstly, let $\Ww$ be the collection of pairwise non-adjacent wings in
$\Uu_x$ determined by the branch sheaf $B$. Let
$S, \tau^-S, \dots, \tau^{-(r-1)}S$ be a basis for one of the wings in
$\Ww$.  Then $\Rr_x$ is the set of indices $j \in\{0,\dots,p(x)-1\}$
such that $\tau^{j+1}S\notin \Ww$.

Secondly, given a connected branch $A$ with associated wing $\Ww_A$,
we define the \emph{undercut} of $A$ as
in~\cite[(4.9)]{angeleri:kussin:2017}:
\begin{equation*}
  A^{>}:=
\begin{cases}
  \rperpo{A}\cap\Ww_A & \text{if}\ A\ \text{is interior},\\
  \rperpo{A}\cap\tau\Ww_A & \text{if}\ A\ \text{is exterior}.
\end{cases}
\end{equation*}
The torsionfree class $\Ff_0$ associated to $(B,V)$ will consist of
all vector bundles, of the rays given by the sets $\Rr_x$, and of some
objects determined by $B$.  Up to $\tau$-shift, these objects will
belong to the wings defined by the undercut of $B$.
\begin{lemma}\label{lemma:undercut}
  Let $V\subseteq\XX$ and $B=B_{\mathfrak i}\oplus B_{\mathfrak e}$ be
  a branch sheaf.
  \begin{enumerate}
  \item[(1)] The class
    \begin{equation}
      \label{eq:undercut-formula}
      \Ff_0=\add\Bigl(\,\vect\XX\cup\tau^- (B^{>})\cup\bigcup_{x\in
        V}\{\tau^j S_x[n]\mid j\in \Rr_x,\,n\in\NN\}\,\Bigr)
    \end{equation}
    is a torsionfree class in $\Hh$ which
    generates. \smallskip
  \item[(2)] There is a cotilting sheaf $C$ with cotilting class
    $\lperpe{C}=\varinjlim\Ff_0$. For any such $C$ its torsion part is
    (up to multiplicities) given by
    $$C_0=B\oplus\bigoplus_{x\in V}\bigoplus_{j\in \Rr_x}\tau^j
    S_x[\infty].$$

  \item[(3)] If, moreover, $C$ is assumed to be minimal cotilting,
    then the indecomposable summands of its torsionfree part $C_+$ are
    given by the adic sheaves $S_y[-\infty]$ with $y\in\XX\setminus V$
    and $S_y\in\Uu_y$ simple such that
    $\Hom(S_y[-\infty],\tau B_{\mathfrak e})=0$, and in case
    $V=\emptyset$, additionally by the sheaf $\Kk$ of rational
    functions.
  \end{enumerate}
\end{lemma}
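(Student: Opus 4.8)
The plan is to deduce the three assertions in turn. First one recognises $\Ff_0$ as a generating torsionfree subcategory of $\Hh$, so that Theorem~\ref{thm:cotilting-is-of-finite-type} supplies a cotilting sheaf $C$ with $\lperpe C=\Cogen(C)=\varinjlim\Ff_0$; then one reads off its torsion part $C_0=tC$ using Theorem~\ref{thm:torsion-splitting} and Lemma~\ref{lem:exceptional-tube}; and finally, for minimal $C$, one reads off $C_+$ using Theorem~\ref{thm:ind-pure-inj-slope-infty}.

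\emph{(1).} The right-hand side of~\eqref{eq:undercut-formula} is a class of exactly the shape treated in~\cite{angeleri:kussin:2017} (the undercut~(4.9) is the one from \emph{loc.\ cit.}). By~\cite[Thm.~4.14]{angeleri:kussin:2017} such a class is resolving in the sense of~\cite[Def.~4.2]{angeleri:kussin:2017}, and by Proposition~\ref{prop:tf-gen-equals-resolve} a resolving subcategory of $\Hh$ is the same thing as a torsionfree generating one, which is the assertion of~(1). One can also check this directly: $\Ff_0$ generates because it contains $\vect\XX$ and every coherent sheaf is an epimorphic image of a vector bundle, and it is closed under subobjects and extensions because for each $x$ the intersection $\Ff_0\cap\Uu_x$ is the torsionfree class of the uniserial category $\Uu_x$ cut out combinatorially by the branch $B_x$ via the undercut.

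\emph{(2).} By~(1) and Theorem~\ref{thm:cotilting-is-of-finite-type} there is a cotilting sheaf $C$, unique up to equivalence, with $\lperpe C=\Cogen(C)=\varinjlim\Ff_0$ and $\lperpe C\cap\Hh=\Ff_0$; since $\vect\XX\subseteq\Ff_0$, Proposition~\ref{prop:tilting-cotilting-invariances}(1) gives that $C$ has slope $\infty$, so $C$ is discrete (Theorem~\ref{thm:ind-pure-inj-slope-infty}) and Lemma~\ref{lem:exceptional-tube} applies. By Theorem~\ref{thm:torsion-splitting}, $tC$ is a coproduct of Pr\"ufer sheaves and exceptional sheaves, and both kinds of summand are detected by $\Prod(C)$, which depends only on the equivalence class (for the exceptional ones use Corollary~\ref{cor:properties-cotilting}). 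A Pr\"ufer sheaf $S[\infty]$ is a summand of $C$ iff $S[\infty]\in\Prod(C)$ (the proposition preceding Corollary~\ref{cor:pruefer-summands-of-products}); as $S[\infty]$ is injective this is equivalent to $S[\infty]\in\lperpe C$, which, $\lperpe C$ being closed under subobjects and $\tau^j S_x[\infty]$ being the direct limit of the ray $\{\tau^j S_x[n]\mid n\ge 1\}$, holds iff that whole ray lies in $\Ff_0$; since $\tau^-(B^{>})$ is finite this happens precisely for $x\in V$ and $j\in\Rr_x$. Hence the Pr\"ufer summands of $C$ are exactly the $\tau^j S_x[\infty]$ with $x\in V$ and $j\in\Rr_x$, and $C$ is of interior type at $x$ (case~(2) of Lemma~\ref{lem:exceptional-tube}) exactly when $x\in V$. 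In either case Lemma~\ref{lem:exceptional-tube} shows that the finite-length summands of $C$ at $x$ form a connected-branch configuration equal to $\Prod(C)\cap\Uu_x$; the remaining point is to identify this configuration with $B_x$, which amounts to the \emph{invertibility} of the undercut construction — recovering the branch from $\Ff_0\cap\Uu_x$, exactly as in~\cite{angeleri:kussin:2017} and as already used implicitly in the proof of Lemma~\ref{lem:exceptional-tube}. I expect this to be the main obstacle. Assembling the two computations over all $x$ yields $C_0=tC=B\oplus\bigoplus_{x\in V}\bigoplus_{j\in\Rr_x}\tau^j S_x[\infty]$, up to multiplicities.

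\emph{(3).} Let $C$ be minimal (unique up to isomorphism, being discrete). By Theorem~\ref{thm:torsion-splitting} the sequence $0\to tC\to C\to C_+\to0$ splits, so $C_+$ is a direct summand of $C$, hence pure-injective (Theorem~\ref{thm:cotilting-is-pure-injective}), torsionfree, and of slope $\infty$; by Theorem~\ref{thm:ind-pure-inj-slope-infty} its indecomposable summands are among $\Kk$ and the adic sheaves. By Lemma~\ref{lem:exceptional-tube}(2)(a) there are no adic summands at points of $V$, and by Lemma~\ref{lem:exceptional-tube}(1) those at $y\in\XX\setminus V$ are exactly the $S_y[-\infty]$ with $\Hom(S_y[-\infty],\tau\Ww_y)=0$, $\Ww_y$ being the union of the wings determined by $B_{\mathfrak e}$ at $y$; since each such wing's root is a summand of $B_{\mathfrak e}$ and, being uniserial of full length, blocks the same adic sheaves as the whole wing, this condition is the same as $\Hom(S_y[-\infty],\tau B_{\mathfrak e})=0$. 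It remains to treat $\Kk$. One always has $\Kk\in\varinjlim\Ff_0=\lperpe C=\Cogen(C)$ (write $\Kk$ as a direct limit of vector bundles) and $\Kk$ is injective, so $\Kk\in\Prod(C)$; since the reduced sheaves $\Rr$ form a class closed under products and summands, $C$ cannot be reduced, so $C$ (being discrete) has an indecomposable summand that is divisible, i.e.\ either $\Kk$ or a Pr\"ufer sheaf. If $V=\emptyset$ there are no Pr\"ufer summands by~(2), so this summand is $\Kk$. If $V\neq\emptyset$, pick a Pr\"ufer summand $P$ of $C$ and let $C'$ be $C$ with all of its $\Kk$-summands removed; one has $\Kk\in\Cogen(P)$ (the line bundle $L$ with $\Kk=\IE(L)$ maps to $P$ by countably many morphisms whose kernels are sub-line-bundles of strictly decreasing degree with zero intersection, so $L\hookrightarrow P^{\NN}$, and this extends to a monomorphism $\Kk\hookrightarrow P^{\NN}$ since $P^{\NN}$ is injective and $L\to\Kk$ is essential), whence $\Kk\in\Prod(C')$; thus $\Prod(C')=\Prod(C)$, so $C'$ lies in the same equivalence class and the minimal representative (a summand of $C'$) has no $\Kk$-summand. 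Together with the description of the adic summands this proves~(3).
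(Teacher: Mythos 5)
Your proposal is correct and follows essentially the same route as the paper's proof: part (1) is delegated to the resolving classes of \cite{angeleri:kussin:2017} via Proposition~\ref{prop:tf-gen-equals-resolve}; part (2) detects the Pr\"ufer summands through Corollary~\ref{cor:pruefer-summands-of-products} and the rays contained in $\varinjlim\Ff_0$, and defers the identification $\Ff_0\cap\rperpe{{\Ff_0}}=\add(B)$ to the argument of \cite[Lem.~4.11]{angeleri:kussin:2017}, exactly as the paper itself does; and part (3) combines Lemma~\ref{lem:exceptional-tube}, Theorem~\ref{thm:ind-pure-inj-slope-infty} and Lemma~\ref{lem:pruefer-adic-ext-relations}. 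The only difference is that you spell out details the paper leaves implicit (that $\Kk\in\Prod$ of a Pr\"ufer sheaf, the removal of $\Kk$-summands under minimality when $V\neq\emptyset$, and the agreement of the conditions $\Hom(S_y[-\infty],\tau\Ww)=0$ and $\Hom(S_y[-\infty],\tau B_{\mathfrak e})=0$ via the root of each wing), which is harmless.
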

\begin{proof}
  (1) It is shown in~\cite[Lem.~4.11]{angeleri:kussin:2017} that
  $\Ff_0$ is a resolving class, which in our setting means, that
  $\Ff_0$ generates and is closed under subobjects and extensions.

  (2) By Theorem~\ref{thm:cotilting-is-of-finite-type} there is a
  cotilting object $C$ with $\lperpe{C}=\vec{\Ff_0}$. Given a simple
  object $S\in\Uu_x$, it follows from
  Corollary~\ref{cor:pruefer-summands-of-products} and the fact that
  $\vec{\Ff_0}$ is is a torsion-free class that we have that
  $S[\infty]$ is a direct summand of $C$ if and only if
  $\vec{\Ff_0} = \lperpe{C}$ contains the ray $\{S[n]\mid\,n\geq 1\}$.
  Therefore the objects $\tau^j S_x[\infty]$ with $x\in V$ and
  $j\in\Rr_x$ are precisely the Pr\"ufer summands of $C$. Moreover, by
  Lemma \ref{lem:properties-cotilting}, we have that
  $$\Ff_0\cap\rperpe{{\Ff_0}}=\Prod(C)\cap\Hh.$$ It remains to show that this class
  coincides with $\add(B)$, which then shows that the torsion part
  $C_0$ is as indicated.  Since, by Proposition
  \ref{prop:tf-gen-equals-resolve}, the resolving classes in $\Hh$
  coincide with the generating torsion-free classes, we may proceed in
  exactly the same way as in the proof of
  \cite[Lem.~4.11]{angeleri:kussin:2017}, replacing $\mathscr{S}$ with
  $\Ff_0$.
  
  (3) After Lemma~\ref{lem:exceptional-tube}~(1) it only remains to
  show that $C_+$ does not have another indecomposable summand, except
  $\Kk$, if $C$ is assumed to be minimal, since $C$ is uniquely
  determined by its indecomposable direct summands (or also by
  part~(2) of the preceding proposition). By
  Theorem~\ref{thm:ind-pure-inj-slope-infty} this could only be either
  the generic $\Kk$ or another adic. If $V\neq\emptyset$ then the
  generic is already in $\Prod(C)$ (since $C$ contains a Pr\"ufer
  summand), in case $V=\emptyset$ we have to add it. Additional adics
  are not possible, using Lemma~\ref{lem:pruefer-adic-ext-relations}.
\end{proof}
As a consequence we obtain
\begin{proposition}\label{prop:T-C-same-torsion}
  Let $C$ be a cotilting sheaf of slope $\infty$.
  \begin{enumerate}
  \item[(1)] Let $T$ be a corresponding tilting sheaf such that
    $\Gamma(T)=C$. Then the torsion parts $C_0$ and $T_0$ coincide up
    to ``multiplicities'': $\Add(C_0)=\Add(T_0)$.
  \item[(2)] Up to equivalence, $C$ is uniquely determined by its
    torsion part $C_0$. \qed
\end{enumerate}
\end{proposition}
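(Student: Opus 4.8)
The plan is to transport the description in Lemma~\ref{lemma:undercut} across the tilting--cotilting correspondence of Theorem~\ref{thm:tilting-cotilting-duality}; both parts then reduce to bookkeeping with the parametrisation by pairs $(B,V)$.

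For part~(1): since $\Gamma(T)=C$, Proposition~\ref{prop:tilting-cotilting-invariances}(4) shows that $T$ also has slope $\infty$. By the classification of tilting sheaves of finite type of slope $\infty$ in \cite{angeleri:kussin:2017} (of which Lemma~\ref{lemma:undercut} is the cotilting counterpart; compare \cite[Lem.~4.11, Thm.~4.14]{angeleri:kussin:2017}), such a $T$ is attached to a pair $(B,V)$, where $B$ is a branch sheaf and $V\subseteq\XX$, so that its associated resolving class $\lperpe{(\rperpe{T})}\cap\Hh$ is the class $\Ff_0$ of~\eqref{eq:undercut-formula} and $\Add(T_0)=\Add\bigl(B\oplus\bigoplus_{x\in V}\bigoplus_{j\in\Rr_x}\tau^{j}S_x[\infty]\bigr)$. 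By Theorem~\ref{thm:tilting-cotilting-duality} we get $\lperpe{C}\cap\Hh=\Ff_0$, hence $\lperpe{C}=\varinjlim\Ff_0$ by Theorem~\ref{thm:cotilting-is-of-finite-type}, and then Lemma~\ref{lemma:undercut}(2) yields $\Add(C_0)=\Add\bigl(B\oplus\bigoplus_{x\in V}\bigoplus_{j\in\Rr_x}\tau^{j}S_x[\infty]\bigr)$. Comparing the two expressions gives $\Add(C_0)=\Add(T_0)$.

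For part~(2): by the above, together with the bijection $\Theta$ of Theorem~\ref{thm:tilting-cotilting-duality} and the tilting classification, the equivalence class of a cotilting sheaf $C$ of slope $\infty$ corresponds bijectively to a pair $(B,V)$, and this pair produces $\Add(C_0)$ as in part~(1). It thus suffices to recover $(B,V)$ from $C_0$. Up to multiplicities, $C_0\cong B\oplus\bigoplus_{x\in V}\bigoplus_{j\in\Rr_x}\tau^{j}S_x[\infty]$; since Pr\"ufer sheaves are not coherent, $B$ is the maximal coherent direct summand of $C_0$ (up to multiplicities), and $V=\{x\in\XX\mid C_0\text{ has a Pr\"ufer sheaf concentrated in }\Uu_x\text{ as a direct summand}\}$. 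The last identity uses that $\Rr_x\neq\emptyset$ for each $x\in V$: by Lemma~\ref{lem:exceptional-tube}(2) the $\Uu_x$-component of a cotilting sheaf of interior type contains at least one Pr\"ufer summand (and $\Rr_x=\{0\}$ when $x$ is homogeneous), so every interior tube is genuinely visible in the torsion part. Hence $(B,V)$, and with it the equivalence class of $C$, is determined by $C_0$.

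The argument carries no serious obstacle once Lemma~\ref{lemma:undercut}, Lemma~\ref{lem:exceptional-tube} and the tilting classification of \cite{angeleri:kussin:2017} are in hand; the only care needed is to compose the bijections $[T]\leftrightarrow(B,V)\leftrightarrow\Ff_0$ and $[T]\leftrightarrow[C]$ in the correct direction and to verify that no ``interior'' tube fails to contribute a Pr\"ufer summand. Should one wish to avoid appealing to the tilting classification, one can instead read $(B,V)$ off $C$ directly via Lemma~\ref{lem:exceptional-tube} and re-run the undercut computation of \cite[Lem.~4.11]{angeleri:kussin:2017} on the cotilting side, identifying $\lperpe{C}\cap\Hh$ with $\Ff_0$; this is routine but somewhat longer.
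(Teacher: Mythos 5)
Your proposal is correct and follows essentially the same route as the paper, which states this proposition as an immediate consequence of Lemma~\ref{lemma:undercut} together with the correspondence $\Gamma$ of Theorem~\ref{thm:tilting-cotilting-duality} and the classification of tilting sheaves of slope $\infty$ from \cite{angeleri:kussin:2017}. Your extra care in part~(2) (recovering $B$ as the coherent part of $C_0$ and $V$ from the Pr\"ufer summands, using $\Rr_x\neq\emptyset$ via Lemma~\ref{lem:exceptional-tube}) just makes explicit what the paper leaves implicit.
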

\subsection*{The classification}
Next we present the main result of this section, which states that,
for any pair $(V,B)$, where $V \subseteq\XX$ is non-empty and $B$ is a
branch sheaf, there is a uniquely determined cotilting sheaf $C$ of
slope $\infty$.  Moreover, every cotilting sheaf of slope $\infty$
arises in this way.  Each $x\in V$ dictates that $\Uu_x$ is of
Pr\"ufer type and the interior part of $B$ with respect to $x$
dictates which Pr\"ufers belonging to $\Uu_x$ occur.  Similarly, the
set $\XX\setminus V$ and the exterior part of $B$ with respect to $V$
control the tubes of adic type.
\begin{theorem}\label{thm:large-cotilting-sheaves-in-general}
  Let $\QHh=\Qcoh\XX$, where $\XX$ is a weighted noncommutative
  regular projective curve.
  \begin{enumerate}
  \item[(1)] Let $V\subseteq\XX$ and $B\in\Hh_0$ be a branch
    sheaf. There is a unique large cotilting sheaf
    $C=C_+\oplus C_0$ of slope $\infty$ up to equivalence such that
    \begin{equation}
      \label{eq:large-torsion-part}
      C_0=B\oplus\bigoplus_{x\in V}\bigoplus_{j\in \Rr_x}\tau^j
      S_x[\infty],
    \end{equation}
    where the sets $\Rr_x\subseteq\{0,\dots,p(x)-1)\}$ are non-empty
    and are uniquely determined by $B$.\smallskip
  \item[(2)] Every cotilting sheaf of slope $\infty$ is, up to
    equivalence, as in~(1) and $C$ is $V$-divisible. (If
    $V\neq\emptyset$ hence $C$ is automatically of slope
    $\infty$.)\smallskip
  \item[(3)] Assuming $C$ to be minimal, the indecomposable summands
    of the torsionfree part $C_+$ are the following:
    \begin{itemize}
    \item the adic sheaves $\tau^{\ell} S_y[-\infty]$ with
      $y\in\XX\setminus V$ and $\ell$ such that
      $\tau^{\ell}S_y \not\in\tau\Ww$ for any wing $\Ww$ associated
      with an exterior branch part of $B$; if $V\neq\emptyset$ then
      $C_+$ is the pure-injective envelope of these adic sheaves;
    \item if $V=\emptyset$, additionally the sheaf of rational
      functions $\Kk$.
    \end{itemize}
  \end{enumerate}
\end{theorem}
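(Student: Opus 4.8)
The plan is to assemble the theorem from the two technical lemmas of this section, Lemma~\ref{lemma:undercut} and Lemma~\ref{lem:exceptional-tube}, together with the general cotilting machinery from Section~\ref{sec:cotilting}. For part~(1) I would start from a given pair $(V,B)$ and form the class $\Ff_0$ of \eqref{eq:undercut-formula}; by Lemma~\ref{lemma:undercut}(1) this is a generating torsionfree class in $\Hh$, so Theorem~\ref{thm:cotilting-is-of-finite-type} produces a cotilting sheaf $C$ with $\lperpe C=\varinjlim\Ff_0$, and Lemma~\ref{lemma:undercut}(2) identifies its torsion part with \eqref{eq:large-torsion-part}, the sets $\Rr_x$ being those attached to the wings of $B$. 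Since $\vect\XX\subseteq\Ff_0$, Proposition~\ref{prop:tilting-cotilting-invariances}(1),(3) gives that $C$ has slope $\infty$ and is large. Non-emptiness of $\Rr_x$ for $x\in V$ is immediate: for homogeneous $x$ one has $\Rr_x=\{0\}$, and for a non-homogeneous $x$ a branch in $\Uu_x$ has fewer than $p(x)$ indecomposable summands, so at least one coray is untouched (cf.\ the discussion preceding Lemma~\ref{lem:exceptional-tube}). Uniqueness up to equivalence is then Proposition~\ref{prop:T-C-same-torsion}(2), and reading $(V,B)$ back off from $C_0$ (the support of the Pr\"ufer summands gives $V$, the finite-length part gives $B$) shows the assignment is injective, so $(V,B)$ genuinely parametrises these cotilting sheaves.

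For part~(2) I would take an arbitrary cotilting sheaf $C$ of slope $\infty$ and, using Proposition~\ref{prop:minimal}, replace it by an equivalent minimal discrete representative, still of slope $\infty$ by Proposition~\ref{prop:tilting-cotilting-invariances}(2). Theorem~\ref{thm:torsion-splitting} splits $C=C_+\oplus C_0$ with $C_0=tC$ a direct sum of Pr\"ufer and exceptional finite-length sheaves; then Lemma~\ref{lem:exceptional-tube}, applied tube by tube, sorts each $\Uu_x$ into adic type or Pr\"ufer type. Setting $V$ to be the set of Pr\"ufer-type points and $B$ the sum of all finite-length summands of $C$ (a branch sheaf, again by Lemma~\ref{lem:exceptional-tube}), I would let $C'$ be the sheaf attached to $(V,B)$ in part~(1) and check $\Add(C_0)=\Add(C'_0)$: the finite-length parts both equal $B$, and in each Pr\"ufer-type tube Lemma~\ref{lem:exceptional-tube}(2) identifies the Pr\"ufer summands of $C$ with exactly the $\tau^jS_x[\infty]$ for $j\in\Rr_x$. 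Proposition~\ref{prop:T-C-same-torsion}(2) then yields that $C$ and $C'$ are equivalent. Finally, Lemma~\ref{lem:exceptional-tube}(2)(c) shows that the torsionfree part is $x$-divisible for each $x\in V$, hence $V$-divisible, which together with Lemma~\ref{lem:V-divisibles-have-slope-infty} (and the fact that torsion sheaves have slope $\infty$) recovers the parenthetical remark that $V\neq\emptyset$ forces slope $\infty$.

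For part~(3), assuming $C$ minimal, Theorem~\ref{thm:ind-pure-inj-slope-infty} tells us that the only indecomposable pure-injective sheaves of slope $\infty$ are finite-length sheaves, $\Kk$, Pr\"ufer and adic sheaves; as $C_+$ is torsionfree its indecomposable summands are adics and possibly copies of $\Kk$, and the precise list is exactly Lemma~\ref{lemma:undercut}(3), complemented by Lemma~\ref{lem:exceptional-tube}(1)(c). The role of $\Kk$ is settled by observing that a Pr\"ufer summand of $C$ already puts $\Kk$ into $\Prod(C)$ via the short exact sequence $0\to\tau S[-\infty]\to E\to S[\infty]\to0$ with $E$ a sum of copies of $\Kk$, so for $V\neq\emptyset$ it need not be a separate summand of the minimal $C$, whereas for $V=\emptyset$ it must be adjoined. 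The step I expect to be the main obstacle is the combinatorial bookkeeping in part~(2): one must verify that the data extracted locally from $C$ via Lemma~\ref{lem:exceptional-tube} — which Pr\"ufers appear and which connected branch sits in each non-homogeneous tube — coincides with the data $(\Rr_x,B_x)$ of the undercut construction, i.e.\ that the $\tau$-indexing of the gaps between consecutive Pr\"ufer summands matches the index set $\Rr_x$. This is essentially the content of Lemma~\ref{lem:exceptional-tube}(2)(b), but it is the point where a careful argument, rather than a citation, is required.
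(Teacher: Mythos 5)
Your proposal is correct in outline, and for parts (1) and (3) it coincides with the paper's own route: there everything is Lemma~\ref{lemma:undercut} combined with Theorem~\ref{thm:cotilting-is-of-finite-type} and Propositions~\ref{prop:tilting-cotilting-invariances} and \ref{prop:T-C-same-torsion}. Where you genuinely diverge is part (2). The paper settles surjectivity of the parametrisation in one line: via Proposition~\ref{prop:tilting-cotilting-invariances}(1) and Theorem~\ref{thm:cotilting-is-of-finite-type}, slope-$\infty$ cotilting sheaves correspond to generating torsionfree classes in $\Hh$ containing $\vect\XX$, and the paper cites \cite[Sec.~4]{angeleri:kussin:2017} (the classification of resolving classes used there for tilting sheaves of finite type) for the fact that these classes are exactly the classes~\eqref{eq:undercut-formula}; Lemma~\ref{lem:exceptional-tube} does not enter the proof of the theorem at all. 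You instead analyse a minimal representative directly, using Theorem~\ref{thm:torsion-splitting} and Lemma~\ref{lem:exceptional-tube} tube by tube, extract $(B,V)$, match torsion parts, and conclude with Proposition~\ref{prop:T-C-same-torsion}(2). This is a legitimate, more self-contained route, and the bookkeeping you single out (that the gaps between consecutive Pr\"ufer summands in a Pr\"ufer-type tube are indexed precisely by $\Rr_x$ as read off from the interior branch) is indeed the only substantive verification it needs beyond the quoted lemmas. One caution: Proposition~\ref{prop:T-C-same-torsion}(2) is stated without proof ``as a consequence'' of Lemma~\ref{lemma:undercut}; if you justified it by reading $(B,V)$ off $C_0$ and appealing to the classification of slope-$\infty$ cotilting classes, you would be assuming exactly the surjectivity you are trying to prove. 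To keep your argument non-circular, derive that proposition through the tilting correspondence (Proposition~\ref{prop:T-C-same-torsion}(1) together with Theorem~\ref{thm:tilting-cotilting-duality} and the fact from \cite{angeleri:kussin:2017} that a large tilting sheaf of slope $\infty$ is determined up to equivalence by its torsion part), or compare the cotilting classes $\lperpe{C}\cap\Hh$ directly. With that proviso, your proof trades the paper's reliance on the external resolving-class classification for the explicit tube-by-tube analysis, which is a reasonable exchange; the paper's proof is shorter precisely because it delegates that analysis to the earlier tilting paper.
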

\begin{proof}
  The result is a consequence of Lemma~\ref{lemma:undercut} and the
  fact, that the classes~\eqref{eq:undercut-formula} are just the
  (generating) torsionfree subclasses in $\Hh$ containing $\vect\XX$,
  which follows from the corresponding results on resolving classes
  and tilting sheaves, cf.~\cite[Sec.~4]{angeleri:kussin:2017}.
\end{proof}
\begin{numb}
  Let $V\subseteq\XX$ and $B$ be a branch sheaf. The large cotilting
  sheaf from Theorem~\ref{thm:large-cotilting-sheaves-in-general} will
  be denoted by $C_{(B,V)}$. With the large tilting sheaf $T_{(B,V)}$
  of finite type from~\cite[(4.6)]{angeleri:kussin:2017} if
  $V\neq\emptyset$ and $T_{(B,\emptyset)}=\bL'\oplus B$ with $\bL'$ a
  Lukas tilting sheaf (cf.~\cite[Prop.~4.5]{angeleri:kussin:2017}) in
  $\rperp{B}$, we have $\Gamma(T_{(B,V)})=C_{(B,V)}$ by
  construction. It follows that the following holds true:\medskip

  \emph{$C_{(B,V)}$ is tilting $\quad\Leftrightarrow\quad$ $T_{(B,V)}$
    is cotilting $\quad\Leftrightarrow\quad$ $T_{(B,V)}$ is
    pure-injective $\quad\Leftrightarrow\quad$ $V=\XX$.}\medskip

  Indeed: because of Proposition~\ref{prop:T-C-same-torsion}~(1),
    saying that the pure-injective torsion parts $C_0$ and $T_0$
    agree, we only need to consider the torsionfree parts $C_+$ and
  $T_+$, respectively. Here, $C_+$ is always
  pure-injective. By~\cite[Thm.~4.8]{angeleri:kussin:2017} we have
  that $T_+$ is $V$-divisible, hence by
  Lemma~\ref{lem:V-divisibles-have-slope-infty} of slope $\infty$,
  unless $V=\emptyset$. It is sufficient to show that $T_+$ is
  pure-injective if and only if $V=\XX$. If $V=\XX$ then
  $T_+\in\Add(\Kk)$ is pure-injective (and up to multiplicities and
  summands isomorphic to $\Kk$, $T_{(B,\XX)}=C_{(B,\XX)}$). If
  $V=\emptyset$, then $T_+$ is not pure-injective: otherwise the Lukas
  sheaf $\bL$ would be pure-injective and therefore $\bL$ must have an
  adic summand $S[-\infty]$. Let $S'$ be simple from a different,
  homogeneous tube. It is easy to see that $S'[-\infty]$ cogenerates
  every vector bundle, and hence also $\bL$, since $\bL$ is
  $\vect(\XX)$-filtered by \cite[Thm.~4.4]{angeleri:kussin:2017}
  together with \cite[Cor.~2.15(2)]{saorin:stovicek:2011}. Then
  $S'[-\infty]$ also cogenerates $S[-\infty]$. But from
  Proposition~\ref{prop:summands-of-products} we deduce
  $\Hom(S[-\infty],S'[-\infty])=0$. Thus we can assume that
  $\emptyset\neq V\subsetneq\XX$. Then again $T_+$ is not
  pure-injective: It follows from~\cite[Sec.~5]{angeleri:kussin:2017}
  that we can assume that $\XX$ is non-weighted and (compare also the
  proof of Lemma~\ref{lem:reduced-pi-are-adics}) that $T_+$ becomes a
  projective generator in $\QHh/\Tt_{V}$, and
  $A_V:=\End(T_+)=\varinjlim\Hom_{\Hh}(L',L)$ (where $L$ is the
  structure sheaf and the direct limit runs over all sub-line bundles
  $L'$ so that $L/L'$ has support in $V$) is a noncommutative Dedekind
  domain (\cite[Cor.~3.15]{kussin:2016} is the special case
  $A_{\XX\setminus\{x\}}=R_x$), and it is PI since
  $A_V\subseteq k(\Hh)$, with the latter finite over its centre. It
  follows from~\cite[Thm.~4.2]{marubayashi:1972b}, or
  \cite[Thm.~1.6]{prest:1998} (alternatively also from
  Lemma~\ref{lem:reduced-pi-are-adics}), that in case $U$ is a
  pure-injective indecomposable summand of $T_+$, it must be the
  $P$-adic completion $(\widehat{A_V})_P$ of $A_V$ for some non-zero
  prime ideal $P$. On the other hand, $U$ is a summand of $A_V$. But
  in $A_V$ the partial sums of the first powers of a non-zero element
  in $P$ yields a Cauchy sequence which has no limit in $A_V$. Thus,
  $U$ is not pure-injective.\smallskip

  The discussion also shows:\smallskip

  \emph{$C_+$ is reduced if and only if $V\neq\emptyset,\,\XX$. In any
    case, the reduced part of $C$ (resp.\ of $C_+$) is (up to
    $\Prod$-equivalence) of the form $\widehat{T}$ (resp.\
      $\widehat{T_+}$)}.\smallskip

  Here, the \emph{completion} $\widehat{M}$ of a sheaf $M$ is defined
  as $\prod_{x\in\XX}\widehat{M}^x$, with the $x$-\emph{completion}
  $\widehat{M}^x$ of $M$ defined as
  $\varprojlim_{M/M'\in\Uu_x}M/M'$, like in~\cite{vandenbergh:2001},
  \cite[2.4]{ringel:2004}. Note that $\widehat{M}^x=0$ if $M$ is
    $x$-divisible, and $\widehat{M}^x=M$ if $M\in\Uu_x$. Since
  $\rperp{{\Uu_y}}$ is closed under limits, $\widehat{M}^x$ is
  $y$-divisible for any $y\neq x$, and thus has slope $\infty$ by
  Lemma~\ref{lem:V-divisibles-have-slope-infty}. In $\Mod(R_{x})$ we
  have that $\widehat{M}^x$ is pure-injective since it is complete
  (cf.\ \cite[Thm.~11.4]{krylov:tuganbaev:2008}), and hence it is
  pure-injective also in $\QHh$. It follows then that $\widehat{M}^x$
  lies in $\Prod(\Uu_x)$ and is discrete. We obtain that $\widehat{M}$
  coincides with $\PE(\bigoplus_{x\in\XX}\widehat{M}^x)$, since they
  have the same indecomposable (pure-injective) summands by using
  Theorem~\ref{thm:ind-pure-inj-slope-infty} and
  Proposition~\ref{prop:summands-of-products}. \qed
\end{numb}
\subsection*{Maximal rigid objects in a large tube}
Following~\cite{baur:buan:marsh:2014}, we call an object $U$ in
  the direct limit closure $\vec{\Uu}$ of a tube $\Uu$ \emph{maximal
    rigid} if it is rigid and every indecomposable $Y\in\vec{\Uu}$
  satisfying $\Ext^1(U\oplus Y,U\oplus Y)=0$ is a direct summand of
  $U$. With the preceding results we
  complement~\cite[Cor.~4.19]{angeleri:kussin:2017} by the statements
  (2') and (3'):
\begin{corollary}\label{BBM}
  The following statements are equivalent for an object
  $U\in\vec{\Uu}$.
  \begin{enumerate}
  \item[(1)] $U$ is maximal rigid in $\vec{\Uu}$.
  \item[(2)] $U$ is tilting in $\vec{\Uu}$.
  \item[(2')] $U$ is cotilting in $\vec{\Uu}$.
  \item[(3)] $U$ is of Pr\"ufer type and it coincides, up to
    multiplicities, with the summand $(tT)_x$ supported at $x$ in the
    torsion part of some large tilting sheaf $T\in\QHh$.
  \item[(3')] $U$ is of Pr\"ufer type and it coincides, up to
    multiplicities, with the summand $(tC)_x$ supported at $x$ in the
   torsion part of some large cotilting sheaf $C\in\QHh$. \qed
  \end{enumerate}
\end{corollary}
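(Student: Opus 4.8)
The equivalences (1) $\Leftrightarrow$ (2) $\Leftrightarrow$ (3) are precisely \cite[Cor.~4.19]{angeleri:kussin:2017}, so the plan is to adjoin (2') and (3') to this chain by establishing (3) $\Leftrightarrow$ (3') and (2') $\Leftrightarrow$ (3'). The first of these is a formal consequence of the tilting--cotilting correspondence together with Proposition~\ref{prop:T-C-same-torsion}; the second is the cotilting mirror of the equivalence (2) $\Leftrightarrow$ (3) of \cite[Cor.~4.19]{angeleri:kussin:2017} and is where the substantive work lies.

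For (3) $\Rightarrow$ (3'): if $U$ is of Pr\"ufer type and $U\cong(tT)_x$ up to multiplicities for a large tilting sheaf $T$, then $T$ has a Pr\"ufer direct summand, so by the tilting counterpart of Lemma~\ref{lem:exceptional-tube} its torsionfree part $T_+$ is $x$-divisible and hence $T$ has slope $\infty$; by the tilting analogue of Theorem~\ref{thm:large-cotilting-sheaves-in-general} (see \cite[Sec.~4]{angeleri:kussin:2017}) such a $T$ is, up to equivalence, one of the sheaves $T_{(B,V)}$ with $x\in V$, hence of finite type. Therefore $C:=\Gamma(T)$ is defined (Theorem~\ref{thm:tilting-cotilting-duality}) and is a large cotilting sheaf, and $\Add(tC)=\Add(tT)$ by Proposition~\ref{prop:T-C-same-torsion}(1), so $(tC)_x\cong(tT)_x\cong U$ up to multiplicities, which is (3'). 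The converse (3') $\Rightarrow$ (3) is symmetric: a large cotilting sheaf $C$ with a Pr\"ufer summand at $x$ has slope $\infty$ (the argument behind Lemma~\ref{lem:exceptional-tube}(2) shows $C_+$ is $x$-divisible, hence of slope $\infty$ by Lemma~\ref{lem:V-divisibles-have-slope-infty}), so $T:=\Theta(C)$ is a large tilting sheaf of finite type with $(tT)_x\cong(tC)_x\cong U$ again by Proposition~\ref{prop:T-C-same-torsion}(1).

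For (2') $\Leftrightarrow$ (3') I would work inside the subcategory $\vec{\Uu}=\vec{\Uu_x}$, which is a localizing subcategory of $\QHh$ and, in its own right, a locally noetherian hereditary Grothendieck category with $\fp(\vec{\Uu})=\Uu$, so that Theorem~\ref{thm:cotilting-is-of-finite-type}, Lemma~\ref{lem:properties-cotilting} and Corollary~\ref{cor:properties-cotilting} apply to it. Given a large cotilting sheaf $C$ of slope $\infty$ that is of Pr\"ufer type at $x$, its torsionfree part $C_+$ is $x$-divisible (Lemma~\ref{lem:exceptional-tube}), so $\Ext^1_{\QHh}(X,C_+)=0$ for $X\in\Uu_x$ and thus, by Theorem~\ref{thm:ext-direct-limit-pure-inj}, for every $X\in\vec{\Uu}$; since the components $C_y$ with $y\neq x$ contribute no extensions to objects of $\vec{\Uu}$, and since $C_x=(tC)_x$ has no adic summand (Lemma~\ref{lem:pruefer-adic-ext-relations}), one obtains $\Ext^1_{\QHh}(X,C)=\Ext^1_{\vec{\Uu}}(X,C_x)$ for all $X\in\vec{\Uu}$, whence $\lperpe{C}\cap\vec{\Uu}$ equals the class $\lperpe{C_x}$ formed in $\vec{\Uu}$. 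As $\Rr_x\neq\emptyset$, the intersection of this class with $\Uu$ contains a full ray and is therefore a generating torsionfree subclass of $\Uu$; by Theorem~\ref{thm:cotilting-is-of-finite-type} it is the cotilting class of a cotilting object of $\vec{\Uu}$, and by Lemma~\ref{lem:properties-cotilting}, Corollary~\ref{cor:properties-cotilting} and Lemma~\ref{lem:maximality-in-tubes} that object is, up to $\Prod$-equivalence, $C_x$. This yields (3') $\Rightarrow$ (2'). Conversely, a cotilting object $U$ of $\vec{\Uu}$ is pure-injective of slope $\infty$, hence discrete (Theorem~\ref{thm:ind-pure-inj-slope-infty}) and of Pr\"ufer type; its exceptional summands assemble into a branch sheaf $B$ supported at $x$, and choosing $V\ni x$ so that the Pr\"ufer summands prescribed for $C_{(B,V)}$ in Theorem~\ref{thm:large-cotilting-sheaves-in-general} coincide with those of $U$ gives $(tC_{(B,V)})_x\cong U$, i.e.~(3').

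I expect the main obstacle to be the identification, in the previous paragraph, of the cotilting object of $\vec{\Uu}$ attached to $\lperpe{C}\cap\vec{\Uu}$ with the component $C_x$, and the reverse combinatorial reconstruction of a pair $(B,V)$ from a given cotilting object of $\vec{\Uu}$: one must check that the maximality statement of Lemma~\ref{lem:maximality-in-tubes} (``maximal self-orthogonal with respect to $\overline{\Uu}$'', a condition that a priori also constrains adic sheaves) restricts exactly to ``cotilting in $\vec{\Uu}$'' (a condition on torsion objects alone), and that the bookkeeping of which Pr\"ufer sheaves and which connected branches occur is compatible with the parametrisation of Theorem~\ref{thm:large-cotilting-sheaves-in-general}. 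Granting this---the argument being parallel to the tilting one in \cite[Sec.~4]{angeleri:kussin:2017} underlying \cite[Cor.~4.19]{angeleri:kussin:2017}---one obtains (2') $\Leftrightarrow$ (3'), and hence the whole chain of equivalences, including (2) $\Leftrightarrow$ (2').
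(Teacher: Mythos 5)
Your proposal is correct and matches the paper's (unwritten) argument: the paper records this corollary as an immediate consequence of \cite[Cor.~4.19]{angeleri:kussin:2017} together with the tilting--cotilting correspondence $\Gamma$, Proposition~\ref{prop:T-C-same-torsion} and the slope-$\infty$ classification (Lemma~\ref{lem:exceptional-tube}, Theorem~\ref{thm:large-cotilting-sheaves-in-general}), which is exactly the route you take for (3)$\Leftrightarrow$(3') and, in fleshed-out form inside $\vec{\Uu}$, for (2')$\Leftrightarrow$(3'). The step you leave ``granted'' does go through as you indicate, via Lemma~\ref{lem:maximality-in-tubes} combined with Lemma~\ref{lem:properties-cotilting} and Corollary~\ref{cor:properties-cotilting}, so there is no genuine gap.
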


\section{The case of positive Euler characteristic}

We assume that $\XX$ is of domestic type, that is, the normalized
\emph{orbifold Euler characteristic} $\chi'_{orb}(\XX)$ is
\emph{positive}. Let $\delta(\om)$ be the (negative) integer such that
for the slopes $\mu(\tau E)=\mu(E)+\delta(\om)$ holds for each
indecomposable vector bundle $E$. The collection $\Ee$ of
indecomposable vector bundles $F$ such that $0\leq\mu(F)<-\delta(\om)$
forms a slice in the sense of~\cite[4.2]{ringel:1984}, and
$T_{\her}:=\bigoplus_{F\in\Ff}F$ is a tilting bundle having a
finite-dimensional tame hereditary $k$-algebra $H$ as endomorphism
ring. We refer to~\cite[Prop.~6.5]{lenzing:reiten:2006}. In particular
$\bDerived{\QHh}=\bDerived{\Mod(H)}$, and this is also the repetitive
category of $\Mod(H)$. Denote by $\bp$ and $\bq$ the preprojective and
the preinjective component of $H$, respectively. Since $H$ is
hereditary, the tilting torsion pair $(\Tt,\Ff)$ in $\QHh$ induced by
$T_{\her}$ splits. Moreover, in $\Mod(H)$ there is the (split) torsion
pair $(\Qq,\Cc)$ with $\Qq=\Gen\bq$.
\begin{theorem}\label{thm:domestic-main-result}
    Let $\XX$ be a domestic curve.
    \begin{enumerate}
    \item[(1)] Each indecomposable pure-injective sheaf in $\Qcoh\XX$
      is either a vector bundle or has slope $\infty$.
    \item[(2)] Each large cotilting sheaf in $\Qcoh\XX$ has slope
      $\infty$.
    \end{enumerate}
  \end{theorem}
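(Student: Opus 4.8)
The plan is to transport both statements to the module category $\Mod(H)$ over the tame hereditary algebra $H$, using the derived equivalence $\bDerived{\QHh}=\bDerived{\Mod(H)}$ induced by the tilting bundle $T_{\her}$, and then invoke the known classification of indecomposable pure-injective $H$-modules together with the description of the preprojective/preinjective/regular trichotomy. First I would observe that, since $(\Tt,\Ff)$ splits, every indecomposable sheaf in $\QHh$ lies either in $\Tt=\Gen(T_{\her})$ or in $\Ff$, and likewise every indecomposable $H$-module lies in $\Qq=\Gen\bq$ or in $\Cc$. The tilting functor $\Hom(T_{\her},-)$ (with its derived inverse) carries $\Tt\subseteq\QHh$ equivalently onto $\Cc\subseteq\Mod(H)$ and $\Ff[1]$ onto $\Qq$; crucially, by Lemma~\ref{lem:pure-in-der-cat} pure-injectivity in $\QHh$ agrees with pure-injectivity in $\bDerived{\QHh}$, and the same holds for $\Mod(H)$, so pure-injective objects correspond to pure-injective objects under the equivalence. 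Hence an indecomposable pure-injective sheaf $E$ falls into one of two cases depending on which half of the split torsion pair it lies in.

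\textbf{Step for (1).} If $E\in\Tt$, then $E$ corresponds to an indecomposable pure-injective $H$-module in $\Cc$. For a tame hereditary algebra the indecomposable pure-injective modules are classically known (Prüfer, adic, finite-dimensional regular, finite-dimensional preprojective/preinjective, and the generic module); those lying in the torsionfree class $\Cc$ of the split pair $(\Qq,\Cc)$ — i.e.\ with no preinjective part — are the preprojective modules and the regular (finite length, Prüfer, adic, generic) modules. Under the inverse tilting functor the preprojective $H$-modules go to vector bundles (a slice-tilting argument: the $\tau$-orbit of the slice sweeps out precisely $\vect\XX$ in positive slope, cf.~\cite[Prop.~6.5]{lenzing:reiten:2006}), while the regular $H$-modules are supported on the tubes and correspond exactly to the indecomposable pure-injective sheaves of slope $\infty$ listed in Theorem~\ref{thm:ind-pure-inj-slope-infty}. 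If instead $E\in\Ff$, then $E[1]$ corresponds to an indecomposable pure-injective module in $\Qq=\Gen\bq$; since $\Ff=\vect\XX$ in the domestic case (every torsionfree coherent sheaf is a vector bundle, and direct limits of vector bundles that are torsionfree are again in $\Ff$), and $E$ being indecomposable pure-injective forces it into $\Prod$ of the compacts, the only possibilities are again vector bundles (the ``negative slope'' preprojectives and the structure-sheaf side) — or the generic $\Kk$, which already has slope $\infty$. Collecting cases gives: $E$ is a vector bundle or has slope $\infty$.

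\textbf{Step for (2).} Let $C$ be a large cotilting sheaf. By Theorem~\ref{thm:tilting-cotilting-duality} there is a tilting sheaf $T$ of finite type with $\Gamma(T)=C$, and by Proposition~\ref{prop:tilting-cotilting-invariances}(4) it suffices to show every large tilting sheaf of finite type in the domestic case has slope $\infty$, equivalently (Proposition~\ref{prop:tilting-cotilting-invariances}(1)) that $\vect\XX\subseteq\lperpe{C}\cap\Hh=\rperpe{T}\cap\Hh$. Here I would use the classification of tilting sheaves of finite type from~\cite[Sec.~4]{angeleri:kussin:2017}: in the domestic case every resolving (= generating torsionfree) subclass of $\Hh$ distinct from $\vect\XX$ itself already contains all of $\vect\XX$, because a torsionfree class in $\Hh$ closed under subobjects and extensions and containing a non-vector-bundle would contain a finite-length sheaf, and then — using that there are no non-homogeneous obstructions forcing a coherent tilting sheaf — it must contain a whole ray, hence a generator forcing all bundles in. Alternatively, directly: a cotilting sheaf $C$ that is \emph{not} of slope $\infty$ would have $\vect\XX\not\subseteq\lperpe{C}$, so some vector bundle $F$ has $\Ext^1(F,C)\neq 0$, i.e.\ $C$ has a non-zero map to $\tau F\in\vect\XX$ by Serre duality; but then the torsionfree part $C_+$ has a coherent (vector bundle) summand, forcing $C$ to be equivalent to a coherent cotilting sheaf, contradicting largeness. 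Since the domestic hypothesis is what guarantees $\Ff=\vect\XX$ and that there is no ``tubular/elliptic'' reservoir of large cotilting sheaves of finite rational slope, this is where $\chi'_{orb}(\XX)>0$ is genuinely used.

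\textbf{Main obstacle.} The delicate point is part (1) for sheaves in $\Ff$ and the careful bookkeeping of which preprojective/preinjective $H$-modules map to vector bundles versus to sheaves of slope $\infty$ under the (derived) tilting equivalence — in particular checking that the generic $H$-module corresponds to $\Kk$ and sits on the slope-$\infty$ side, and that no spurious superdecomposable or irrational-slope phenomena arise in the domestic case. Once the dictionary between the split torsion pairs $(\Tt,\Ff)$ and $(\Qq,\Cc)$ under the tilting functor is pinned down precisely, both statements follow from the classical tame-hereditary picture; the rest is routine.
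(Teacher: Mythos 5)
Your framework for part (1) is essentially the paper's: pass to $\Mod(H)$ via the slice tilting bundle $T_{\her}$, use that both torsion pairs $(\Tt,\Ff)$ and $(\Qq,\Cc)$ split, transfer pure-injectivity through Lemma~\ref{lem:pure-in-der-cat}, and settle each case by module-theoretic input (the paper quotes Crawley--Boevey's lemma \cite[3.~Lem.~1]{crawley-boevey:1998} and Ringel's argument \cite[2.2]{ringel:2004} rather than the full classification of indecomposable pure-injective $H$-modules, but that is a difference of packaging, and your handling of the case $E\in\Tt$ is acceptable). The genuine problem is the case $E\in\Ff$. Your claim that ``$\Ff=\vect\XX$ in the domestic case'' is false: $\Ff$ here is the torsionfree class of the tilting torsion pair induced by $T_{\her}$, i.e.\ $\Ff=\{X\mid\Hom(T_{\her},X)=0\}$, not the class of torsionfree sheaves; for instance $T_{\her}$ itself is a vector bundle lying in $\Tt$, and the generic sheaf $\Kk$ also lies in $\Tt$ (any vector bundle maps nonzero into its injective envelope $\Kk$), so the extra case ``or the generic $\Kk$'' you allow cannot occur and signals a conflation of $(\Tt,\Ff)$ with the torsion pair (torsion, torsionfree). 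What actually has to be proved in this case is that the only indecomposable pure-injective $H$-modules lying in $\Qq=\Gen\bq$ are the finite-dimensional preinjectives, so that $E\in\bq[-1]\subseteq\vect\XX$; equivalently, that the Pr\"ufer, adic and generic modules all lie in $\Cc$. You assert this implicitly but never verify it, and it is precisely the point where the paper works: there one shows $E[1]\in\Prod(\bq)$ (a summand of a product of finite-dimensional modules, projected away from $\prod\Cc$ since $\Cc$ is closed under products) and then applies \cite[2.2]{ringel:2004} and \cite[3.~Lem.~1]{crawley-boevey:1998} to conclude that $E[1]$ is preinjective. The facts you need are classical (e.g.\ \cite{ringel:1998}) and the strategy is salvageable, but as written this step is a gap.

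For part (2), your primary argument --- produce $T$ with $\Gamma(T)=C$ via Theorem~\ref{thm:tilting-cotilting-duality}, apply Proposition~\ref{prop:tilting-cotilting-invariances}(4), and quote the domestic classification of large tilting sheaves of finite type --- is exactly the paper's proof and is fine. The ``alternative direct'' argument you sketch is not: a nonzero map $C\to\tau F$ with $F$ a vector bundle does not, without further work, produce a coherent direct summand of $C_+$ (you would need discreteness of $C$ and part (1) for that), and even a coherent direct summand would not force $C$ to be equivalent to a coherent cotilting sheaf --- large cotilting sheaves of slope $\infty$ do have coherent branch summands --- so that chain of implications is a non sequitur and should be dropped.
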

  Hence the classifications of indecomposable pure-injective sheaves
  and of large cotilting sheaves in the domestic case are given by
  Theorem~\ref{thm:ind-pure-inj-slope-infty} (plus the indecomposable
  vector bundles) and
  Theorem~\ref{thm:large-cotilting-sheaves-in-general}, respectively. 
  \begin{proof}
    (1) It is sufficient to proof the following: if $E$ is
    indecomposable pure-injective and there is a non-zero morphism to
    a vector bundle then $E$ is a vector bundle. The analogue in the
    module case is well-known
    (cf.~\cite[3.~Lem.~1]{crawley-boevey:1998}). Since $E$ is
    indecomposable, either $E\in\Tt$ or $E\in\Ff$. We regard $E$ as an
    object in $\bDerived{\QHh}=\bDerived{\Mod(H)}$. Thus either
    $E\in\Mod(H)$ or $E\in\Mod(H)[-1]$. We invoke
    Lemma~\ref{lem:pure-in-der-cat} applied to $\Derived{\QHh}$. Let
    first $E\in\Mod(H)$. Since there is even an epimorphism from $E$
    to a vector bundle $F$, thus $F\in\Tt$ and hence $F\in\bp$, the
    claim $E\in\bp\subseteq\vect\XX$ follows from the result in the
    module case. In case $E\in\Mod(H)[-1]$, we have
    $E\in\Qq[-1]$. Then $E[1]$ is a direct summand of a product of
    finite dimensional $H$-modules. Since $(\Qq,\Cc)$ is a torsion
    pair and $\Cc$ closed under products, we obtain
    $E[1]\in\Prod(\bq)$. Thus, by~\cite[2.2]{ringel:2004}, either
    $E[1]\in\Cc$ or $\Hom(\bq,E[1])\neq 0$. The first case is not
    possible since $E\in\QHh$. In the latter case we get
    $E\in\bq[-1]\subseteq\vect\XX$
    from~\cite[3.~Lem.~1]{crawley-boevey:1998}.

    (2) We make use of
    Proposition~\ref{prop:tilting-cotilting-invariances}~(4) and of
    the fact that the corresponding result for tilting sheaves is
    known, cf.\ \cite[Sec.~6]{angeleri:kussin:2017}.
  \end{proof}

\section{The case of Euler characteristic zero}\label{sec:Euler-zero}
\emph{Throughout this section let $\XX$ be a weighted noncommutative
  projective curve of orbifold Euler characteristic zero, and
  $\QHh=\Qcoh\XX$.}\medskip 

For general information on the tubular case we refer
to~\cite{lenzing:meltzer:1992}, \cite{lenzing:1997},
\cite[Ch.~13]{reiten:ringel:2006}, \cite[Ch.~8]{kussin:2009}
and~\cite[Sec.~13]{kussin:2016}, on the elliptic case to
\cite[Sec.~9]{kussin:2016}.

Let $\ovp$ denote the least common multiple of the weights
$p_1,\dots,p_t$, that is, $\ovp=1$ in the case where $\XX$ is elliptic
and let $\ovp>1$ if $\XX$ is tubular.  The formulae for the degree and
for the slope of a non-zero object $E\in\Hh$ simplify to
$\mu(E)=\frac{\deg(E)}{\rk(E)}\in\widehat{\QQ}=\QQ\cup\{\infty\}$,
with $\deg(E)=\frac{1}{\kappa\varepsilon}\DLF{L}{E}$,
cf.~\eqref{eq:deg-define}.

Since we are assuming $\chi'_{orb}(\XX)=0$, it follows that every
indecomposable coherent sheaf is semistable.  We therefore have the
following result, which is analogous to Atiyah's
classification~\cite{atiyah:1957}.
\begin{theorem}[{\cite[Prop.~8.1.6]{kussin:2009},
    \cite[Thm.~9.7]{kussin:2016}}]\label{thm:coh-X-alpha}
  For every $\alpha\in\widehat{\QQ}$ the full subcategory $\bt_\alpha$
  of $\Hh$ formed by the semistable sheaves of slope $\alpha$ is a
  non-trivial abelian uniserial category whose connected components
  form stable tubes; the tubular family $\bt_\alpha$ is parametrized
  again by a weighted noncommutative regular projective curve
  $\XX_\alpha$ over $k$ which satisfies $\chi'_{orb}(\XX_{\alpha})=0$
  and is derived-equivalent to $\XX$. \qed
\end{theorem}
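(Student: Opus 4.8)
Although the statement is quoted from \cite{kussin:2009, kussin:2016}, here is the line of argument one would follow. The case $\alpha=\infty$ is immediate: $\bt_\infty$ is by definition the category $\Hh_0$ of finite-length sheaves, which by the axioms (NC~1)--(NC~5) is nonzero, abelian, uniserial, decomposes as $\coprod_{x\in\XX}\Uu_x$ into stable tubes, and is parametrized by $\XX$ itself; so one may take $\XX_\infty=\XX$, which is trivially derived-equivalent to $\XX$ and has $\chi'_{orb}=0$. The whole content is therefore to transport this picture from slope $\infty$ to an arbitrary $\alpha\in\widehat{\QQ}$, and the natural device is an exact auto-equivalence of $\bDerived{\Hh}$ carrying slope $\infty$ to slope $\alpha$.

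The plan is as follows. First I would fix the numerical setup: since $\chi'_{orb}(\XX)=0$ every indecomposable coherent sheaf is semistable, and $(\rk,\deg)\colon\Knull(\Hh)\to\ZZ^2$ is surjective, the slope $\deg/\rk$ taking values in $\widehat{\QQ}=\PP^1(\QQ)$. Any exact auto-equivalence $\Phi$ of $\bDerived{\Hh}$ acts on $\ZZ^2$ by an element of $\mathrm{GL}_2(\ZZ)$ compatible with the induced Euler form; since Harder--Narasimhan filtrations are determined by $(\rk,\deg)$ once one knows that indecomposables are semistable, $\Phi$ respects semistability and the HN structure and transforms slopes by the induced fractional-linear action on $\PP^1(\QQ)$. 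Hence it suffices, for each $\alpha$, to produce some $\Phi$ with $\Phi(\text{slope }\infty)=\text{slope }\alpha$.

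Next I would assemble enough such auto-equivalences. Two kinds are needed: the degree-shifting auto-equivalences (the tubular shift functors $\sigma_x$), which fix the rank and raise the degree, hence act on $\PP^1(\QQ)$ by shears $\bigl(\begin{smallmatrix}1&0\\ \ast&1\end{smallmatrix}\bigr)$; and a ``tubular mutation'', that is, the derived auto-equivalence arising from a suitable HRS-tilt of $\Hh$ at a torsion pair defined by slope truncation, whose numerical action is an element of $\mathrm{SL}_2(\ZZ)$ of ``$S$-type''. For the mutation one uses $\chi'_{orb}(\XX)=0$ to see that the tilt is ``short'', so that its heart $\Hh'$ generates $\bDerived{\Hh}$ as a triangulated category (cf.\ \cite{stovicek:kerner:trlifaj:2011} and the discussion in Section~\ref{sec:cotilting}), and that $\Hh'$ again satisfies the axioms (NC~1)--(NC~5). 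The shears together with the mutation generate a subgroup of $\mathrm{SL}_2(\ZZ)$ acting transitively on $\PP^1(\QQ)$; this is the tubular-mutation calculus of \cite{lenzing:meltzer:1992}, classical after \cite{atiyah:1957} in the elliptic case. Choosing $\Phi$ with $\Phi(\text{slope }\infty)=\text{slope }\alpha$, I would set $\Hh_\alpha:=\Phi(\Hh)[n]$ for the shift $n$ making this a heart in which $\bt_\alpha$ is the finite-length part, and let $\XX_\alpha$ be the curve with $\coh\XX_\alpha=\Hh_\alpha$.

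The conclusions are then read off. Up to the shift $[n]$, $\Phi$ restricts to an exact equivalence of abelian categories $\Hh_0\to\bt_\alpha$, so $\bt_\alpha$ is nonzero, abelian, uniserial and has stable tubes as its connected components, and $\bt_\alpha=(\Hh_\alpha)_0$ is parametrized by $\XX_\alpha$ by construction. That $\XX_\alpha$ is derived-equivalent to $\XX$ is immediate, since $\Hh_\alpha$ is the heart of a bounded t-structure generating $\bDerived{\Hh}$, whence $\bDerived{\coh\XX_\alpha}=\bDerived{\Hh_\alpha}=\bDerived{\Hh}=\bDerived{\coh\XX}$; and $\chi'_{orb}(\XX_\alpha)=0$ because the vanishing of the orbifold Euler characteristic is equivalent to a property of the Euler form (equivalently, of the Coxeter transformation on the numerical Grothendieck group), which is a derived invariant. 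The main obstacle is the transitivity step: it is the only part that is not formal, requiring the explicit auto-equivalences, the computation of their effect on $(\rk,\deg)$, and the arithmetic that the resulting matrices act transitively on $\PP^1(\QQ)$; entangled with it is the verification that the HRS-tilted heart $\Hh'$ genuinely satisfies the axioms of a weighted noncommutative regular projective curve --- in particular that Serre duality, noetherianity, and the existence of objects of infinite length are inherited --- which is exactly where the hypothesis $\chi'_{orb}(\XX)=0$ enters in an essential way.
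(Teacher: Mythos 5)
The paper contains no internal proof of this theorem: it is imported as background, stated with citations to \cite[Prop.~8.1.6]{kussin:2009} and \cite[Thm.~9.7]{kussin:2016}, so there is nothing in the text to compare your argument against line by line. Your sketch --- settling $\alpha=\infty$ via $\bt_\infty=\Hh_0$ and transporting it by derived auto-equivalences (tubular shifts together with a mutation/tilt of $S$-type) whose induced action on $(\rk,\deg)$ is transitive on $\widehat{\QQ}$, then checking that the resulting heart again satisfies (NC~1)--(NC~5) --- is essentially the Lenzing--Meltzer/Atiyah-style argument used in those cited sources, so it follows the same route the paper relies on, with the hard work (the transitivity calculus and the axiom verification for the tilted heart) correctly identified rather than carried out.
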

We therefore
write $$\Hh=\bigvee_{\alpha\in\widehat{\QQ}}\bt_{\alpha}.$$ Note that
the component $\bt_{\infty}$ coincides with the sheaves of finite
length.
\subsection*{Quasicoherent sheaves with real slope}
We recall that by~\cite{reiten:ringel:2006,angeleri:kussin:2017} the
notion of slope is extended to all quasicoherent sheaves using the
following partitions of $\QHh$. For
$w\in\widehat{\RR}=\RR\cup\{\infty\}$ we define
$$\bp_w=\bigcup_{\alpha<w}\bt_\alpha\quad\quad\bq_w=\bigcup_{w<\beta}\bt_\beta,$$
where $\alpha,\,\beta\in\widehat{\QQ}$. We then have a partition of
$\Hh$ as $\Hh=\bp_w \vee\bt_w\vee\bq_w$ if $w$ is rational, and
$\Hh=\bp_w\vee\bq_w$ if $w$ is irrational.  We define the
\emph{sheaves of slope $w$} to be those contained in
$\Mm(w)=\Bb_w\cap\Cc_w$
where
$$\Cc_w=\rperpo{{\bq_w}}=\lperpe{\bq_w}\quad \text{and}
\quad\Bb_w=\lperpo{\bp_w}= \rperpe{{\bp_w}}.$$ For coherent sheaves
this definition of slope is equivalent to the former one given as
fraction of degree and rank, and for irrational $w$ there are only
non-coherent sheaves in $\Mm(w)$.  

The following fundamental statement can be found
in~\cite[Thm.~13.1]{reiten:ringel:2006},
\cite[Thm.~7.6]{angeleri:kussin:2017}.
\begin{theorem}[Reiten-Ringel]\label{thm:reiten-ringel}
  \begin{enumerate}
  \item[(1)] $\Hom(\Mm(w'),\Mm(w))=0$ for $w<w'$.
  \item[(2)] Every indecomposable sheaf has a well-defined slope
    $w\in\widehat{\RR}$. \qed
  \end{enumerate}
\end{theorem}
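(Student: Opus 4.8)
This is quoted from \cite{reiten:ringel:2006} and \cite{angeleri:kussin:2017}; here is the route I would take. The plan is to reduce both assertions to the coherent slope ordering — the Atiyah-type input already contained in Theorem~\ref{thm:coh-X-alpha} — by attaching to each nonzero $E\in\QHh$ the two extremal invariants
\[
  \lambda(E)=\sup\{\mu(X)\mid X\in\Hh\text{ indecomposable},\ \Hom(X,E)\neq 0\},\qquad
  \rho(E)=\inf\{\mu(X)\mid X\in\Hh\text{ indecomposable},\ \Hom(E,X)\neq 0\}
\]
with values in $\widehat{\RR}$, where $\rho(E)=\infty$ if the second set is empty. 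Since $\QHh$ is locally noetherian, $E$ is the directed union of its coherent subobjects, so the first set is non-empty and $\lambda(E)$ is well-defined. Unwinding $\Bb_w=\lperpo{\bp_w}$ and $\Cc_w=\rperpo{\bq_w}$ against the coherent slope ordering gives $E\in\Cc_w\iff\lambda(E)\le w$ and $E\in\Bb_w\iff\rho(E)\ge w$, hence
\[
  E\in\Mm(w)\ \Longleftrightarrow\ \lambda(E)\le w\le\rho(E).
\]

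With this dictionary, part (1) is equivalent to the inequality $\lambda(N)\ge\rho(N)$ for every nonzero $N\in\QHh$: a strict inequality $\lambda(N)<\rho(N)$ would place $N$ in $\Mm(v)\cap\Mm(v')$ for some $v<v'$, so that $\mathrm{id}_N$ is a nonzero element of $\Hom(\Mm(v'),\Mm(v))$, contradicting (1); conversely, the image $N$ of any morphism $\Mm(w')\to\Mm(w)$ with $w<w'$ is a quotient of an object of $\Bb_{w'}$ and a subobject of an object of $\Cc_w$, so — $\QHh$ being hereditary, whence $\Bb_{w'}$ is closed under quotients and $\Cc_w$ under subobjects — $N\neq 0$ would force $\lambda(N)\le w<w'\le\rho(N)$, i.e.\ $\lambda(N)<\rho(N)$. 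Part (2) becomes the statement that $\lambda(E)=\rho(E)$ for every indecomposable $E$; granting (1), this is the single inequality $\lambda(E)\le\rho(E)$. I would prove the two remaining inequalities together, reducing to the coherent subobjects and quotients of the sheaf in question, with the coherent slope ordering of Theorem~\ref{thm:coh-X-alpha} as the base case; indecomposability enters precisely to exclude a gap $\lambda(E)<w<\rho(E)$, which would separate the coherent subobjects of $E$ into a slope-$<w$ part and a slope-$>w$ part and split $E$ accordingly. This is the step where I would follow the control arguments of \cite{reiten:ringel:2006} most closely. Once $\lambda(E)=\rho(E)=:w$ is known, that $E$ actually lies in $\Bb_w\cap\Cc_w=\Mm(w)$ — in particular the $\Ext^1$-conditions hidden in $\Bb_w=\rperpe{\bp_w}$ and $\Cc_w=\lperpe{\bq_w}$ — follows from hereditariness together with Theorem~\ref{thm:ext-direct-limit-pure-inj} applied to the directed system of coherent subobjects of $E$.

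The main obstacle is the case of irrational $w$. Then there is no coherent sheaf of slope exactly $w$ to anchor either inequality, so the comparison of incoming and outgoing coherent slopes must be carried out purely through the tubular families $\bt_\alpha$ with rational $\alpha\to w$, and one must verify that the resulting limiting $\Hom$- and $\Ext^1$-conditions are exactly those defining $\Bb_w$ and $\Cc_w$. Here I would use the definability machinery of Section~\ref{sec:purity} — exactly as in Proposition~\ref{prop:slope-infty-definable}, $\Mm(w)$ is closed under products, direct limits and pure subobjects — together with the derived equivalence $\XX\simeq\XX_\alpha$ of Theorem~\ref{thm:coh-X-alpha}, which transports a rational slope to $\infty$ and makes the explicit description of $\Mm(\infty)$ from Section~\ref{sec:pure-inj-inf-slope} available; keeping track of the cohomological shifts introduced by those equivalences is the fiddly part. (Complete proofs are \cite[Thm.~13.1]{reiten:ringel:2006} and \cite[Thm.~7.6]{angeleri:kussin:2017}.)
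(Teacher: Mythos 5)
The paper does not prove this theorem: it is stated with a \verb|\qed| and attributed to \cite[Thm.~13.1]{reiten:ringel:2006} and \cite[Thm.~7.6]{angeleri:kussin:2017}, so your concluding citation is exactly what the paper does. Your dictionary $E\in\Mm(w)\Leftrightarrow\lambda(E)\le w\le\rho(E)$ is correct (it is just the definition $\Mm(w)=\lperpo{\bp_w}\cap\rperpo{\bq_w}$ read against the slope ordering on $\Hh=\bigvee_\alpha\bt_\alpha$), and so is the equivalence of part (1) with the no-gap inequality $\lambda(N)\ge\rho(N)$ for every nonzero $N$.

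However, the portion of your sketch that goes beyond the citation misidentifies where the real work lies, in two ways. First, for (1) the no-gap inequality must be proved for \emph{arbitrary} nonzero sheaves, where indecomposability is unavailable; and your account of how indecomposability is used is backwards: in a gap situation $\lambda(E)<w<\rho(E)$ every indecomposable coherent sheaf mapping nontrivially to $E$ has slope $\le\lambda(E)<w$, so there is no ``slope-$>w$ part'' of coherent subobjects and no splitting of the kind you describe. What indecomposability actually excludes, for (2), is the opposite configuration $\rho(E)<\lambda(E)$: choosing a rational $\alpha$ strictly in between, the indecomposable $E$ must lie on one side of the torsion pair $(\Gen(\bq_\alpha),\Cc_\alpha)$, and since $\Gen(\bq_\alpha)\subseteq\Bb_\alpha$ this forces $\lambda(E)\le\alpha$ or $\rho(E)\ge\alpha$, a contradiction. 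The splitness of that torsion pair at rational $\alpha$ (Lemma~\ref{lem:reiten-ringel}) and the no-gap statement for arbitrary sheaves are precisely the Reiten--Ringel ``control'' results, so as written your sketch defers essentially all of the content of both parts to the reference while describing the wrong mechanism around it. Second, the closing appeal to Theorem~\ref{thm:ext-direct-limit-pure-inj} is misapplied: that theorem requires the second argument to be pure-injective, which $E$ is not assumed to be; it is also unnecessary, since once $\lambda(E)\le w\le\rho(E)$ the $\Ext^1$-conditions follow from the equalities $\lperpo{\bp_w}=\rperpe{\bp_w}$ and $\rperpo{\bq_w}=\lperpe{\bq_w}$, which hold by generalised Serre duality together with the $\tau$-invariance of the tubular families and are already built into the paper's definitions of $\Bb_w$ and $\Cc_w$.
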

\subsection*{Cotilting sheaves that have a slope}
\begin{theorem}\label{thm:exist-Ww}
  Let $w\in\widehat{\RR}$.
  \begin{enumerate}
  \item[(1)] There is a large cotilting sheaf $\bW_w$ of
    slope $w$ and with cotilting class $\Cc_w$.
  \item[(2)] If $w$ is irrational, then $\bW_w$ is, up to equivalence,
    the unique cotilting sheaf of slope $w$.
  \end{enumerate}
\end{theorem}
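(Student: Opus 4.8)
The plan is to produce $\bW_w$ from the finite-type classification of cotilting torsionfree classes (Theorem~\ref{thm:cotilting-is-of-finite-type}) applied to $\Ff_0:=\Cc_w\cap\Hh$, and then to extract uniqueness in the irrational case from the closure properties of $\Cc_w=\rperpo{\bq_w}$. First I would identify $\Ff_0$: using that in Euler characteristic zero every indecomposable coherent sheaf is semistable, together with the Reiten--Ringel $\Hom$-vanishing between slopes (Theorem~\ref{thm:reiten-ringel}), one gets $\Ff_0=\bigvee_{\alpha\le w}\bt_\alpha$ (for finite $w$: the vector bundles whose indecomposable summands all have slope $\le w$). Since $\Cc_w=\rperpo{\bq_w}$ is closed under subobjects and extensions, so is $\Ff_0$, and $\Ff_0$ is generating (this is part of the theory of resolving classes in \cite[Sec.~7]{angeleri:kussin:2017}, and is also visible directly because $\Ff_0$ contains line bundles of arbitrarily small slope). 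So Theorem~\ref{thm:cotilting-is-of-finite-type} gives a cotilting sheaf $C$ with $\lperpe{C}=\varinjlim\Ff_0$.

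The next step is to show $\varinjlim\Ff_0=\Cc_w$. The inclusion ``$\subseteq$'' holds because $\bq_w\subseteq\Hh$ consists of finitely presented objects, hence $\Cc_w=\rperpo{\bq_w}$ is closed under direct limits; for ``$\supseteq$'', every $M\in\Cc_w$ is the directed union of its coherent subobjects ($\QHh$ being locally noetherian), and those subobjects lie in $\Cc_w\cap\Hh=\Ff_0$ since $\Cc_w$ is closed under subobjects. Thus $\lperpe{C}=\Cc_w$, and I set $\bW_w:=C$. To see that $\bW_w$ has slope $w$: it lies in $\Cogen(\bW_w)=\Cc_w$, and $\bp_w\subseteq\Ff_0\subseteq\lperpe{\bW_w}$ gives $\Ext^1(\bp_w,\bW_w)=0$, i.e.\ $\bW_w\in\rperpe{\bp_w}=\Bb_w$; hence $\bW_w\in\Bb_w\cap\Cc_w=\Mm(w)$. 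That $\bW_w$ is large I would deduce from the corresponding statement on the tilting side via Theorem~\ref{thm:tilting-cotilting-duality} and \cite[Sec.~7]{angeleri:kussin:2017}; for irrational $w$ it is automatic, as $\Mm(w)$ then contains no coherent sheaf. For part (2), let $w$ be irrational and $C'$ any cotilting sheaf of slope $w$, so $C'\in\Mm(w)=\Bb_w\cap\Cc_w$ with cotilting class $\Cogen(C')=\lperpe{C'}$. Since $C'\in\Cc_w=\rperpo{\bq_w}$ and this class is closed under products and subobjects, $\Cogen(C')\subseteq\Cc_w$. Conversely, for irrational $w$ we have $\Ff_0=\Cc_w\cap\Hh=\bp_w$, so $\Ext^1(\Ff_0,C')=0$ because $C'\in\Bb_w=\rperpe{\bp_w}$; as $C'$ is pure-injective (Theorem~\ref{thm:cotilting-is-pure-injective}) and $\Cc_w=\varinjlim\Ff_0$, Theorem~\ref{thm:ext-direct-limit-pure-inj} yields $\Ext^1(\Cc_w,C')=0$, i.e.\ $\Cc_w\subseteq\lperpe{C'}$. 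Hence $\lperpe{C'}=\Cc_w=\lperpe{\bW_w}$, so $C'$ is equivalent to $\bW_w$.

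The main obstacle is the identification $\Cc_w=\varinjlim(\Cc_w\cap\Hh)$, i.e.\ that $\Cc_w$ is of finite type; this goes through cleanly only because $\Cc_w$ is cut out by $\Hom$-orthogonality against coherent objects, so it is closed under both subobjects and direct limits, which is exactly what lets local noetherianity do the work. (Realizing $\bW_w$ instead through a tilting sheaf with tilting class $\Bb_w$ would raise the less convenient question of whether that tilting sheaf is of finite type.) A secondary point needing care, rather than citation, is that $\bW_w$ is large when $w$ is rational.
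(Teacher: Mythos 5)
Your proposal is correct and follows the same backbone as the paper's proof: both construct $\bW_w$ by feeding a generating torsionfree class of coherent sheaves into the finite-type correspondence of Theorem~\ref{thm:cotilting-is-of-finite-type} and then identify the resulting cotilting class with $\Cc_w$, and both reduce uniqueness for irrational $w$ to $\Cc_w\cap\Hh=\add(\bp_w)$. Two differences are worth recording. First, the paper applies the correspondence to $\add(\bp_w)$ and simply quotes $\varinjlim(\bp_w)=\Cc_w$, whereas you take $\Ff_0=\Cc_w\cap\Hh$ and prove $\varinjlim\Ff_0=\Cc_w$ from local noetherianness and the closure of $\rperpo{{\bq_w}}$ under subobjects and direct limits; your choice is in fact the more careful one for rational $w$, since there $\Cc_w\cap\Hh=\add(\bp_w\cup\bt_w)\supsetneq\add(\bp_w)$, and the bijection in Theorem~\ref{thm:cotilting-is-of-finite-type} forces $\varinjlim(\add\bp_w)\cap\Hh=\add(\bp_w)$, so $\varinjlim(\bp_w)$ cannot equal $\Cc_w$ for rational $w$ (for $w=\infty$ it is $\varinjlim\vect\XX$, not $\QHh=\Cc_\infty$); the argument you supply is exactly what is needed to make statement (1) literally correct for all $w\in\widehat{\RR}$. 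Second, for uniqueness you establish $\Cc_w\subseteq\lperpe{C'}$ directly from pure-injectivity of $C'$ (Theorem~\ref{thm:cotilting-is-pure-injective}) combined with Theorem~\ref{thm:ext-direct-limit-pure-inj}, while the paper intersects with $\Hh$ and invokes the equivalence criterion $\lperpe{C}\cap\Hh=\lperpe{C'}\cap\Hh$ of Theorem~\ref{thm:cotilting-is-of-finite-type}; both are valid, the paper's being shorter, yours re-using the direct-limit description of $\Cc_w$ instead. Your treatment of largeness for rational $w$ via the tilting correspondence is more roundabout than the paper's one-line appeal to the fact that no coherent cotilting sheaf has a slope, but it is defensible.
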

\begin{proof}
  (1) The class $\add(\bp_w)\subseteq\Hh$ is torsionfree and generates
  $\Hh$. Thus there is, by
  Theorem~\ref{thm:cotilting-is-of-finite-type}, a cotilting sheaf
  $\bW_w$ with $\Cogen(\bW_w)=\varinjlim(\bp_w)=\Cc_w$. Moreover, by
  the cotilting property clearly $\bW_w\in\rperpe{{\Cc_w}}$, which is
  a subclass of $\Bb_w$. Since in $\Hh$ no cotilting object has a
  slope (cf.~\cite[Rem.~7.7]{angeleri:kussin:2017}), $\bW_w$ is large.
  
  (2) Let $C$ be cotilting of irrational slope $w$. Since $C\in\Bb_w$,
  we have $\bp_w\subseteq\lperpe{\bW_w}\subseteq\lperpe{C}$, and thus
  $\bp_w\subseteq\lperpe{C}\cap\Hh$. Since $\lperpe{C}\subseteq\Cc_w$,
  and since $w$ is irrational, we obtain
  $\lperpe{C}\cap\Hh\subseteq\bp_w$. We obtain
  $\lperpe{C}\cap\Hh=\bp_w=\lperpe{\bW_w}\cap\Hh$. From
  Theorem~\ref{thm:cotilting-is-of-finite-type} the result follows.
\end{proof}
\begin{example}
  For $w=\infty$ the Reiten-Ringel sheaf $\bW:=\bW_{\infty}$, given as
  the direct sum of the generic $\Kk$ and all the Pr\"ufer sheaves, is
  cotilting with class $\lperpe{\bW}=\Cc_\infty=\QHh$. Since $\Kk$
  belongs to $\Prod(S[\infty])$ for any Pr\"ufer sheaf, also
  $\bW/\Kk$ is cotilting and equivalent to $\bW$.
\end{example}
\subsection*{Interval categories}
Let $w\in\widehat{\RR}$. We denote by $\Hh\spitz{w}$ the full
subcategory of $\bDerived{\Hh}$ defined as
$$\bigvee_{\beta>w}\bt_{\beta}[-1]\vee
\bigvee_{\gamma\leq w}\bt_{\gamma}.$$ The category $\Hh\spitz{w}$ is a
($[-1]$-shifted) HRS-tilt of $\Hh$ in $\bDerived{\Hh}$ with respect to
the split torsion pair $(\Tt_{w},\Ff_{w})$ in $\Hh$ given by
$\Tt_{w}=\bigvee_{\beta>w}\bt_{\beta}$ and
$\Ff_{w}=\bigvee_{\gamma\leq w}\bt_{\gamma}$ and hence is abelian, see
\cite[I.~Thm.~3.3]{happel:reiten:smaloe:1996}
and~\cite[Prop.~2.2]{lenzing:skowronski:1996}. Moreover
$\bDerived{\Hh\spitz{w}}=\bDerived{\Hh}$, and $\Hh\spitz{w}$ is
hereditary abelian, satisfying Serre duality. If
$w=\alpha\in\widehat{\QQ}$, then by~\cite[Prop.~8.1.6]{kussin:2009},
\cite[Thm.~9.7]{kussin:2016} we have that
$\Hh\spitz{\alpha}=\coh\XX_\alpha$ for some curve $\XX_\alpha$ with
$\chi'_{orb}(\XX_{\alpha})=0$. If $k$ is algebraically closed, then
$\XX_\alpha$ is always isomorphic to $\XX$. The rank function on
$\Hh\spitz{\alpha}$ defines a linear form
$\rk_\alpha\colon\Knull(\Hh)\ra\ZZ$.
\begin{lemma}[{Reiten-Ringel~\cite{reiten:ringel:2006}}]\label{lem:reiten-ringel}
  For every $w\in\widehat{\RR}$ the pair $(\Gen(\bq_w),\Cc_w)$ is a
  torsion pair. If $w\in\widehat{\QQ}$, then the torsion pair
  splits. \qed
\end{lemma}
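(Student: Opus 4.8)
The plan is to derive the statement from the description of $\Cc_w$ as a cotilting class together with the structure of $\coh\XX$. By Theorem~\ref{thm:exist-Ww}, $\Cc_w$ is the cotilting class $\lperpe{\bW_w}=\Cogen(\bW_w)$ of the cotilting sheaf $\bW_w$; hence by Theorem~\ref{thm:cotilting-is-pure-injective} and the general properties of cotilting objects, $\Cc_w$ is a torsionfree class in $\QHh$ (closed under subobjects, extensions and products) which is in addition closed under direct limits. Any torsionfree class is the right-hand part of a torsion pair, its torsion radical being the reject of $\Cc_w$ in $X$, so $(\lperpo{\Cc_w},\Cc_w)$ is a torsion pair in $\QHh$, and the first assertion reduces to the identification $\lperpo{\Cc_w}=\Gen(\bq_w)$.

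Since $\Cc_w=\rperpo{\bq_w}$ we have $\bq_w\subseteq\lperpo{\Cc_w}$, and as $\lperpo{\Cc_w}$ is a torsion class --- hence closed under quotients and coproducts --- this gives $\Gen(\bq_w)\subseteq\lperpo{\Cc_w}$. For the reverse inclusion I would first record the coherent picture. Because $\chi'_{orb}(\XX)=0$ every indecomposable coherent sheaf is semistable, so that $\Hh=\bigvee_{\alpha\in\widehat{\QQ}}\bt_\alpha$; using additivity of $\deg$ and $\rk$, that a quotient of a semistable sheaf has slope at least that of the sheaf, and the vanishing of $\Hom$ from higher to lower slopes, one checks that $\Gen(\bq_w)\cap\Hh=\add(\bq_w)$, that $\bigl(\add\bq_w,\ \add(\bp_w\vee\bt_w)\bigr)$ (replace $\bp_w\vee\bt_w$ by $\bp_w$ when $w$ is irrational) is a torsion pair in $\Hh$, and that $\Cc_w\cap\Hh=\add(\bp_w\vee\bt_w)$ (resp. $\add\bp_w$). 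Denote by $t$ the torsion radical of this coherent torsion pair.

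Now let $X\in\lperpo{\Cc_w}$ and, using that $\QHh$ is locally noetherian, write $X=\varinjlim X_\lambda$ as the directed union of its coherent subobjects. Since the objects of $\Hh=\fp(\QHh)$ commute with direct limits, the trace $\operatorname{tr}_{\bq_w}(X)$ of $\bq_w$ in $X$ equals $\varinjlim\operatorname{tr}_{\bq_w}(X_\lambda)$; and for a coherent subobject one has $\operatorname{tr}_{\bq_w}(X_\lambda)=t(X_\lambda)$, because $\Gen(\bq_w)\cap\Hh=\add(\bq_w)$. Consequently $X/\operatorname{tr}_{\bq_w}(X)\cong\varinjlim\bigl(X_\lambda/t(X_\lambda)\bigr)$ is a direct limit of objects of $\Cc_w\cap\Hh$ and therefore lies in $\Cc_w$, as $\Cc_w$ is closed under direct limits. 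But $X\in\lperpo{\Cc_w}$, so the canonical epimorphism $X\to X/\operatorname{tr}_{\bq_w}(X)$ must vanish; hence $X=\operatorname{tr}_{\bq_w}(X)\in\Gen(\bq_w)$. This proves that $(\Gen\bq_w,\Cc_w)$ is a torsion pair.

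For the splitting when $w\in\widehat{\QQ}$ it is enough to show $\Ext^1_{\QHh}(F,T)=0$ for all $F\in\Cc_w$ and $T\in\Gen(\bq_w)$. When $F$ is coherent, so $F\in\add(\bp_w\vee\bt_w)$, this is immediate from generalised Serre duality: $\D\Ext^1_{\QHh}(F,T)\cong\Hom_{\QHh}(T,\tau F)$, and writing $T=\varinjlim T_\mu$ with $T_\mu\in\add(\bq_w)$ of slope $>w$ while $\tau F$ again has slope $\le w$, we obtain $\Hom_{\QHh}(T,\tau F)=\varprojlim_\mu\Hom_{\QHh}(T_\mu,\tau F)=0$ by the vanishing of $\Hom$ from higher to lower slopes. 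Passing from coherent $F$ to a general $F=\varinjlim F_i\in\Cc_w$ is the step I expect to be the real obstacle: once $\Ext^1_{\QHh}(F_i,T)=0$ is known, $\Ext^1_{\QHh}(F,T)$ is governed by the derived inverse limit of the inverse system $\bigl(\Hom_{\QHh}(F_i,T)\bigr)_i$, and its vanishing is exactly the assertion of the splitting. For $w$ rational this can be deduced from the fact that the heart $\Gg_w$ of the associated HRS-tilt is the locally noetherian Grothendieck category $\Qcoh\XX_w$ --- so that $\bW_w$ is $\Sigma$-pure-injective, by the criterion established earlier --- or else taken directly from~\cite{reiten:ringel:2006}; with that input in hand the remaining bookkeeping is routine.
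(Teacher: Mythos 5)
The paper itself gives no argument for this lemma: it is quoted from Reiten--Ringel~\cite{reiten:ringel:2006} with an immediate \textit{qed}, the only later hint being the remark that $\Qq_w=\Gen(\bq_w)$ follows ``with the same arguments as in [RR, Lem.~1.3+1.4]''. Your first half is therefore more than the paper records, and it is correct: $\Cc_w=\rperpo{\bq_w}$ is a torsionfree class closed under direct limits (your appeal to Theorem~\ref{thm:exist-Ww} and Theorem~\ref{thm:cotilting-is-pure-injective} is legitimate and not circular, though closure properties also follow directly from $\Cc_w$ being a Hom-orthogonal of coherent objects); the coherent identities $\Gen(\bq_w)\cap\Hh=\add(\bq_w)$ and $\Cc_w\cap\Hh=\add(\bp_w\vee\bt_w)$ do follow from semistability and slope-Hom-vanishing; and the trace computation $X=\varinjlim X_\lambda$, $\operatorname{tr}_{\bq_w}(X)=\varinjlim t(X_\lambda)$, $X/\operatorname{tr}_{\bq_w}(X)\in\varinjlim(\Cc_w\cap\Hh)\subseteq\Cc_w$ correctly identifies $\lperpo{\Cc_w}$ with $\Gen(\bq_w)$. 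The Serre-duality argument giving $\Ext^1(F,T)=0$ for coherent $F\in\Cc_w\cap\Hh$ and arbitrary $T\in\Gen(\bq_w)$ is also fine.

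The passage to non-coherent $F$ is, as you admit, the whole content of the splitting assertion, and your proposed patch does not close it. $\Sigma$-pure-injectivity of $\bW_w$ cannot be the decisive input: $\Sigma$-pure-injectivity of a cotilting object never by itself forces its torsion pair to split (every coherent cotilting sheaf, or finite-dimensional cotilting module, is endofinite and hence $\Sigma$-pure-injective, yet the associated torsion pairs are typically non-split), and no bookkeeping upgrades $\Ext^1(F_i,T)=0$ to $\Ext^1(\varinjlim F_i,T)=0$ unless $T$ itself is pure-injective, which an arbitrary $T\in\Gen(\bq_w)$ is not. What is actually equivalent to the splitting is the vanishing of $\Ext^2$ in the shifted heart $\QHh\spitz{w}$, i.e.\ its heredity --- exactly the equivalence the paper proves (and leaves open) for irrational $w$ in the last proposition of Section~\ref{sec:irrational-slope}; for rational $\alpha$ one must therefore invoke heredity of $\QHh\spitz{\alpha}\simeq\Qcoh\XX_\alpha$ (via Theorem~\ref{thm:coh-X-alpha}, Theorem~\ref{thm:loc-coh-hearts} and Gabriel's description of locally noetherian categories), not mere local noetherianity, which is all that $\Sigma$-pure-injectivity reflects; beware also that Corollary~\ref{cor:Ww-Sigma-pi} is proved later using the interval category, whose definition presupposes the torsion-pair part of this very lemma (harmless only because you prove that part independently). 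Your fallback of ``taking the splitting directly from \cite{reiten:ringel:2006}'' is, of course, precisely what the paper does.
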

Let $\alpha\in\widehat{\QQ}$. By $\QHh\spitz{\alpha}=\Gg_{\alpha}[-1]$
we denote the $[-1]$-shifted heart of the t-structure in
$\Derived{\QHh}$ induced by the torsion pair
$(\lperpo{\bW_\alpha},\lperpe{\bW_\alpha})
=(\Gen\bq_{\alpha},\Cc_{\alpha})$. We have
$\QHh\spitz{\alpha}=\Qcoh\XX_\alpha$, cf.\
Theorem~\ref{thm:coh-X-alpha}. If $X\in\QHh$ has a rational slope
$\alpha$, then clearly $X\in\QHh\cap\QHh\spitz{\alpha}$ where the
intersection is formed in
$\Derived{\QHh}=\Derived{\QHh\spitz{\alpha}}$; in $\QHh\spitz{\alpha}$
then $X$ has slope $\infty$. In particular, $\QHh\spitz{\alpha}$ is
locally noetherian with
$\fp(\QHh\spitz{\alpha})=\Hh\spitz{\alpha}$.\medskip

Let $w$ be irrational. Similarly, we denote by
$\QHh\spitz{w}=\Gg_w[-1]$ the $[-1]$-shifted heart of the t-structure
in $\Derived{\QHh}$ associated with the cotilting torsion pair
$(\lperpo{\bW_w},\lperpe{\bW_w})=(\Qq_w=\lperpo{\Cc_w},\Cc_w)$. We
note that $\Qq_w=\Gen\bq_w$, which follows, arguing in $\QHh$, with
the same arguments as in~\cite[Lem.~1.3+1.4]{reiten:ringel:2006}
replacing ``finite length'' by ``noetherian''. In $\Hh$ this induces
the (splitting) torsion pair $(\Qq_w\cap\Hh,\Cc_w\cap\Hh)$, and we
have
$\Qq_w\cap\Hh=\Gen(\bq_w)\cap\Hh=\add\bq_w=\bigvee_{\beta>w}\bt_{\beta}$
and
$\Cc_w\cap\Hh=\Cogen\bW_w\cap\Hh=\bigvee_{\gamma<w}\bt_{\gamma}$. Since
$\QHh$ and $\Hh$ are hereditary, $(\Cc_w,\Qq_w[-1])$ and
$(\Cc_w\cap\Hh,(\Qq_w\cap\Hh)[-1])$ are splitting torsion pairs in,
respectively, $\QHh\spitz{w}$ and $\Hh\spitz{w}$. Moreover,
$\Cc_w=\varinjlim(\bp_w)$ and
$\Qq_w=\varinjlim(\bq_w)\subseteq\Bb_w$. The situation is illustrated
in Figure~\ref{fig:QHh<w>}.\medskip

Let $w\in\widehat{\RR}$. A sequence
$\eta\colon0\ra E'\stackrel{u}\lra E\stackrel{v}\lra E''\ra 0$ with
objects $E',E,E''$ in $\QHh\cap\QHh\spitz{w}$ is exact in $\QHh$ if
and only if it is exact in $\QHh\spitz{w}$; indeed, both conditions
are equivalent to
$E'\stackrel{u}\lra E\stackrel{v}\lra E''\stackrel{\eta}\lra E'[1]$
being a triangle in
$\Derived{\QHh}$. (Cf.~\cite[Prop.~2.2]{stovicek:kerner:trlifaj:2011}.)
\begin{figure}[htb]
  \begin{center}
    \definecolor{dark}{gray}{.85}
    \setlength{\unitlength}{0.75cm}
    \begin{picture}(14,4)
      \newsavebox{\Hw}
      \savebox{\Hw}(4.12,2.75)[c]{$\QHh\spitz{w}$}
      
      \put(2.61,0.62){\colorbox{dark}{\usebox{\Hw}}}
      
      \put(7.0,0.5){\line(0,1){3}}
      \put(6.9,0.2){\tiny{$w$}}
      
      \put(2.5,0.5){\line(0,1){3}}
      \put(1.9,0.2){\tiny{$w[-1]$}}
      \put(2.7,0.8){\tiny{$\Qq_w[-1]$}}
      
      \put(-0.3,0.5){\line(1,0){4.3}}
      \put(4,0.5){\line(0,1){3}}
      \put(-0.3,3.5){\line(1,0){4.3}}
      \put(1.2,4){$\QHh[-1]$}
      \put(3.15,3.6){\tiny{$\infty[-1]$}}
      
      \put(4.2,0.5){\line(1,0){4.3}}
      \put(8.5,0.5){\line(0,1){3}}
      \put(4.2,3.5){\line(1,0){4.3}}
      \put(5.9,4){$\QHh$}
      \put(8.2,3.6){\tiny{$\infty$}}
      \put(6.0,0.8){\tiny{$\Cc_w$}}
      
      \put(8.7,0.5){\line(1,0){4.3}}
      \put(13,0.5){\line(0,1){3}}
      \put(8.7,3.5){\line(1,0){4.3}}
      \put(10.2,4){$\QHh[1]$}
      \put(7.7,0.8){\tiny{$\Qq_w$}}
    \end{picture}
    \caption{The interval category $\QHh\spitz{w}$}
    \label{fig:QHh<w>}
  \end{center}
\end{figure}

For irrational $w$ we will show several properties of $\QHh\spitz{w}$
in Section~\ref{sec:irrational-slope}.
\subsection*{Injective Cogenerators}
\begin{proposition}
  Let $w\in\widehat{\RR}$. The cotilting sheaf $\bW_w$ yields an
  injective cogenerator of $\QHh\spitz{w}$.
\end{proposition}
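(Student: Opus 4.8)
The plan is to deduce this immediately from the general description of injective cogenerators of HRS-tilted hearts, combined with bookkeeping of the single degree shift that turns $\Gg_w$ into $\QHh\spitz{w}$.

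First I would record that, by Theorem~\ref{thm:exist-Ww}(1), the sheaf $\bW_w$ is a cotilting object of $\QHh$ with cotilting class $\lperpe{\bW_w}=\Cogen(\bW_w)=\Cc_w$, and that $\Cc_w$ is a generating class: indeed $\Cc_w=\varinjlim(\bp_w)$ and $\bp_w$ already generates $\Hh$, so its direct-limit closure generates $\QHh$. Hence the torsion pair in $\QHh$ whose torsionfree part is $\Cc_w$ --- namely $(\Gen\bq_w,\Cc_w)$ when $w$ is rational and $(\Qq_w,\Cc_w)=(\lperpo{\Cc_w},\Cc_w)$ when $w$ is irrational --- satisfies the hypotheses of \cite[Prop.~4.4]{coupek:stovicek:2019} (the result stated just above Corollary~\ref{Cor: loc coh heart plus min cotilt}): its torsionfree class is generating and equals $\Cogen(\bW_w)$. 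That proposition then yields that $\bW_w[1]$ is an injective cogenerator of the heart $\Gg_w$ of the associated HRS-tilted t-structure.

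Next I would invoke the definition $\QHh\spitz{w}=\Gg_w[-1]$ together with the fact that the shift $[-1]$ is an autoequivalence of $\Derived{\QHh}$; restricting it gives an equivalence of abelian categories $\Gg_w\simeq\Gg_w[-1]=\QHh\spitz{w}$. Since an equivalence carries injective objects to injective objects and cogenerators to cogenerators, the image $\bW_w[1][-1]=\bW_w$ of the injective cogenerator $\bW_w[1]$ is an injective cogenerator of $\QHh\spitz{w}$, which is the assertion. A small consistency check worth spelling out is that $\bW_w$, viewed in degree $0$, does lie in $\QHh\spitz{w}$ --- equivalently $\bW_w[1]\in\Gg_w$ --- because $\bW_w\in\Cogen(\bW_w)=\Cc_w$ and the torsionfree part of the heart sits in cohomological degree $-1$.

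I do not expect a genuine obstacle; the only point demanding care is matching conventions --- confirming that the $\Gg_w$ entering the definition of the interval category $\QHh\spitz{w}$ is precisely the HRS-tilted heart of a torsion pair whose torsionfree class is $\Cogen(\bW_w)$ (torsionfree part in degree $-1$, torsion part in degree $0$), and tracking the single shift so that the conclusion comes out for $\bW_w$ itself rather than for a shift of it.
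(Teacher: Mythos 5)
Your argument is correct and is exactly the paper's route: the paper's proof is a one-line appeal to \cite[Prop.~4.4]{coupek:stovicek:2019}, applied to the cotilting torsion pair with torsionfree class $\Cc_w=\Cogen(\bW_w)$, with the $[-1]$-shift bookkeeping left implicit. You have simply spelled out the details (generation of $\Cc_w$, identification of the torsion pair, and the shift from $\Gg_w$ to $\QHh\spitz{w}$) that the paper takes for granted.
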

\begin{proof}
  This follows from~\cite[Prop.~4.4]{coupek:stovicek:2019}.
\end{proof}
\begin{remark}
  Let $w\in\widehat{\RR}$. We can choose $\bW_w$ to be a
  \emph{minimal} injective cogenerator in $\QHh\spitz{w}$, and we will
  do so in the following. Then $\bW_w$ is discrete and its
  indecomposable summands correspond, up to isomorphism, bijectively
  to the simple objects in $\QHh\spitz{w}$.
\end{remark}
\begin{proposition}\label{prop:slope-w-objects}
  Let $w\in\RR\setminus\QQ$. The sheaves of slope $w$ are the pure
  subsheaves of products of copies of $\bW_w$.
\end{proposition}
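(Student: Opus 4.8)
The plan is to prove both inclusions of the asserted equality separately, exploiting the description $\Mm(w)=\Cc_w\cap\Bb_w$, the copresentation of objects of $\Cc_w$ by $\bW_w$ afforded by Lemma~\ref{lem:properties-cotilting}(1), and the fact that for \emph{irrational} $w$ the partition $\Hh=\bp_w\vee\bq_w$ has no ``middle'' tubular family $\bt_w$.

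First I would dispatch the inclusion ``$\supseteq$''. Since $\bW_w$ has slope $w$ by Theorem~\ref{thm:exist-Ww}, i.e.\ $\bW_w\in\Mm(w)$, it suffices to note that $\Mm(w)$ is closed under products and under pure subobjects. Closure of $\Cc_w=\rperpo{\bq_w}$ under products and under subobjects is immediate from $\Hom(-,\prod X_i)=\prod\Hom(-,X_i)$; closure of $\Bb_w=\rperpe{\bp_w}$ under products is \cite[Cor.~A.2]{coupek:stovicek:2019}, and closure of $\Bb_w$ under pure subobjects follows by applying $\Hom_{\QHh}(F,-)$ with $F\in\bp_w$ to a pure-exact sequence $0\to X\to Y\to Z\to 0$ with $Y\in\Bb_w$: this forces $\Ext^1_{\QHh}(F,X)\hookrightarrow\Ext^1_{\QHh}(F,Y)=0$, exactly as in the proof of Proposition~\ref{prop:slope-infty-definable}. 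Hence every pure subsheaf of a product $\bW_w^{I}$ lies in $\Mm(w)$.

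Next I would prove ``$\subseteq$''. Let $E\in\Mm(w)$. As $E\in\Cc_w=\Cogen(\bW_w)$, Lemma~\ref{lem:properties-cotilting}(1) yields an exact sequence $0\to E\to\bW_w^{I}\xrightarrow{g}\bW_w^{J}$ in $\QHh$; set $Q:=\operatorname{im}(g)\subseteq\bW_w^{J}$, giving a short exact sequence $0\to E\to\bW_w^{I}\to Q\to 0$ with $\bW_w^{I},Q\in\Cc_w$ (the torsionfree class $\Cc_w$ contains $\bW_w$ and is closed under products and subobjects). I would then show this sequence is pure-exact, i.e.\ that $\Hom_{\QHh}(F,\bW_w^{I})\to\Hom_{\QHh}(F,Q)$ is onto for every coherent $F$; by $\Hom(\bigoplus_tF_t,-)=\prod_t\Hom(F_t,-)$ it is enough to take $F$ indecomposable, hence (since $\chi'_{orb}(\XX)=0$) semistable of some slope $\alpha\in\widehat{\QQ}$, with $\alpha\neq w$ because $w$ is irrational. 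If $\alpha>w$ then $F\in\bq_w$, so $\Hom_{\QHh}(F,\bW_w^{I})=0=\Hom_{\QHh}(F,Q)$ because $\bW_w^{I},Q\in\Cc_w=\rperpo{\bq_w}$. If $\alpha<w$ then $F\in\bp_w$, so $\Ext^1_{\QHh}(F,E)=0$ because $E\in\Bb_w=\rperpe{\bp_w}$, whence the connecting map $\Hom_{\QHh}(F,Q)\to\Ext^1_{\QHh}(F,E)$ vanishes and $\Hom_{\QHh}(F,\bW_w^{I})\to\Hom_{\QHh}(F,Q)$ is onto. Thus $0\to E\to\bW_w^{I}\to Q\to 0$ is pure-exact, so $E$ is a pure subsheaf of a product of copies of $\bW_w$.

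The main obstacle is this pure-exactness check, and it is there that irrationality of $w$ is essential: the absence of the tubular family $\bt_w$ in $\Hh$ means that each test object $F$ lies either ``below'' $w$ (handled by $\Ext^1$-vanishing, $E\in\Bb_w$) or ``above'' $w$ (handled by $\Hom$-vanishing, $Q\in\Cc_w$), with no case left over; this dichotomy is precisely what fails for rational $w$. One could alternatively use that $\bW_w$ yields an injective cogenerator of the interval category $\QHh\spitz{w}$ to obtain a monomorphism $E\hookrightarrow\bW_w^{I}$ there, but promoting it to a pure monomorphism in $\QHh$ would still require matching products of copies of $\bW_w$ (and monomorphisms between objects of $\Cc_w$) computed in $\QHh$ and in $\QHh\spitz{w}$, so the direct route above seems more economical.
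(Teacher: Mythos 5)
Your proof is correct, and for the main inclusion it takes a genuinely different (though closely related) route from the paper. The paper produces the pure embedding by working in the interval category: since $\bW_w$ is an injective cogenerator of $\QHh\spitz{w}$, one gets a short exact sequence $0\to E\to C_0\to C_1\to 0$ in $\QHh\spitz{w}$ with $C_0\in\Prod(\bW_w)$, and then checks purity in $\QHh$ by the same dichotomy you use: a coherent indecomposable $F$ with $\Hom(F,C_1)\neq 0$ must lie in $\bp_w$ (irrationality of $w$), whence $\Ext^1(F,E)=0$ as $E\in\Bb_w$. You instead stay entirely inside $\QHh$, using $\Cc_w=\Cogen(\bW_w)=\Copres(\bW_w)$ (Lemma~\ref{lem:properties-cotilting}) to copresent $E$ by powers of $\bW_w$ and running the identical $\bp_w$/$\bq_w$ case distinction on the cokernel $Q\in\Cc_w$. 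What your route buys is that it sidesteps the implicit transfer steps in the paper's argument (exactness of the sequence when regarded in $\QHh$, and the agreement of $\Prod(\bW_w)$ computed in $\QHh\spitz{w}$ versus $\QHh$, which the paper only records later via \cite[Cor.~2.13]{coupek:stovicek:2019}); what the paper's route buys is the slightly stronger conclusion that $E$ embeds purely with cokernel again of slope $w$ into an object of $\Prod(\bW_w)$, in line with viewing $\QHh\spitz{w}$ as the natural home of $\bW_w$. You also spell out the reverse inclusion (closure of $\Mm(w)=\Cc_w\cap\Bb_w$ under products and pure subobjects, plus $\bW_w\in\Mm(w)$ from Theorem~\ref{thm:exist-Ww}), which the paper leaves implicit; that part matches the argument of Proposition~\ref{prop:slope-infty-definable} and is fine.
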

\begin{proof}
  Let $E$ be a sheaf of slope $w$. Since $\bW_w$ is an injective
  cogenerator in $\QHh\spitz{w}$ there is a short exact sequence
  $0\ra E\ra C_0\ra C_1\ra 0$ in $\QHh\spitz{w}$ with
  $C_0\in\Prod(\bW_w)$. This sequence is pure-exact in $\QHh$: let
  $F\in\Hh$ be coherent, without loss of generality,
  indecomposable. We have to show that the sequence stays exact under
  $\Hom(F,-)$. Thus we can also assume that $\Hom(F,C_1)\neq 0$. Since
  $w$ is irrational, this means $F\in\bp_w$. Now $\Ext^1(F,E)=0$ since
  $E\in\Bb_w=\rperpe{{\bp_w}}$.
\end{proof}
\begin{corollary}\label{cor:pure-injectives-of-irrational-slope}
  Let $w\in\RR\setminus\QQ$. The class of pure-injective sheaves of
  slope $w$ is given by $\Prod(\bW_w)$. \qed
\end{corollary}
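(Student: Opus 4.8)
The plan is to deduce this immediately from Proposition~\ref{prop:slope-w-objects} together with the pure-injectivity of $\bW_w$.

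For one inclusion I would argue that every object of $\Prod(\bW_w)$ is a pure-injective sheaf of slope $w$. Indeed, $\bW_w$ is a cotilting sheaf, hence pure-injective by Theorem~\ref{thm:cotilting-is-pure-injective}; since the Jensen--Lenzing criterion (Proposition~\ref{prop:jensen-lenzing-crit}) is manifestly inherited by arbitrary products and by direct summands, every object in $\Prod(\bW_w)$ is pure-injective. Moreover $\bW_w$ has slope $w$ by Theorem~\ref{thm:exist-Ww}, that is $\bW_w\in\Mm(w)=\Bb_w\cap\Cc_w$. The class $\Cc_w=\Cogen(\bW_w)$ is a (cotilting) torsionfree class, hence closed under products and under direct summands, while $\Bb_w=\rperpe{\bp_w}$ is closed under products by \cite[Cor.~A.2]{coupek:stovicek:2019} (as $\bp_w\subseteq\Hh$ consists of coherent objects) and trivially under summands, exactly as in the proof of Proposition~\ref{prop:slope-infty-definable}. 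Thus $\Prod(\bW_w)\subseteq\Mm(w)$, and this inclusion follows.

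For the reverse inclusion, let $E$ be a pure-injective sheaf of slope $w$. By Proposition~\ref{prop:slope-w-objects} there is a pure monomorphism $E\ra\bW_w^{I}$ for some index set $I$. As $E$ is pure-injective this pure monomorphism splits, so $E$ is a direct summand of $\bW_w^{I}$, i.e.\ $E\in\Prod(\bW_w)$.

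I expect no real obstacle here: the substantive work is already contained in Proposition~\ref{prop:slope-w-objects} and Theorem~\ref{thm:exist-Ww}, and the irrationality of $w$ enters only through those results. The only step requiring a short justification is the closure of $\Mm(w)$ under products (so that $\Prod(\bW_w)$ genuinely consists of sheaves of slope $w$), and this is proved verbatim as the corresponding statement for $\Mm(\infty)$ in Proposition~\ref{prop:slope-infty-definable}.
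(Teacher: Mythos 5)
Your proposal is correct and is essentially the argument the paper intends: the corollary is stated with \qed as an immediate consequence of Proposition~\ref{prop:slope-w-objects} (pure-injectivity forces the pure embedding into a power of $\bW_w$ to split), combined with the fact that $\Prod(\bW_w)$ consists of pure-injective sheaves lying in $\Mm(w)$. Your extra verification that $\Mm(w)=\Bb_w\cap\Cc_w$ is closed under products and summands is the right (and only) supporting detail needed.
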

\subsection*{Rational slope}
Let $\alpha\in\widehat{\QQ}$ and $M\in\QHh$. Since the torsion pair
from Lemma~\ref{lem:reiten-ringel} splits, we have $M=M'\oplus M''$
with $M'\in\Cc_{\alpha}$ and $M''\in\Gen\bq_{\alpha}$.
\begin{lemma}
  Let $M\in\QHh$ be indecomposable of slope
  $\alpha\in\widehat{\QQ}$. Then $M$ is pure-injective in $\QHh$ if
  and only if $M$ is pure-injective considered as an object in
  $\QHh\spitz{\alpha}$.
\end{lemma}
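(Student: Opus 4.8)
The plan is to pass to the common derived category and invoke Lemma~\ref{lem:pure-in-der-cat} twice. First I would check that $M$ is genuinely an object of both hearts: since $M$ is indecomposable of slope $\alpha\in\widehat{\QQ}$ it lies in $\Mm(\alpha)=\Bb_\alpha\cap\Cc_\alpha\subseteq\Cc_\alpha$, so $M[1]$ has $H^{-1}(M[1])=M\in\Cc_\alpha$ and vanishing cohomology elsewhere except $H^0(M[1])=0\in\Gen\bq_\alpha$, whence $M[1]\in\Gg_\alpha$ and therefore $M\in\Gg_\alpha[-1]=\QHh\spitz{\alpha}$. As noted in the discussion of interval categories, $M$ is then one and the same object of $\QHh$, of $\QHh\spitz{\alpha}$, and of $\Derived{\QHh}=\Derived{\QHh\spitz{\alpha}}$. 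I would also record that $\QHh$ (Section~\ref{sec:sheaves-and-modules}) and $\QHh\spitz{\alpha}=\Qcoh\XX_\alpha$ (Theorem~\ref{thm:coh-X-alpha}) are both locally noetherian hereditary Grothendieck categories, so that Lemma~\ref{lem:pure-in-der-cat} is available for each of them.

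Next I would apply Lemma~\ref{lem:pure-in-der-cat} with $\Aa=\QHh$: $M$ is pure-injective in $\QHh$ if and only if $M$ is pure-injective as an object of $\Derived{\QHh}$. Then the same lemma with $\Aa=\QHh\spitz{\alpha}$: $M$ is pure-injective in $\QHh\spitz{\alpha}$ if and only if $M$ is pure-injective as an object of $\Derived{\QHh\spitz{\alpha}}$. The final step is to observe that $\Derived{\QHh\spitz{\alpha}}=\Derived{\QHh}$ as triangulated categories, with the same set-indexed coproducts and products, and that by~\ref{numb:loc-noeth-comp-gen} (via \cite[Thm.~1.8]{krause:2000}) pure-injectivity in a compactly generated triangulated category is detected solely by the factorisation of the summation morphism $M^{(I)}\ra M$ through the canonical embedding $M^{(I)}\ra M^{I}$, a condition referring only to products and coproducts and not to a choice of compact generators. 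Hence ``$M$ pure-injective in $\Derived{\QHh}$'' and ``$M$ pure-injective in $\Derived{\QHh\spitz{\alpha}}$'' are the same statement, and chaining the two biconditionals gives the result.

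The argument is essentially formal, so the one place where I expect to have to be careful — and the step I would single out as the potential obstacle — is the applicability of Lemma~\ref{lem:pure-in-der-cat} to the heart $\QHh\spitz{\alpha}$, i.e.\ verifying that this heart is again locally noetherian, hereditary and Grothendieck with $\fp(\QHh\spitz{\alpha})=\Hh\spitz{\alpha}$. This is precisely what Theorem~\ref{thm:coh-X-alpha} and the subsequent remarks supply (using in particular that $(\Gen\bq_\alpha,\Cc_\alpha)$ is a cotilting torsion pair, so that indeed $\Derived{\QHh\spitz{\alpha}}=\Derived{\QHh}$), so in the end no genuine difficulty remains.
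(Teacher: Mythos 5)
Your argument is correct, but it takes a genuinely different route from the paper. The paper never leaves the abelian categories: it uses that for rational $\alpha$ the torsion pair $(\Gen\bq_\alpha,\Cc_\alpha)$ splits (Lemma~\ref{lem:reiten-ringel}), so that any $X\in\QHh$ decomposes as $X'\oplus X''$ with $X'\in\QHh\spitz{\alpha}$ and $\Hom(X'',M)=0$; from this it deduces that the product $M^I$ formed in $\QHh\spitz{\alpha}$ satisfies the universal property of the product in $\QHh$ as well (and conversely), and then concludes directly with the Jensen--Lenzing criterion of Proposition~\ref{prop:jensen-lenzing-crit}. You instead detour through the common derived category, applying Lemma~\ref{lem:pure-in-der-cat} to each of $\QHh$ and $\QHh\spitz{\alpha}$ and using that pure-injectivity in the compactly generated triangulated category $\Derived{\QHh}=\Derived{\QHh\spitz{\alpha}}$ is intrinsic by \cite[Thm.~1.8]{krause:2000} (indeed the subcategory of compact objects is intrinsic, so no choice is involved). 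Both routes are sound and both use rationality of $\alpha$ in an essential way, but in different places: the paper through the splitting of the torsion pair, you through the fact that the heart $\QHh\spitz{\alpha}=\Qcoh\XX_\alpha$ is again locally noetherian and hereditary (which fails for irrational $w$ by Proposition~\ref{prop:irrational-cut}, so your hypotheses correctly block that case). Your version is more formal and would transfer to any cotilting HRS-tilt whose heart has these properties, at the cost of invoking compact generation of the heart's derived category and the identification $\Derived{\QHh\spitz{\alpha}}=\Derived{\QHh}$; the paper's version is more elementary and yields the slightly stronger on-the-nose statement that the products $M^I$ in the two hearts literally coincide.
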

\begin{proof}
  The class $\Mm(\alpha)=\Cc_\alpha\cap\Bb_\alpha$ is definable, both
  in $\QHh$ and in $\QHh\spitz{\alpha}$, in particular closed under
  forming products in $\QHh$ and $\QHh\spitz{\alpha}$. For every set
  $I$, forming the product $M^I$ in $\QHh$ is the same as forming the
  product $M^I$ in $\QHh\spitz{\alpha}$. Indeed, consider the product
  $M^I$ in $\QHh\spitz{\alpha}$ with projections $p_i\colon M^I\ra M$
  ($i\in I$). Let $X\in\QHh$ and $f\in\Hom(X,M)$. Write $X=X'\oplus
  X''$ as above, so that $X'\in\QHh\spitz{\alpha}$ and
  $\Hom(X'',M)=0$. By the universal property of the product $M^I$ in
  $\QHh\spitz{\alpha}$ there is a unique $\bar{f}\in\Hom(X',M^I)$ with
  $p_i \circ\bar{f}=f$ for all $i$. This property is then trivially
  extended to $X$, which shows, that $M^I$ with the projections $p_i$
  is also the product in $\QHh$. The converse direction is similar.

  The claim now follows with the Jensen-Lenzing criterion
  Proposition~\ref{prop:jensen-lenzing-crit}.
\end{proof}
\subsection*{Indecomposable pure-injective sheaves}
We obtain the following version for sheaves
of~\cite[Thm.~6.7]{angeleri:kussin:2017b}.
\begin{theorem}\label{thm:nondomestic-main-result}
  The following is a complete list of the indecomposable
  pure-injective sheaves in $\QHh=\Qcoh\XX$:
  \begin{enumerate}
  \item[(1)] The indecomposable coherent sheaves.
  \item[(2)] For every $\alpha\in\widehat{\QQ}$ the generic, the
    Pr\"ufer and the adic sheaves of slope $\alpha$.
  \item[(3)] For every irrational $w$ the indecomposable objects of
    $\Prod(\bW_w)$.
  \end{enumerate}
\end{theorem}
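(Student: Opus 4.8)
The plan is to decompose an arbitrary indecomposable pure-injective sheaf $M$ according to its slope and then invoke the slope-specific classifications already established. By Theorem~\ref{thm:reiten-ringel}(2), $M$ has a well-defined slope $w\in\widehat{\RR}$, so we may split into three cases: $w=\infty$, $w\in\QQ$, and $w\in\RR\setminus\QQ$. The irrational case is immediate from Corollary~\ref{cor:pure-injectives-of-irrational-slope}: the pure-injective sheaves of slope $w$ are precisely $\Prod(\bW_w)$, so the indecomposable ones are the indecomposable objects of $\Prod(\bW_w)$, which is exactly case (3). The case $w=\infty$ is handled by Theorem~\ref{thm:ind-pure-inj-slope-infty}, which lists the indecomposable pure-injectives of slope $\infty$ as the indecomposable finite-length sheaves (these are coherent, hence covered by (1), as $\bt_\infty$ is the finite-length category) together with $\Kk$, the Pr\"ufer and the adic sheaves (of slope $\infty$), which fall under (1) for the coherent ones and (2) with $\alpha=\infty$ for the rest.

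The remaining case is $w=\alpha\in\QQ$. Here I would use the derived-equivalence machinery from the section: by Theorem~\ref{thm:coh-X-alpha} and the surrounding discussion, $\QHh\spitz{\alpha}=\Qcoh\XX_\alpha$ is again the category of quasicoherent sheaves over a weighted noncommutative regular projective curve of orbifold Euler characteristic zero, and an object $M\in\QHh$ of slope $\alpha$ lies in $\QHh\cap\QHh\spitz{\alpha}$, where inside $\QHh\spitz{\alpha}$ it has slope $\infty$. By the Lemma just before ``Indecomposable pure-injective sheaves'' (the one stating $M$ is pure-injective in $\QHh$ iff it is pure-injective in $\QHh\spitz{\alpha}$), $M$ is an indecomposable pure-injective sheaf of slope $\infty$ in $\QHh\spitz{\alpha}=\Qcoh\XX_\alpha$. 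Applying Theorem~\ref{thm:ind-pure-inj-slope-infty} to the curve $\XX_\alpha$, $M$ is therefore either an indecomposable coherent sheaf of $\XX_\alpha$ (equivalently, an indecomposable object of $\bt_\alpha$, hence an indecomposable coherent sheaf of $\XX$ of slope $\alpha$, covered by (1)), or the generic, a Pr\"ufer, or an adic sheaf in $\QHh\spitz{\alpha}$ — which, translated back, are exactly the generic, Pr\"ufer, and adic sheaves of slope $\alpha$ in $\QHh$, covered by (2).

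Conversely, one must check that every sheaf in the list is indeed pure-injective: indecomposable coherent sheaves are pure-injective by Lemma~\ref{lem:pure-injective} (since $\Hh=\fp(\QHh)$ is Hom-finite and $\QHh$ is locally coherent); the generic, Pr\"ufer, and adic sheaves of slope $\alpha$ are pure-injective in $\QHh$ because they are pure-injective in $\QHh\spitz{\alpha}$ (again by the transfer Lemma) — the Pr\"ufers and generic being injective there, and the adics being pure-injective of slope $\infty$ by Theorem~\ref{thm:ind-pure-inj-slope-infty}; and the indecomposable objects of $\Prod(\bW_w)$ for irrational $w$ are pure-injective since $\bW_w$ is cotilting, hence pure-injective by Theorem~\ref{thm:cotilting-is-pure-injective}, and $\Prod$ of a pure-injective consists of pure-injectives. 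The main obstacle is bookkeeping the translation across the derived equivalence $\QHh\simeq_{\mathrm{der}}\QHh\spitz{\alpha}$: one must be careful that ``slope $\infty$ in $\QHh\spitz{\alpha}$'' corresponds correctly to ``slope $\alpha$ in $\QHh$'' on objects, that the finite-length objects of $\XX_\alpha$ are exactly $\bt_\alpha$, and that the notions of generic/Pr\"ufer/adic match up under this identification — but all of this is already laid out in the ``Interval categories'' subsection, so the argument is essentially a citation assembly rather than new work.
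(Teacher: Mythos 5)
Your proposal is correct and follows essentially the same route as the paper: reduce via the fact that every indecomposable sheaf has a slope, dispose of irrational slopes by Corollary~\ref{cor:pure-injectives-of-irrational-slope}, transfer rational slopes to slope $\infty$ in $\QHh\spitz{\alpha}$ by the lemma preceding the theorem, and then apply Theorem~\ref{thm:ind-pure-inj-slope-infty}. Your explicit verification that every object in the list is indeed pure-injective is a reasonable addition that the paper leaves implicit, but it does not change the argument.
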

\begin{proof}
  We recall that each indecomposable object has a slope. Because of
  Corollary~\ref{cor:pure-injectives-of-irrational-slope} we only need
  to consider slopes $\alpha$ in $\widehat{\QQ}$, and by the preceding
  lemma we can restrict even to the case $\alpha=\infty$. Let now $M$
  be indecomposable of slope $\infty$. Then we can apply
  Theorem~\ref{thm:ind-pure-inj-slope-infty}. 
\end{proof}
\subsection*{Every large cotilting sheaf has a slope}
\begin{lemma}\label{lem:cotilt-tilt}
  Let $C$ be cotilting and $T$ be a corresponding tilting sheaf (of
  finite type): $C=\Gamma(T)$. Let $w\in\widehat{\RR}$. Then
  $$C\ \text{has slope}\ w\quad\Leftrightarrow\quad T\ \text{has
    slope}\ w.$$  
\end{lemma}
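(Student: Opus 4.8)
The plan is to express both ``$C$ has slope $w$'' and ``$T$ has slope $w$'' as one and the same condition on the torsionfree generating subclass of $\Hh$ that $C$ and $T$ share, and then appeal to the already-known cases. Recall that $C=\Gamma(T)$ means precisely, by Theorem~\ref{thm:tilting-cotilting-duality}, that $\lperpe{C}\cap\Hh=\lperpe{(\rperpe{T})}\cap\Hh$; write $\Ff_0$ for this common class. The target condition $(\star_w)$ will be: $\bp_w\subseteq\Ff_0\subseteq\Cc_w\cap\Hh$ (equivalently, $\bp_w\subseteq\Ff_0$ and $\Ff_0\cap\bq_w=0$; for rational $w$ the right-hand containment is what restricts which branch summands in $\bt_w$ may occur).

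First I would treat the cotilting side. By definition $C$ has slope $w$ iff $C\in\Mm(w)=\Bb_w\cap\Cc_w$. Since $\Bb_w=\rperpe{\bp_w}$, membership $C\in\Bb_w$ means $\Ext^1(\bp_w,C)=0$, i.e. $\bp_w\subseteq\lperpe{C}$; as $\bp_w$ consists of coherent sheaves and $\lperpe{C}\cap\Hh=\Ff_0$, this is exactly $\bp_w\subseteq\Ff_0$. On the other hand $\Cc_w=\rperpo{\bq_w}$ is closed under subobjects and products, hence closed under $\Cogen$; since $\lperpe{C}=\Cogen(C)$ for the cotilting sheaf $C$ and $\lperpe{C}=\varinjlim\Ff_0$ by Theorem~\ref{thm:cotilting-is-of-finite-type}, while $\Cc_w=\varinjlim(\bp_w)=\varinjlim(\Cc_w\cap\Hh)$, we get $C\in\Cc_w\iff\lperpe{C}\subseteq\Cc_w\iff\Ff_0\subseteq\Cc_w\cap\Hh$. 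Thus ``$C$ has slope $w$'' $\iff(\star_w)$.

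It remains to show ``$T$ has slope $w$'' $\iff(\star_w)$ with $\Ff_0=\lperpe{(\rperpe{T})}\cap\Hh$; combining this with the previous paragraph and Theorem~\ref{thm:tilting-cotilting-duality} proves the lemma. For $w=\infty$ this is exactly the tilting analogue of Proposition~\ref{prop:tilting-cotilting-invariances}(1) that is invoked in the proof of Proposition~\ref{prop:tilting-cotilting-invariances}(4), coming from \cite[Sec.~4]{angeleri:kussin:2017}. For rational $w$ I would reduce to that case by passing to the interval category $\QHh\spitz{w}=\Qcoh\XX_w$ of Theorem~\ref{thm:coh-X-alpha}: this is again the quasicoherent sheaf category of a weighted noncommutative regular projective curve with $\chi'_{orb}(\XX_w)=0$, it lies in $\Derived{\QHh}=\Derived{\QHh\spitz{w}}$, and under the identification the tubes $\bt_w$ become the finite-length sheaves over $\XX_w$, so that ``slope $w$ in $\QHh$'' becomes ``slope $\infty$ in $\QHh\spitz{w}$'' and $\bp_w$ becomes $\vect\XX_w$. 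One then checks that the tilting-of-finite-type and cotilting properties, as well as the correspondence $\Gamma$ and the identity $\lperpe{C}\cap\Hh=\lperpe{(\rperpe{T})}\cap\Hh$, all transfer across this passage; the crucial point is that for objects lying in $\QHh\cap\QHh\spitz{w}$ — which includes $C$, $T$ and every object of $\bt_w$ — the group $\Ext^1$ is computed as $\Hom_{\Derived{\QHh}}(-,-[1])$ either way, so it does not matter in which heart it is taken. Applying the $w=\infty$ result inside $\XX_w$ and translating back gives ``$T$ has slope $w$'' $\iff(\star_w)$. For irrational $w$ the same equivalence follows more cheaply: Theorem~\ref{thm:exist-Ww}(2) gives a unique cotilting sheaf $\bW_w$ of slope $w$ with $\lperpe{\bW_w}\cap\Hh=\Cc_w\cap\Hh=\bp_w$, and its tilting counterpart in \cite{angeleri:kussin:2017} yields $(\star_w)$ again, which here simply reads $\Ff_0=\bp_w$. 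The main obstacle is the bookkeeping in the rational case, namely verifying that ``(co)tilting of finite type of slope $w$'' and the map $\Gamma$ really do transfer through the HRS-tilt to the slope-$\infty$ situation over $\XX_w$; this amounts to careful tracking of torsion pairs and $\Ext^1$-vanishing but uses no ideas beyond those already established in Section~\ref{sec:Euler-zero} and in \cite{angeleri:kussin:2017}.
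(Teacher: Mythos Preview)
Your treatment of the cotilting side is correct and close in spirit to the paper's: showing $C\in\Bb_w\iff\bp_w\subseteq\Ff_0$ is exactly the paper's first chain of equivalences, and your argument for $C\in\Cc_w\iff\Ff_0\subseteq\Cc_w\cap\Hh$ is a legitimate variant of the paper's second chain (the paper phrases it as $\bq_w\subseteq\lperpo{C}$, but this amounts to the same thing). One minor slip: the equality $\Cc_w=\varinjlim(\bp_w)$ fails for rational $w$ (for instance $\bt_w\subseteq\Cc_w$ but $\bt_w\not\subseteq\varinjlim(\bp_w)$, since a finitely presented object in a direct-limit closure is a summand of an object of the class); fortunately you only use that $\Cc_w$ is closed under direct limits, which is immediate from $\Cc_w=\rperpo{\bq_w}$.

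The substantive divergence---and the gap---is on the tilting side. The paper does \emph{not} case-split on $w$ or pass to interval categories. It proves $C\in\Bb_w\iff T\in\Bb_w$ and $C\in\Cc_w\iff T\in\Cc_w$ uniformly in $w$ by short Serre-duality chains, using $\tau(\bp_w)=\bp_w$, $\tau(\bq_w)=\bq_w$, the identities $\rperpe{T}=\rperpe{\vSs}$ and $\lperpo{C}=\lperpo{\vec\Ff}$, and the torsion-pair relation $\rperpo{(\rperpe{T})}=\rperpo{T}$. For example, $T\in\Cc_w=\lperpe{\bq_w}$ iff $\bq_w\subseteq\rperpe{T}=\rperpe{\vSs}$ iff (Serre duality) $\bq_w\subseteq\lperpo{\Ff}$ iff $\bq_w\subseteq\lperpo{\vec\Ff}=\lperpo{C}$ iff $C\in\Cc_w$. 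This takes a few lines and needs no reduction.

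Your proposed reduction for rational $w$ via $\QHh\spitz{w}$ is circular as written. You assert that ``objects lying in $\QHh\cap\QHh\spitz{w}$ \ldots\ includes $C$, $T$'', but membership of $T$ in $\QHh\spitz{w}$ is exactly $T\in\Cc_w$, which is half of the condition ``$T$ has slope $w$'' you are trying to establish. Thus in the direction $(\star_w)\Rightarrow$ ``$T$ has slope $w$'' you cannot yet place $T$ in the tilted heart, and the transfer of the $\Gamma$-correspondence you defer to ``bookkeeping'' cannot even be set up. To break the circularity you would have to prove $T\in\Cc_w$ directly from $(\star_w)$---and the clean way to do that is precisely the Serre-duality chain above, after which the interval-category detour is no longer needed. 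The same remark applies to the irrational case: invoking uniqueness of $\bW_w$ still requires knowing that the $T$ corresponding to $C$ under $\Gamma$ has slope $w$, which is again the point at issue.
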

\begin{proof}
  We show the following:
  \begin{enumerate}
  \item[(1)] $C\in\Bb_w\quad\Leftrightarrow\quad T\in\Bb_w$.
  \item[(2)] $C\in\Cc_w\quad\Leftrightarrow\quad T\in\Cc_w$.
  \end{enumerate}
  To this end let $(\lperpo{C},\lperpe{C})$ and
  $(\rperpe{T},\rperpo{T})$ be the corresponding cotilting, resp.\
  tilting, torsion pairs. Moreover, let
  $\Ff=\lperpe{C}\cap\Hh=\lperpe{(\rperpe{T})}\cap\Hh=\vSs$ be the
  corresponding ``small'' torsionfree/resolving class. We have
  $\rperpe{T}=\rperpe{\vSs}$ and $\lperpo{C}=\lperpo{\vec{\Ff}}$. We
  remark that $\tau(\bp_w)=\bp_w$ and $\tau(\bq_w)=\bq_w$.

  (1) We have $C\in\Bb_w=\lperpo{\bp}_w$ iff
  $\bp_w\subseteq\rperpo{C}$ iff $\bp_w\subseteq\lperpe{C}$ iff
  $\bp_w\subseteq\Ff=\vSs$ iff $\bp_w\subseteq\rperpo{(\rperpe{T})}$
  iff $T\in\lperpo{\bp}_w=\Bb_w$.

  (2) We have $C\in\Cc_w=\rperpo{{\bq_w}}$ iff
  $\bq_w\subseteq\lperpo{C}=\lperpo{\vec{\Ff}}$ iff
  $\bq_w\subseteq\lperpo{\Ff}$ iff
  $\bq_w\subseteq\rperpe{\vSs}=\rperpe{T}$ iff
  $T\in\lperpe{\bq}_w=\rperpo{{\bq_w}}=\Cc_w$.
\end{proof}
The main result of this section is the following, which follows from
the lemma and the corresponding result for large tilting
sheaves~\cite[Thm.~8.5 + 9.1]{angeleri:kussin:2017}.
\begin{theorem}\label{thm:every-large-ts-slope}
  For every large cotilting sheaf $C$ in $\QHh$, there is
  $w\in\widehat{\RR}$ such that $C$ has slope $w$. \qed
\end{theorem}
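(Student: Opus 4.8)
The plan is to reduce the statement about large cotilting sheaves to the already-established classification of large tilting sheaves of finite type via the bijection $\Gamma,\Theta$ of Theorem~\ref{thm:tilting-cotilting-duality}. Let $C$ be a large cotilting sheaf in $\QHh$. First I would invoke Proposition~\ref{prop:tilting-cotilting-invariances}~(3)--(4) together with Theorem~\ref{thm:tilting-cotilting-duality}: there is a tilting sheaf $T$ of finite type with $\Gamma(T)=C$, and since $C$ is large (i.e.\ not equivalent to a coherent cotilting sheaf), $T$ is likewise not equivalent to a coherent tilting sheaf, hence $T$ is a \emph{large} tilting sheaf of finite type in the sense of~\cite{angeleri:kussin:2017}.

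Next I would quote the corresponding structural result for tilting sheaves in the Euler-characteristic-zero case, namely \cite[Thm.~8.5 + 9.1]{angeleri:kussin:2017} (the tubular and the elliptic cases respectively), which asserts that every large tilting sheaf of finite type in $\QHh$ has a well-defined slope $w\in\widehat{\RR}$, meaning $T\in\Mm(w)=\Bb_w\cap\Cc_w$ for a unique $w$. Applying Lemma~\ref{lem:cotilt-tilt} to the pair $(C,T)$ with $C=\Gamma(T)$, the two equivalences $C\in\Bb_w\Leftrightarrow T\in\Bb_w$ and $C\in\Cc_w\Leftrightarrow T\in\Cc_w$ transport this slope from $T$ to $C$: we conclude $C\in\Bb_w\cap\Cc_w=\Mm(w)$, i.e.\ $C$ has slope $w$. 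Uniqueness of $w$ for $C$ follows from uniqueness for $T$ together with the same two equivalences (or directly from Theorem~\ref{thm:reiten-ringel}~(1), which forces the slope to be unique once it exists, since $\Hom(\Mm(w'),\Mm(w))=0$ for $w<w'$ would be violated by a nonzero endomorphism of $C$).

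The only genuine content beyond bookkeeping is checking that $\Gamma$ sends \emph{large} cotilting sheaves to \emph{large} tilting sheaves of finite type and vice versa, so that the cited tilting-side theorem is actually applicable; this is immediate from the definition of largeness (non-equivalence to a coherent object) and the fact that $\Gamma,\Theta$ restrict correctly between the coherent classes, but it is worth stating explicitly. I do not expect a serious obstacle here: all the heavy lifting — the existence and classification of large tilting sheaves of finite type, and the compatibility of the correspondence $\Gamma$ with the perpendicular classes $\Bb_w,\Cc_w$ — has already been carried out in~\cite{angeleri:kussin:2017} and in Lemma~\ref{lem:cotilt-tilt}, and the proof of Theorem~\ref{thm:every-large-ts-slope} is essentially a one-line deduction from these. (The parallel domestic statement is Theorem~\ref{thm:domestic-main-result}~(2), where in fact $w=\infty$ always.)
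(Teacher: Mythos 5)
Your proposal is exactly the paper's argument: the paper deduces Theorem~\ref{thm:every-large-ts-slope} in one line from Lemma~\ref{lem:cotilt-tilt} together with the corresponding result for large tilting sheaves, \cite[Thm.~8.5 + 9.1]{angeleri:kussin:2017}, which is precisely the reduction you carry out (including the implicit point that the correspondence $\Gamma$ of Theorem~\ref{thm:tilting-cotilting-duality} matches large cotilting sheaves with large tilting sheaves of finite type). So the proposal is correct and takes essentially the same route, just spelled out in slightly more detail than the paper.
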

\begin{example}
  For every $w\in\widehat{\RR}$ denote by $\bL_w$ the tilting sheaf in
  $\QHh$ with tilting class $\Bb_w$,
  cf.~\cite{angeleri:kussin:2017}. Then $\Gamma(\bL_w)=\bW_w$.
\end{example}
\subsection*{Reduction from rational slope to slope $\infty$}
\begin{lemma}\label{lem:tilting-corr-tubular}
  Let $\alpha\in\widehat{\QQ}$. For an object $C$ in $\QHh$ the
  following conditions are equivalent:
  \begin{enumerate}
  \item[(1)] $C$ is a cotilting sheaf in $\QHh$ of slope $\alpha$;
  \item[(2)] $C$ is a cotilting sheaf in $\QHh\spitz{\alpha}$ of slope
    $\infty$.
  \end{enumerate}
\end{lemma}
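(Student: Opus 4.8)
The plan is to exploit that $\QHh$ and $\QHh\spitz{\alpha}$ are two hearts of t-structures on the common triangulated category $\Derived{\QHh}=\Derived{\QHh\spitz{\alpha}}$, overlapping in precisely $\Cc_\alpha$: from $\QHh\spitz{\alpha}=\Gg_\alpha[-1]$ (whose objects $Z$ satisfy $H^0(Z)\in\Cc_\alpha$, $H^1(Z)\in\Gen\bq_\alpha$ and $H^i(Z)=0$ otherwise) one reads off $\QHh\cap\QHh\spitz{\alpha}=\Cc_\alpha$. A sheaf $C$ of slope $\alpha$ in $\QHh$ lies in $\Mm(\alpha)=\Cc_\alpha\cap\Bb_\alpha\subseteq\Cc_\alpha$, hence in this overlap; under the identification $\QHh\spitz{\alpha}=\Qcoh\XX_\alpha$ of Theorem~\ref{thm:coh-X-alpha} such a $C$ has slope $\infty$, and conversely $\Mm_{\QHh\spitz{\alpha}}(\infty)=\Cc_\alpha\cap\Bb_\alpha=\Mm_{\QHh}(\alpha)$, so the slope hypotheses in (1) and (2) are equivalent and I may assume throughout that $C\in\Cc_\alpha$. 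Since both categories are hereditary, for every $X\in\Cc_\alpha$ we have $\Ext^i_{\QHh}(X,C)\cong\Hom_{\Derived{\QHh}}(X,C[i])\cong\Ext^i_{\QHh\spitz{\alpha}}(X,C)$; moreover, by definability of $\Mm(\alpha)$ in both categories together with the splitting torsion pairs used below (this is exactly the argument in the proof of the preceding lemma) the product $C^I$ agrees whether formed in $\QHh$ or in $\QHh\spitz{\alpha}$.

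Next I would check that the criterion ``$C$ is cotilting $\iff$ (CS1) and (CS2) hold and the finitely presented part of $\lperpe{C}$ generates'' transfers between the two settings. Condition (CS1), $\Ext^1(C^I,C)=0$, transfers at once from the paragraph above. For (CS2) and the generating condition I would use the splitting torsion pairs $(\Gen\bq_\alpha,\Cc_\alpha)$ in $\QHh$ (Lemma~\ref{lem:reiten-ringel}) and $(\Cc_\alpha,\Gen\bq_\alpha[-1])$ in $\QHh\spitz{\alpha}$: every object of either category decomposes as a $\Cc_\alpha$-summand plus an ``extra'' summand (in $\Gen\bq_\alpha$, resp.\ in $\Gen\bq_\alpha[-1]$), and the $\Cc_\alpha$-summands contribute identically to $\Hom(-,C)$ and $\Ext^1(-,C)$. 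The extra summands are controlled by $\Hom_{\QHh}(\Gen\bq_\alpha,C)=0$ together with $\Gen\bq_\alpha\cap\Cc_\alpha=0$ (an object generated by $\bq_\alpha$ on which $\Hom(\bq_\alpha,-)$ vanishes is zero), and, on the other side, by $\Ext^1_{\QHh\spitz{\alpha}}(Q[-1],C)=\Ext^2_{\QHh}(Q,C)=0$ (hereditarity) while $\Hom_{\QHh\spitz{\alpha}}(Q[-1],C)=\Ext^1_{\QHh}(Q,C)$, combined with $\lperpe{C}=\Cogen(C)\subseteq\Cc_\alpha$ for cotilting $C$; so (CS2) on either side forces both the $\Cc_\alpha$-part and the extra part of any would-be witness to vanish, giving (CS2) on the other. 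For the generating condition one identifies $\lperpe{C}\cap\Hh\spitz{\alpha}$ (computed in $\QHh\spitz{\alpha}$) with $\bigvee_{\beta>\alpha}\bt_\beta[-1]\vee(\lperpe{C}\cap\Hh)$ — the shifted tubes lie in $\lperpe{C}$ automatically by hereditarity, and the $\bt_\gamma$-part ($\gamma\leq\alpha$) is generated by the original torsionfree generating class since an epimorphism in $\Hh$ between objects of $\Cc_\alpha$ stays epic in $\Hh\spitz{\alpha}$ (its kernel, a subobject inside $\Cc_\alpha$, again lies in $\Cc_\alpha$) — and then uses the splitting torsion pair $(\Cc_\alpha\cap\Hh,(\Gen\bq_\alpha\cap\Hh)[-1])$ in $\Hh\spitz{\alpha}$ to cover an arbitrary object. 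Theorem~\ref{thm:cotilting-is-of-finite-type} additionally records that the cotilting sheaves so obtained are equivalent.

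The converse (2)$\Rightarrow$(1) is the same argument with the roles of $\QHh$ and $\QHh\spitz{\alpha}$ interchanged (viewing $\QHh$ as an interval category of $\Qcoh\XX_\alpha$), using that a cotilting sheaf of slope $\infty$ in $\QHh\spitz{\alpha}$ lies in $\lperpo{\vect\XX_\alpha}$, a class closed under products and subobjects, so that $\Cogen(C)$ again lands inside $\Cc_\alpha$. An alternative, shorter route passes through tilting sheaves: by Theorem~\ref{thm:tilting-cotilting-duality} and Lemma~\ref{lem:cotilt-tilt} a cotilting sheaf of slope $\alpha$ in $\QHh$ corresponds to a tilting sheaf of finite type of slope $\alpha$ in $\QHh$, the analogue of the present lemma for \emph{tilting} sheaves is available from~\cite{angeleri:kussin:2017}, and one returns to cotilting sheaves by applying Theorem~\ref{thm:tilting-cotilting-duality} inside $\QHh\spitz{\alpha}$; the only point to verify is then the compatibility of the correspondence $\Gamma$ with the two ambient categories.

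The main obstacle I anticipate is bookkeeping rather than conceptual content: the ``generating'' half of the cotilting criterion is not preserved by merely restricting the torsionfree class, since $\vect\XX$ does not generate all of $\coh\XX_\alpha$; the class must be enlarged by the shifted tubes $\bigvee_{\beta>\alpha}\bt_\beta[-1]$, and one must confirm — via hereditarity and the splitting torsion pair in $\Hh\spitz{\alpha}$ — that this enlargement is exactly $\lperpe{C}\cap\Hh\spitz{\alpha}$ and that it still generates, with the symmetric verification (including identifying the slope-$\infty$ cogeneration class in $\QHh\spitz{\alpha}$) needed for the converse. The underlying slope identification $\Mm_{\QHh}(\alpha)=\Mm_{\QHh\spitz{\alpha}}(\infty)$, which aligns the hypotheses of (1) and (2), should be cited from the structure theory of $\XX_\alpha$ (Theorem~\ref{thm:coh-X-alpha}, and~\cite{reiten:ringel:2006,angeleri:kussin:2017}).
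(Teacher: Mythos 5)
Your main line of argument is the paper's own: work with the criterion ``(CS1) + (CS2) + the finitely presented part of $\lperpe{C}$ generates'', and transfer each condition between $\QHh$ and $\QHh\spitz{\alpha}$ using that both hearts are hereditary with the same derived category, that the torsion pairs $(\Qq_\alpha,\Cc_\alpha)$ and $(\Cc_\alpha,\Qq_\alpha[-1])$ split, and that products of copies of $C$ agree in the two categories (the paper invokes \cite[Cor.~2.13]{coupek:stovicek:2019} here, you re-run the universal-property argument from the lemma preceding Theorem~\ref{thm:nondomestic-main-result}; both are fine). Your handling of (CS2) via the decomposition $X=X'\oplus Q[-1]$, and of the generating condition (the shifted tubes lie in $\lperpe{C}$ by heredity of $\QHh$, and the old generating class stays generating because kernels of epimorphisms between objects of $\Cc_\alpha$ remain in $\Cc_\alpha$, so exactness transfers), is a more detailed version of exactly what the paper does; the sketched detour through the tilting--cotilting correspondence is a legitimate alternative but not the paper's route.

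One supporting claim in your converse direction is false, though fortunately not needed. You assert that $\lperpo{\vect\XX_\alpha}$ is ``closed under products and subobjects, so that $\Cogen(C)$ again lands inside $\Cc_\alpha$''. Slope-$\infty$ classes are closed under products and under \emph{pure} subobjects, but not under arbitrary subobjects: every line bundle of $\XX_\alpha$ embeds into the generic sheaf of $\XX_\alpha$, which has slope $\infty$. The conclusion also fails: inside $\QHh\spitz{\alpha}$ the class $\Cc_\alpha$ is the \emph{torsion} class of the split torsion pair $(\Cc_\alpha,\Qq_\alpha[-1])$, hence not closed under subobjects, and for a cotilting $C$ of slope $\infty$ in $\QHh\spitz{\alpha}$ the cotilting class $\Cogen(C)=\lperpe{C}$, computed in $\QHh\spitz{\alpha}$, contains all vector bundles of $\XX_\alpha$, in particular the shifted tubes $\bq_\alpha[-1]$, which do not lie in $\Cc_\alpha$. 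The role-swapped argument does not require this containment at all: for (CS2) in $\QHh$ decompose a would-be witness $X\in\QHh$ as $X'\oplus X''$ with $X'\in\Cc_\alpha$ and $X''\in\Qq_\alpha$, and apply (CS2) in $\QHh\spitz{\alpha}$ to $X'$ and to $X''[-1]$, using $\Hom_{\QHh\spitz{\alpha}}(X''[-1],C)\cong\Ext^1_{\QHh}(X'',C)$ together with $\Hom_{\QHh}(X'',C)=0$ and $\Ext^1_{\QHh\spitz{\alpha}}(X''[-1],C)\cong\Ext^2_{\QHh}(X'',C)=0$; for the generating condition note that $\bp_\alpha\subseteq\lperpe{C}\cap\Hh$ (since $C\in\Bb_\alpha$) and $\add(\bp_\alpha)$ already generates $\Hh$, as used in Theorem~\ref{thm:exist-Ww}. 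With that sentence repaired, your proof coincides with the paper's.
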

\begin{proof}
  Clearly, by changing the roles of $\QHh$ and $\QHh\spitz{\alpha}$,
  it suffices to show (1)$\Rightarrow$(2). Assuming~(1) we show (CS1),
  (CS2) w.r.t.\ $\QHh\spitz{\alpha}$ and that
  $\lperpe{C}\cap\Hh\spitz{\alpha}$, formed in $\QHh\spitz{\alpha}$,
  generates. For (CS1) it suffices to remark that forming the product
  $C^I$ in $\QHh$ and $\QHh\spitz{\alpha}$ yields the same; this
  follows from~\cite[Cor.~2.13]{coupek:stovicek:2019}. For (CS2) let
  $X\in\QHh\spitz{\alpha}$ such that
  $\Hom_{\QHh\spitz{\alpha}}(X,C)=0=\Ext^1_{\QHh\spitz{\alpha}}(X,C)$.
  Since the ``cut'' at $\bt_{\infty}[-1]$ defines
  by Lemma~\ref{lem:reiten-ringel} a splitting torsion pair
  $(\Tt_{\infty},\Ff_{\infty})$ in $\QHh\spitz{\alpha}$, we can write
  $X=X'\oplus X''$ with $X'\in\Tt_{\infty}$, that is, lying in $\QHh$,
  and $X''\in\Ff_{\infty}$, that is, lying in $\QHh[-1]$. Using (CS2)
  w.r.t.\ $\QHh$ (for $C$) and $\QHh[-1]$ (for $C[-1]$), we conclude
  $X'=0=X''$, and hence $X=0$. Moreover, the same splitting property
  shows that all objects from $\Ff_{\infty}$ belong to
  $\Ker\Ext^1_{\QHh\spitz{\alpha}}(-,C)$. This concludes the proof
  that $C$ is cotilting in $\QHh\spitz{\alpha}$.
\end{proof}
Let $B_\alpha$ be a sheaf of slope $\alpha$ that becomes a branch
sheaf of finite length in $\QHh\spitz{\alpha}$.  Then we call
$B_\alpha$ a \emph{branch sheaf of slope $\alpha$}.  Note that the
direct summands of $B_\alpha$ are contained in a subcategory
$\Ww_\alpha$ that becomes a wing in $\QHh\spitz{\alpha}$.  We call
$\Ww_\alpha$ a wing of slope $\alpha$ and we adopt all of the
appropriate notation and terminology suggested by
Section~\ref{nr:wings}.

We conclude this chapter by summarizing our results on large cotilting
sheaves in the tubular and the elliptic cases.
\begin{theorem}\label{thm:Euler-zero-cotilting}
  Every large cotilting sheaf $C$ (minimal, without loss of
  generality) in $\QHh$ has a slope $w\in\widehat{\RR}$, and for
  irrational $w$ we have $C\cong\bW_w$. Let $\alpha$ be rational or
  infinite.
  \begin{enumerate}
  \item[(1)] Let $V_\alpha\subseteq\XX_\alpha$ and $B_\alpha$ be a
    branch sheaf of slope $\alpha$. There is a unique minimal
    cotilting sheaf $C=C_+\oplus C_0$ of slope $\alpha$ whose torsion
    part is given by
    \begin{equation*}
      C_0=B_\alpha\oplus\bigoplus_{x\in V_\alpha}\bigoplus_{j\in \Rr_x}\tau^j
      S_x[\infty],
    \end{equation*}
    where the non-empty sets $\Rr_x\subseteq\{0,\dots,p(x)-1)\}$ are
    uniquely determined by $B_\alpha$ as in \ref{nr:wings}.\smallskip
  \item[(2)] Every cotilting sheaf of slope $\alpha$ is, up to
    equivalence, as in~(1).\smallskip
  \item[(3)] The indecomposable summands of the torsionfree part $C_+$
    of $C$ are the following:
    \begin{itemize}
    \item the adic sheaves $\tau^{\ell} S_y[-\infty]$ with
      $y\in\XX_\alpha\setminus V_\alpha$ and $\ell$ such that
      $\tau^{\ell}S_y \not\in\tau\Ww$ for any wing $\Ww$ associated
      with an exterior branch part of $B_\alpha$; if
      $V_\alpha\neq\emptyset$ then $C_+$ is the pure-injective
      envelope of these adic sheaves;
    \item if $V_\alpha=\emptyset$, additionally the generic sheaf of
      slope $\alpha$.\smallskip
    \end{itemize}
  \item[(4)] If $V_\alpha=\XX_\alpha$ and $\Rr_x=\{0,\dots,p(x)-1)\}$
    for all $x$, then $C\cong\bW_\alpha$.\qed
  \end{enumerate}
\end{theorem}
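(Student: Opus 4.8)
The plan is to reduce everything to the already-established slope-$\infty$ theory via the interval categories $\QHh\spitz{\alpha}$. First I would dispose of the global statement: by Theorem~\ref{thm:every-large-ts-slope} every large cotilting sheaf $C$ has a slope $w\in\widehat{\RR}$, and for irrational $w$ the uniqueness $C\cong\bW_w$ is exactly Theorem~\ref{thm:exist-Ww}(2). Moreover, by Proposition~\ref{prop:minimal} each equivalence class has a discrete minimal representative, so assuming $C$ minimal is harmless. This leaves the rational-or-infinite case, where the key tool is Lemma~\ref{lem:tilting-corr-tubular}: $C$ is a cotilting sheaf of slope $\alpha$ in $\QHh$ if and only if $C$ is a cotilting sheaf of slope $\infty$ in $\QHh\spitz{\alpha}=\Qcoh\XX_\alpha$. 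Since $\XX_\alpha$ is again a weighted noncommutative regular projective curve (Theorem~\ref{thm:coh-X-alpha}), the entire classification of slope-$\infty$ cotilting sheaves from Section~\ref{sec:slope-infinity}, i.e.\ Theorem~\ref{thm:large-cotilting-sheaves-in-general}, applies verbatim inside $\QHh\spitz{\alpha}$.

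For part~(1), given $V_\alpha\subseteq\XX_\alpha$ and a branch sheaf $B_\alpha$ of slope $\alpha$ (which by definition becomes an honest branch sheaf in $\QHh\spitz{\alpha}$, supported in the wings $\Ww_\alpha$ of slope $\alpha$), I would apply Theorem~\ref{thm:large-cotilting-sheaves-in-general}(1) in $\QHh\spitz{\alpha}$ to the pair $(B_\alpha,V_\alpha)$. This produces a unique (up to equivalence) large cotilting sheaf of slope $\infty$ in $\QHh\spitz{\alpha}$ with torsion part exactly $B_\alpha\oplus\bigoplus_{x\in V_\alpha}\bigoplus_{j\in\Rr_x}\tau^jS_x[\infty]$, the sets $\Rr_x$ being determined by $B_\alpha$ as in~\ref{nr:wings}; here the Pr\"ufer and adic sheaves are those of the tubular family $\bt_\alpha$. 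Translating back via Lemma~\ref{lem:tilting-corr-tubular} gives the asserted cotilting sheaf $C=C_+\oplus C_0$ of slope $\alpha$ in $\QHh$. One point to check is that the notions of "torsion part" and "finite length" agree: finite-length objects in $\QHh\spitz{\alpha}$ are precisely the coherent sheaves of slope $\alpha$ living in $\bt_\alpha$, so $C_0$ in the sense of $\QHh\spitz{\alpha}$ is indeed the part of $C$ lying in $\bt_\alpha$ together with the Pr\"ufers of slope $\alpha$. Part~(2) is the converse direction of Lemma~\ref{lem:tilting-corr-tubular} combined with Theorem~\ref{thm:large-cotilting-sheaves-in-general}(2): any cotilting sheaf of slope $\alpha$ becomes one of slope $\infty$ in $\QHh\spitz{\alpha}$, hence is equivalent to one of the form described.

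Part~(3) is the image under the equivalence of Theorem~\ref{thm:large-cotilting-sheaves-in-general}(3): the torsionfree summands of $C_+$ (minimal) are the adic sheaves $\tau^\ell S_y[-\infty]$ with $y\in\XX_\alpha\setminus V_\alpha$ avoiding $\tau\Ww$ for exterior branch wings $\Ww$ of $B_\alpha$, and when $V_\alpha\neq\emptyset$ the sheaf $C_+$ is the pure-injective envelope of their direct sum, while for $V_\alpha=\emptyset$ one additionally adds the generic sheaf of slope $\alpha$ (the sheaf $\Kk$ of $\XX_\alpha$, which is the generic of slope $\alpha$ in $\QHh$). Part~(4) is the special case $V_\alpha=\XX_\alpha$, $\Rr_x=\{0,\dots,p(x)-1\}$: then $C_0$ is the sum of \emph{all} Pr\"ufers of slope $\alpha$ and $C_+$ has no adic summands, so $C$ is precisely $\bW_\alpha$ — the injective cogenerator of $\QHh\spitz{\alpha}$, which has cotilting class $\Cc_\alpha$ by Theorem~\ref{thm:exist-Ww}(1). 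The main obstacle is the bookkeeping in part~(3): one must make sure that the "branch/undercut" combinatorics of Section~\ref{sec:slope-infinity}, and in particular the correspondence between exterior branch wings and the excluded adics, transports correctly through the derived equivalence $\Derived{\QHh}=\Derived{\QHh\spitz{\alpha}}$, i.e.\ that wings of slope $\alpha$ in $\QHh$ become ordinary wings in $\QHh\spitz{\alpha}$ with the expected $\tau$-action — but this is exactly what the definition "branch sheaf of slope $\alpha$" and Theorem~\ref{thm:coh-X-alpha} are set up to provide.
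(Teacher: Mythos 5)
Your proposal is correct and follows exactly the route the paper intends: the theorem is stated there as a summary whose proof is the chain Theorem~\ref{thm:every-large-ts-slope} (existence of a slope), Theorem~\ref{thm:exist-Ww}(2) (irrational case), Proposition~\ref{prop:minimal} (minimal discrete representative), and the reduction via Lemma~\ref{lem:tilting-corr-tubular} to the slope-$\infty$ classification of Theorem~\ref{thm:large-cotilting-sheaves-in-general} applied in $\QHh\spitz{\alpha}=\Qcoh\XX_\alpha$, which is legitimate by Theorem~\ref{thm:coh-X-alpha}. Your additional checks (that torsion parts, Pr\"ufer/adic sheaves and the branch/undercut combinatorics transport through the interval category) are precisely the bookkeeping the definition of ``branch sheaf of slope $\alpha$'' is designed to handle, so nothing is missing.
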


\section{Additional results related to irrational slopes}\label{sec:irrational-slope}
\emph{We continue to assume that the orbifold Euler characteristic
  $\chi'_{orb}(\XX)$ is zero. Throughout, we let $w$ be
  irrational.}\medskip

Our understanding of $\Prod(\bW_w)$, the class of pure-injectives in
$\QHh$ of slope $w$, is still quite small. The natural home of the
object $\bW_w$ is the category $\QHh\spitz{w}$, of which it is an
injective cogenerator. One should regard this Grothendieck category as
a geometrical object (in the sense of noncommutative algebraic
geometry, cf.\ the introductions in~\cite[1.2]{vandenbergh:2001} or
\cite[Ch.~III]{rosenberg:1995}), where the points are given by the
simple objects, or equivalently, by the indecomposable objects in
$\Prod(\bW_w)$. Some of the statements in the following proposition
were already stated in~\cite[Rem.~7.5]{angeleri:kussin:2017} without
proofs; part~(2) was obtained in discussions with H.~Lenzing.
\begin{proposition}\label{prop:irrational-cut}
  The following holds.
  \begin{enumerate}
  \item[(1)] $\QHh\spitz{w}$ is a locally coherent Grothendieck
    category with
    $\Hh\spitz{w}=\fp(\QHh{\spitz{w}})=\coh(\QHh{\spitz{w}})$.
  \item[(2)] $\Hh\spitz{w}$ does not contain any simple object.
  \item[(3)] Every non-zero object in $\Hh\spitz{w}$ is not
    noetherian, and thus $\QHh\spitz{w}$ is not locally noetherian.
  \item[(4)] There exist simple objects in $\QHh\spitz{w}$.
  \end{enumerate}
\end{proposition}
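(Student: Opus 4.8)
The plan is to get (1) and (4) directly from the machinery already set up, to prove (2) by a careful argument inside $\bDerived{\Hh}$, and then to deduce (3) from (1) and (2).

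\textit{For (1):} By Theorem~\ref{thm:exist-Ww} the sheaf $\bW_w$ is cotilting with torsionfree class $\Cc_w=\lperpe{\bW_w}=\Cogen(\bW_w)$, which by Theorem~\ref{thm:cotilting-is-pure-injective} is closed under direct limits. Hence Theorem~\ref{thm:loc-coh-hearts}, applied to the torsion pair $(\Qq_w,\Cc_w)$ in $\QHh$, shows that $\QHh\spitz{w}=\Gg_w[-1]$ is locally coherent Grothendieck with $\fp(\QHh\spitz{w})=\Gg_w[-1]\cap\bDerived{\Hh}$. I would then identify this intersection with $\Hh\spitz{w}$ using $\Cc_w\cap\Hh=\bigvee_{\gamma<w}\bt_{\gamma}$ and $\Qq_w\cap\Hh=\bigvee_{\beta>w}\bt_{\beta}$, and use that $\coh=\fp$ in any locally coherent category. \textit{For (4):} $\QHh\spitz{w}$ is nonzero and, by (1), locally finitely generated, hence (as in the proof of the element-free version of \cite[Prop.~6.6]{stenstroem:1975}) every nonzero finitely generated object has a simple quotient, so simple objects exist. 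Equivalently, the chosen minimal injective cogenerator $\bW_w$ is nonzero and discrete, and its indecomposable summands biject with the simple objects.

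\textit{For (2),} I argue by contradiction. Suppose $S$ is simple in $\Hh\spitz{w}$. As $S$ is indecomposable and $\Hh\spitz{w}=\bigvee_{\beta>w}\bt_{\beta}[-1]\vee\bigvee_{\gamma\le w}\bt_{\gamma}$ (with $\bt_{\alpha}\ne0$ only for $\alpha\in\widehat{\QQ}$, so no $\bt_w$), either $S\in\bt_{\gamma}$ for a rational $\gamma<w$, or $S=Y[-1]$ with $Y$ indecomposable in $\bt_{\beta}$ for a rational $\beta>w$. Each $\bt_{\gamma}$, resp.\ $\bt_{\beta}[-1]$, is an exact abelian subcategory of $\Hh\spitz{w}$ (a triangle between objects of the heart is a short exact sequence there), so if $S$, resp.\ $Y$, were not quasisimple in its uniserial tube it would already have a proper subobject, resp.\ quotient, inside that tube, hence in $\Hh\spitz{w}$; thus I may assume $S$ is quasisimple and treat $S\in\bt_{\gamma}$ (the case $S=Y[-1]$ being dual). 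Then $S$ is a stable vector bundle of some rank $r\ge1$. Next I choose an indecomposable vector bundle $Y$ of rational slope $\mu>w$ with $\rk(Y)\,(\mu-w)<r\,(w-\gamma)$ — possible by taking $\mu$ a sufficiently good rational approximation of $w$ from above and $Y$ a quasisimple of $\bt_{\mu}$ — so that $[Z]:=[S]+[Y]$ has slope $(\gamma r+\mu\,\rk(Y))/(r+\rk(Y))<w$. By Serre duality $\Ext^1_{\Hh}(Y,S)\cong\D\Hom_{\Hh}(S,\tau Y)$, and $\Hom_{\Hh}(S,\tau Y)\ne0$: indeed $\Ext^1_{\Hh}(S,\tau Y)=\D\Hom_{\Hh}(\tau Y,\tau S)=0$ by Theorem~\ref{thm:reiten-ringel}(1) since $\mu(\tau Y)=\mu>\gamma=\mu(\tau S)$, so $\dim_k\Hom_{\Hh}(S,\tau Y)$ equals the Euler number $\LF{S}{\tau Y}$, which is strictly positive for indecomposable semistable sheaves of strictly increasing slope in the elliptic and tubular cases (\cite{lenzing:meltzer:1992, reiten:ringel:2006}). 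Picking $0\ne\phi\in\Ext^1_{\Hh}(Y,S)=\Hom_{\bDerived{\Hh}}(Y[-1],S)$, both $Y[-1]$ and $S$ lie in $\Hh\spitz{w}$ (as $\mu(Y)>w>\gamma$), so $\phi$ is a morphism there with cone the sheaf $Z$ sitting in $0\to S\to Z\to Y\to0$. Decomposing $Z=Z'\oplus Z''$ along the split torsion pair $(\bigvee_{\beta>w}\bt_{\beta},\bigvee_{\gamma'\le w}\bt_{\gamma'})$ of $\Hh$ and running the long exact cohomology sequence of this triangle for the heart $\Hh\spitz{w}$ gives $\Coker_{\Hh\spitz{w}}(\phi)=Z'$. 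As $\mu(Z)<w$ forces the minimal Harder--Narasimhan slope of $Z$ to be $<w$, we get $Z'\ne0$; hence $\Image_{\Hh\spitz{w}}(\phi)$ is a proper nonzero subobject of $S$, contradicting simplicity.

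\textit{For (3):} if $0\ne X\in\Hh\spitz{w}$ were noetherian in $\QHh\spitz{w}$, all its subobjects would be finitely generated, hence coherent by (1), so by the ascending chain condition the poset of proper subobjects of $X$ has a maximal element $X'$, with $X/X'$ a simple object of $\coh(\QHh\spitz{w})=\Hh\spitz{w}$ — contradicting (2). Since $\Hh\spitz{w}\ne0$, this exhibits a nonzero non-noetherian finitely presented object of $\QHh\spitz{w}$, so $\QHh\spitz{w}$ is not locally noetherian. The main obstacle throughout is (2), and specifically the two geometric inputs there: the non-vanishing of $\Hom$ between indecomposable semistable sheaves of strictly increasing slope, and the slope estimate guaranteeing that the extension $Z$ is not entirely destabilized above $w$; everything else is a formal consequence of the cited results.
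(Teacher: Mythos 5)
Your treatments of (1), (3) and (4) are correct and essentially the same as the paper's (Saor\'\i n's theorem for (1), the existence of maximal subobjects of finitely generated objects for (4), and the deduction of (3) from (2)); you even address the case of a simple object lying in $\bigvee_{\beta>w}\bt_\beta[-1]$, which the paper passes over as dual. The divergence is in (2), and as written your argument has a genuine gap at its key step: the claim that $\dim_k\Hom(S,\tau Y)=\LF{S}{\tau Y}$ is \emph{strictly positive} for arbitrary indecomposable semistable sheaves of strictly increasing slope is false in the weighted (tubular) case. For example, for a weighted tubular curve take $L$ the structure sheaf (stable of slope $0$) and $S_x$ a simple sheaf at a weighted point $x$ chosen so that $\Hom(L,S_x)=0$ (all but one of the $p(x)$ simples at $x$ have this property); then $\mu(L)=0<\infty=\mu(S_x)$, both are quasisimple in their tubular families, yet $\Hom(L,S_x)=0$ and $\LF{L}{S_x}=0$ (the Ext-term also vanishes by Serre duality and slopes), and applying a tubular mutation produces the same phenomenon between two finite rational slopes. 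What Riemann--Roch with $\chi'_{orb}=0$ actually gives is positivity of the \emph{average} form $\DLF{S}{\tau Y}$, hence only $\Hom(\tau^jS,\tau Y)\neq0$ for \emph{some} $j$. Your argument can be repaired by replacing $Y$ with the corresponding $\tau$-shift (harmless, since $\tau$ preserves rank and slope when the Euler characteristic is zero), or by citing a nonvanishing result of the type the paper itself uses (\cite[Thm.~13.8]{kussin:2016}), but the justification you give does not stand.

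A second, smaller gap is the existence of $Y$ with $\rk(Y)(\mu-w)<r(w-\gamma)$: this needs to be argued, since it requires rationals $\mu>w$ arbitrarily close to $w$ admitting indecomposables in $\bt_\mu$ whose rank is controlled by the denominator of $\mu$ (so that a Dirichlet-type approximation makes the product small). This is true in the tubular/elliptic setting, but it relies on knowing which rank--degree pairs are realized in the families $\bt_\mu$ (e.g.\ via the transitivity of tubular mutations) and is not automatic in the general noncommutative situation of the paper. The paper's own proof of (2) avoids both issues by a different device: it fixes a rational $\beta$ with $\alpha<\beta<w$, uses the rank function $\rk_\beta$ of the interval category $\Hh\spitz{\beta}$, chooses $F$ of minimal $\rk_\beta$ among suitable indecomposables of slope below $\alpha$ with $\Hom_{\Hh\spitz{w}}(F,S)\neq0$, and then the epimorphism $F\ra S$ forced by simplicity has kernel of $\rk_\beta$ zero, giving $F\cong S$ and a slope contradiction --- no approximation estimates and no pairwise Hom-nonvanishing are needed. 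So your route is viable once the two points above are filled in, but the paper's minimality argument is both slicker and closer to the available citations.
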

\begin{proof}
  (1) This follows from Theorem~\ref{thm:loc-coh-hearts}.
  
  (2) We assume that there is a simple object $S$ in
  $\Hh\spitz{w}$. Then there is a rational $\alpha<w$ such that
  $S\in\bt_\alpha$. We choose a rational $\beta$ with
  $\alpha<\beta<w$. The sheaf category $\Hh\spitz{\beta}$ defines a
  rank function $\rk_{\beta}$, which is additive on short exact
  sequences in particular in $\Hh\spitz{w}\cap\Hh\spitz{\alpha}$ and
  $\tau$-invariant. Moreover, $\rk_\beta(F)>0$ for every
  indecomposable $F$ in $\Hh\spitz{w}\cap\Hh\spitz{\alpha}$. We choose
  $F$ such that $\rk_\beta(F)$ is minimal, and moreover with
  $F\in\bt_\gamma$ such that $\gamma<\alpha$. By
  \cite[Thm.~13.8]{kussin:2016} we may assume that
  $\Hom_{\Hh\spitz{w}}(F,S)\neq 0$. Since $S$ is simple, there is a
  short exact sequence $0\ra U\ra F\ra S\ra 0$ in $\Hh\spitz{w}$, and
  by the choice of $F$ we get $\rk_{\beta}(U)=0$, that is,
  $U=0$. Hence we get an isomorphism $F\cong S$, which gives a
  contradiction since $F$ and $S$ have different slopes.
  
  (3) Since a non-zero noetherian object has a maximal subobject, this
  follows directly from (2).

  (4) Let $E$ be a non-zero, finitely generated object in
  $\QHh\spitz{w}$ (for instance, $E\neq 0$ finitely presented). Then
  it contains a maximal subobject, and the quotient is simple. (Thus
  one might expect that there are even ``many'' simple objects in
  $\QHh\spitz{w}$.)
\end{proof}
\begin{corollary}\label{cor:Ww-Sigma-pi}
  For $w\in\widehat{\RR}$, the cotilting sheaf $\bW_w$ is
  $\Sigma$-pure-injective if and only if $w\in\widehat{\QQ}$.
\end{corollary}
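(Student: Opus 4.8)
The plan is to identify $\Sigma$-pure-injectivity of $\bW_w$ with a local noetherianity property of the associated heart, and then to read off the answer from the structure of the interval category $\QHh\spitz{w}$. First I would note that $\QHh$ is locally noetherian, hence locally coherent, and that by Theorem~\ref{thm:exist-Ww}(1) the sheaf $\bW_w$ is cotilting with cotilting class $\Cogen(\bW_w)=\Cc_w=\lperpe{\bW_w}$, so that $(\lperpo{\bW_w},\lperpe{\bW_w})$ is the associated cotilting torsion pair in $\QHh$. The criterion proved in the subsection on $\Sigma$-pure-injective cotilting objects then applies directly: $\bW_w$ is $\Sigma$-pure-injective in $\QHh$ if and only if the heart $\Gg_w$ of the corresponding HRS-tilted t-structure in $\Derived{\QHh}$ is locally noetherian.

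Next I would pass to the $[-1]$-shift. Since $[-1]$ is an autoequivalence of $\Derived{\QHh}$ carrying $\Gg_w$ onto the $[-1]$-shifted heart $\QHh\spitz{w}=\Gg_w[-1]$, the abelian categories $\Gg_w$ and $\QHh\spitz{w}$ are equivalent, so $\Gg_w$ is locally noetherian if and only if $\QHh\spitz{w}$ is. Thus the corollary reduces to the assertion that $\QHh\spitz{w}$ is locally noetherian precisely when $w$ is rational or $\infty$.

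For the ``if'' direction, let $w=\alpha\in\widehat{\QQ}$. Then by Theorem~\ref{thm:coh-X-alpha} (and the discussion of interval categories following it) $\QHh\spitz{\alpha}=\Qcoh\XX_\alpha$ for a weighted noncommutative regular projective curve $\XX_\alpha$; in particular $\QHh\spitz{\alpha}$ is locally noetherian with $\fp(\QHh\spitz{\alpha})=\Hh\spitz{\alpha}$. Hence $\Gg_\alpha$ is locally noetherian and $\bW_\alpha$ is $\Sigma$-pure-injective. For the ``only if'' direction, let $w$ be irrational. Then Proposition~\ref{prop:irrational-cut}(3) states that every non-zero object of $\Hh\spitz{w}$ fails to be noetherian, so $\QHh\spitz{w}$ is not locally noetherian; therefore $\Gg_w$ is not locally noetherian and $\bW_w$ is not $\Sigma$-pure-injective.

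I do not expect a genuine obstacle here: the substantive content has already been established in Proposition~\ref{prop:irrational-cut} and in the general $\Sigma$-pure-injectivity criterion. The only points needing care are verifying the two hypotheses of that criterion --- $\QHh$ locally coherent and $(\lperpo{\bW_w},\lperpe{\bW_w})$ a cotilting torsion pair, both immediate from the above --- and keeping the harmless $[-1]$-shift relating $\Gg_w$ and $\QHh\spitz{w}$ straight.
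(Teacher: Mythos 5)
Your proposal matches the paper's own argument: the paper proves this corollary by combining Stenstr\"om's Prop.~V.4.3 (equivalently, the $\Sigma$-pure-injectivity criterion for cotilting objects established earlier in the paper, which you invoke) with the fact that $\QHh\spitz{w}$ is locally noetherian exactly when $w\in\widehat{\QQ}$, the latter coming from $\QHh\spitz{\alpha}=\Qcoh\XX_\alpha$ for rational $\alpha$ and from Proposition~\ref{prop:irrational-cut}(3) for irrational $w$. Your handling of the harmless $[-1]$-shift between $\Gg_w$ and $\QHh\spitz{w}$ is correct, so this is essentially the same proof, just spelled out in more detail.
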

\begin{proof}
  This follows from~\cite[Prop.~V.4.3]{stenstroem:1975} and the fact
  that the category $\QHh\spitz{w}$ is locally noetherian if and only
  if $w\in\widehat{\QQ}$.
\end{proof}
\begin{proposition}\label{prop:simples-and injectives-have-slope-w}
  The class of injective objects in $\QHh\spitz{w}$ is given by
  $\Prod(\bW_w)$, where $\Prod$ can be formed either in
  $\QHh\spitz{w}$ or in $\QHh$. Each injective object and each simple
  object in $\QHh\spitz{w}$ has ``internal'' slope $w$, that is,
  belongs to $\lperpo{\Hh\spitz{w}}$, this class of objects formed in
  $\QHh\spitz{w}$.
\end{proposition}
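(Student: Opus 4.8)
The plan is to verify the three claims in turn, relying on the fact established just above that $\bW_w$ is an injective cogenerator of $\QHh\spitz{w}$, together with the structural properties of $\QHh\spitz{w}$ recorded in Proposition~\ref{prop:irrational-cut}.

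For the first claim I would argue as follows. In any Grothendieck category the injective objects are precisely the direct summands of products of copies of a fixed injective cogenerator: a product of injectives is injective, and conversely any injective $I$ embeds into some power $\bW_w^{J}$ by the cogenerator property, and this embedding splits since $I$ is injective. Hence $\Inj(\QHh\spitz{w}) = \Prod(\bW_w)$ when $\Prod$ is formed in $\QHh\spitz{w}$. To see that $\Prod$ may instead be formed in $\QHh$, observe that $\bW_w$ is a cotilting object of $\QHh$ with cotilting class $\Cc_w$ (Theorem~\ref{thm:exist-Ww}) and, being injective, is also a cotilting object of $\QHh\spitz{w}$; by \cite[Cor.~2.13]{coupek:stovicek:2019} the product of copies of $\bW_w$ formed in $\QHh$, respectively in $\QHh\spitz{w}$, agrees with the product formed in $\Derived{\QHh} = \Derived{\QHh\spitz{w}}$ — the last identification holding because $(\Qq_w, \Cc_w)$ is a cotilting torsion pair — so the two coincide.

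For the second claim, since $\bW_w$ has slope $w$ it lies in $\Mm(w) = \Cc_w \cap \Bb_w$, a class closed under products (using \cite[Cor.~A.2]{coupek:stovicek:2019} for the factor $\Bb_w = \rperpe{\bp_w}$) and under direct summands; therefore $\Prod(\bW_w) \subseteq \Mm(w)$, so every injective object $F$ of $\QHh\spitz{w}$ is an honest sheaf with $F \in \Bb_w = \lperpo{\bp_w}$. Let $X \in \Hh\spitz{w}$. Using the splitting torsion pair $(\Cc_w \cap \Hh,\, (\Qq_w \cap \Hh)[-1])$ on $\Hh\spitz{w}$ I may write $X \cong X_0 \oplus X_1[-1]$ with $X_0 \in \add\bp_w$ and $X_1 \in \add\bq_w$, so that, computed inside $\Derived{\QHh}$,
\[ \Hom_{\QHh\spitz{w}}(F, X) \cong \Hom_{\QHh}(F, X_0) \oplus \Ext^{-1}_{\QHh}(F, X_1) = \Hom_{\QHh}(F, X_0) = 0, \]
the final vanishing because $F \in \lperpo{\bp_w}$. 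Thus $F \in \lperpo{\Hh\spitz{w}}$.

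For the statement about simple objects, let $S$ be a simple object of $\QHh\spitz{w}$. Then $S$ is finitely generated, since its only subobjects are $0$ and $S$, hence it is an epimorphic image of some coherent object $P$. If there were a non-zero morphism $S \to X$ with $X \in \Hh\spitz{w} = \fp(\QHh\spitz{w})$, it would be monic as $S$ is simple, so the composite $P \twoheadrightarrow S \hookrightarrow X$ would have image $S$; since $\QHh\spitz{w}$ is locally coherent (Proposition~\ref{prop:irrational-cut}(1)) its subcategory $\Hh\spitz{w}$ of finitely presented objects is abelian, and the image of a morphism between coherent objects is coherent, so $S$ would be a simple object of $\Hh\spitz{w}$, contradicting Proposition~\ref{prop:irrational-cut}(2). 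Therefore $\Hom_{\QHh\spitz{w}}(S, X) = 0$ for every coherent $X$, i.e.\ $S \in \lperpo{\Hh\spitz{w}}$. The one point that I expect to need genuine care is the coincidence of the three notions of product of copies of $\bW_w$ in the first paragraph; the remainder is routine bookkeeping with the classes $\Cc_w$, $\Bb_w$ and with the defining property of local coherence.
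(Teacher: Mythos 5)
Your proof is correct and follows the same overall strategy as the paper: the statement about $\Prod$ is handled by \cite[Cor.~2.13]{coupek:stovicek:2019} exactly as in the paper, and your argument for simple objects (a non-zero map $S\to X$ with $X\in\Hh\spitz{w}$ is monic, $S$ is finitely generated, hence $S$ is coherent and simple in $\Hh\spitz{w}$, contradicting Proposition~\ref{prop:irrational-cut}(2)) is the paper's argument with the justification of ``finitely generated subobject of a coherent object is coherent'' spelled out via an epimorphism from a finitely presented object. The only point where you diverge is the claim that injective objects have internal slope $w$: the paper deduces this from the closure of $\lperpo{\Hh\spitz{w}}$ under products (arguing as in \cite[Prop.~13.5]{reiten:ringel:2006}), whereas you show directly that $\Prod(\bW_w)\subseteq\Mm(w)=\Cc_w\cap\Bb_w$ (using \cite[Cor.~A.2]{coupek:stovicek:2019} for the $\Bb_w$-factor) and then compute $\Hom_{\QHh\spitz{w}}(F,X)=0$ via the splitting torsion pair $(\Cc_w\cap\Hh,(\Qq_w\cap\Hh)[-1])$ on $\Hh\spitz{w}$ and heredity of $\QHh$; this is a slightly more self-contained route that avoids the external product-closure argument, at the cost of explicitly tracking the decomposition $X\cong X_0\oplus X_1[-1]$. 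Both arguments are sound, and your verification that products of copies of $\bW_w$ agree in $\QHh$, $\QHh\spitz{w}$ and $\Derived{\QHh}$ correctly uses that $(\Qq_w,\Cc_w)$ is a cotilting torsion pair, which is also how the paper uses this identification elsewhere.
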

\begin{proof}
  The statement on forming $\Prod$ follows
  from~\cite[Cor.~2.13]{coupek:stovicek:2019}. Every injective object
  $Q$ in $\QHh\spitz{w}$ is a direct summand of a power ${\bW_w}^I$ of
  $\bW_w$ (for some set $I$). Since $\lperpo{\Hh\spitz{w}}$ is closed
  under products, which follows by the same arguments as
  in~\cite[Prop.~13.5]{reiten:ringel:2006}, we conclude that
  $Q\in\lperpo{\Hh\spitz{w}}$.

  Let $S$ be a simple object in $\QHh\spitz{w}$. If
  $S\not\in\lperpo{\Hh\spitz{w}}$. Then there is a monomorphism
  $S\ra F$ for an object $F\in\Hh\spitz{w}$. Since $F$ is coherent and
  $S$ finitely generated, we obtain $S\in\Hh\spitz{w}$, and $S$ is
  simple in $\Hh\spitz{w}$. This yields a contradiction by
  Proposition~\ref{prop:irrational-cut}.
\end{proof}
\begin{remark}
  The statement in the preceding proposition on simple objects in
  $\QHh\spitz{w}$ is also shown in~\cite[Thm.~8.2.3]{rapa:2019} with
  completely different methods. Moreover, based on ideas by J.\
  \v{S}{\v{t}}ov{\'{\i}}{\v{c}}ek, in that thesis a simple object in
  $\QHh\spitz{w}$ is constructed in an explicit way as a direct limit
  of finitely presented objects of rational slopes.
\end{remark}
\begin{question}\label{q:w-hereditary}
  Is $\QHh\spitz{w}$ hereditary?
\end{question}
This interesting question is still open. We know that $\Hh\spitz{w}$
is hereditary, and by considering the derived category we see that at
least $\Ext^i_{\QHh\spitz{w}}(-,-)=0$ for $i\geq 3$. Moreover, if $S$
is a simple object (or any object of slope $w$) in $\QHh\spitz{w}$,
then $\IE(S)/S$ is injective. For heredity we would need that every
factor object of an injective object is injective. At least
$\QHh\spitz{w}$ is \emph{semihereditary}\footnote{For the
  corresponding ring-theoretic notion we refer
  to~\cite{megibben:1970}.} in the following sense:
\begin{proposition}
  In $\QHh\spitz{w}$ each of the following equivalent conditions holds
  true.
\begin{enumerate}
\item[(1)] Each factor object of an fp-injective object is fp-injective.
\item[(2)] Each factor object of an injective object is
    fp-injective.
\item[(3)] For all $X,\,Y\in\QHh\spitz{w}$, with $X$ finitely
  presented, $\Ext^2_{\QHh\spitz{w}}(X,Y)=0$.
\end{enumerate}
Moreover, the fp-injective objects coincide with the objects of slope
$w$ in $\QHh\spitz{w}$, and they form a definable subcategory of
$\QHh\spitz{w}$.
\end{proposition}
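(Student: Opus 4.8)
The plan is to dispose of the equivalence (1)$\Leftrightarrow$(2)$\Leftrightarrow$(3) by a routine dévissage, then prove the strongest of these, namely (3), by passing to the derived category, and finally — along the same lines — identify the fp-injective objects with the objects of slope $w$ and read off definability. For the equivalences: (1)$\Rightarrow$(2) is trivial, since injective objects are fp-injective. For (2)$\Rightarrow$(3), given $X\in\fp(\QHh\spitz{w})$ and an arbitrary $Y$, choose an injective copresentation $0\to Y\to Q\to Q/Y\to 0$ and apply $\Hom_{\QHh\spitz{w}}(X,-)$: as $\Ext^2_{\QHh\spitz{w}}(X,Q)=0$ and $Q/Y$ is fp-injective by (2), the long exact sequence forces $\Ext^2_{\QHh\spitz{w}}(X,Y)=0$. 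For (3)$\Rightarrow$(1), given an epimorphism $E\twoheadrightarrow E''$ with kernel $E'$ and $E$ fp-injective, apply $\Hom_{\QHh\spitz{w}}(X,-)$ for $X$ finitely presented: the segment $0=\Ext^1_{\QHh\spitz{w}}(X,E)\to\Ext^1_{\QHh\spitz{w}}(X,E'')\to\Ext^2_{\QHh\spitz{w}}(X,E')$ together with (3) gives $\Ext^1_{\QHh\spitz{w}}(X,E'')=0$, so $E''$ is fp-injective. (All long exact $\Ext$-sequences are available because $\QHh\spitz{w}$ is a Grothendieck category, by Proposition \ref{prop:irrational-cut}(1).)

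To prove (3) I would use that, $(\Qq_w,\Cc_w)$ being a cotilting torsion pair in $\QHh$, the triangulated categories $\Derived{\QHh\spitz{w}}$ and $\Derived{\QHh}$ are identified compatibly with their standard t-structures, so that $\Ext^i_{\QHh\spitz{w}}(X,Y)=\Hom_{\Derived{\QHh}}(X,Y[i])$. By Proposition \ref{prop:irrational-cut}(1) and the definition of $\Hh\spitz{w}$ we have $\fp(\QHh\spitz{w})=\Hh\spitz{w}=\add(\bp_w\cup\bq_w[-1])$; using the splitting torsion pairs $(\Cc_w\cap\Hh,(\Qq_w\cap\Hh)[-1])$ in $\Hh\spitz{w}$ and $(\Cc_w,\Qq_w[-1])$ in $\QHh\spitz{w}$, I may decompose $X=X_0\oplus X_1[-1]$ with $X_0\in\bp_w$, $X_1\in\bq_w$, and $Y=Y_0\oplus Y_1[-1]$ with $Y_0\in\Cc_w$, $Y_1\in\Qq_w$, where $X_0,X_1,Y_0,Y_1$ all lie in $\QHh$. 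Expanding $\Hom_{\Derived{\QHh}}(X,Y[2])$ (carefully tracking the shifts) produces a sum of $\Ext^2_{\QHh}(X_0,Y_0)$, $\Ext^1_{\QHh}(X_0,Y_1)$, $\Ext^3_{\QHh}(X_1,Y_0)$ and $\Ext^2_{\QHh}(X_1,Y_1)$. Three of these vanish because $\QHh$ is hereditary; the fourth vanishes because, by generalised Serre duality in $\QHh$, $\Ext^1_{\QHh}(X_0,Y_1)\cong\D\Hom_{\QHh}(Y_1,\tau X_0)$ with $\tau X_0\in\bp_w$ (as $\tau(\bp_w)=\bp_w$) and $Y_1\in\Qq_w\subseteq\Bb_w=\lperpo{\bp_w}$. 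Hence $\Ext^2_{\QHh\spitz{w}}(X,Y)=0$, which is (3).

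For the final clause I would run the same derived computation for $\Ext^1$ rather than $\Ext^2$. Writing $M=M_0\oplus M_1[-1]$ as above, one finds that $M$ is fp-injective, i.e.\ $\Ext^1_{\QHh\spitz{w}}(\Hh\spitz{w},M)=0$, exactly when $\Ext^1_{\QHh}(\bp_w,M_0)=0$, $\Hom_{\QHh}(\bp_w,M_1)=0$ and $\Ext^1_{\QHh}(\bq_w,M_1)=0$, whereas $M$ has internal slope $w$, i.e.\ $\Hom_{\QHh\spitz{w}}(M,\Hh\spitz{w})=0$ (the notion used in Proposition \ref{prop:simples-and injectives-have-slope-w}), exactly when $\Hom_{\QHh}(M_0,\bp_w)=0$, $\Ext^1_{\QHh}(M_1,\bp_w)=0$ and $\Hom_{\QHh}(M_1,\bq_w)=0$; these two triples of conditions coincide, using $\Bb_w=\lperpo{\bp_w}=\rperpe{\bp_w}$ for the first and generalised Serre duality together with $\tau(\bp_w)=\bp_w$, $\tau(\bq_w)=\bq_w$ for the other two. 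So the fp-injective objects are precisely the objects of slope $w$. Finally, $\QHh\spitz{w}$ is locally coherent by Proposition \ref{prop:irrational-cut}(1), and in a locally coherent Grothendieck category the class of fp-injective objects is closed under products, direct limits and pure subobjects — hence definable — by the standard theory of purity (see \cite{crawley-boevey:1994,prest:2009}); this finishes the proof.

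\textbf{Expected main obstacle.} None of the steps is deep; the one place demanding genuine care is the bookkeeping of shifts when moving between $\QHh$ and $\QHh\spitz{w}=\Gg_w[-1]$ and computing the resulting $\Hom$-groups in $\Derived{\QHh}$, and checking that the torsion-pair decompositions of $X$ and $Y$ used above are exactly the ones coming from the relevant splitting torsion pairs. The only non-trivial external ingredient is the (well-known) definability of the class of fp-injective objects in a locally coherent Grothendieck category.
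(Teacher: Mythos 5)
Your argument is correct, but it reaches the conclusion by a noticeably different route than the paper. You prove condition (3) directly (and the identification of fp-injectives with slope-$w$ objects) by decomposing $X=X_0\oplus X_1[-1]$ and $Y=Y_0\oplus Y_1[-1]$ via the splitting torsion pairs $(\Cc_w\cap\Hh,(\Qq_w\cap\Hh)[-1])$ and $(\Cc_w,\Qq_w[-1])$, computing $\Hom_{\Derived{\QHh}}(-,-[i])$ termwise, and killing the surviving term $\Ext^1_{\QHh}(X_0,Y_1)$ by generalised Serre duality \emph{in $\QHh$} together with $\Qq_w\subseteq\Bb_w=\lperpo{\bp_w}$ and $\tau(\bp_w)=\bp_w$; all the ingredients you use (the identification $\Ext^i_{\QHh\spitz{w}}(X,Y)=\Hom_{\Derived{\QHh}}(X,Y[i])$, the splitting of $(\Cc_w,\Qq_w[-1])$, heredity of $\QHh$, and the $\tau$-invariance of $\bp_w,\bq_w$) are available in the paper, and your shift bookkeeping checks out. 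The paper instead first establishes generalised Serre duality \emph{inside} $\QHh\spitz{w}$, by applying Lemma~\ref{lem:Serre} to $\Derived{\QHh\spitz{w}}=\Derived{\QHh}$ with compact objects $\bDerived{\Hh}$; this gives $\lperpo{\Hh\spitz{w}}=\rperpe{\Hh\spitz{w}}$ in one line (so fp-injective $=$ slope $w$), and then condition (2) follows at once because injectives have internal slope $w$ and that class is closed under quotients. Your approach is more computational but avoids invoking the Serre functor for the heart (it only reuses Serre duality in $\QHh$, already employed elsewhere), and it yields the explicit componentwise criteria for fp-injectivity and internal slope $w$, which is mildly informative; the paper's approach is shorter and produces Serre duality for $\QHh\spitz{w}$ as a byproduct of independent interest. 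For the final definability claim the paper argues as in Proposition~\ref{prop:slope-infty-definable} (explicit closure under products, direct limits and pure subobjects, using both descriptions of the class), whereas you cite the standard fact that fp-injectives form a definable class in a locally coherent Grothendieck category; that is legitimate, though if you want the argument self-contained you should note that closure under direct limits is where local coherence of $\QHh\spitz{w}$ (Proposition~\ref{prop:irrational-cut}) is genuinely used.
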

\begin{proof}
  The equivalence of the conditions follows from standard arguments by
  applying $\Hom_{\QHh\spitz{w}}(X,-)$ with $X$ finitely presented to
  a short exact sequence of the form $0\ra Y\ra Q\ra Q/Y\ra 0$ with
  $Q$ injective or fp-injective.

  In $\QHh\spitz{w}$ we have (generalised) Serre duality
  $\D\Ext^1_{\QHh\spitz{w}}(X,Y)=\Hom_{\QHh\spitz{w}}(Y,\tau X)$,
  where $X,\,Y\in\QHh\spitz{w}$ with $X$ finitely presented. Indeed,
  this follows from Lemma~\ref{lem:Serre}, applied to the derived
  category $\Derived{\QHh\spitz{w}}=\Derived{\QHh}$ and using that
  $\QHh$ is locally noetherian and the compact objects are given by
  $\bDerived{\Hh}$, cf.~\ref{numb:loc-noeth-comp-gen}. 

   By this we see
    $\lperpo{\Hh\spitz{w}}
    =\{Y\in\QHh\spitz{w}\mid\Hom_{\QHh\spitz{w}}(Y,\Hh\spitz{w})=0\}
    =\{Y\in\QHh\spitz{w}\mid\Ext^1_{\QHh\spitz{w}}(\Hh\spitz{w},Y)=0\}
    =\rperpe{\Hh\spitz{w}}$, and hence the objects of slope $w$
    coincide with the fp-injective objects in $\QHh\spitz{w}$. Since
    every factor object of an injective has slope $w$, by
    Proposition~\ref{prop:simples-and injectives-have-slope-w}, it is
    fp-injective. Thus~(2) holds.

    Moreover, it follows as in
    Proposition~\ref{prop:slope-infty-definable} that the class of
    objects in $\QHh\spitz{w}$ of slope $w$ is definable.
\end{proof}
Since $\QHh\spitz{w}$ is not locally noetherian, there are
fp-injective objects which are not injective
(\cite[Prop.~A.11]{krause:2001}).\medskip

We discuss several equivalent formulations of
Question~\ref{q:w-hereditary}.
\begin{lemma}[Reiten-Ringel]\label{lem:2nd-construction}
  Let $w$ be irrational, $\beta_1>\beta_2>\dots>w$ a sequence of
  rational numbers converging to $w$ and $Q_i\in\Add(\bt_{\beta_i})$
  (for $i=1,2,\dots$). Then in $\QHh$ we have
  $\prod Q_i/\bigoplus Q_i\in\Mm(w)$.
\end{lemma}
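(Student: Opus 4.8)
The plan is to check that $M:=\prod_i Q_i/\bigoplus_i Q_i$ lies in $\Mm(w)=\Bb_w\cap\Cc_w$, where $\Bb_w=\lperpo{\bp_w}=\rperpe{\bp_w}$ and $\Cc_w=\rperpo{\bq_w}=\lperpe{\bq_w}$. The first thing I would record is that the canonical sequence $0\ra\bigoplus_i Q_i\ra\prod_i Q_i\ra M\ra 0$ is pure-exact in $\QHh$: since $\bigoplus_i Q_i=\varinjlim_{J}\prod_{i\in J}Q_i$ over the finite subsets $J$, each $\prod_{i\in J}Q_i$ is a direct summand, hence a pure subobject, of $\prod_i Q_i$, and for every $F\in\Hh=\fp(\QHh)$ the functor $\Hom(F,-)$ commutes with this directed colimit, so it takes the sequence to a short exact sequence. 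I would also use that each $Q_i\in\Add(\bt_{\beta_i})$ has slope $\beta_i$, i.e.\ $Q_i\in\Mm(\beta_i)=\Bb_{\beta_i}\cap\Cc_{\beta_i}$ (coherent semistable sheaves of slope $\beta_i$ lie in $\Mm(\beta_i)$, and $\Mm(\beta_i)$ is closed under $\Add$, being the intersection of a torsion and a torsionfree class).

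For $M\in\Bb_w$: since $w$ is irrational, every $\alpha\in\widehat\QQ$ with $\alpha<w$ is finite, and a semistable coherent sheaf of finite slope is torsionfree, hence a vector bundle; thus $\bp_w\subseteq\vect\XX$. From $w<\beta_i$ we get $\bp_w\subseteq\bp_{\beta_i}$, so $Q_i\in\Bb_{\beta_i}=\rperpe{\bp_{\beta_i}}\subseteq\rperpe{\bp_w}$. Applying \cite[Cor.~A.2]{coupek:stovicek:2019} to each $E\in\bp_w\subseteq\vect\XX$ yields $\Ext^1(E,\prod_i Q_i)=0$, whence $\prod_i Q_i\in\rperpe{\bp_w}=\Bb_w$; and since $\Bb_w=\lperpo{\bp_w}$ is closed under quotients, $M\in\Bb_w$.

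For $M\in\Cc_w=\lperpe{\bq_w}$ it is enough to prove $\Ext^1(M,G)=0$ for every $G\in\bt_\gamma$ with $\gamma\in\widehat\QQ$, $\gamma>w$. Fix such $G$. Because $\beta_1>\beta_2>\cdots\to w<\gamma$, only finitely many indices $i$ satisfy $\beta_i\geq\gamma$, and removing them leaves $M$ unchanged; so I may assume $\beta_i<\gamma$ for all $i$. The coherent sheaf $G$ is $\Sigma$-pure-injective by Lemma~\ref{lem:pure-injective}, so $\Hom(-,G)$ sends the pure-exact sequence above to a short exact sequence, and the long exact $\Ext$-sequence together with $\Ext^2=0$ gives a monomorphism $\Ext^1(M,G)\hookrightarrow\Ext^1(\prod_i Q_i,G)$. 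Generalised Serre duality in $\QHh$ (valid by \ref{numb:loc-noeth-comp-gen} and Lemma~\ref{lem:Serre}), applied with the finitely presented object $\tau^{-1}G$, gives
\[\Ext^1_{\QHh}\Bigl(\prod_i Q_i,\,G\Bigr)\;\cong\;\D\Hom_{\QHh}\Bigl(\tau^{-1}G,\,\prod_i Q_i\Bigr)\;\cong\;\D\Bigl(\prod_i\Hom_{\QHh}(\tau^{-1}G,Q_i)\Bigr),\]
and every factor $\Hom_{\QHh}(\tau^{-1}G,Q_i)$ vanishes since $\tau^{-1}G\in\Mm(\gamma)$, $Q_i\in\Mm(\beta_i)$ and $\beta_i<\gamma$, by Theorem~\ref{thm:reiten-ringel}(1). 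Hence $\Ext^1(\prod_i Q_i,G)=0$, so $\Ext^1(M,G)=0$; combined with the previous step this shows $M\in\Bb_w\cap\Cc_w=\Mm(w)$.

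The step I expect to be the main obstacle is showing $\prod_i Q_i\in\Bb_w$, i.e.\ that $\Bb_w$ is closed under products; this is precisely where the irrationality of $w$ enters (through $\bp_w\subseteq\vect\XX$), so that \cite[Cor.~A.2]{coupek:stovicek:2019} is available. The $\Cc_w$-direction is comparatively routine once one exploits that coherent sheaves are pure-injective, generalised Serre duality, and the Reiten--Ringel vanishing $\Hom(\Mm(\gamma),\Mm(\beta_i))=0$ for $\beta_i<\gamma$.
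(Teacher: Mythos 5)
Your proof is correct, but it takes a different route from the paper: the paper does not argue in the text at all, it simply observes that the statement is a mild generalisation of ``The First Construction'' of Reiten--Ringel \cite[13.4]{reiten:ringel:2006} and that the proof given there carries over to $\QHh$. What you have done is supply an intrinsic verification in $\Qcoh\XX$, and the ingredients you choose are exactly the tools this paper develops elsewhere: pure-exactness of $0\ra\bigoplus Q_i\ra\prod Q_i\ra M\ra 0$, pure-injectivity of coherent sheaves (Lemma~\ref{lem:pure-injective}), the product criterion \cite[Cor.~A.2]{coupek:stovicek:2019} for $E\in\vect\XX$ exactly as in Proposition~\ref{prop:slope-infty-definable}, and generalised Serre duality in $\QHh$ obtained from Lemma~\ref{lem:Serre} and \ref{numb:loc-noeth-comp-gen}, together with the Reiten--Ringel vanishing of Theorem~\ref{thm:reiten-ringel}(1). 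This buys a self-contained argument in the sheaf setting (in effect substantiating the paper's claim that ``the proof therein still holds''), at the cost of invoking heavier machinery than Reiten--Ringel need; note that for the $\Cc_w$-half you could bypass Serre duality altogether by using $\Cc_w=\rperpo{\bq_w}$ and lifting any map from a coherent $G\in\bq_w$ along the pure epimorphism $\prod Q_i\ra M$, which is closer in spirit to the original construction.

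Two small remarks, neither of which affects correctness. First, your closing comment that irrationality of $w$ enters through $\bp_w\subseteq\vect\XX$ is misleading: $\bp_w\subseteq\vect\XX$ holds for every $w\in\widehat{\RR}$ (all sheaves in $\bt_\alpha$ with $\alpha<w\leq\infty$ have finite slope, hence are torsionfree), and in fact your argument nowhere genuinely uses that $w$ is irrational --- which is consistent with the Reiten--Ringel construction working for arbitrary $w$. Second, in the $\Cc_w$-step the injectivity of $\Ext^1(M,G)\ra\Ext^1(\prod Q_i,G)$ already follows from exactness of $\Hom(-,G)$ on the pure-exact sequence; the appeal to $\Ext^2=0$ is superfluous there.
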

\begin{proof}
  This is a slightly more general version of ``The First
  Construction'' in~\cite[13.4]{reiten:ringel:2006}; the proof therein
  still holds.
\end{proof}
\begin{proposition}
  Let $w$ be irrational. The following are equivalent:
  \begin{enumerate}
  \item[(1)] The abelian category $\QHh\spitz{w}$ is
    hereditary.\smallskip 
  \item[(2)] The torsion pair $(\Qq_w,\Cc_w)$ in $\QHh$
    splits.\smallskip  
  \item[(3)] $\Ext^1_{\QHh}(\Prod(\bW_w),\Add(\bq_w))=0$; in the
    second argument, one can restrict to coproducts of objects in
    $\bq_w$ whose slopes converge to $w$.\smallskip 
  \item[(4)] For each sequence $Q_i\in\Add(\bt_{\beta_i})$ with
    $\beta_1>\beta_2>\dots>w$ converging to $w$ the canonical
    monomorphism $\bigoplus Q_i\ra\prod Q_i$ splits.\smallskip  
  \item[(5)] In the category $\QHh\spitz{w}$ the following holds: for
    all objects $X\in\QHh\spitz{w}$ of the form $X=\bigoplus X_i$ with
    $X_i\in\Add(\bt_{\gamma_i})$, the $\gamma_i$ converging to
    $w[-1]$, and for each monomorphism $f$ from $X$ to an injective
    object in $\QHh\spitz{w}$, and for any object $Y$ of slope $w$ we
    have $\Ext^1_{\QHh\spitz{w}}(Y,\Coker f)=0$ (or equivalently,
    $\Ext^2_{\QHh\spitz{w}}(Y,X)=0$).
  \end{enumerate}
\end{proposition}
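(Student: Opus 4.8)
The plan is to reduce each of $(1)$--$(5)$ to an Ext-vanishing statement inside the fixed Grothendieck category $\QHh$ and then run the cycle. The backbone is $(1)\Leftrightarrow(2)$. Since $(\Qq_w,\Cc_w)$ is a cotilting torsion pair we have $\Derived{\QHh\spitz{w}}=\Derived{\QHh}$, and because $\QHh$ is hereditary every object of $\Derived{\QHh}$ is the direct sum of the shifts of its cohomology objects. An object $A$ of $\QHh\spitz{w}$ has $H^0(A)\in\Cc_w$, $H^1(A)\in\Qq_w$ and vanishing cohomology elsewhere, so writing $A\cong H^0(A)\oplus H^1(A)[-1]$, likewise for $B$, and using $\Ext^{\ge2}_\QHh=0$, one obtains the identity
\[\Ext^2_{\QHh\spitz{w}}(A,B)\cong\Ext^1_\QHh\!\bigl(H^0(A),H^1(B)\bigr).\]
As $\QHh\spitz{w}$ already has vanishing $\Ext^i$ for $i\ge3$, it is hereditary precisely when $\Ext^1_\QHh(\Cc_w,\Qq_w)=0$; and since torsion-free classes are closed under subobjects and torsion classes under quotients, this is exactly the assertion that the torsion pair $(\Qq_w,\Cc_w)$ splits, i.e. $(1)\Leftrightarrow(2)$. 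The same cohomological identity, combined with the fact that $(\Cc_w,\Qq_w[-1])$ is a (splitting) torsion pair in $\QHh\spitz{w}$ — so that the slope-$w$ objects $Y$ of $\QHh\spitz{w}$ have $H^0(Y)\in\Mm(w)$ — and with the immediate reformulation of the ``$\Ext^2$'' statement in $(5)$ via the injectivity of $Q$, shows that $(5)$ is equivalent to: $\Ext^1_\QHh(\Mm(w),\bigoplus_iM_i)=0$ for every family $M_i\in\Add(\bt_{\beta_i})$ with $\beta_i\downarrow w$.

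For $(2)\Leftrightarrow(3)$, one direction is the inclusions $\Prod(\bW_w)\subseteq\Cogen(\bW_w)=\Cc_w$ and $\Add(\bq_w)\subseteq\Gen(\bq_w)=\Qq_w$. For the other, write $C\in\Cc_w$ as the kernel of a map $\bW_w^{I}\to\bW_w^{J}$ — so $C$ embeds in $\bW_w^{I}$ with cokernel again in $\Cc_w$ (using $\Copres(\bW_w)=\Cc_w$ and closure of $\Cc_w$ under subobjects) — write $M\in\Qq_w=\varinjlim\bq_w$ via a pure-exact sequence $0\to K\to\bigoplus_\lambda M_\lambda\to M\to0$ with $M_\lambda\in\bq_w$, and use $\Ext^{\ge2}_\QHh=0$ to exhibit $\Ext^1_\QHh(C,M)$ as a subquotient of $\Ext^1_\QHh(\bW_w^{I},\bigoplus_\lambda M_\lambda)$, which is $0$ by $(3)$; this gives $(2)\Leftrightarrow(3)_{\mathrm{full}}$, and trivially $(3)_{\mathrm{full}}\Rightarrow(3)_{\mathrm{restricted}}$. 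To link $(3)_{\mathrm{restricted}}$, $(4)$ and $(5)$ the key elementary observation is that a coproduct of coherent sheaves all of whose slopes exceed a fixed rational $\beta>w$ lies in $\Add(\bq_\beta)\subseteq\Qq_\beta$, while $\Cc_w\subseteq\Cc_\beta$, so $\Ext^1_\QHh(\Cc_w,-)$ annihilates it because $(\Qq_\beta,\Cc_\beta)$ splits for rational $\beta$ (Lemma~\ref{lem:reiten-ringel}); this handles ``for free'' any slab of slopes bounded away from $w$. Using in addition Lemma~\ref{lem:2nd-construction} (for $\beta_i\downarrow w$ the sequence $0\to\bigoplus_iQ_i\to\prod_iQ_i\to P\to0$ has $P\in\Mm(w)$) and Proposition~\ref{prop:slope-w-objects} (every slope-$w$ sheaf purely embeds into a power of $\bW_w$ with cokernel again of slope $w$), one passes by diagram chasing between $(3)_{\mathrm{restricted}}$, $(4)$ and $(5)$: splitness of $\bigoplus_iQ_i\hookrightarrow\prod_iQ_i$ makes the pertinent extensions split and conversely, and $(5)$ is just $(3)_{\mathrm{restricted}}$ with $\Mm(w)$ in place of $\Prod(\bW_w)$, the two agreeing via the pure embeddings of Proposition~\ref{prop:slope-w-objects}.

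What remains — and this I expect to be the main obstacle — is to close the cycle by deducing the global condition $(2)$ from its ``local at $w$'' counterparts $(3)_{\mathrm{restricted}}$, $(4)$, $(5)$; equivalently, one must show that $\Ext^1_\QHh(\Prod(\bW_w),-)$ annihilates \emph{every} coproduct $\bigoplus_jN_j$ of coherent sheaves of slope $>w$, knowing this when the slopes form a single decreasing sequence with limit $w$. The slopes involved form a countable, hence scattered, subset $\Sigma$ of $\widehat{\QQ}\cap(w,\infty]$. The approach is to decompose $\bigoplus_jN_j$ into ``slabs'' along rationals $\beta_1>\beta_2>\cdots\to w$: the slabs with slopes bounded away from $w$ contribute $0$ by the elementary observation above, and one is left with a countable coproduct of slabs whose slope-ranges shrink to $w$ — a configuration which, via a mild strengthening of the Reiten--Ringel construction (Lemma~\ref{lem:2nd-construction}, whose proof adapts to slabs in place of the $Q_i$), is controlled by the splitness asserted in $(4)$. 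Turning this into a proof requires a Cantor--Bendixson induction on $\Sigma$ that peels off, at each stage, the bounded-away part and reduces to the literal setup of $(4)$ (a single sequence of single-slope coproducts converging to $w$); this induction, together with the passage from ``each slab contributes $0$'' to ``the coproduct of slabs contributes $0$'' (which is where the splitting of $\bigoplus\hookrightarrow\prod$ and the control of $\prod_nR_n/\bigoplus_nR_n\in\Mm(w)$ are used), is the delicate point. Once it is in place one has $(3)_{\mathrm{restricted}}\Rightarrow(2)$, and together with the equivalences above the chain $(1)\Leftrightarrow(2)\Leftrightarrow(3)\Leftrightarrow(4)\Leftrightarrow(5)$ is complete.
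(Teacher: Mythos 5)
Your treatment of the two halves of the cycle is essentially sound and close to the paper's: the identity $\Ext^2_{\QHh\spitz{w}}(A,B)\cong\Ext^1_{\QHh}(H^0(A),H^1(B))$ (legitimate because $\Derived{\QHh\spitz{w}}=\Derived{\QHh}$ and every complex over the hereditary $\QHh$ is the sum of its cohomologies) gives (1)$\Leftrightarrow$(2) just as the paper does; copresenting objects of $\Cc_w$ by products of $\bW_w$ and presenting objects of $\Qq_w$ by coproducts from $\bq_w$ gives (2)$\Leftrightarrow$(3) in its full form; your reformulation of (5) is correct; and your links among the restricted form of (3), (4) and (5) use the same ingredients as the paper (splitting of $(\Qq_\beta,\Cc_\beta)$ for rational $\beta$ from Lemma~\ref{lem:reiten-ringel}, Lemma~\ref{lem:2nd-construction}, Proposition~\ref{prop:slope-w-objects}, and --- which you should cite explicitly --- \cite[Cor.~A.2]{coupek:stovicek:2019} to pass from $\Ext^1_{\QHh}(\bW_w^J,Q_i)=0$ to $\Ext^1_{\QHh}(\bW_w^J,\prod_iQ_i)=0$).

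The genuine gap is the one you flag yourself: no implication from the cluster $\{(3)_{\mathrm{restricted}},(4),(5)\}$ back to $\{(1),(2),(3)_{\mathrm{full}}\}$ is actually proved, so the five conditions are not shown to be equivalent. The Cantor--Bendixson induction is only announced, its key steps (the slab version of Lemma~\ref{lem:2nd-construction}, and above all the passage from slab-wise vanishing to vanishing on the coproduct of slabs) are deferred, and as set up it threatens circularity: to split $0\to\bigoplus_iQ_i\to\prod_iQ_i\to P\to0$ you need $\Ext^1_{\QHh}(P,\bigoplus_iQ_i)=0$ with $P\in\Mm(w)$, and after embedding $P$ into a power of $\bW_w$ via Proposition~\ref{prop:slope-w-objects} this is exactly the vanishing $\Ext^1_{\QHh}(\bW_w^I,\bigoplus_iQ_i)=0$ you are trying to establish. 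For comparison, the paper closes the cycle with no transfinite induction: (4)$\Rightarrow$(3) is simply that, once $\bigoplus_iQ_i$ is a summand of $\prod_iQ_i$, the group $\Ext^1_{\QHh}(\bW_w^J,\bigoplus_iQ_i)$ is a summand of $\Ext^1_{\QHh}(\bW_w^J,\prod_iQ_i)=0$; and the reduction of an arbitrary second argument from $\Add(\bq_w)$ (respectively, in (5)$\Rightarrow$(1), of arbitrary $X$, using that $\Qq_w[-1]$ is generated by $\bq_w[-1]$ and $\Ext^3_{\QHh\spitz{w}}=0$) to the converging-slope form is done by splitting off, via $\Ext^1_{\QHh}(\Cc_w,\Bb_\beta)=\Ext^2_{\QHh\spitz{\beta}}(\Cc_w,\Bb_\beta[-1])=0$ for rational $\beta>w$, the part whose slopes are bounded away from $w$. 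Your instinct that this reduction is the delicate point is not unreasonable --- the paper compresses it into a single remark --- but your proposal neither carries out your own induction nor supplies the paper's reduction, so the central implication is missing as written.
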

\begin{proof}
  (1)$\Rightarrow$(2) Let $\QHh\spitz{w}$ be hereditary. Let
  $0\ra X'\ra X\ra X''\ra 0$ be a short exact sequence in $\QHh$ with
  $X'\in\Qq_w$ and $X''\in\Cc_w$. Then
  $V=X'[-1]\in\Qq_w[-1]\subseteq\QHh\spitz{w}$, and this yields
  $\Ext^1_{\QHh}(X'',X')=\Hom_{\Derived{\QHh}}(X'',X'[1])=\Ext^2_{\QHh\spitz{w}}(X'',V)=0$. Hence
  $(\Qq_w,\Cc_w)$ splits.

  (2)$\Rightarrow$(1) Conversely, we assume that $(\Qq_w,\Cc_w)$
  splits. Let $X,\,Y\in\QHh\spitz{w}$. Since $(\Cc_w,\Qq_w[-1])$ is a
  torsion pair in $\QHh\spitz{w}$, for showing
  $\Ext^2_{\QHh\spitz{w}}(X,Y)=0$ it is sufficient to assume
  $X,\,Y\in\Cc_w\cup\Qq_w[-1]$. Since
  $\Ext^2_{\QHh\spitz{w}}(X,Y)=\Hom_{\Derived{\QHh}}(X,Y[2])$ and
  $\QHh$ is hereditary, the only crucial case is when $X\in\Cc_w$ and
  $Y\in\Qq_w[-1]$. But then $Y[1]\in\Qq_w$ and
  $\Hom_{\Derived{\QHh}}(X,Y[2])=\Ext^1_{\QHh}(X,Y[1])=0$ since
  $(\Qq_w,\Cc_w)$ splits.

  (2)$\Leftrightarrow$(3) is easy to show since $\QHh$ is hereditary,
  and moreover
  $\Ext^1_{\QHh}(\Cc_w,\Bb_\beta)=\Ext^2_{\QHh\spitz{\beta}}(\Cc_w,\Bb_{\beta}[-1])=0$
  for every rational $\beta>w$, because $\QHh\spitz{\beta}$ is
  hereditary.

  (3)$\Rightarrow$(4) Follows directly with
  Lemma~\ref{lem:2nd-construction}.

  (4)$\Rightarrow$(3) Let
  $\eta\colon 0\ra\bigoplus_i Q_i\ra E\ra {\bW_w}^J\ra 0$ be a short
  exact sequence with $Q_i$ as in~(4). We have to show that $\eta$
  splits. Since each $Q_i\in\Add(\bt_{\beta_i})\subseteq\Bb_{\beta_i}$
  with $\beta_i>w$ we have $\Ext^1_{\QHh}({\bW_w}^J,Q_i)=0$ for each
  $i$, hence we obtain $\Ext^1_{\QHh}({\bW_w}^J,\prod Q_i)=0$
  by~\cite[Cor.~A.2]{coupek:stovicek:2019}. Since by~(4) the coproduct
  $\bigoplus Q_i$ is a direct summand of the product $\prod Q_i$, we
  obtain $\Ext^1_{\QHh}({\bW_w}^J,\bigoplus Q_i)=0$, and thus $\eta$
  splits.

  (1)$\Rightarrow$(5) This is clear.

  (5)$\Rightarrow$(1) We recall that there is the splitting torsion
  pair $(\Cc_w,\Qq_w[-1])$ in $\QHh\spitz{w}$, and since
  $\QHh\spitz{w}$ is locally coherent with
  $\fp(\QHh\spitz{w})=\Hh\spitz{w}$, the class $\Qq_w[-1]$ is
  generated in $\QHh\spitz{w}$ by $\bq_w[-1]$. We have to show that
  for any short exact sequence $0\ra X\ra{\bW_w}^I\ra B\ra 0$ in
  $\QHh\spitz{w}$, the cokernel object $B$ is injective, that is,
  $\Ext^2_{\QHh\spitz{w}}(Y,X)=0$ holds for each $Y\in\QHh\spitz{w}$.
  Since $\Ext^3_{\QHh\spitz{w}}(-,-)=0$, we obtain that for $X$ it is
  sufficient to assume that it is a coproduct of objects in
  $\bq_w[-1]$. Moreover, since for every rational $\beta$ we have
  $\Hom_{\Derived{\QHh\spitz{\beta}}}(M,N)=0$ for all
  $M\in\QHh\spitz{\beta}[-1]$ and $N\in\QHh\spitz{\beta}[1]$ (which
  follows from heredity of $\QHh\spitz{\beta}$), we deduce that we
  can, moreover, assume $X$ to be of the form as in~(5), and to test
  injectivity with objects $Y$ of slope $w$.
\end{proof}
The equivalence of (1) and (2) also follows
from~\cite[Thm.~5.2]{stovicek:kerner:trlifaj:2011}. 

\subsection*{Acknowledgements}
For several inspiring discussions on the subject we would like to
thank Lidia Angeleri H\"ugel, Helmut Lenzing, Alessandro Rapa, Jan
\v{S}\v{t}ov\'{\i}\v{c}ek and Jorge Vitòria.

The first named author was supported by the DFG through SFB/TR 45 at
the University of Bonn. The second named author was supported by the
Max Planck Institute for Mathematics and also by the European Union's
Horizon 2020 research and innovation programme under the Marie
Sk{\l}odowska-Curie Grant Agreement No.~797281.


\bibliographystyle{elsarticle-harv}

\def\cprime{$'$}

\end{document}